\newtheorem{satz}{Satz}[section]
\newtheorem{teo}[satz]{Theorem}
\newtheorem{cor}[satz]{Corollary}
\newtheorem{conj}[satz]{Conjecture}
\newtheorem{prop}[satz]{Proposition}
\newtheorem{lemma}[satz]{Lemma}
\theoremstyle{definition}
\newtheorem{de}[satz]{Definition}
\newtheorem{rem}[satz]{Remark}
\newtheorem{ex}[satz]{Example}
\newtheorem{notation}[satz]{Notation}
\newtheorem{de-teo}[satz]{Definition-Theorem}
\newtheorem{say}[satz]{}
\newcommand\N{\mathbb N}
\newcommand\Q{\mathbb Q}
\newcommand\Z{\mathbb Z}
\newcommand\R{\mathbb R}
\newcommand\C{\mathbb C}
\DeclareMathOperator{\Star}{Star}
\DeclareMathOperator{\Spec}{Spec}
\DeclareMathOperator{\Hom}{Hom}
\DeclareMathOperator{\Pic}{Pic}
\DeclareMathOperator{\Conv}{Conv}
\DeclareMathOperator{\Cone}{Cone}
\DeclareMathOperator{\codim}{codim}
\DeclareMathOperator{\Proj}{Proj}
\DeclareMathOperator{\Sym}{Sym}
\DeclareMathOperator{\mult}{mult}
\DeclareMathOperator{\codeg}{codeg}
\DeclareMathOperator{\interior}{int}
\DeclareMathOperator{\Vol}{Vol}
\DeclareMathOperator{\Cayley}{Cayley}
\DeclareMathOperator{\Div}{Div}
\DeclareMathOperator{\Span}{Span}
\begin{document}

\begin{center}

\textbf{TORIC BIRATIONAL GEOMETRY AND ITS \\ APPLICATIONS TO LATTICE POLYTOPES}
\end{center}

\vspace{0,1 cm}

\begin{center}
Douglas Mons\^ores de Melo Santos

monsores@impa.br
\end{center}

\vspace{1 cm}
\begin{center}
\textbf{ Abstract}
\end{center}
\vspace{0,3 cm}

Toric geometry provides a bridge between the theory of polytopes and algebraic geometry: one can associate to each lattice polytope  a polarized toric variety. 
In this thesis we explore this correspondence to classify smooth lattice polytopes having small degree,
extending a classification provided by Dickenstein, Di Rocco and Piene.
Our approach consists in interpreting the degree of a polytope as 
a geometric invariant of the corresponding polarized variety,
and then applying techniques from Adjunction Theory and Mori Theory. 

In the opposite direction, we use the combinatorics of  fans to describe the extremal rays of several cones of cycles of a projective toric variety $X$. One of these cones is the \emph{cone of moving curves}, that is the closure of the cone generated by classes of curves moving in a family that sweeps out $X$. This cone plays an important role in the problem of birational classification of projective varieties.    

\vspace{1,5 cm}

\textbf{Acknowledgements}: This work is my Ph.D. thesis. I thank my advisor Carolina Araujo, with whom I had the honor of working, for sharing her wisdom, patience and attention in our discussions. I also thank Victor Batyrev, Ana-Maria Castravet and Edilaine Nobili for their precious suggestions and discussions.

\vspace{1,5 cm}

\textbf{Keywords:} Cayley Polytopes, Minimal Model Program, toric variety, codegree, nef value, cone of moving curves.

\thispagestyle{empty}

\pagenumbering{arabic}
\tableofcontents

%
%
%
%

\chapter{Introduction}

A \emph{toric variety} is a normal algebraic variety over $\C$ that contains an algebraic torus $T \simeq (\C^*)^n$ as anopen dense subset so that the natural action of $T$ on itself extends to an action $T \times X \to X$. Every $n$-dimensional toric variety gives rise to a certain collection $\Sigma_X$ of cones in $\R^n$ called a \emph{fan}. Many algebro-geometric properties of the variety $X$ can be translated to combinatorial properties of $\Sigma_X$ that are, in general, simpler to be studied. \\ 

Thanks to this combinatorial structure, toric varieties provide a ``testing ground'' for general conjectures. For instance, one of the greatest achievements in the problem of classification of complex projective manifolds is the Minimal Model Program (MMP). Roughly speaking, the goal of the MMP is perform successive birational transformations in a given complex algebraic variety to obtain a birational model which is ``as simple as possible''. In 1988, Mori established the MMP for 3-folds. With this result Mori earned the Fields medal in 1990. In the toric context, the MMP was established by Reid in 1983 for varieties of arbitrary dimension using the combinatorics of the associated fans. The MMP for toric varieties will play a key role in this work.

In this thesis we follow this philosophy: by exploring the combinatorics of the fan of a toric variety $X$ we prove some algebro-geometric properties of $X$. Going in the opposite direction, we solve a combinatorial problem by interpreting it as a problem in Algebraic Geometry. \\

The first problem we solve in this thesis concerns combinatorics of lattice polytopes. A \emph{polytope} $P \subset \R^n$ is the convex hull of a finite set of points. The \emph{dimension} of $P$ is the dimension of the smallest linear subspace containing $P$. We say that $P$ is a \emph{full dimensional} polytope when $\Span(P) = \R^n$. Points in $\Z^n$ are said to be \emph{lattice points}. When the vertices of $P$ are lattice points we say that $P$ is a \emph{lattice polytope}.  

For each full dimensional lattice polytope $P$, we can associate an integer number, $\deg(P)$, called the \emph{degree} of $P$ that encodes structural properties of the polytope. A purely combinatorial problem is to classify lattice polytopes with a given degree $d$. For instance, polytopes with degree zero are precisely unimodular simplices. A non-conventional way to approach this subject is by using Toric Geometry. As with fans, one can obtain toric varieties from polytopes. From a given full dimensional lattice polytope $P \subset \R^n$, one can construct a projective toric variety $X_P$ together with an ample divisor $L_P$ on it. The fan of $X_P$ is called the \emph{normal fan} of $P$. We provide a classification theorem for polytopes with low degree, by reinterpreting $\deg(P)$ as a birational invariant of the polarized toric variety $(X_P, L_P)$. We use  techniques of birational geometry, such as Adjunction Theory and the MMP, to classify the variety $X_P$. Then we recover a description of the combinatorics of the polytope $P$.

Another problem that we treat in this thesis concerns finding generators for special cones of cycles in toric varieties. As an application, we describe the cone of pseudo-effective divisors of Losev-Manin moduli spaces

In the following sections we will give more details about the problems treated in this thesis.  

\section{Polytopes with small degree}

The \emph{degree} of a full dimensional lattice polytope
$P \subset \mathbb{R}^n$ is the smallest non-negative integer $d$ such that $kP$ contains no 
interior lattice points for $1\leq k\leq n-d$. The \emph{codegree} of $P$ is defined as $\codeg(P)=n+1-d$.

Lattice polytopes with small degree are very special.
It is not difficult to see that lattice polytopes with degree $d= 0$ are precisely
unimodular  simplices (\cite[Proposition 1.4]{baty}).
In \cite[Theorem 2.5]{baty}, Batyrev and Nill  classified lattice polytopes
with degree $d= 1$.
They all belong to a special class of lattice polytopes, called \emph{Cayley polytopes}.
A Cayley polytope is a lattice polytope affinely isomorphic to
$$
P_0 *...* P_k \ := \ \Conv \big(P_0 \times \{\bar{0}\}, P_1 \times \{e_1\},...,P_k \times \{e_k\}\big) \subset \mathbb{R}^{m}\times \mathbb{R}^{k},
$$
where the $P_i$'s are lattice polytopes in $ \mathbb{R}^m$, and 
$\{e_1,...,e_k\}$ is a basis for $\mathbb{Z}^k$.
Batyrev and Nill also posed the following problem: to find a function $N(d)$ such that every lattice polytope of degree $d$ and dimension $n > N(d)$ is a Cayley polytope.
In \cite[Theorem 1.2]{hnp}, Hasse, Nill and Payne solved this problem with the quadratic polynomial 
$N(d) = (d^2+19d-4)/2$. 
It was conjectured in \cite[Conjecture 1.2]{alicia2} that one can take $N(d)=2d$. 
This would be a sharp bound.
Indeed, let $ \Delta_n$ denote the standard  $n$-dimensional unimodular  simplex.
If $n$ is even, then 
$2 \Delta_n$ has degree $d=\frac{n}{2}$, but is not a Cayley polytope.

While the methods of \cite{baty} and \cite{hnp} are purely combinatorial,  
Hasse, Nill and Payne pointed out that these results can be interpreted 
in terms of Adjunction Theory on toric varieties.
This point of view was then explored by Dickenstein, Di Rocco and Piene in \cite{alicia} 
to study \emph{smooth} lattice polytopes with small degree. 
Recall that an $n$-dimensional lattice polytope $P$ is smooth if
there are exactly $n$ facets incident to each vertex of $P$,
and the primitive inner normal vectors of these facets form a basis 
for  $\mathbb{Z}^n$.
This condition is equivalent to saying that the toric variety associated to $P$ is smooth.
One has the following classification of smooth $n$-dimensional  
lattice polytopes $P$ with degree $d<\frac{n}{2}$ (or, equivalently, $\codeg(P) \geq \frac{n+3}{2}$). \\

\noindent \textbf{Theorem} (\cite[Theorem 1.12]{alicia}  and \cite[Theorem 1.6]{alicia2}) Let $P \subset \mathbb{R}^n$ be a smooth $n$-dimensional lattice polytope. Then $\codeg(P) \geq \frac{n+3}{2}$ if and only if $P$ is 
affinely isomorphic to a Cayley polytope $P_0 *...* P_k$, where all the $P_i$'s 
have the same normal fan, and $k = \codeg(P)-1 > \frac{n}{2}$. \\

This theorem was first proved in \cite{alicia} under the additional assumption that 
$P$ is a \emph{$\mathbb{Q}$-normal} polytope. (See Definition~\ref{defn_qnormal} for the notion of
$\mathbb{Q}$-normality.) 
Then, using combinatorial methods, Dickenstein and Nill showed in  \cite{alicia2}
that  the inequality $\codeg(P) \geq \frac{n+3}{2}$  implies that $P$ is $\mathbb{Q}$-normal.

In this thesis we go one step further. 
We address smooth $n$-dimensional lattice polytopes $P$  of degree 
$d< \frac{n}{2}+1$ (or, equivalently, $\codeg(P) \geq \frac{n+1}{2}$).
Not all such polytopes are Cayley polytopes.
In order to state our classification result, we need the following generalization
of Cayley polytope concept, introduced in \cite{alicia}.\\

\noindent \textbf{Definition.}
Let $P_0, \dots, P_k \subset \mathbb{R}^m$  be $m$-dimensional lattice polytopes, 
and $s$ a positive integer.   Set
$$
[P_0 *...* P_k]^s \ := \ \Conv \big(P_0 \times \{\bar{0}\}, P_1 \times \{se_1\},...,P_k \times \{se_k\}\big) \subset \mathbb{R}^{m}\times \mathbb{R}^{k},
$$
where $\{e_1,...,e_k\}$ is a basis for $\mathbb{Z}^k$.
We say that a lattice polytope $P$ is an $s^{th}$  \emph{order generalized Cayley polytope} 
if $P$ is affinely isomorphic to a polytope $[P_0 *...* P_k]^s$ as above.
If all the $P_i$'s  have the same normal fan, we write 
$P = \Cayley^s(P_0, \ldots ,P_k)$, and say that $P$ is \emph{strict}.\\

The following is our main result:\\

\noindent \textbf{Theorem \ref{mainthm}.} Let $P \subset \mathbb{R}^n$ be a smooth $n$-dimensional $\mathbb{Q}$-normal lattice polytope. Then $\codeg(P) \geq \frac{n+1}{2}$ if and only if $P$ is affinely isomorphic to one of the following polytopes:
\begin{enumerate}
\item[{\rm (i)}] $2\Delta_n$;
\item[{\rm (ii)}] $3\Delta_3 \ (n=3)$;
\item[{\rm (iii)}] $s\Delta_1$, $s\geq 1 \ (n=1)$;
\item[{\rm (iv)}] $\Cayley^1(P_0, \ldots, P_k)$, where $k=\codeg(P)-1 \geq \frac{n-1}{2}$;
\item[{\rm (v)}] $\Cayley^2(d_0\Delta_1, d_1\Delta_1, \ldots, d_{n-1}\Delta_1)$, where $n \geq 3$ is odd and the $d_i$'s are congruent modulo 2.
\end{enumerate}

\noindent \textbf{Corollary \ref{bound_tobe_ cayley}.} Let $P \subset \mathbb{R}^n$ be a smooth $n$-dimensional $\mathbb{Q}$-normal lattice polytope. If $\codeg(P) \geq \frac{n+1}{2}$, then $P$ is a strict generalized Cayley polytope.
\\

In Example \ref{contra}, we describe a smooth $n$-dimensional lattice polytope 
$P \subset \mathbb{R}^n$ with 
$\codeg(P) = \frac{n+1}{2}$ which is not a generalized Cayley polytope. 
So we cannot drop the assumption of $\mathbb{Q}$-normality in the corollary above.

Our proof of Theorem \ref{mainthm} follows the strategy of \cite{alicia}: we
interpret the degree of $P$ as a geometric invariant of the corresponding polarized variety $(X_P,L_P)$,
and then apply techniques of Birational Geometry, such as Adjunction Theory and the MMP. 
This approach naturally leads to introducing more refined invariants of polytopes,
which are the polytope counterparts of  important invariants of polarized varieties.
In particular, we consider the \emph{$\mathbb{Q}$-codegree}  $\codeg_{\mathbb{Q}}(P)$
of  $P$ (see Definition~\ref{defn_qnormal}).
This is a rational number that carries information about the birational geometry of $(X,L)$,
and satisfies $ \lceil \codeg_{\mathbb{Q}}(P) \rceil \leq \codeg(P)$. Equality holds if $P$ is $\Q$-normal.

The following is the polytope version of a conjecture by Beltrametti and Sommese 
(see \cite[7.18]{beltrametti}).
\\

\noindent \textbf{Conjecture.} Let $P\subset \mathbb{R}^n$ be a smooth $n$-dimensional lattice polytope. Then $P$ is $\mathbb{Q}$-normal if $\codeg_{\mathbb{Q}}(P) > \frac{n+1}{2}$.
\\

If this conjecture holds, then Theorem \ref{mainthm} implies that smooth polytopes $P$ with 
$\codeg_{\mathbb{Q}}(P) > \frac{n+1}{2}$ are those in (iv) with $k \geq \frac{n}{2}$. 
By Proposition \ref{prop1}, these polytopes have $\mathbb{Q}$-codegree $\geq \frac{n+2}{2}$. 
Hence, if the conjecture holds, then the $\mathbb{Q}$-codegree of smooth lattice polytopes
does not assume values in the interval $\left( \frac{n+1}{2}, \frac{n+2}{2} \right)$.

\section{Cones of Cycles on Toric Varieties}

Let $X$ be a smooth complex projective variety of dimension $n$. A \emph{k-cycle} on $X$ is a finite formal sum $\sum a_V V$, where each $V$ is a $k$-dimensional proper subvariety of $X$ and $a_V \in \Z$. We denote the free group of $k$-cycles by $A_k(X)$. For every $k=0, 1, ..., n$, there exists a well defined intersection product between cycles on $X$:
$$
\cdot \ : A_k(X) \times A_{n-k}(X)  \to \Z,
$$                         
with the property that if $V \in A_k(X)$ and $W \in A_{n-k}(X)$ are proper subvarieties intersecting transversally, then $V \cdot W = \#(V \cap W)$.
Two $k$-cycles $\alpha$ e $\alpha'$ on $X$ are \emph{numerically equivalent} if $\alpha \cdot \beta = \alpha' \cdot \beta$ for every $\beta \in A_{n-k}(X)$. Numerical equivalence is an equivalence relation in $A_k(X)$. We denote by $[\alpha]$ the numerical class of a cycle $\alpha$. Let $N_k(X)$ be the vector space of $\R$-linear combinations of classes of $k$-cycles modulo numerical equivalence. It is well known that these vector spaces have finite dimension. For every $k =0, 1, ..., n$ we define the cones:
$$
\overline{NE}_k(X) := \overline{\Cone \Big([V] \ \big| \  V \subset X \ \mbox{is a proper subvariety of dimension} \ k \Big)} \subset N_k(X),
$$ 
$$ Nef^k(X) := \overline{NE}_k(X)^{\vee} \subset N_{n-k}(X).     
$$
The cone $\overline{NE}_k(X)$ is called the \emph{cone of pseudo-effective k-cycles} of $X$.
Cones of cycles play an important role in Algebraic Geometry. For instance, Kleiman proved in \cite{kleiman} that a Cartier divisor on $X$ is ample if and only if its numerical class $[D]$ belongs to the interior of the cone $Nef^1(X)$. Moreover, the steps of the Minimal Model Program are related to certain extremal rays of the cone $\overline{NE}_1(X)$.

In \cite{bdpp}, Boucksom, Demailly, Paun and Peternell established the following interpretation for the cone $Nef^{n-1}(X)$: this is equal to the closure of the cone in $N_1(X)$ generated by the classes of movable curves on $X$ (i.e., curves moving in an irreducible family that sweeps out $X$). In \cite{araujo}, Araujo observed that under certain assumptions, $Nef^{n-1}(X)$ is polyhedral and that each extremal ray of this cone is related to certain outcome of an MMP for $X$. Such outcomes are varieties $X'$ with a special fibration structure $f: X' \to Y$ called \emph{Mori fiber space} (see Section \ref{log} for details).

When $X$ is a toric variety, much of this theory can be related to the combinatorics of its fan $\Sigma_X$. Let $\Sigma_X(1)$ be the set of primitive lattice vectors generating the $1$-dimensional cones of $\Sigma_X$. The vector space $N_1(X)$ can be identified with the vector space of linear relations among the vectors in $\Sigma_X(1)$. A relation:
$$
a_0v_0 + ... +a_kv_k = 0, \ \ \ \ v_i \in \Sigma_X(1) \ \ \forall i,
$$
is called \emph{minimal} if each $a_i$ is a positive rational number, and the vector space generated by the $v_i's$ has dimension $k$. Victor Batyrev conjectured that the minimal relations are exactly the generators of the extremal rays of the cone $Nef^{n-1}(X)$. We prove this conjecture with the following theorem:\\

\noindent \textbf{Theorem \ref{main_theorem}.} A linear relation among vectors in $\Sigma_X(1)$ generates an extremal ray of $Nef^{n-1}(X)$ if and only if it is minimal. Moreover, each minimal relation of primitive lattice vectors in $\Sigma(1)$ determines the fan of a general fiber of a Mori fiber space obtained by an MMP for $X$. \\

For toric varieties, the cone of pseudo-effective divisors $\overline{NE}_{n-1}(X)$ is polyhedral and is generated by numerical classes of $T$-invariant divisors on $X$. Each such divisor $D_{\tau}$ is uniquely associated to a $1$-dimensional cone of $\Sigma_X$ generated by a primitive lattice vector $v_{\tau}$. The theorem above allows us to establish a simple combinatorial criterion to decide when $D_{\tau}$ determines an extremal ray of $\overline{NE}_{n-1}(X)$.\\

\noindent \textbf{Corollary \ref{criterion_eff}.} Write $\rho(X) = \dim (N_1(X))$. A class $[D_{\tau}]$ generates an extremal ray of $\overline{NE}_{n-1}(X)$ if and only if there exist $\rho(X)-1$ linearly independent minimal relations, such that $v_{\tau}$ does not appear in any of these relations.\\

The corollary above will be useful to describe the cone of pseudo-effective divisors of Losev-Manin moduli spaces $\bar{L}_{n+1}$. These spaces were introduced in \cite{lm00} and provide a compactification of the moduli spaces $\mathcal{M}_{0,n+3}$ of smooth rational curves with $n+3$ marked points. They parametrize chains of lines with $n+1$ marked points satisfying certain properties (see Section \ref{losev_manin} for a precise description). One can show that $\bar{L}_{n+1}$ is an $n$-dimensional smooth projective toric variety. We prove the following result:\\

\noindent \textbf{Proposition \ref{losev_cone}.} The cone $\overline{NE}_{n-1}(\bar{L}_{n+1})$ is polyhedral and has $2^{n+1} -2$ extremal rays. This number is exactly the quantity of $T$-invariant prime divisors on $\bar{L}_{n+1}$. \\

Classically the study of cones of cycles on a projective variety was concerned only with cycles of dimension and codimension one. Little is known about cones with arbitrary dimension. Recently, these cones have started to come into focus. For instance, in \cite{voisin} it was established some results about the cones $\overline{NE}_k(X)$ when $X$ is an Abelian variety. In \cite{eide}, the cone $\overline{NE}_2(X)$ was used to classify toric 2-Fano manifolds of dimension 4.

In the toric case, Reid proved in \cite{reid} that the cone of curves $\overline{NE}_1(X)$ is a polyhedral cone. In the paper \cite[Problem 6.9]{voisin} the authors say that ``presumably'' the other cones $\overline{NE}_k(X)$ are also polyhedral. In fact, we prove this assertion showing that every $k$-dimensional subvariety $V \subset X$ is numerically equivalent to a linear combination, with nonnegative coefficients, of $T$-invariant subvarieties of $X$. It would be interesting, as in the case $k=n-1$, to have a criterion to decide when a $T$-invariant proper subvariety of $X$ determines an extremal ray of $\overline{NE}_k(X)$.

\section{Organization of the thesis}

This thesis is structured as follows: In Chapter 2 we give an overview of the general theory of toric varieties, Adjunction theory and the Minimal Model Program. 

In Chapter 3 we discuss certain invariants of lattice polytopes, relating them to algebro-geometric invariants of polarized varieties. We also describe properties of toric $\mathbb{P}^k$-bundles and their relationship with Cayley Polytopes.

In Chapter 4 we discuss about Generalized Cayley Polytopes and we prove a classification theorem for smooth lattice polytopes that extends the main result of \cite{alicia}.

In Chapter 5 we prove that the cone of pseudo-effective $k$-cycles on a smooth complete toric variety $X$ is polyhedral. We also prove a conjecture by Victor Batyrev, characterizing extremal rays of the cone of moving curves of a $\Q$-factorial projective toric variety. As a consequence, we give a criterion to decide which numerical classes of $T$-invariant prime divisors generate extremal rays of the cone of pseudo-effective divisors. In this chapter we also describe the cone of pseudo-effective divisors on Losev-Manin moduli spaces (this last part comes from a joint work with Edilaine Nobili).

%
%
%
%

\chapter{Preliminaries}

\section{Basic properties of toric Varieties}

The object of our work consists in a special class of algebraic varieties, known as toric varieties. These varieties are equipped with a very special combinatorics, based on properties of suitable collections of cones contained in the $n$-dimensional Euclidean space $\R^n$. Exploring this combinatorics, one can determine many geometric properties of the variety (e.g. smoothness, projectivity , completeness) as well as compute with reasonable ease its Picard group and intersection products of cycles of any dimension. In this sense, toric varieties are excellent ``prototypes'' to check the validity of conjectures stated in a general scope.

\vspace{0,3 cm}

\noindent \textbf{Notation:} Throughout this section, $N \simeq \Z^n$ will be a lattice of rank $n$ and $M := N^{\vee} = \Hom_{\Z}(N, \Z)$ the dual lattice of $N$. We denote by $N_{\R}$ the real vector space $N \otimes_{\Z} \R$ associated to $N$ and by $M_{\R}$ the dual of $N_{\R}$. 

\subsection{Construction of toric varieties}

The elementary results of the theory of toric varieties will be merely stated. Proofs can be found in \cite{cox} or \cite{fulton}. Throughout these notes, an algebraic variety will be an irreducible, separated scheme of finite type over $\C$.

\begin{de} An $n$-dimensional normal algebraic variety $X$ is called a \emph{toric variety} if $X$ contains an $n$-dimensional algebraic torus $T \simeq (\C^*)^n$ as a Zariski open subset such that the natural action of $T$ on itself extends to an action $\varphi: T \times X \to X$.
\end{de}

\begin{ex} The $n$-dimensional projective space $\mathbb{P}^n$ is the simplest example of projective toric variety. The torus $T \subset \mathbb{P}^n$ consist in the points $(1: t_1 : ... : t_n)$ with $t_i \neq 0, \ \forall i$. The action $\varphi: T \times \mathbb{P}^n \to \mathbb{P}^n$, $\varphi \big ( (1: t_1 : ... : t_n), (x_0 : ... : x_n) \big ) = (x_0 : t_1x_1 : ... : t_nx_n)$ extends the natural action of $T$ on itself.
\end{ex}

The coordinate ring of an $n$-dimensional algebraic torus $T$ is isomorphic to the $\C$-algebra $\C[x_1^{\pm 1}, ..., x_n^{\pm 1}]$, hence the function field of any $n$-dimensional toric variety is the field $\C(x_1, ..., x_n)$.\\

Now, we describe a combinatorial way of constructing toric varieties. There is an algebraic torus $T = T_N$ naturally associated to the lattice $N$, namely, the torus $N \otimes_{\Z} \C^*$. For each $u = (u_1, ..., u_n) \in M$, the \emph{character} of $T_N$ associated to $u$ is the function $\chi^u: T_N \to \C^*$, $\chi(t_1, ..., t_n) = t_1^{u_1}, ..., t_n^{u_n}$. Given $u, u' \in M$, we have $\chi^u \cdot \chi^{u'} = \chi^{u+u'}$. Note that the coordinate ring of $T_N$ is the $\C$-algebra generated by all the characters of $T_N$.\\

A {\it cone} in the vector space $N_{\R}$ is a set 
$$ 
\sigma = \Cone(S) := \Big\{ \displaystyle \sum_{v_i \in S}a_iv_i \ \Big| \ a_i \in \R_+ \Big\}.
$$
We say that $S$ is a set of \emph{generators} of the cone $\sigma$.  The {\it dimension} $\dim(\sigma)$ of a cone $\sigma$ is the dimension of the smallest vector subspace of $N_{\R}$ containing $\sigma$. A subcone $\tau$ of $\sigma$ is called a \emph{face} of $\sigma$, and we write $\tau \prec \sigma$, if given any $u,v \in \sigma$ with $u+v \in \tau$ we have $u,v \in \tau$. When $\dim(\tau) =1$, $\tau$ is called an {\it extremal ray} of $\sigma$.  
 
Given a cone $\sigma$ in $N_{\R}$, its {\it dual cone} is defined by:
$$\sigma^{\vee} := \{ u \in M_{\R} \ \Big| \ \langle u, v \rangle \geq 0, \ \forall v \in \sigma \}.$$
The set $\sigma^{\vee}$ is a cone in $M_{\R}$ such that $(\sigma^{\vee})^{\vee} = \bar{\sigma}$, where $\bar{\sigma}$ is the closure of $\sigma$ in $M_{\R}=\R^n$. If $\tau \subset \sigma$ then $\sigma^{\vee} \subset \tau^{\vee}$. 

When $S$ is a finite set we say that $\sigma$ is a {\it polyhedral cone}. In addition if $S \subset N$, $\sigma$ is called \emph{rational}. The faces of a polyhedral cone $\sigma$ are polyhedral cones again and can be described as  intersection of $\sigma$ with some hyperplanes. In fact, every dual vector $u \in M_{\R}$ defines in $N_{\R}$ a hyperplane $H_u = \{v \in N_{\R} \ : \ \langle u, v \rangle = 0\}$ and a positive half-space $H_u^+ = \{v \in N_{\R} \ / \ \langle u, v \rangle  \geq 0\}$. Every face $\tau \prec \sigma$ is obtained as a intersection $\tau = \sigma \cap H_u$ for some $u \in M_{\R}$ such that $\sigma \subset H_u^+$.

When $\sigma \subset N_{\R}$ is a polyhedral cone, there exists an one-to-one inclusion-reversing correspondence: 
\begin{align*}
\big\{\mbox{faces \ of} \ \sigma & \big\}  \longleftrightarrow \big\{\mbox{faces \ of} \ \sigma^{\vee} \big\}  \notag \\
& \tau  \longmapsto  \tau^* := \sigma^{\vee} \cap \tau^{\perp}, 
\end{align*}
where $\tau^{\perp} = \{u \in M_{\R} \ | \ \langle u, v \rangle = 0, \ \forall v \in \tau \}$. We have $\dim(\tau^*) = \dim(\sigma^{\vee}) - \dim(\tau)$.

\begin{rem} Let $\sigma$ be a cone of dimension $n$ in $N_{\R}$. It follows from the correspondence between faces of $\sigma$ and $\sigma^{\vee}$ that if $v$ is a nonzero vector in $\sigma$ and if there exists a linearly independent set $\{u_1,...,u_{n-1}\} \subset \sigma^{\vee}$ such that $\langle u_i, v \rangle = 0 \ \forall i$, then $\R_+\cdot v$ is an extremal ray of $\sigma$. In fact, every non-zero vector $v \in \sigma$ defines a proper face $\tau'$ of $\sigma^{\vee}$ given by the intersection:
$$
\tau' = \sigma^{\vee} \cap \{u \in M_{\R} \ | \ \langle u, v \rangle =0 \}.
$$ 
If $\tau'$ contains a linearly independent set with $n-1$ vectors then $\dim(\tau') = n-1$. Using the one-to-one correspondence, we conclude that $\tau'$ determines a $1$-dimensional face $\tau \prec \sigma$ containing $v$, therefore $\tau = \R_+v$, as required. 

\label{lin_equiv}
\end{rem}       

\begin{de} A {\it fan} $\Sigma$ in $N_{\R}$ is a finite collection of rational polyhedral cones in $N_{\R}$ satisfying:
\begin{enumerate}
\item[(1)] $\sigma \in \Sigma$, $\tau \prec \sigma \Rightarrow \tau \in \Sigma$;
\item[(2)] $\sigma_1, \sigma_2 \in \sigma \Rightarrow \sigma_1 \cap \sigma_2 \in \Sigma$;
\item[(3)] $\sigma \cap (-\sigma) = {0}, \ \forall \sigma \in \Sigma.$
\end{enumerate}

\label{fan}
\end{de}

Note that given a polyhedral cone $\sigma$, the intersection $U = \sigma \cap (-\sigma)$ is a vector subspace of $N_{\R}$. Therefore, the condition (3) above says that the cones of the fan do not contain lines. Cones satisfying (3) are called \emph{strongly convex}.

Given $m \in \{0, 1, ..., n\}$, we denote by $\Sigma(m)$ the collection of $m$-dimensional cones of $\Sigma$. Cones of $\Sigma$ with dimension $n$ are called \emph{maximal}. We say that a cone $\tau \in \Sigma(n-1)$ is a \emph{wall} when $\tau$ is face of two maximal cones of $\Sigma$. The \emph{support} of a fan $\Sigma$, denoted by $|\Sigma|$, is the subset of $N_{\R}$ obtained by the union of all the cones $\sigma \in \Sigma$. When $|\Sigma| = N_{\R}$ we say that $\Sigma$ is \emph{complete}.\\

A finite collection of rational polyhedral cones $\Sigma^*$ in $N_\R$ is called a \emph{degenerated fan} when it satisfies conditions (1) and (2) of a fan and the following condition: 
\begin{enumerate}
\item[(3')] There is a vector subspace $U \neq  \{0\}$ of $N_{\R}$ such that $U = \sigma \cap (-\sigma), \ \forall \sigma \in \Sigma^*$.
\end{enumerate}

By (1), the vector subspace $U$ belongs to $\Sigma^*$ and is a face of every cone $\sigma \in \Sigma^*$. We say that $U$ is the \emph{vertex} of $\Sigma^*$. If $\Sigma^*$ is a degenerated fan in $N_{\R}$ and $g: N_{\R} \to N_{\R}/U$ is the natural projection, we can construct a fan $\Sigma$ in $N_{\R}/U$, setting $\Sigma = \{g(\sigma) \ / \ \sigma \in \Sigma^* \}$. Here, we set $N/(N \cap U)$ to be the lattice corresponding to $\Sigma$ in $N_{\R}/U$. \\

From now on, by simplicity, a \emph{cone} will be a strongly convex rational polyhedral cone.\\

Given a semigroup $S$ of the lattice $M$, we denote by $\C[S]$ the $\C$-algebra generated by the characters $\chi^u$, $u \in S$. If $\sigma$ is a rational polyhedral cone in $N_{\R}$, let $S_{\sigma} := \sigma^{\vee} \cap M$. 

\begin{prop}[Gordan's Lemma] $S_{\sigma}$ is a finitely generated semigroup of $M$. In particular, the $\C$-algebra $\C[S_{\sigma}]$ is finitely generated and hence defines an affine toric variety $\mathcal{U}_{\sigma} = \Spec \C[S_{\sigma}]$ with torus $T_{N} = \mathcal{U}_{\{0\}}$.

\label{gordan}
\end{prop}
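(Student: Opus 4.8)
The plan is to produce an explicit finite generating set for the semigroup $S_\sigma = \sigma^\vee \cap M$ and then to read off the algebraic statements from it. The first step is to observe that, since $\sigma$ is a rational polyhedral cone, its dual $\sigma^\vee$ is again a rational polyhedral cone; picking a finite set of generators of $\sigma^\vee$ with rational coordinates and clearing denominators, I may assume $\sigma^\vee = \Cone(u_1,\dots,u_s)$ with every $u_i \in M$.

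Next I would run the classical ``fundamental parallelepiped'' argument. Set
$$
K := \Big\{\ \sum_{i=1}^{s} t_i u_i \ \Big|\ 0 \le t_i \le 1 \ \Big\} \subset M_{\R}.
$$
Since $K$ is bounded and $M$ is discrete, $K \cap M$ is finite, and I claim it generates $S_\sigma$ as a semigroup. Given $u \in \sigma^\vee \cap M$, write $u = \sum t_i u_i$ with $t_i \ge 0$ and split $t_i = \lfloor t_i \rfloor + \{t_i\}$ into integer and fractional parts. Then $u = \sum \lfloor t_i \rfloor u_i + \sum \{t_i\} u_i$: the first summand is a non-negative integral combination of the $u_i$, each of which lies in $K \cap M$, while the second summand equals $u - \sum \lfloor t_i \rfloor u_i$, hence lies in $M$, and lies in $K$ by construction, so it belongs to $K \cap M$ as well. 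Thus $u$ is a non-negative integral combination of elements of the finite set $K \cap M$, and $S_\sigma$ is finitely generated.

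The ``in particular'' assertions would then follow formally. If $w_1,\dots,w_r$ generate $S_\sigma$, then $\C[S_\sigma]$ is generated as a $\C$-algebra by $\chi^{w_1},\dots,\chi^{w_r}$, so $\mathcal{U}_\sigma = \Spec\C[S_\sigma]$ is an affine scheme of finite type over $\C$; since $S_\sigma \subset M$, the ring $\C[S_\sigma]$ is a subalgebra of the Laurent polynomial ring $\C[M] \cong \C[x_1^{\pm 1},\dots,x_n^{\pm 1}]$, an integral domain, hence $\C[S_\sigma]$ is a domain and $\mathcal{U}_\sigma$ is a variety. For $\sigma = \{0\}$ one has $S_{\{0\}} = M_{\R}\cap M = M$, whence $\mathcal{U}_{\{0\}} = \Spec\C[M] = T_N$. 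When $\sigma$ is strongly convex, $\sigma^\vee$ spans $M_{\R}$, so $S_\sigma$ generates $M$ as a group; inverting the finitely many $\chi^{w_j}$ turns $\C[S_\sigma]$ into $\C[M]$, exhibiting $T_N$ as a principal open subset of $\mathcal{U}_\sigma$, dense by irreducibility, and the comultiplication $\chi^u \mapsto \chi^u \otimes \chi^u$ on $\C[M]$ restricts to a coaction $\C[S_\sigma] \to \C[M]\otimes_{\C}\C[S_\sigma]$, giving the extension of the torus action.

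The step I expect to carry the real content is the one hidden at the very start: that the dual of a rational polyhedral cone is again rational polyhedral, so that finitely many lattice generators $u_i$ of $\sigma^\vee$ exist in the first place. This is a form of the Minkowski--Weyl (Farkas) duality for polyhedral cones and is the only genuinely non-formal input; the parallelepiped argument and the algebraic bookkeeping are then routine. I would also remark that, to match the definition of toric variety used here, one should in addition check that $\mathcal{U}_\sigma$ is normal, which follows from the saturation of $S_\sigma$ in $M$ (if $m u \in S_\sigma$ for some integer $m \ge 1$ then $u \in S_\sigma$, since $\sigma^\vee$ is a cone), via the standard criterion for integral closedness of affine semigroup algebras.
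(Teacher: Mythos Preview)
The paper does not supply a proof of this proposition: as stated at the beginning of Section~2.1, elementary results on toric varieties are ``merely stated'' with proofs deferred to \cite{cox} and \cite{fulton}. Your argument is the standard proof found in those references (the fundamental parallelepiped argument), and it is correct; the additional remarks on normality via saturation and on the torus action are sound and go a bit beyond what the bare statement asks for.
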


It is not difficult to see that the $\dim(\mathcal{U}_{\sigma}) = n$ if and only if $\sigma$ is strongly convex (see for instance \cite[1.2.18]{cox}).\\

Let $\Sigma$ be a fan in $N_{\R}$. There exists, up to isomorphism, a unique toric variety corresponding to $\Sigma$ that we will denote by $X_{\Sigma, N}$ or simply by $X_{\Sigma}$. This variety is obtained by gluing together the affine varieties $\mathcal{U}_{\sigma}$, $ \sigma \in \Sigma$, along the subvarieties $\mathcal{U}_{\tau} \subset \mathcal{U}_{\sigma}$ for $\tau \prec \sigma$ (see \cite[3.1.5]{cox} for details of the construction). It is also possible to construct a toric variety from a degenerated fan $\Sigma^*$ with vertex $U$, setting it to be the toric variety corresponding to the fan obtained projecting $\Sigma^*$ in $N_{\R}/U$. It is a well-known result, attributed to Sumihiro, that for every $n$-dimensional toric variety $X$, there exists a fan $\Sigma$ in $N_{\R}$ such that $X \simeq X_{\Sigma}$ (see for instance \cite[3.1.8]{cox}).

\begin{ex} In this example we describe the fan $\Sigma$ of $\mathbb{P}^n$. Let $\{e_1, ..., e_n\}$ be the canonical basis of $N$ and $e_0:= -e_1 - ... - e_n$. For each $j = 0, 1, ..., n$, we define:
$$
\sigma_j := \Cone \big(e_0, ..., \hat{e}_j, ..., e_n \big)
$$
The cones of $\Sigma$ are the cones $\sigma_j$, $j=0, ..., n$ and their faces. The open subsets $\mathcal{U}_{\sigma_j}$ are precisely the basic open subsets $\{(x_0: ...: x_n) \ | \ x_j \neq 0\}$ of $\mathbb{P}^n$. 
\begin{figure}[h]
\centering
\includegraphics[scale=0.3]{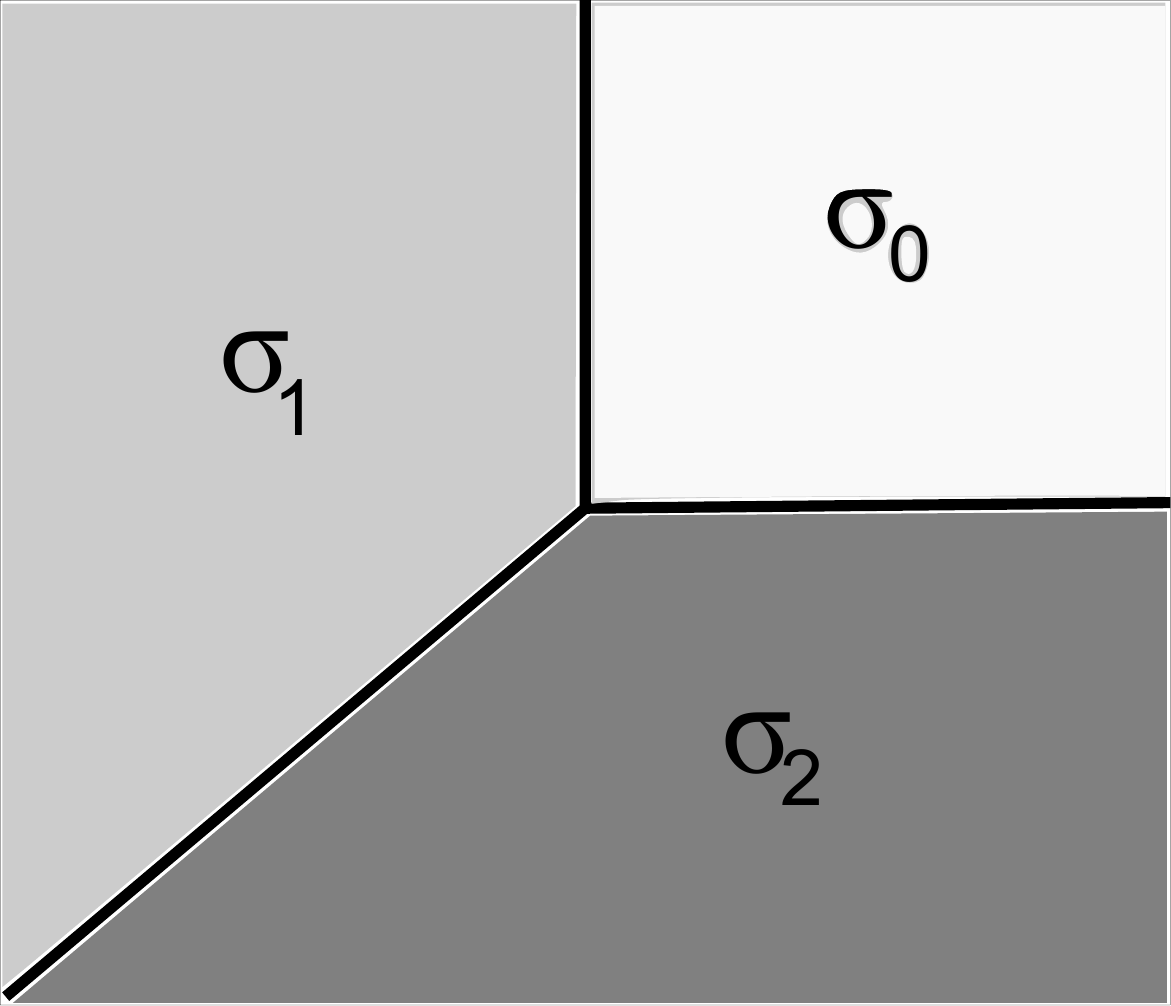}
\caption{The fan for $\mathbb{P}^2$.}
\end{figure}
\end{ex} 

It is a classical result of Algebraic Geometry that there exists an one-to-one correspondence between the closed points of an affine algebraic variety and the maximal ideals of its coordinate ring. In the toric context there is another special one-to-one correspondence between points of a toric variety and certain homomorphisms of semigroups. In fact, if $\sigma \subset N_{\R}$ is a cone we have a bijection:

\begin{center}
     $\Big\lbrace$ points of $\mathcal{U}_{\sigma} \Big\rbrace \longleftrightarrow \Big\lbrace $ homomorphisms of semigroups $S_{\sigma} \to \C \Big\rbrace.$
     
\end{center}

Given a point $p \in \mathcal{U}_{\sigma}$, we define a homomorphism of semigroups $f_p: S_{\sigma} \to \C$ setting $f_p(u) = \chi^u(p)$. Conversely, each homomorphism of semigroups $f: S_{\sigma} \to \C$ induces a natural epimorphism of rings $\tilde{f}: \C[S_{\sigma}] \to \C$, with $\tilde{f}(\chi^u) = f(u)$ for all $u \in S_{\sigma}$. Hence, this epimorphism determines a maximal ideal of $\C[S_{\sigma}]$ and therefore a point $p \in \mathcal{U}_{\sigma}$. It is not difficult to see that these constructions are inverse to each other. Thus, for each cone $\sigma \in \Sigma$, the following homomorphism of semigroups determines a point $x_{\sigma}$ in the toric variety $\mathcal{U}_{\sigma}$:
$$
u\in S_{\sigma} \longmapsto
\left\{
\begin{array}{ll}
1 & \mbox{if } u\in S_{\sigma}\cap \sigma^{\perp} \\
0 & \textrm{otherwise}, \\
\end{array}
\right.
$$
where $\sigma^{\perp} := \{u \in M_{\R} \ / \ \langle u, v \rangle = 0 \ \ \forall v \in \sigma \}$.  The point $x_{\sigma}$ is called the \emph{distinguished point} of $\mathcal{U}_{\sigma}$. 

In the following, we state the first notable result about the structure of toric varieties (see for instance \cite[3.2.6]{cox}).

\begin{prop}[Cone-Orbit Correspondence] Let $X_{\Sigma}$ be a toric variety corresponding to a fan $\Sigma$ in $N_{\R}$.

\begin{enumerate}
\item There is a one-one correspondence:
\begin{align}
          \Big\{ cones \ \ \sigma \in \Sigma \Big\} \longleftrightarrow
 \Big\{ T_N \textrm{-orbits in} \ X_{\Sigma} \Big\}
 \notag
 \\
 \sigma \longleftrightarrow O(\sigma):= T_N \cdot x_{\sigma} \notag
  \end{align}
Furthermore, $\dim(O(\sigma)) = n - \dim(\sigma)$.

\item $\mathcal{U}_{\sigma} = \displaystyle \bigcup_{\tau \prec \sigma} O(\tau)$

\item $\tau \prec \sigma$ if and only if $O(\sigma) \subset V(\tau) : = \overline{O(\tau)}$ and
$$
V(\tau) = \displaystyle \bigcup_{\tau \prec \sigma} O(\sigma),
$$
where $\overline{O(\tau)}$ denotes the closure in both the classical and Zariski topologies.

\end{enumerate}

\label{cone_orbit}

\end{prop}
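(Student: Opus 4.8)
The plan is to reduce everything to the affine charts $\mathcal{U}_\sigma$ and then glue, using the description of the points of $\mathcal{U}_\sigma$ as semigroup homomorphisms $S_\sigma\to\C$ (with $\C$ a monoid under multiplication) together with the face correspondence between $\sigma$ and $\sigma^\vee$ recalled above. First I would establish the \emph{affine orbit decomposition}: for a single cone $\sigma$, the points $\gamma\in\mathcal{U}_\sigma$ with prescribed ``support'' $F_\gamma:=\{u\in S_\sigma:\gamma(u)\neq 0\}$ form a single $T_N$-orbit, and $F_\gamma$ is always of the form $S_\sigma\cap\tau^\perp$ for a unique face $\tau\prec\sigma$. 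That $F_\gamma$ is a face of the semigroup $S_\sigma$ is immediate from $\gamma(a+b)=\gamma(a)\gamma(b)$, and since a face $F$ has the property that $a\notin F$, $b\in S_\sigma$ force $a+b\notin F$, extension by zero identifies $\{\gamma:F_\gamma=F\}$ with $\Hom_{\mathrm{sgp}}(F,\C^*)$. By the bijection $\tau\mapsto\tau^*=\sigma^\vee\cap\tau^\perp$ between faces of $\sigma$ and of $\sigma^\vee$, intersecting with $M$ shows the faces of $S_\sigma$ are exactly the sets $S_\sigma\cap\tau^\perp$; since $\tau^*$ is a full-dimensional rational cone in $\tau^\perp$, its lattice points generate $\tau^\perp\cap M$ as a group, so $\Hom_{\mathrm{sgp}}(S_\sigma\cap\tau^\perp,\C^*)=\Hom_\Z(\tau^\perp\cap M,\C^*)$ is a torus of dimension $n-\dim\tau$. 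The action $(t\cdot\gamma)(u)=t(u)\gamma(u)$ of $T_N=\Hom_\Z(M,\C^*)$ preserves supports and acts on this torus through the restriction map $\Hom_\Z(M,\C^*)\to\Hom_\Z(\tau^\perp\cap M,\C^*)$, which is surjective because $\C^*$ is divisible; hence the action on each support-stratum is transitive. As the distinguished point $x_\tau$ has support $S_\sigma\cap\tau^\perp$, this stratum is precisely $O(\tau)$, which yields part~(2) of the statement and the dimension formula $\dim O(\tau)=n-\dim\tau$ locally.

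Next I would globalize part~(1). Recall from the construction of $X_\Sigma$ that $\mathcal{U}_\sigma\cap\mathcal{U}_{\sigma'}=\mathcal{U}_{\sigma\cap\sigma'}$, so for a point $p\in X_\Sigma$ the set of cones $\sigma$ with $p\in\mathcal{U}_\sigma$ is closed under intersection and has a least element $\gamma(p)\in\Sigma$. By the affine case, $p$ lies in a unique orbit $O(\tau)\subset\mathcal{U}_{\gamma(p)}$ with $\tau\prec\gamma(p)$; but then $p\in\mathcal{U}_\tau$, and minimality of $\gamma(p)$ forces $\tau=\gamma(p)$. Thus every point lies in exactly one orbit $O(\gamma(p))$, the cone $\gamma(p)$ is intrinsic to $p$, and $\sigma\mapsto O(\sigma):=T_N\cdot x_\sigma$ is a bijection onto the set of $T_N$-orbits; this proves part~(1), and combined with the local decomposition it gives $\mathcal{U}_\sigma=\bigcup_{\tau\prec\sigma}O(\tau)$, i.e.\ part~(2).

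Finally, for part~(3), working in a chart $\mathcal{U}_\sigma$ with $\tau\prec\sigma$, the Zariski closure of $O(\tau)$ is the closed subscheme $\Spec\C[S_\sigma\cap\tau^\perp]$ cut out by the monomials $\chi^u$ with $u\in S_\sigma\setminus\tau^\perp$, and $\C[S_\sigma\cap\tau^\perp]$ is the coordinate ring of the affine toric variety attached to the lattice $N/(N\cap\Span\tau)$ and the image of $\sigma$ in $N_\R/\Span\tau$. Patching these over all $\sigma\succ\tau$ exhibits $V(\tau)=\overline{O(\tau)}$ as the toric variety of the fan $\Star(\tau)=\{\overline{\sigma}:\tau\prec\sigma\in\Sigma\}$ in $N_\R/\Span\tau$; applying part~(1) to this toric variety, its orbits are exactly the $O(\sigma)$ with $\tau\prec\sigma$, so $V(\tau)=\bigcup_{\tau\prec\sigma}O(\sigma)$. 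Together with disjointness of orbits this gives $O(\sigma)\subseteq V(\tau)\iff O(\sigma)=O(\sigma')$ for some $\sigma'\succ\tau\iff\tau\prec\sigma$, completing part~(3). The step I expect to require the most care is this last one: verifying that the local closures $\Spec\C[S_\sigma\cap\tau^\perp]$ patch consistently and that the image cones $\overline{\sigma}$ actually form a fan (in particular are strongly convex), which is where the way the cones of $\Sigma$ are arranged around $\tau$ enters, together with the gluing identity $\mathcal{U}_\sigma\cap\mathcal{U}_{\sigma'}=\mathcal{U}_{\sigma\cap\sigma'}$, which rests on the separatedness of $X_\Sigma$.
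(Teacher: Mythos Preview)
The paper does not give its own proof of this proposition; it is stated as a basic structural result with a reference to \cite[3.2.6]{cox}. Your outline is correct and is essentially the standard textbook argument one finds in that reference (and in Fulton's book): identify points of $\mathcal{U}_\sigma$ with semigroup homomorphisms $S_\sigma\to\C$, stratify by support, match supports with faces of $\sigma$ via the $\tau\mapsto\sigma^\vee\cap\tau^\perp$ correspondence, check transitivity of the $T_N$-action on each stratum, and then globalize using the gluing $\mathcal{U}_\sigma\cap\mathcal{U}_{\sigma'}=\mathcal{U}_{\sigma\cap\sigma'}$ and the $\Star(\tau)$ description of $V(\tau)$. There is nothing to compare against in the paper itself, and your plan would reproduce the cited proof; the point you flag as delicate---that the image cones $\overline\sigma$ form a genuine fan in $N_\R/\Span(\tau)$ and that the local closures patch---is indeed where one has to be a bit careful, but it goes through exactly as you indicate.
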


We say that an irreducible subvariety $V$ of $X_{\Sigma}$ is \emph{T-invariant} if $t \cdot p \in V, \ \forall t \in T$ and $\forall p \in V$. The unique invariant irreducible subvarieties of $X$ are the closures of the orbits $V(\tau) = \overline{O(\tau)}$, $\tau \in \Sigma$.
Moreover, each $V(\tau)$ is also a toric variety and its fan is given as follows: let $N_{\tau} = \Span(\tau) \cap N$. The lattice $N(\tau) = N/N_{\tau}$ is torsion free, then $N \simeq N_{\tau} \oplus N(\tau)$. For each $\sigma \in \Sigma$, we denote by $\bar{\sigma}$ the image of $\sigma$ in $N(\tau)$ under the projection $N_{\R} \to N(\tau)_{\R}$. The \emph{star} of $\tau$ is the collection:
$$
\Star(\tau) = \Big\{\bar{\sigma} \ \big| \ \tau \prec \sigma \in \Sigma \Big\}.
$$ 
This defines a fan in $N(\tau)_{\R}$ whose associated variety is isomorphic to $V(\tau)$. The torus of $V(\tau)$ is $O(\tau)$ and this orbit is isomorphic to the torus $T_{N(\tau)}$ (see \cite[3.2.5 and 3.2.7]{cox}.

\vspace{0,5 cm}

Next we show how certain combinatorial properties of a fan $\Sigma$ in $N_{\R}$ can determine geometric properties of the toric variety $X_{\Sigma}$ such as completeness and smoothness. A cone $\sigma \in N_{\R}$ is called \emph{simplicial} (respectively \emph{smooth}) if it can be generated by a subset of a basis of $N_{\R}$ (respectively by a basis of $N$). A fan $\Sigma$ is simplicial (respectively smooth) when every cone $\sigma \in \Sigma$ is simplicial (respectively smooth).

We recall that an algebraic variety is called \emph{$\Q$-factorial} when every Weil divisor on $X$ has a positive multiple that is Cartier. We have the following result:

\begin{prop}[{\cite[1.3.12, 3.4.8 and 4.2.7]{cox}} ] Let $\Sigma$ be a fan in $N_{\R}$. The toric variety $X_{\Sigma}$ is complete (respectively smooth, respectively $\Q$-factorial) if and only if $\Sigma$ is complete (respectively smooth, respectively simplicial).

\label{fan_properties}
\end{prop}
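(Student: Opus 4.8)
The statement is three independent equivalences, and it is worth separating them by nature. Smoothness and $\Q$-factoriality are \emph{local} properties: they can be checked on the affine cover $\{\mathcal{U}_\sigma\}_{\sigma\in\Sigma}$, and the matching conditions on $\Sigma$ can be checked cone by cone. Since a face of a smooth (resp. simplicial) cone is again smooth (resp. simplicial), and every cone is a face of a maximal one, it suffices in these two cases to prove the affine statements ``$\mathcal{U}_\sigma$ is smooth $\iff$ $\sigma$ is smooth'' and ``$\mathcal{U}_\sigma$ is $\Q$-factorial $\iff$ $\sigma$ is simplicial''. Completeness, by contrast, is genuinely global, and I expect it to be the main obstacle; I return to it at the end.

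For smoothness I would first reduce to $\sigma$ full-dimensional. If $W=\Span(\sigma)$ has dimension $k<n$, then $N\cap W$ is saturated in $N$, so $N\cong (N\cap W)\oplus N'$, and correspondingly $\mathcal{U}_\sigma\cong \mathcal{U}_{\sigma,\,N\cap W}\times T_{N'}$; since the torus factor is smooth, $\mathcal{U}_\sigma$ is smooth iff $\mathcal{U}_{\sigma,\,N\cap W}$ is, and $\sigma$ is a smooth cone in $N$ iff it is one in $N\cap W$. Assume now $\sigma$ full-dimensional, so $\sigma^\perp=\{0\}$ and the distinguished point $x_\sigma$ is the unique torus-fixed point. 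If $\sigma$ is generated by a basis $e_1,\dots,e_n$ of $N$, then $\sigma^\vee$ is generated by the dual basis, so $S_\sigma=\sigma^\vee\cap M\cong\N^n$, hence $\C[S_\sigma]\cong\C[x_1,\dots,x_n]$ and $\mathcal{U}_\sigma\cong\C^n$ is smooth. Conversely, the maximal ideal $\mathfrak{m}$ of $x_\sigma$ is spanned by $\{\chi^u : u\in S_\sigma\setminus\{0\}\}$, and $\dim_{\C}\mathfrak{m}/\mathfrak{m}^2$ equals the number of \emph{irreducible} elements of $S_\sigma$ (those not a sum of two nonzero elements), which generate $S_\sigma$ and include the primitive generators of the rays of $\sigma^\vee$. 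Smoothness forces $\dim \mathfrak{m}/\mathfrak{m}^2 = n$, so $S_\sigma$ has exactly $n$ irreducible generators; this forces $\sigma^\vee$, hence $\sigma$, to have exactly $n$ rays, so $\sigma$ is simplicial, and then the multiplicity of $\sigma$ (the index in $N$ of the sublattice spanned by its primitive ray generators) must be $1$, since it equals the degree of the finite map onto $\C^n$ coming from a smooth refinement — equivalently, a smooth affine toric variety of dimension $n$ with a torus-fixed point is isomorphic to $\C^n$, which pins its cone down to a smooth one. Hence $\sigma$ is smooth.

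For $\Q$-factoriality I would use that every Weil divisor class on $X_\Sigma$ has a $T$-invariant representative $D=\sum_{\rho\in\Sigma(1)}a_\rho D_\rho$, so $X_\Sigma$ is $\Q$-factorial iff every such $D$ is $\Q$-Cartier. By the description of $T$-Cartier data, $mD$ is Cartier iff for every maximal cone $\sigma$ there is $m_\sigma\in M$ with $\langle m_\sigma, v_\rho\rangle = -m a_\rho$ for all rays $\rho$ of $\sigma$, where $v_\rho$ is the primitive generator of $\rho$. If $\Sigma$ is simplicial, the vectors $\{v_\rho : \rho\prec\sigma,\ \dim\rho=1\}$ are linearly independent for each maximal $\sigma$, so such an $m_\sigma\in M_\Q$ always exists, and clearing denominators makes $mD$ Cartier; thus $X_\Sigma$ is $\Q$-factorial. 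Conversely, if some cone, hence some maximal cone $\sigma$, is not simplicial, its primitive ray generators satisfy a nontrivial relation $\sum b_\rho v_\rho=0$; choosing integers $a_\rho$ with $\sum b_\rho a_\rho\neq 0$ makes the linear system above inconsistent for every $m$, so $D=\sum a_\rho D_\rho$ is not $\Q$-Cartier and $X_\Sigma$ is not $\Q$-factorial.

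Finally, completeness. The key local computation is that for $v\in N$ the one-parameter subgroup $\lambda_v\colon\C^*\to T_N\subset X_\Sigma$ extends to a morphism $\C\to\mathcal{U}_\sigma$, i.e. $\lim_{t\to 0}\lambda_v(t)$ exists in $\mathcal{U}_\sigma$, if and only if $v\in\sigma$ (and then the limit is the distinguished point of the smallest face of $\sigma$ containing $v$); hence $\lim_{t\to 0}\lambda_v(t)$ exists in $X_\Sigma$ iff $v\in|\Sigma|$. If $|\Sigma|\neq N_\R$, pick $v\in N\setminus|\Sigma|$; then $\lambda_v$ has no limit as $t\to 0$, so $X_\Sigma$ is not proper. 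If $\Sigma$ is complete, one proves properness by the valuative criterion: given a DVR $R$ with fraction field $K$ and a map $\Spec K\to X_\Sigma$, one reduces — using density of $T_N$ and that $X_\Sigma$ is covered by the $\mathcal{U}_\sigma$ — to a datum behaving like a cocharacter, and $|\Sigma|=N_\R$ supplies the required extension to $\Spec R$; uniqueness is separatedness of $X_\Sigma$, which holds for any fan. The delicate point, and the part I would treat most carefully, is making precise the reduction ``a $K$-point of $X_\Sigma$ may, after translating by an element of $T_N(K)$, be assumed to arise from a one-parameter subgroup'', for which I would follow the argument in \cite[Chapter 3]{cox}, or alternatively replace it by a direct limit/compactness argument over $\C$.
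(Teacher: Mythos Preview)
The paper does not give its own proof of this proposition: it is stated with a citation to \cite[1.3.12, 3.4.8 and 4.2.7]{cox} and no argument is supplied. So there is nothing in the paper to compare your proof against.

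Your sketch is sound and follows the standard route taken in \cite{cox}. A couple of places could be tightened. In the smoothness converse, once you know the Hilbert basis of $S_\sigma$ has exactly $n$ elements, you can finish more directly: those $n$ elements must include the primitive ray generators of $\sigma^\vee$ (which already number at least $n$ since $\sigma^\vee$ is full-dimensional and strongly convex), so they \emph{are} the ray generators, and since they generate $S_\sigma=\sigma^\vee\cap M$ as a semigroup they form a $\Z$-basis of $M$; hence $\sigma^\vee$, and so $\sigma$, is smooth. This avoids the slightly circular appeal to ``a smooth affine toric variety with a fixed point is $\C^n$''. For completeness, you are right that the nontrivial direction is the reduction of an arbitrary $K$-point to a one-parameter subgroup in the valuative criterion; the argument in \cite[\S3.4]{cox} does exactly this by passing to a finite extension and using that $T_N(K)$ acts transitively, and your outline matches it.
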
 

\begin{de} A non-zero lattice vector $v \in N$ is called \emph{primitive} if for every lattice vector $u$ such that $v = t \cdot u$, $t \in \Z_{\geq 0}$, we have $t=1$ (and hence $v=u$).
\label{primitive_vector}
\end{de}

\begin{de} Let $\sigma= \Cone(v_1, ..., v_m)$ be an $m$-dimensional cone in $N_{\R}$, where the $v_i$ are primitive lattice vectors. The \emph{multiplicity of} $\sigma$ is the index:
$$
\mult(\sigma) := (N_{\sigma}: \Z v_1 + ... + \Z v_n),
$$
where $N_{\sigma} = \Span(\sigma) \cap N$. Equivalently, let $\mathcal{B} = \{e_1, ..., e_m\}$ be any basis of $N_{\sigma}$ and consider the matrix $[v_1, ..., v_m]$ of order $m$, whose $i^{th}$ column is composed by the coefficients of $v_i$ in the basis $\mathcal{B}$. Then:
$$ 
\mult(\sigma) := \big|\det[v_1, ..., v_n]\big|.
$$

\label{multiplicity}
\end{de}

Observe that an $m$-dimensional cone $\sigma \subset N_{\R}$ is smooth if and only if $\mult(\sigma) = 1$.

\subsection{Toric Morphisms and Refinements}

Let $N$ and $N'$ be lattices, $\Sigma$ a fan in $N_{\R}$ and $\Sigma'$ a fan in $N'_{\R}$. Among the morphisms $\phi: X_{\Sigma} \to X_{\Sigma'}$, we give a special attention to those that preserve the toric structure, i.e.:

\begin{itemize}
\item $\phi$ maps $T_N$ into $T_{N'}$ so that $\phi_{|T_N}$ is a group homomorphism;

\item $\phi$ is \emph{equivariant}. This means that $\phi(t \cdot p) = \phi(t) \cdot \phi(p)$, for every $t \in T_N$ and $p \in X_{\Sigma}$.

\end{itemize}

A morphism satisfying the two conditions above is called a \emph{toric morphism}. In what follows we detail how to construct toric morphisms from suitable $\Z$ -linear maps between the lattices $N$ and $N'$.\\

Let $\bar{\phi}: N \to N'$ be a $\Z$-linear map between two lattices $N$ and $N'$. We denote by $\bar{\phi}_{\R}: N_{\R} \to N'_{\R}$ the $\R$-linear transformation obtained tensorizing $\bar{\phi}$ with $\R$. Given fans $\Sigma$ and $\Sigma'$ in $N_{\R}$ and $N'_{\R}$ respectively, we say that $\bar{\phi}$ is \emph{compatible} with $\Sigma$ and $\Sigma'$ if for every cone $\sigma \in \Sigma$, there exists a cone $\sigma' \in \Sigma'$ such that $\bar{\phi}_{\R}(\sigma) \subset \sigma'$. In this case, the map $\bar{\phi}$ naturally induces a toric morphism $\phi: X_{\Sigma} \to X_{\Sigma'}$. Moreover, every toric morphism between two toric varieties is obtained in this way (see for instance \cite[3.3.4]{cox}). 

We have the following useful result about the structure of toric morphisms:

\begin{lemma}[{\cite[3.3.21]{cox}} ] Let $\phi: X_{\Sigma} \to X_{\Sigma'}$ be a toric morphism coming from a $\Z$- linear map $\bar{\phi}: N \to N'$ that is compatible with $\Sigma$ and $\Sigma'$. Given $\sigma \in \Sigma$, let $\sigma' \in \Sigma$ be the minimal cone of $\Sigma'$ that contains $\bar{\phi}_{\R}(\sigma)$. Then $\phi(x_{\sigma}) = x_{\sigma'}$. In particular, $\phi(O(\sigma)) \subset O(\sigma')$ and $\phi(V(\sigma)) \subset V(\sigma')$. Furthermore, the induced morphism $\phi_{|V(\sigma)}:V(\sigma) \to V(\sigma')$ is a toric morphism.
\label{toric_morphism}
\end{lemma}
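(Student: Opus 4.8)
The plan is to reduce the statement to the dictionary between points of an affine toric variety and semigroup homomorphisms recorded above, and then to track the distinguished point through the local description of $\phi$. First I would introduce the dual map $\bar{\phi}^{\vee} : M' \to M$ of $\bar{\phi}$, where $M' = (N')^{\vee}$. From $\bar{\phi}_{\R}(\sigma) \subseteq \sigma'$ one gets, for every $u' \in (\sigma')^{\vee}$ and $v \in \sigma$, the inequality $\langle \bar{\phi}^{\vee}(u'), v \rangle = \langle u', \bar{\phi}_{\R}(v) \rangle \geq 0$; hence $\bar{\phi}^{\vee}$ carries $S_{\sigma'} = (\sigma')^{\vee} \cap M'$ into $S_{\sigma}$, and the induced $\C$-algebra homomorphism $\C[S_{\sigma'}] \to \C[S_{\sigma}]$, $\chi^{u'} \mapsto \chi^{\bar{\phi}^{\vee}(u')}$, is exactly $\phi$ restricted to $\mathcal U_{\sigma} \to \mathcal U_{\sigma'}$ on coordinate rings (this is the standard construction of toric morphisms, \cite[3.3.4]{cox}). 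Unwinding the point/semigroup-homomorphism correspondence, the point $\phi(x_{\sigma}) \in \mathcal U_{\sigma'}$ is the semigroup homomorphism $f_{x_{\sigma}} \circ \bar{\phi}^{\vee} : S_{\sigma'} \to \C$, where $f_{x_{\sigma}}(w) = 1$ if $w \in S_{\sigma} \cap \sigma^{\perp}$ and $0$ otherwise.

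Next I would show that $f_{x_{\sigma}} \circ \bar{\phi}^{\vee}$ equals the semigroup homomorphism defining $x_{\sigma'}$, i.e. that for $u' \in S_{\sigma'}$ one has $\bar{\phi}^{\vee}(u') \in \sigma^{\perp}$ if and only if $u' \in (\sigma')^{\perp}$. Unwinding definitions, $\bar{\phi}^{\vee}(u') \in \sigma^{\perp}$ says $\bar{\phi}_{\R}(\sigma) \subseteq u'^{\perp}$, while $u' \in (\sigma')^{\perp}$ says $\sigma' \subseteq u'^{\perp}$; the implication $\Leftarrow$ is immediate from $\bar{\phi}_{\R}(\sigma) \subseteq \sigma'$. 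For $\Rightarrow$ minimality of $\sigma'$ enters: if $\bar{\phi}_{\R}(\sigma) \subseteq u'^{\perp}$ then, since $u' \in (\sigma')^{\vee}$, the set $\sigma' \cap u'^{\perp}$ is a face of $\sigma'$, hence a cone of $\Sigma'$, and it contains $\bar{\phi}_{\R}(\sigma)$; minimality of $\sigma'$ among cones of $\Sigma'$ containing $\bar{\phi}_{\R}(\sigma)$ forces $\sigma' \cap u'^{\perp} = \sigma'$, that is $u' \in (\sigma')^{\perp}$. Thus $\phi(x_{\sigma}) = x_{\sigma'}$. The two ``in particular'' statements follow formally: since $O(\sigma) = T_N \cdot x_{\sigma}$, equivariance of $\phi$ together with $\phi(T_N) \subseteq T_{N'}$ gives $\phi(O(\sigma)) = \phi(T_N) \cdot x_{\sigma'} \subseteq T_{N'} \cdot x_{\sigma'} = O(\sigma')$, and by continuity $\phi(V(\sigma)) = \phi(\overline{O(\sigma)}) \subseteq \overline{\phi(O(\sigma))} \subseteq \overline{O(\sigma')} = V(\sigma')$.

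For the final assertion I would use the description of $V(\sigma)$ and $V(\sigma')$ as the toric varieties of the star fans $\Star(\sigma)$ in $N(\sigma)_{\R}$ and $\Star(\sigma')$ in $N(\sigma')_{\R}$. Since $\bar{\phi}_{\R}(\sigma) \subseteq \sigma'$ we get $\bar{\phi}_{\R}(\Span(\sigma)) \subseteq \Span(\sigma')$, hence $\bar{\phi}(N_{\sigma}) \subseteq N'_{\sigma'}$, so $\bar{\phi}$ descends to a $\Z$-linear map $N(\sigma) \to N(\sigma')$ compatible with the projections $\pi : N_{\R} \to N(\sigma)_{\R}$ and $\pi' : N'_{\R} \to N(\sigma')_{\R}$. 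To see it is compatible with $\Star(\sigma)$ and $\Star(\sigma')$, take $\tau \in \Sigma$ with $\sigma \prec \tau$ and choose $\rho' \in \Sigma'$ with $\bar{\phi}_{\R}(\tau) \subseteq \rho'$; then $\bar{\phi}_{\R}(\sigma) \subseteq \rho'$, so the common face $\sigma' \cap \rho'$ of $\sigma'$ and $\rho'$ contains $\bar{\phi}_{\R}(\sigma)$ and, by minimality of $\sigma'$, must equal $\sigma'$; hence $\sigma' \prec \rho'$, so $\pi'(\rho') \in \Star(\sigma')$ and it contains $\pi'(\bar{\phi}_{\R}(\tau))$, which is the image of $\pi(\tau)$ under the induced map. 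Therefore the induced lattice map gives a toric morphism $\psi : V(\sigma) \to V(\sigma')$. Finally $\psi$ and $\phi|_{V(\sigma)}$ are two morphisms of varieties that agree on the dense torus $O(\sigma) \cong T_{N(\sigma)}$ of $V(\sigma)$ — both restrict there to the group homomorphism induced by $N(\sigma) \to N(\sigma')$, using $\phi(x_{\sigma}) = x_{\sigma'}$, the surjectivity of $T_N \to O(\sigma)$, and equivariance — so $\psi = \phi|_{V(\sigma)}$ by separatedness, and $\phi|_{V(\sigma)}$ is a toric morphism.

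The main obstacle I anticipate is the careful use of minimality in the second and third steps: one must check that the set $\sigma' \cap u'^{\perp}$ (respectively $\sigma' \cap \rho'$) is genuinely a cone of the fan $\Sigma'$ — a face of $\sigma'$, respectively a common face of $\sigma'$ and $\rho'$ — before invoking minimality, and that ``the minimal cone of $\Sigma'$ containing $\bar{\phi}_{\R}(\sigma)$'' is unambiguous, i.e. is a face of every cone of $\Sigma'$ containing $\bar{\phi}_{\R}(\sigma)$; this last point follows from the fan axioms (intersections in a fan are common faces) but deserves an explicit sentence. Everything else is a routine translation between the geometry of $\phi$ and the combinatorics of $\bar{\phi}$.
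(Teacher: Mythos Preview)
Your proof is correct and follows the standard approach via the semigroup-homomorphism description of points, the key use of minimality of $\sigma'$ to identify which characters vanish, and the compatibility of the induced lattice map with the star fans. Note, however, that the paper does not actually prove this lemma: it is stated with a citation to \cite[3.3.21]{cox} and no argument is given, so there is nothing in the paper to compare your proof against beyond observing that your argument is essentially the one found in that reference.
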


\begin{say}[Refinements of fans]

Given a fan $\Sigma$ in $N_{\R}$ we say that a fan $\Sigma'$ \emph{refines} $\Sigma$ if $|\Sigma'| = |\Sigma|$ and every cone of $\Sigma'$ is contained in a cone of $\Sigma$. A refinement $\Sigma'$ of $\Sigma$ induces a natural surjective toric morphism $\phi : X_{\Sigma'} \to X_{\Sigma}$ because the identity map of $N$ is evidently compatible with $\Sigma'$ and $\Sigma$. Note that $\phi$ is a birational morphism because it is the identity on the torus $T_N$.

An important type of refinement is the so called \emph{star subdivision}: let $\Sigma$ be a fan in $N_{\R} \simeq \R^n$ and $\tau \in \Sigma$ a cone of $\Sigma$ with the property that all cones of $\Sigma$ containing $\tau$ are smooth. Suppose that the extremal rays of $\tau$ are generated by primitive vectors $v_1, ..., v_m$. Let $v:= v_1+ ... + v_n$. For every cone $\sigma \in \Sigma$, we define $\sigma(1)$ to be the set of cones in $\Sigma(1)$ that are faces of $\sigma$. For every $\sigma \in \Sigma$ with $\tau \prec \sigma$, set:
$$
\Sigma_{\sigma}^*(\tau) = \Big\{ \Cone(A) \ \Big| \ A \subset \{v\} \cup \sigma(1), \ \tau (1) \nsubseteq  A \Big\}.
$$
The \emph{star subdivision of $\Sigma$ centered in $\tau$} is the fan:
$$
\Sigma(\tau) = \Big\{\sigma \in \Sigma \ \Big| \ \tau \nsubseteq \sigma \Big\} \cup \displaystyle \bigcup_{\tau \subset \sigma}\Sigma_{\sigma}^*(\tau).
$$
The fan $\Sigma(\tau)$ is a refinement of $\Sigma$ and the toric morphism $\phi: X_{\Sigma(\tau)} \to X_{\Sigma}$ is the blowup of $X_{\Sigma}$ along the subariety $V(\tau)$ (see \cite[3.3.17]{cox}). 

\begin{figure}[h]
\centering
\includegraphics[scale=0.24]{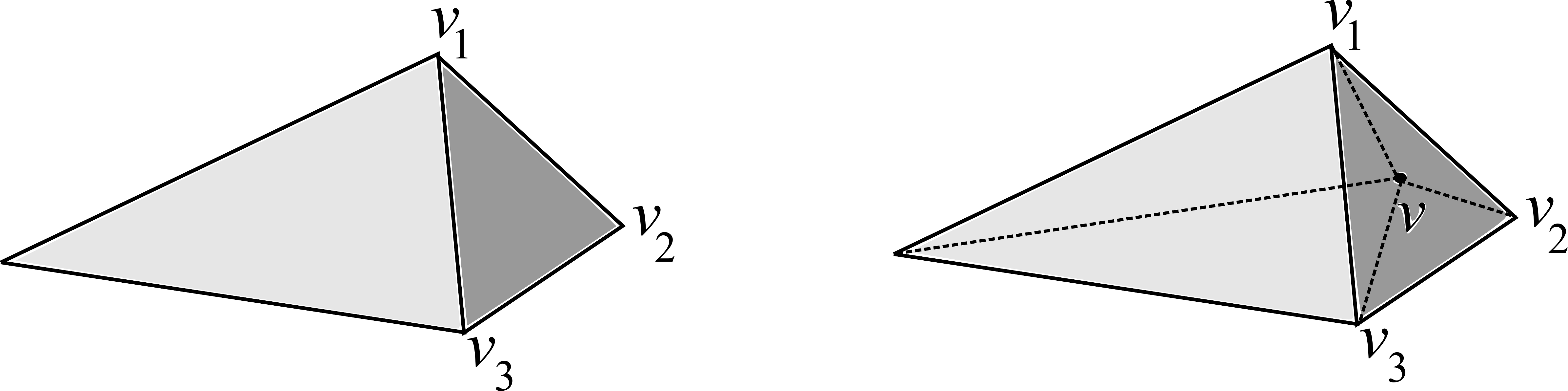}
\caption{The star subdivision of a cone.}
\end{figure}

\label{blowup}
\end{say}

The following proposition is the toric version of Chow's Lemma (see \cite[6.1.18 and 11.1.9]{cox})

\begin{prop}[Toric Chow's Lemma] Let $X_{\Sigma}$ be a complete toric variety. There exists a refinement $\Sigma'$ of $\Sigma$ such that $X_{\Sigma'}$ is a smooth projective toric variety.
\label{chow_lemma}
\end{prop}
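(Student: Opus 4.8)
\noindent The plan is to build $\Sigma'$ in two stages: first refine $\Sigma$ to a fan whose toric variety is \emph{projective} (the Chow part), and then refine that fan further to a \emph{smooth} one without destroying projectivity (the resolution part).

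\medskip

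\noindent \textbf{Stage 1 (projectivization).} Let $\sigma_1,\dots,\sigma_r$ be the maximal cones of $\Sigma$; they are $n$-dimensional because $\Sigma$ is complete. For each $i$, pick generators $u^{(i)}_1,\dots,u^{(i)}_{k_i}$ of the semigroup $\sigma_i^{\vee}\cap M$ and set
$$
P_i \ := \ \Conv\bigl(0,\,u^{(i)}_1,\dots,u^{(i)}_{k_i}\bigr)\subset M_{\R}.
$$
This is a full-dimensional lattice polytope (the $u^{(i)}_j$ span $M_{\R}$ since $\sigma_i$ is strongly convex), $0$ is a vertex of $P_i$ (any $w\in\interior(\sigma_i)$ pairs strictly positively with every $u^{(i)}_j$), and the cone of the normal fan $\Sigma_{P_i}$ attached to this vertex is exactly $\bigl(\Cone(u^{(i)}_1,\dots,u^{(i)}_{k_i})\bigr)^{\vee}=\sigma_i$. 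Hence each $X_{\Sigma_{P_i}}$ is projective and $\sigma_i\in\Sigma_{P_i}$. Now let $\Sigma_1$ be the common refinement of $\Sigma_{P_1},\dots,\Sigma_{P_r}$, i.e. the fan with cones $\tau_1\cap\dots\cap\tau_r$, $\tau_i\in\Sigma_{P_i}$. Since the normal fan of a Minkowski sum is the common refinement of the normal fans, $\Sigma_1=\Sigma_{P_1+\dots+P_r}$ is the normal fan of a lattice polytope, so $X_{\Sigma_1}$ is projective. Finally $\Sigma_1$ refines $\Sigma$: clearly $|\Sigma_1|=N_{\R}=|\Sigma|$, and if $\tau\in\Sigma_1$ is maximal then, by completeness of $\Sigma$, $\interior(\tau)$ meets $\interior(\sigma_i)$ for some $i$; as $\tau$ sits inside an $n$-dimensional cone $\tau'$ of $\Sigma_{P_i}$ whose interior then also meets $\interior(\sigma_i)$, and two cones of a fan with overlapping interiors coincide, we get $\tau'=\sigma_i$ and so $\tau\subset\sigma_i$. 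Passing to faces, every cone of $\Sigma_1$ lies in a cone of $\Sigma$.

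\medskip

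\noindent \textbf{Stage 2 (resolution).} By Proposition~\ref{fan_properties}, $X_{\Sigma_1}$ is smooth precisely when $\Sigma_1$ is a smooth fan, so it remains to refine $\Sigma_1$ to a smooth fan while keeping the toric variety projective, i.e. to carry out toric resolution of singularities. I would first star-subdivide $\Sigma_1$ to a simplicial refinement, and then repeatedly star-subdivide at a (simplicial) cone of maximal multiplicity (Definition~\ref{multiplicity}), subdividing at a suitable primitive lattice vector in its relative interior, using a form of star subdivision more general than the one in §\ref{blowup} (whose stated hypothesis is that the star of the center is already smooth). Each such step is a refinement and induces a projective birational toric morphism onto the previous toric variety (a blowup along a $T$-invariant subscheme, as recorded in §\ref{blowup}); since a variety projective over a projective variety is itself projective, projectivity is maintained throughout. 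A standard decreasing-multiplicity induction shows the procedure terminates at a smooth fan $\Sigma'$. Because refinements compose, $\Sigma'$ refines $\Sigma_1$ and hence $\Sigma$, and $X_{\Sigma'}$ is the desired smooth projective toric variety.

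\medskip

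\noindent The convex-geometric inputs to Stage 1 — the normal fan of a Minkowski sum, and the fact that each $\sigma_i$ occurs as a normal cone — are routine. I expect the main obstacle to be Stage 2: one must choose the subdivision centers and the right numerical invariant so that iterated star subdivisions provably terminate at a smooth fan, and one must work with a more general star subdivision than the one set up in §\ref{blowup} while still recognizing it as a blowup, so that projectivity is not lost along the way. These are precisely the classical facts of toric resolution of singularities, which I would invoke directly (or, for a self-contained treatment, establish via the multiplicity-reduction induction).
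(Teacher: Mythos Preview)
The paper does not actually prove this statement: it merely cites \cite[6.1.18 and 11.1.9]{cox} and moves on. So there is no in-paper argument to compare against, only the referenced one in Cox--Little--Schenck.

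Your two-stage outline is correct and is, in substance, exactly the argument carried out in the cited reference: \cite[6.1.18]{cox} does Stage~1 via normal fans of polytopes and Minkowski sums (your construction of the $P_i$ and the identification $\Sigma_1=\Sigma_{P_1+\cdots+P_r}$ match it closely), and \cite[11.1.9]{cox} does Stage~2 by iterated star subdivisions that first simplicialize and then reduce multiplicities. One small point: you correctly flag that the star subdivision recorded in \S\ref{blowup} assumes the cones containing the center are already smooth, so for Stage~2 you must invoke the more general fact that a star subdivision at an arbitrary primitive lattice vector yields a \emph{projective} toric morphism (this is \cite[Prop.~11.1.6]{cox}); once that is granted, your ``projective over projective is projective'' argument goes through and the multiplicity induction terminates as you say.
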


\begin{say}[Fiber Bundles] We recall how a toric morphism associated to a surjective map of lattices $N \to N'$ has a structure of fiber bundle over the torus $T_{N'}$. First we recall a general fact about the Cartesian product of toric varieties.

Let $\Sigma$ and $\Sigma'$ be two fans respectively in $N_{\R}$ and $N'_{\R}$. The Cartesian product of $\Sigma$ and $\Sigma'$ is the collection:
$$
\Sigma \times \Sigma' := \Big\{ \sigma \times \sigma' \ \Big| \ \sigma \in \Sigma \ \textrm{and} \ \sigma' \in \Sigma' \Big\}.
$$
This is a fan in $(N \times N')_{\R}$. For every $\sigma \in \Sigma$ and $\sigma' \in \Sigma'$, we have $(\sigma \times \sigma')^{\vee} = \sigma^{\vee} \times {\sigma'}^{\vee}$ and hence $S_{\sigma \times \sigma'} = S_{\sigma} \times S_{\sigma'}$. It follows that $\C[S_{\sigma \times \sigma'}] = \C[S_{\sigma}] \otimes \C[S_{\sigma'}]$ and therefore $\mathcal{U}_{\sigma \times \sigma'} \simeq \mathcal{U}_{\sigma} \times \mathcal{U}_{\sigma'}$. These open subsets glue together, resulting in an isomorphism $X_{\Sigma \times \Sigma'} \simeq X_{\Sigma} \times X_{\Sigma'}$.\\

Consider now a surjective $\Z$-linear map $\bar{\phi}: N \to N'$ compatible with the fans $\Sigma$ and $\Sigma'$. Let $N_0$ be the kernel of $\bar{\phi}$. The collection $\Sigma_0 = \{\sigma \in \Sigma \ / \ \bar{\phi}_{\R}(\sigma) = \{0\} \}$ is a fan in ${N_0}_{\R}$ and $N_{\R}$. There is an exact sequence:
$$
0 \longrightarrow N_0 \longrightarrow N \longrightarrow N' \longrightarrow 0.
$$

Thus, $N \simeq N_0 \times N'$. Considering the trivial fan $\{0\}$ in $N'$, by the considerations about cartesian product discussed above we have:
$$
X_{\Sigma_0, N} \simeq X_{\Sigma_0, N_0} \times T_{N'}.
$$

On the other hand, if $\phi: X_{\Sigma} \to X_{\Sigma'}$ is the toric morphism corresponding to $\bar{\phi}$, Lemma \ref{toric_morphism} says that $\phi^{-1}(T_{N'}) = X_{\Sigma_{0, N}}$. It follows that $\phi^{-1}(T_{N'}) \simeq X_{\Sigma_0, N_0} \times T_{N'}$ and therefore $\phi$ is a fiber bundle over $T_{N'}$ whose general fiber is isomorphic to $X_{\Sigma_0, N_0}$. 

\begin{rem} In the situation discussed above, the special fibers of $\phi$ (i.e., those that lie over the invariant subvarieties of $X_{\Sigma'}$) can fail to be isomorphic to $X_{\Sigma_0, N_0}$. In fact, let $\Sigma'$ be the fan of $\mathbb{P}^1$. The fan of $\mathbb{P}^1 \times \mathbb{P}^1$ is $\Sigma = \Sigma' \times \Sigma'$ in $\R^2$. If $\sigma = \Cone(e_1, e_2)$, the star subdivision $\Sigma(\sigma)$ of $\sigma$ is the fan of the blowup of $\mathbb{P}^1 \times \mathbb{P}^1$ in the invariant point $x_{\sigma}$. The projection $\Z^2 \to \Z, \ (x,y) \mapsto x$ is compatible with $\Sigma(\sigma)$ and $\Sigma'$, hence induces a morphism $\phi: X_{\Sigma(\sigma)} \to \mathbb{P}^1$ that is the composition of the blowup map $X_{\Sigma{\sigma}} \to \mathbb{P}^1 \times \mathbb{P}^1$ with the first projection $\mathbb{P}^1 \times \mathbb{P}^1 \to \mathbb{P}^1$. The general fiber of $\phi$ is isomorphic to $\mathbb{P}^1$, but the fiber over the invariant point $x_{\R_+e_1}$ of $\mathbb{P}^1$ has two irreducible components.
   
\end{rem} 

\label{fiber_bundle}
\end{say}

\subsection{Divisors on toric varieties}
\label{toric_divisors}

Throughout this section, $X_{\Sigma}$ will be the $n$-dimensional toric variety associated to a fan $\Sigma$ in $N_{\R}$  The unique invariant irreducible subvarieties of $X_{\Sigma}$ are the closures of the orbits $V(\rho) = \overline{O(\rho)}$, $\rho \in \Sigma$. When $\dim(\rho) =1$, $\overline{O(\rho)}$ is a prime divisor, and sometimes we will denote it by $D_{\rho}$ rather than $V(\rho)$.\\

For each $\rho \in \Sigma(1)$, we denote by $v_{\rho}$ the unique primitive vector in $N$ such that $v_{\rho} \in \rho$. Thus, the set $\Sigma(1)$ can be identified with the set of these primitive vectors. Sometimes we will write $v_{\rho} \in \Sigma(1)$ by abuse of notation. Recall that the function field of $X_{\Sigma}$ is equal to the fraction field of the $\C$-algebra generated by the characters $\chi^u$ with $u \in M = N^{\vee}$. Since $\chi^u$ is regular and non-zero on the torus, the principal divisor $div(\chi^u)$ is supported in $\displaystyle \bigcup_{\rho \in \Sigma(1)} D_{\rho}$. 

\begin{prop}[{\cite[4.1.4]{cox}}] With the notations as above, we have:
$$div(\chi^u) = \displaystyle \sum_{\rho \in \Sigma(1)} \langle u, v_{\rho} \rangle D_{\rho}.
$$
\end{prop}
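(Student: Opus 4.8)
The plan is to compute the divisor of the character $\chi^u$ directly, using the fact that $X_\Sigma$ is covered by the affine charts $\mathcal{U}_\sigma$ and that it suffices to understand $\operatorname{div}(\chi^u)$ restricted to each chart. Since $\chi^u$ is a nonvanishing regular function on the torus $T_N$, its divisor is supported on the complement $X_\Sigma \setminus T_N = \bigcup_{\rho \in \Sigma(1)} D_\rho$, so we may write $\operatorname{div}(\chi^u) = \sum_{\rho \in \Sigma(1)} a_\rho D_\rho$ for some integers $a_\rho$, and the task is to identify $a_\rho = \langle u, v_\rho\rangle$. Because the coefficient $a_\rho$ is the order of vanishing of $\chi^u$ along $D_\rho$, and this is a local computation near the generic point of $D_\rho$, I would localize at the one-dimensional cone $\rho$: it is enough to work in the affine chart $\mathcal{U}_\rho = \operatorname{Spec}\C[S_\rho]$ with $S_\rho = \rho^\vee \cap M$, in which $D_\rho = V(\rho)$ is the unique invariant prime divisor.

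The key computation is then the following. Fix $\rho \in \Sigma(1)$ with primitive generator $v_\rho$. Choose a basis of $M$ adapted to $\rho$ — concretely, since $v_\rho$ is primitive, complete it to a basis $e_1 = v_\rho, e_2, \dots, e_n$ of $N$ and take the dual basis $e_1^*, \dots, e_n^*$ of $M$; then $\rho^\vee \cap M$ is generated as a semigroup by $e_1^*$ together with $\pm e_2^*, \dots, \pm e_n^*$, so $\C[S_\rho] \cong \C[x_1, x_2^{\pm 1}, \dots, x_n^{\pm 1}]$ where $x_i = \chi^{e_i^*}$, and the divisor $D_\rho$ is cut out by $x_1 = 0$. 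Writing $u = \sum_i \langle u, e_i\rangle e_i^* = \langle u, v_\rho\rangle e_1^* + \sum_{i\geq 2}\langle u, e_i\rangle e_i^*$, we get $\chi^u = x_1^{\langle u, v_\rho\rangle} \cdot (\text{unit in } \C[S_\rho])$, since the $x_i$ for $i \geq 2$ are invertible. Hence $\operatorname{ord}_{D_\rho}(\chi^u) = \langle u, v_\rho\rangle$, which is the coefficient $a_\rho$. Doing this for every $\rho \in \Sigma(1)$ and summing gives the formula.

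I would also need to address the two boundary points: first, that $D_\rho$ really is a Cartier (indeed locally principal, $= \{x_1 = 0\}$) divisor on the smooth chart $\mathcal{U}_\rho$ so that "order of vanishing" makes sense, which follows because $\mathcal{U}_\rho$ is smooth — $\rho$ is a smooth cone, being generated by part of a basis of $N$; and second, that no other prime divisors enter, which is exactly the statement that $\chi^u$ is a unit on $T_N$ together with $X_\Sigma \setminus T_N = \bigcup_\rho D_\rho$ (from the Cone-Orbit Correspondence, Proposition~\ref{cone_orbit}, the complement of the open orbit is the union of all $V(\sigma)$ with $\dim\sigma \geq 1$, whose divisorial components are precisely the $D_\rho$).

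The main obstacle — really the only subtle point — is the choice of adapted coordinates and the verification that $\chi^u$ factors as $x_1^{\langle u,v_\rho\rangle}$ times a unit: one must be careful that the semigroup $\rho^\vee \cap M$ is correctly described (it contains all of the sublattice $e_1^\perp \cap M = \langle e_2^*,\dots,e_n^*\rangle$, not just the positive orthant there, because $\rho$ is one-dimensional) so that the "other" characters are genuinely invertible and do not contribute to the order along $D_\rho$. Everything else is bookkeeping: the divisor is determined chart by chart, the charts $\mathcal{U}_\rho$ for $\rho \in \Sigma(1)$ already see every invariant prime divisor, and summing the local contributions yields $\operatorname{div}(\chi^u) = \sum_{\rho \in \Sigma(1)} \langle u, v_\rho\rangle D_\rho$.
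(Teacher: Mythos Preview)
The paper does not actually prove this proposition; it merely states it with a citation to \cite[4.1.4]{cox}. Your argument is correct and is essentially the standard proof given in that reference (and in \cite[3.3]{fulton}): reduce to computing $\operatorname{ord}_{D_\rho}(\chi^u)$ on the smooth affine chart $\mathcal{U}_\rho$, choose coordinates so that $\mathcal{U}_\rho \cong \Spec\,\C[x_1, x_2^{\pm 1},\dots,x_n^{\pm 1}]$ with $D_\rho = \{x_1 = 0\}$, and read off the exponent $\langle u, v_\rho\rangle$.
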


Recall that the class group, $Cl(X)$, of a normal variety $X$ is the quotient group of Weil divisors on $X$ modulo linear equivalence. The class group of a toric variety has a nice presentation in terms of the $T$-invariant divisors. Every divisor $D$ on $X_{\Sigma}$ is linearly equivalent to a $T$-invariant divisor $\displaystyle \sum_{\rho \in \Sigma(1)} a_{\rho} D_{\rho}$. This is a consequence of the following proposition.

\begin{prop}[{\cite[4.1.3]{cox}}] There exists an exact sequence: 
$$
M \longrightarrow \mbox{T-Div}(X_{\Sigma}) \longrightarrow Cl(X_{\Sigma}) \longrightarrow 0,
$$
where $\mbox{T-Div}(X_{\Sigma})$ is the abelian group of $T$-invariant Weil divisors. The first map is $u \mapsto div(\chi^u)$ and the second one sends an invariant divisor to its class in $Cl(X_{\Sigma})$. If there is a maximal cone in $\Sigma$ then the sequence is exact on the left:
\begin{equation}
0 \longrightarrow M \longrightarrow \mbox{T-Div}(X_{\Sigma}) \longrightarrow Cl(X_{\Sigma}) \longrightarrow 0.
\label{exact_sequence}
\end{equation}  

\end{prop}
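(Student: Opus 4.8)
The statement to prove is the exact sequence
$$
0 \longrightarrow M \longrightarrow \mbox{T-Div}(X_{\Sigma}) \longrightarrow Cl(X_{\Sigma}) \longrightarrow 0
$$
when $\Sigma$ contains a maximal (i.e. $n$-dimensional) cone, together with the un-augmented version (no left exactness) in general.

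\textbf{Plan.} The proof splits into three exactness claims. First, surjectivity of $\mbox{T-Div}(X_\Sigma)\to Cl(X_\Sigma)$: since $X_\Sigma$ is normal, every Weil divisor class is represented by a divisor, and the complement of the torus $T_N$ in $X_\Sigma$ is exactly the union $\bigcup_{\rho\in\Sigma(1)}D_\rho$ of the invariant prime divisors (by the Cone-Orbit Correspondence, Proposition~\ref{cone_orbit}). On the torus every Weil divisor is principal because the coordinate ring $\C[x_1^{\pm1},\dots,x_n^{\pm1}]$ is a UFD with trivial class group; choosing a rational function realizing the restriction and subtracting its principal divisor on $X_\Sigma$ moves any divisor class into the subgroup generated by the $D_\rho$. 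Second, exactness in the middle: the map $u\mapsto \operatorname{div}(\chi^u)$ lands in $\mbox{T-Div}(X_\Sigma)$ by the formula $\operatorname{div}(\chi^u)=\sum_\rho\langle u,v_\rho\rangle D_\rho$, and its image is contained in the kernel of the class map since principal divisors are trivial in $Cl$. Conversely, if a $T$-invariant divisor $D=\sum a_\rho D_\rho$ is linearly equivalent to zero, write $D=\operatorname{div}(f)$ for some $f$ in the function field; restricting to $T_N$, $\operatorname{div}(f)|_{T_N}=0$, so $f$ is a unit in $\C[x_1^{\pm1},\dots,x_n^{\pm1}]$, i.e. $f=c\,\chi^u$ for some $c\in\C^*$ and $u\in M$, whence $D=\operatorname{div}(\chi^u)$ is in the image.

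Third, injectivity of $M\to \mbox{T-Div}(X_\Sigma)$ when $\Sigma$ has an $n$-dimensional cone: suppose $\operatorname{div}(\chi^u)=0$, i.e. $\langle u,v_\rho\rangle=0$ for every $\rho\in\Sigma(1)$. If $\sigma\in\Sigma$ is a cone of dimension $n$, its rays span $N_\R$, so $u$ vanishes on a spanning set of $N_\R$ and hence $u=0$ in $M_\R$, thus in $M$. This is the step where the hypothesis is genuinely used; without a full-dimensional cone, the vectors in $\Sigma(1)$ need not span $M_\R^\vee = N_\R$, and $M\to\mbox{T-Div}$ can have a kernel (its rank being the dimension of the lacking directions), which is exactly why the general statement only asserts right exactness. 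I would remark that the kernel in general is $\{u\in M:\langle u,v_\rho\rangle=0\ \forall\rho\in\Sigma(1)\}$, the characters that are constant on all the invariant divisors.

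\textbf{Main obstacle.} The one non-formal input is the claim that a Weil divisor on $X_\Sigma$ supported away from the $D_\rho$, equivalently a Weil divisor on the torus, is principal: this rests on $Cl(T_N)=Cl((\C^*)^n)=0$ together with the fact that restriction $Cl(X_\Sigma)\to Cl(X_\Sigma\setminus\bigcup D_\rho)=Cl(T_N)$ is surjective with kernel generated by the classes $[D_\rho]$ (the standard excision sequence for class groups of normal varieties, removing a codimension-one closed subset). Everything else is bookkeeping with the explicit formula for $\operatorname{div}(\chi^u)$ and the structure of units in a Laurent polynomial ring. I would therefore organize the writeup as: (1) recall the excision sequence and $Cl(T_N)=0$ to get surjectivity and to reduce a linearly-trivial invariant divisor to a principal divisor of a Laurent monomial; (2) compute $\ker$ of the class map via the units argument; (3) dispatch left-exactness using a maximal cone.
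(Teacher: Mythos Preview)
The paper does not give its own proof of this proposition: it is stated as a preliminary fact with a citation to \cite[4.1.3]{cox}, and no argument appears in the text. Your proposal is the standard proof of this result (essentially the one found in the cited reference): excision for the class group together with $Cl(T_N)=0$ gives surjectivity, the description of units in a Laurent polynomial ring gives exactness in the middle, and the spanning property of the rays of a full-dimensional cone gives injectivity on the left. The argument is correct and there is nothing to compare it against in the paper itself.
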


\begin{rem} The fact that the class group of a toric variety is generated by $T$-invariant divisors generalizes to cycles of arbitrary dimension: let $A_k(X_{\Sigma})$ be the Chow group of order $k$, i.e., the group of cycles of dimension $k$ modulo rational equivalence. This group is generated by $T$-invariant cycles $V(\sigma)$ with dimension $k$, i.e., $\sigma \in \Sigma(n-k)$ (see \cite[5.1]{fulton}). 

\label{chow_toric}
\end{rem}

When $D$ is a $T$-invariant Weil divisor on $X_{\Sigma}$, the global sections of the coherent sheaf $\mathcal{O}_{X_{\Sigma}}(D)$, defined by:  

$$U \mapsto \mathcal{O}_{X_{\Sigma}}(D)(U) := \Big\{ f \in K(X_{\Sigma}) \ \Big| \ \big(D + div(f)\big)_{|U} \geq 0 \ \Big\},
$$ 
can be described in terms of characters of the torus $T$. In fact, we have (see \cite[4.3.2]{cox}):
\begin{equation}
\Gamma(X_{\Sigma}, \mathcal{O}_{X_{\Sigma}}(D)) = \displaystyle \bigoplus_{div(\chi^u)+D \geq 0} \C \cdot \chi^u.
\label{global_sections}
\end{equation}

In particular, the characters $\chi^u$ with $div(\chi^u)+D \geq 0$ form a basis for the complex vector space $\Gamma(X_{\Sigma}, \mathcal{O}_{X_{\Sigma}}(D))$. When $X_{\Sigma}$ is complete, this vector space has finite dimension (see for instance {\cite[Vol.2, VI3.4]{shafa}}).

\begin{prop} Every effective Weil divisor $D$ on $X_{\Sigma}$ is linearly equivalent to an effective $T$-invariant divisor. 

\label{eff_toric_div}
\end{prop}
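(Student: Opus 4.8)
The plan is to first replace $D$ by a linearly equivalent $T$-invariant divisor, which need not yet be effective, and then to use the explicit character description of global sections in~(\ref{global_sections}) to correct it. Concretely, by the surjectivity of $\mbox{T-Div}(X_{\Sigma}) \to Cl(X_{\Sigma})$ in the sequence~(\ref{exact_sequence}) --- equivalently, by the fact recalled above that every Weil divisor on $X_{\Sigma}$ is linearly equivalent to one of the form $\sum_{\rho \in \Sigma(1)} a_{\rho} D_{\rho}$ --- I would choose a $T$-invariant divisor $D' = \sum_{\rho \in \Sigma(1)} a_{\rho} D_{\rho}$ with $D \sim D'$ and fix $f \in K(X_{\Sigma})^{*}$ with $D = D' + div(f)$. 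A priori some of the coefficients $a_{\rho}$ may be negative, so $D'$ itself need not be effective.

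Next, since $D$ is effective, the identity $D' + div(f) = D \geq 0$ is exactly the condition that $f$ be a global section of $\mathcal{O}_{X_{\Sigma}}(D')$, by the defining condition of that sheaf. As $D'$ is $T$-invariant, formula~(\ref{global_sections}) applies and exhibits $f$ as a nonzero $\C$-linear combination of characters $\chi^u$ with $div(\chi^u) + D' \geq 0$; in particular the set of such $u \in M$ is nonempty, and I pick one, say $u_0$. Setting $D'' := D' + div(\chi^{u_0})$, the divisor $D''$ is $T$-invariant, because $div(\chi^{u_0}) = \sum_{\rho \in \Sigma(1)} \langle u_0, v_{\rho} \rangle D_{\rho}$ is; it is effective by the choice of $u_0$; and $D'' \sim D'$ since the two differ by the principal divisor $div(\chi^{u_0})$. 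Hence $D'' \sim D' \sim D$ is the desired effective $T$-invariant divisor.

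There is no serious obstacle here; the only delicate point is that the character decomposition~(\ref{global_sections}) is available only for $T$-invariant divisors, which is why the reduction to $D'$ must precede its use. One should also dispose of the degenerate case $\Sigma(1) = \varnothing$ --- then $X_{\Sigma} \simeq T_N$, $Cl(X_{\Sigma}) = 0$, and the statement is vacuous --- and note that the argument uses neither completeness nor the existence of a maximal cone, so the proposition holds for an arbitrary fan $\Sigma$.
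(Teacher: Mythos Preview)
Your proof is correct and follows essentially the same approach as the paper: reduce to a $T$-invariant divisor $D'\sim D$, use effectivity of $D$ to see that $\Gamma(X_{\Sigma},\mathcal{O}_{X_{\Sigma}}(D'))\neq 0$, and then invoke~(\ref{global_sections}) to extract a character $\chi^{u_0}$ with $D'+div(\chi^{u_0})\geq 0$. Your version is slightly more explicit in naming the section $f$ and in handling the degenerate case $\Sigma(1)=\varnothing$, but the argument is the same.
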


\begin{proof}
Since every divisor on $X_{\Sigma}$ is linearly equivalent to a $T$-invariant divisor, there exists $D'= \displaystyle \sum_{\rho \in \Sigma(1)} a_{\rho} D_{\rho}$ with $D \sim D'$. The vector space $\Gamma(X_{\Sigma}, \mathcal{O}_{X_{\Sigma}}(D))$ is isomorphic to $\Gamma(X_{\Sigma}, \mathcal{O}_{X_{\Sigma}}(D'))$ and since $D$ is effective, these vector spaces are non-zero.  Because $D'$ is $T$-invariant, there exists a character $\chi^u$ with $D''=div(\chi^u) + D' \geq 0$. Thus, $D''$ is an effective $T$-invariant divisor linearly equivalent to $D$. 

\end{proof}

We also recall some properties of $T$-invariant Cartier divisors. Such divisors have special coordinate data given by characters defined in the open sets $\mathcal{U}_{\sigma}$. More precisely, let $D$ be a $T$-invariant Cartier divisor on $X_{\Sigma}$ and write $D = \displaystyle \sum_{\rho \in \Sigma(1)} a_{\rho}D_{\rho}$. For every cone $\sigma \in \Sigma$, there exists $u_{\sigma} \in M$ such that $D_{|\mathcal{U}_{\sigma}}$ is principal in $\mathcal{U}_{\sigma}$ and its local equation in this open set is given by the character $\chi^{-u_{\sigma}}$. If $D_\rho$ is an invariant divisor intersecting $\mathcal{U}_{\sigma}$, then $\langle u_{\sigma}, v_{\rho} \rangle = -a_{\rho}$.\\

If the fan $\Sigma$ contains a maximal cone of $N_{\R}$ then the Picard Group $\Pic(X_{\Sigma})$ is a free abelian group. In particular, when $X_{\Sigma}$ is complete and $\Q$-factorial, $\Pic(X_{\Sigma})_{\R} = Cl(X_{\Sigma})_{\R}$. Tensorizing Equation \ref{exact_sequence} by $\R$ and using this last equality, we conclude that the vector space $\Pic(X_{\Sigma})_{\R}$ has dimension $\# \Sigma(1) - n$. Therefore, when $\Sigma$ is complete and simplicial, the rank of $\Pic(X_{\Sigma})$ is equal to $\# \Sigma(1) - n$.\\

The canonical class of a toric variety $X_{\Sigma}$ is represented by the following $T$-invariant divisor (see for instance \cite[4.3]{fulton}):
$$
K_{X_{\Sigma}} = \displaystyle - \sum_{\rho \in \Sigma(1)} D_{\rho}.
$$
In particular, when $X_{\Sigma}$ is complete, the canonical divisor of a toric variety is not effective.\\

Next, we describe intersection products of invariant Cartier divisors with $T$-invariant cycles on a toric variety $X_{\Sigma}$. We refer to \cite[5.1]{fulton} for proofs:

\begin{prop} Let $X_{\Sigma}$ be an arbitrary toric variety. Let $D = \displaystyle \sum_{\rho \in \Sigma(1)} a_{\rho}D_{\rho}$ be a $T$-invariant Cartier divisor and $V(\sigma)$ an invariant subvariety of $X_{\Sigma}$ not contained in the support of $D$. Then:
$$
D \cdot V(\sigma) = \sum b_{\tau}V(\tau),
$$ 
the sum over all cones $\tau \in \Sigma$ containing $\sigma$ with $\dim(\tau) = \dim(\sigma) + 1$. The coefficients $b_{\tau}$ are computed as follows: let $v_{\rho}$ be any primitive lattice generator of $\tau$ not contained in $\sigma$ and let $e \in N_{\tau}/N_{\sigma}$ be such that the class of $v_{\rho}$ in this quotient is $\bar{v}_{\rho} = s_{\rho} \cdot e$, $s_{\rho} > 0$. Then:
$$
b_{\tau} = \frac{a_{\rho}}{s_{\rho}}.
$$
\label{toric_intersections}
\end{prop}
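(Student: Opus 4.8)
The plan is to reduce the computation to the orbit closure $V(\sigma)$, which is itself a toric variety. Recall (see \cite[5.1]{fulton}) that, since $V(\sigma)$ is not contained in the support of the Cartier divisor $D$, the restriction $D|_{V(\sigma)}$ is a well-defined Cartier divisor on $V(\sigma)$, and $D\cdot V(\sigma)$ is by definition the cycle on $X_{\Sigma}$ underlying the Weil divisor associated to $D|_{V(\sigma)}$; since $V(\sigma)\hookrightarrow X_{\Sigma}$ is a closed immersion, pushing a prime cycle forward along it does nothing, so this cycle is literally the Weil divisor of $D|_{V(\sigma)}$ viewed inside $X_{\Sigma}$. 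By the Cone-Orbit Correspondence and the description of orbit closures recalled above, $V(\sigma)\simeq X_{\Star(\sigma)}$ for the fan $\Star(\sigma)$ in $N(\sigma)_{\R}=(N/N_{\sigma})_{\R}$, its $T$-invariant prime divisors correspond to the rays of $\Star(\sigma)$, these rays are exactly the images $\bar\tau$ of the cones $\tau\in\Sigma$ with $\sigma\prec\tau$ and $\dim\tau=\dim\sigma+1$, and the prime divisor of $V(\sigma)$ attached to $\bar\tau$ is, inside $X_{\Sigma}$, nothing but $V(\tau)$ (here $O(\tau)\subset V(\sigma)$ because $\sigma\prec\tau$, and closures in $V(\sigma)$ and in $X_{\Sigma}$ agree). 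So $D|_{V(\sigma)}=\sum_{\tau}b_{\tau}V(\tau)$ with the sum exactly over the cones in the statement, and it remains to identify $b_{\tau}$.

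For this I would fix such a $\tau$ and work on the affine chart $\mathcal{U}_{\tau}$, whose trace on $V(\sigma)$ is the affine chart $\mathcal{U}_{\bar\tau}$ of $X_{\Star(\sigma)}$. As $D$ is Cartier, $D|_{\mathcal{U}_{\tau}}$ is principal with local equation $\chi^{-u_{\tau}}$ for some $u_{\tau}\in M$ with $\langle u_{\tau},v_{\rho'}\rangle=-a_{\rho'}$ for every ray $\rho'\prec\tau$, as recalled in the discussion of Cartier divisors. The key point is that the hypothesis that $V(\sigma)$ is not contained in the support of $D$ is equivalent to $a_{\rho'}=0$ for every ray $\rho'\prec\sigma$ (if some such $a_{\rho'}\neq 0$ then $V(\sigma)\subset D_{\rho'}$ would lie in the support of $D$), and this forces $\langle u_{\tau},v_{\rho'}\rangle=0$ for all $\rho'\prec\sigma$, i.e.\ $u_{\tau}\in\sigma^{\perp}\cap M$. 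Hence $\chi^{-u_{\tau}}$ restricts to a genuine character on the torus of $V(\sigma)$ and serves as a local equation for $D|_{V(\sigma)}$ on $\mathcal{U}_{\bar\tau}$; since $\bar\tau$ is the only ray of $\Star(\sigma)$ whose divisor meets $\mathcal{U}_{\bar\tau}$, one reads off $b_{\tau}=-\langle u_{\tau},e_{\bar\tau}\rangle$, where $e_{\bar\tau}\in N(\sigma)$ is the primitive generator of $\bar\tau$. Finally, the definition of $s_{\rho}$ says that the image of $v_{\rho}$ in $N(\sigma)$ equals $\bar v_{\rho}=s_{\rho}\,e_{\bar\tau}$, so (using $u_{\tau}\in\sigma^{\perp}$ to descend the pairing, and then $\langle u_{\tau},v_{\rho}\rangle=-a_{\rho}$)
$$
b_{\tau}\;=\;-\langle u_{\tau},e_{\bar\tau}\rangle\;=\;-\frac{1}{s_{\rho}}\langle u_{\tau},v_{\rho}\rangle\;=\;\frac{a_{\rho}}{s_{\rho}},
$$
which is the asserted value.

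I expect the real work to be bookkeeping rather than conceptual: one must fix the sign conventions for the Cartier datum $u_{\tau}$ and for the Cartier-divisor-by-cycle product to match \cite{fulton}, and one must invoke carefully the identifications $\mathcal{U}_{\tau}\cap V(\sigma)=\mathcal{U}_{\bar\tau}$ and $V(\sigma)\simeq X_{\Star(\sigma)}$, together with the compatibility of their orbit decompositions. The one step I would single out as genuinely essential is the one above: the support hypothesis is precisely what places $u_{\tau}$ in $\sigma^{\perp}$, which is what lets $\chi^{u_{\tau}}$ restrict to $V(\sigma)$ in the first place; were it dropped, one would first replace $D$ by the linearly equivalent divisor $D+div(\chi^{u_{\sigma}})$ — harmless for the intersection product — to arrange the same. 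As a bonus, the expression $b_{\tau}=-\langle u_{\tau},e_{\bar\tau}\rangle$ does not depend on which ray $\rho\prec\tau$ outside $\sigma$ was chosen, which re-proves that $a_{\rho}/s_{\rho}$ is well defined even when $\tau$ fails to be simplicial — a fact that truly uses the Cartier hypothesis on $D$.
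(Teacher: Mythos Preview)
The paper does not supply its own proof of this proposition: it is stated with the prefatory sentence ``We refer to \cite[5.1]{fulton} for proofs,'' so there is nothing to compare against beyond Fulton's treatment. Your argument is correct and is essentially the standard one given there --- restrict $D$ to the orbit closure $V(\sigma)\simeq X_{\Star(\sigma)}$, use that the support hypothesis forces the Cartier datum $u_{\tau}$ into $\sigma^{\perp}$ so that $\chi^{-u_{\tau}}$ descends to a local equation on $V(\sigma)$, and read off the order of vanishing along $V(\tau)$ via the primitive generator of $\bar\tau$ in $N(\sigma)$. Your closing remarks about well-definedness when $\tau$ is not simplicial and about dropping the support hypothesis are also accurate and match Fulton's discussion.
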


\begin{cor} Let the assumptions be as in Proposition \ref{toric_intersections}. If $X_{\Sigma}$ is smooth and $D=D_{\rho}$ we have:
$$
D_{\rho} \cdot V(\sigma) = 
\left\{ \begin{array}{ll}
V(\tau), \ \mbox{if} \ \sigma \ \mbox{and} \ \rho \ \mbox{span a cone} \ \tau \in \Sigma;\\
0, \ \mbox{otherwise.}\\
\end{array}
\right.
$$
\end{cor}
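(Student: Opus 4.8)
The plan is to obtain the statement as a direct specialization of Proposition \ref{toric_intersections} to the divisor $D = D_\rho$, using smoothness of $X_\Sigma$ to evaluate the coefficients $b_\tau$. Recall that the hypotheses inherited from Proposition \ref{toric_intersections} include that $V(\sigma)$ is not contained in the support of $D_\rho = V(\rho)$; equivalently, $\rho$ is not a ray of $\sigma$, so that the formula of that proposition applies. Writing $D_\rho = \sum_{\rho' \in \Sigma(1)} a_{\rho'} D_{\rho'}$, we have $a_{\rho'} = 1$ if $\rho' = \rho$ and $a_{\rho'} = 0$ otherwise, and Proposition \ref{toric_intersections} gives
$$
D_\rho \cdot V(\sigma) \ = \ \sum_{\tau} b_\tau \, V(\tau),
$$
the sum over all $\tau \in \Sigma$ with $\sigma \prec \tau$ and $\dim \tau = \dim \sigma + 1$.

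Next I would use that $X_\Sigma$ is smooth, so every such $\tau$ is a smooth cone: its rays are part of a basis of $N$, hence number exactly $\dim \tau = \dim \sigma + 1$, and since $\sigma \prec \tau$ the set $\sigma(1)$ of rays of $\sigma$ is contained in $\tau(1)$ and accounts for all but one of them. Thus $\tau$ has a unique ray $\rho_\tau \in \tau(1) \setminus \sigma(1)$, and by Proposition \ref{toric_intersections} the coefficient $b_\tau = a_{\rho_\tau}/s_{\rho_\tau}$ depends only on $\rho_\tau$. Hence $b_\tau \neq 0$ exactly when $\rho_\tau = \rho$, which happens precisely when $\tau(1) = \sigma(1) \cup \{\rho\}$; since a smooth cone is determined by its rays, there is then a unique such $\tau$, namely the cone spanned by $\sigma$ and $\rho$, and this occurs iff that cone belongs to $\Sigma$. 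It remains to compute $s_\rho$ in this case: smoothness of $\tau$ makes $\{v_{\rho'} : \rho' \in \tau(1)\}$ a basis of $N_\tau$ and $\{v_{\rho'} : \rho' \in \sigma(1)\}$ a basis of $N_\sigma$, so the image $\bar v_\rho$ of $v_\rho$ generates $N_\tau / N_\sigma \cong \Z$; therefore $s_\rho = 1$ and $b_\tau = a_\rho/s_\rho = 1$.

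Putting these together yields $D_\rho \cdot V(\sigma) = V(\tau)$ when $\sigma$ and $\rho$ span a cone $\tau \in \Sigma$, and $D_\rho \cdot V(\sigma) = 0$ otherwise, as claimed. There is no genuine obstacle here: once Proposition \ref{toric_intersections} is available, the argument is purely formal, and the only two points requiring (minimal) care are the observation that a smooth cone $\tau$ with $\sigma \prec \tau$ of one dimension larger has a single extra ray, and the verification that smoothness forces the multiplicity factor $s_\rho$ to equal $1$.
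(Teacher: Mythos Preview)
Your proof is correct and follows exactly the approach the paper intends: the corollary is stated immediately after Proposition \ref{toric_intersections} with no separate proof, so it is meant as the direct specialization you carry out, using smoothness to ensure each $\tau \succ \sigma$ of one higher dimension has a unique extra ray and that $s_\rho = 1$.
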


\section{Polytopes}

Let $N$ be a lattice of rank $n$ and $M$ its dual lattice. A \emph{polytope} in $M_{\R}$ is the convex hull of a finite number of points of $M_{\R}$. The \emph{dimension} of a polytope $P$ is the dimension of the smallest affine subspace of $M_{\R}$ containing $P$. When $\dim(P)=n$, $P$ is said to be \emph{full dimensional}. Given a nonzero vector $v \in N_{\R}$ and $a \in \R$, we define the affine hyperplane $H_{v,a}$ and the closed half-space $H^+_{v,a}$ in $M_{\R}$ by:
$$
H_{v,a} = \{u \in M_{\R} \ / \ \langle u, v \rangle = -a\} \ \ \textrm{and} \ \ H^+_{v,a} = \{u \in M_{\R} \ / \ \langle u, v \rangle \geq -a \}.
$$

A nonempty subset $Q$ of a polytope $P \subset M_{\R}$ is called a \emph{face}, and we write $Q \prec P$, if there exist $v \in N_{\R}-\{0\}$ and $a \in \R$ as above such that $Q = H_{v,a} \cap P$ and $P \subset H^+_{v, a}$. Such a hyperplane $H_{v,a}$ is called a \emph{supporting hyperplane} of the face $Q$. Faces of $P$ are polytopes again. We also consider $P$ as a face of itself. A face $Q$ is called a \emph{vertex} (respectively a \emph{facet}) of $P$ if $\dim(Q) = 0$ (respectively $\dim(Q) = \dim(P) - 1$). We say that $P$ is a \emph{lattice} polytope if its vertices belong to $M$. 

\begin{figure}[h]
\centering
\includegraphics[scale=0.5]{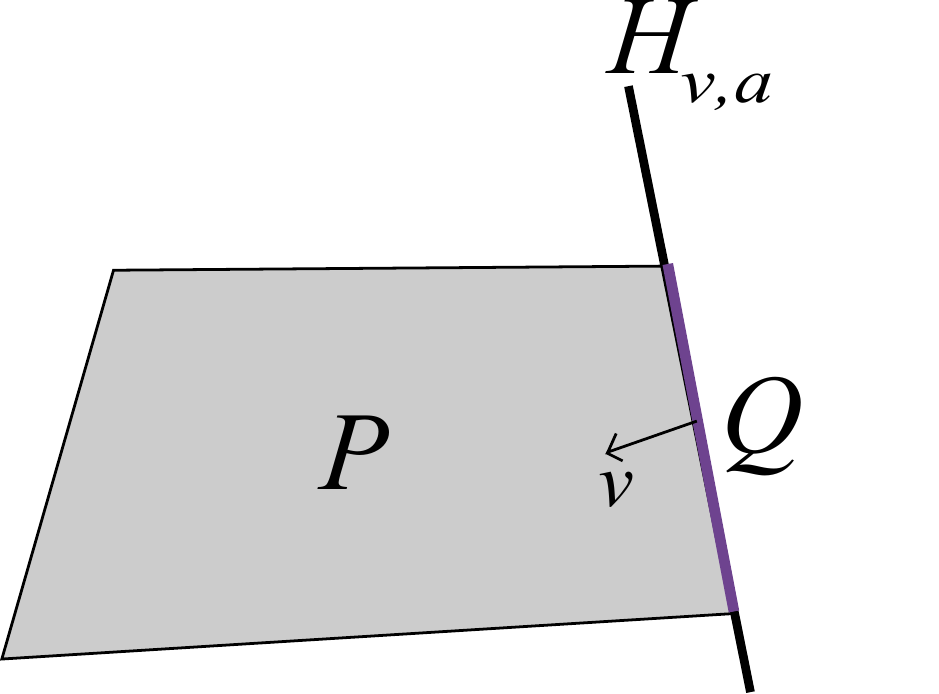}
\caption{}
\end{figure}

We are interested in full dimensional lattice polytopes $P$ in $M_{\R}$. In this case, for each facet $F$ of $P$, there is an unique supporting hyperplane $H_{v_F, a_F}$ of $F$, where $v_F$ is a primitive lattice vector. Thus, we can write $P$ as intersection of closed half-spaces determined by the supporting hyperplanes of the facets of $P$. This is called the \emph{facet presentation of P}:
\begin{equation}
P = \displaystyle \bigcap_{F \ \textrm{facet}} H^+_{v_F, a_F}.
\label{facet_presentation}
\end{equation}

\subsection{Polytopes Versus Toric Varieties}
\label{polytope_toric}

There are rich connections between the theory of polytopes and the theory of toric varieties. We can construct toric varieties from lattice polytopes.
Suppose that $P$ is a lattice polytope with maximal dimension in $M_{\R}$. Consider its facet presentation: 
$$
P = \displaystyle \bigcap_{F \ \textrm{facet}} H^+_{v_F, a_F}.
$$
For each face $Q$ of $P$ we define the cones in $N_{\R}$:
$$
\sigma_Q:= {\Cone}\Big(v_F \ \Big| \ Q \prec F, \ F \ \mbox{facet} \Big).
$$
We have $\dim(\sigma_Q)= n - \dim(Q)$. In particular, the vertices of $Q$ provide cones in $N_{\R}$ with dimension $n$. The collection $\Sigma_P := \{ \sigma_Q \ | \ Q \prec P\}$ is a complete fan in $N_{\R}$, called the \emph{normal fan of P}. 

\begin{de} Let $P$ be an $n$-dimensional lattice polytope in $M_{\R}$. We say that $P$ is \emph{smooth}, if for each vertex $v$ of $P$, the primitive inner normal lattice vectors defining the facets incidents to $v$ form a basis for $N$. Equivalently, $P$ is smooth if and only if the normal fan $\Sigma_P$ is smooth. 
\end{de}

\begin{figure}[h]
\centering
\includegraphics[scale=0.7]{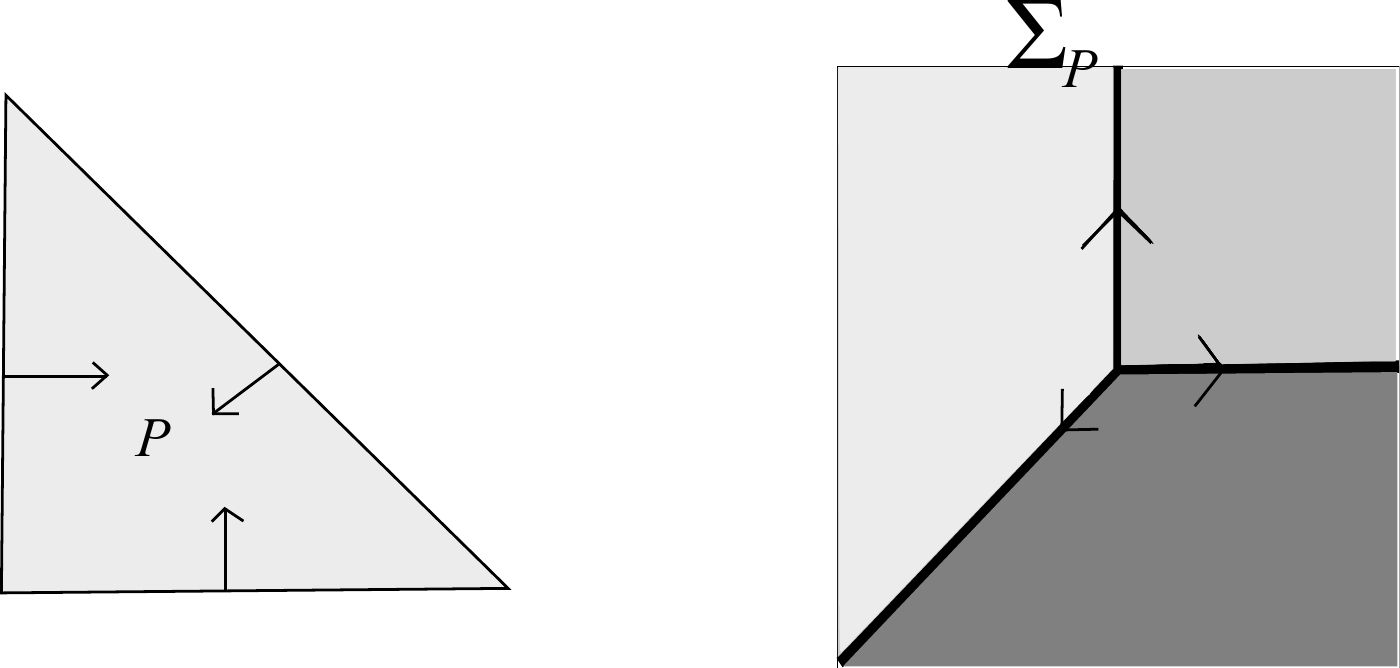}
\caption{The normal fan of the standard simplex is the fan of $\mathbb{P}^2$.}
\end{figure}

Given a full dimensional lattice polytope $P$ in $M_{\R}$, we define $X_P$ to be the toric variety corresponding to $\Sigma_P$. The divisor $D_P := \sum a_FD_{v_F}$ is an ample divisor on $X_P$, hence $X_P$ is a projective variety (see for instance \cite[6.1.15]{cox}).\\ 

It is also possible to construct polytopes from complete toric varieties: let $X_{\Sigma}$ be a complete $n$-dimensional toric variety. Given any Weil divisor $D = \displaystyle \sum_{\rho \in \Sigma(1)} a_{\rho}D_{\rho}$ on $X_{\Sigma}$ we can define a polytope (possibly empty!) $P_D$ as:
$$
P_D = \displaystyle \bigcap_{\rho \in \Sigma(1)} H^+_{v_{\rho},a_{\rho}}.
$$

In fact, to see that $P_D$ is a polytope, it is sufficient to see that $P_D$ is bounded. To prove this, we see that $P_D$ has a finite number of lattice points. We have from (\ref{global_sections}):
\begin{align*}
u \in P_D & \iff \langle u, v_\rho \rangle \geq -a_{\rho} \ \forall \rho \in \Sigma(1) \notag \\ 
 & \iff div(\chi^u) + D \geq 0 \notag \\ 
 & \iff \chi^u \in \Gamma(X_{\Sigma}, \mathcal{O}_{X_{\Sigma}}(D)).
\end{align*}  
Hence:
\begin{equation}
\Gamma(X_{\Sigma}, \mathcal{O}_{X_{\Sigma}}(D)) = \displaystyle \bigoplus_{u \in P_D \cap M} \C \cdot \chi^u.
\label{global_sections_pol}
\end{equation}
Since $X_{\Sigma}$ is complete, $\Gamma(X_{\Sigma}, \mathcal{O}_{X_{\Sigma}}(D))$ is a finite dimensional vector space, therefore, there are finitely many lattice points in $P_D$. Moreover, the dimension of the complex vector space $\Gamma(X_{\Sigma}, \mathcal{O}_{X_{\Sigma}}(D))$ is equal to the number of lattice points lying in $P_D$. 

However, the polytope $P_D$ can fail to be a lattice polytope, or to be full dimensional. The following proposition shows how these and other combinatorial properties of the polytope $P$ are reflected by geometric properties of the divisor $D$ (see for instance \cite[6.1.10 and 6.1.15]{cox}). 

\begin{prop} Let $\Sigma$ be a complete fan in $N_{\R}$ and $D$ a $T$-invariant Cartier divisor on the toric variety $X_{\Sigma}$. Then:
\begin{enumerate} 

\item[(a)] If $D$ is basepoint free then $P_D$ is a lattice polytope;

\item[(b)] If $D$ is ample, then $P_D$ is a full dimensional lattice polytope, $\Sigma = \Sigma_{P_D}$ and $D=D_{P_D}$.

\end{enumerate}

\label{polytope_div_conections}
\end{prop}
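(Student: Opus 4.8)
The plan is to reduce everything to the support function attached to $D$. Write $D = \sum_{\rho \in \Sigma(1)} a_\rho D_\rho$. Since $D$ is Cartier, for every maximal cone $\sigma \in \Sigma(n)$ there is $u_\sigma \in M$ with $\langle u_\sigma, v_\rho \rangle = -a_\rho$ for all $\rho \in \sigma(1)$, and these patch to a function $\varphi_D \colon N_\R \to \R$ which is linear on each cone, $\varphi_D|_\sigma = \langle u_\sigma, \cdot \rangle$. From the presentation $P_D = \bigcap_{\rho \in \Sigma(1)} H^+_{v_\rho, a_\rho}$ one reads off that $u \in P_D$ if and only if $\langle u, v \rangle \geq \varphi_D(v)$ for all $v \in N_\R$: on each cone $\sigma$ both $\langle u, \cdot\rangle$ and $\varphi_D$ are linear, so the inequality on $\sigma$ is equivalent to its validity on the generating rays, i.e.\ to the original inequalities $\langle u, v_\rho\rangle \geq -a_\rho$.

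For (a), I would first invoke the standard criterion that $D$ is basepoint free precisely when $\varphi_D(v) = \min_{\sigma \in \Sigma(n)} \langle u_\sigma, v \rangle$ for all $v$, equivalently when $u_\sigma \in P_D$ for every maximal cone $\sigma$ (see \cite[6.1.7]{cox}). Granting this, each $u_\sigma$ is a lattice point of $P_D$, so $\Conv(u_\sigma \mid \sigma \in \Sigma(n)) \subseteq P_D$. For the reverse inclusion, suppose $u \in P_D \setminus \Conv(u_\sigma \mid \sigma \in \Sigma(n))$ and separate $u$ from this polytope by a functional $v_0 \in N_\R$ with $\langle u, v_0 \rangle < \langle u_\sigma, v_0 \rangle$ for all $\sigma \in \Sigma(n)$. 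Choosing a maximal cone $\sigma_0 \ni v_0$ gives $\langle u, v_0 \rangle < \langle u_{\sigma_0}, v_0 \rangle = \varphi_D(v_0)$, contradicting $u \in P_D$. Hence $P_D = \Conv(u_\sigma \mid \sigma \in \Sigma(n))$ is the convex hull of finitely many lattice points, so it is a lattice polytope.

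For (b), ampleness upgrades the criterion to strict convexity of $\varphi_D$: the $u_\sigma$ are pairwise distinct and $\langle u_\sigma, \cdot\rangle > \varphi_D$ off $\sigma$ (again \cite[6.1.7]{cox}). One then checks that the normal cone of $P_D$ at the vertex $u_\sigma$ is $\{v \in N_\R \mid \langle u_\sigma, v\rangle = \min_{u \in P_D}\langle u, v\rangle\} = \{v \mid \langle u_\sigma, v\rangle = \varphi_D(v)\}$, which by strict convexity is exactly $\sigma$; thus the maximal cones of $\Sigma_{P_D}$ are precisely those of $\Sigma$, so $\Sigma = \Sigma_{P_D}$, and in particular $\Sigma_{P_D}$ is complete, forcing $P_D$ to be full dimensional (alternatively, full-dimensionality follows from $\dim X_\Sigma = n$ together with the growth of $\dim \Gamma(X_\Sigma, \mathcal{O}_{X_\Sigma}(mD)) = \#(mP_D \cap M)$). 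Finally, since the fans agree, the facets of $P_D$ are indexed by $\rho \in \Sigma(1)$ with inner normal $v_\rho$, and the coefficient of $D_\rho$ in $D_{P_D}$ is $-\min_{u \in P_D}\langle u, v_\rho\rangle = -\varphi_D(v_\rho) = a_\rho$, so $D_{P_D} = \sum_\rho a_\rho D_\rho = D$.

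The main obstacle is the input that I am citing rather than reproving: that basepoint-freeness of $D$ is equivalent to convexity of $\varphi_D$ and ampleness to its strict convexity. Once that is in hand, part (a) is a one-line separation argument and part (b) is bookkeeping with normal cones and support functions. If one wished to be self-contained, establishing these (strict) convexity characterizations — in particular producing, for a non-basepoint-free $D$, a ray $v_\rho$ and a cone $\sigma$ with $\langle u_\sigma, v_\rho\rangle < -a_\rho$ witnessing a common zero of the sections — would be the technical heart of the argument.
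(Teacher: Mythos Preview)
The paper does not actually prove this proposition: it is stated with a parenthetical reference ``(see for instance \cite[6.1.10 and 6.1.15]{cox})'' and no argument is given. Your proposal supplies exactly the standard proof that lies behind that citation --- using the support function $\varphi_D$, the characterization of basepoint-freeness/ampleness as (strict) convexity of $\varphi_D$, the identification $P_D=\Conv(u_\sigma\mid\sigma\in\Sigma(n))$ via a separation argument, and the recovery of the normal fan from the normal cones at the vertices $u_\sigma$ --- so there is nothing to compare beyond noting that your write-up is correct and is precisely the argument the paper is pointing to.
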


It would be interesting to know weaker conditions on a $T$-invariant Cartier divisor $D$ that imply that $P_D$ is a lattice polytope.

\begin{cor} There exists a one-to-one correspondence:
$$
\left\{ 
\begin{array}{cc} 
\mbox{full dimensional lattice} \\
\mbox{polytopes in} \  M_{\R}                   
\end{array} \right\} \longleftrightarrow
\left\{
\begin{array}{cc}
n \mbox{-dimensional polarized} \\
\mbox{toric varieties} 
\end{array} \right\},
$$
that associates to each full dimensional lattice polytope $P$  the polarized toric variety $(X_P, D_P)$.

\end{cor}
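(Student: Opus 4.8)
The plan is to show that the two constructions recalled above are mutually inverse: the map $P \mapsto (X_P,D_P)$ sends a full dimensional lattice polytope in $M_{\R}$ to a polarized toric variety --- meaning here an $n$-dimensional projective toric variety together with a $T$-invariant ample Cartier divisor --- and the map $(X_{\Sigma},D)\mapsto P_D$ is its inverse. So the proof splits into (1) checking that both maps are well defined, and (2) checking that both compositions are the identity.

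For well-definedness of $P\mapsto (X_P,D_P)$: the normal fan $\Sigma_P$ is complete, so $X_P$ is complete, and it was recalled above that $D_P=\sum_F a_F D_{v_F}$ is ample, so $X_P$ is projective. It remains to see $D_P$ is Cartier: for the maximal cone $\sigma_v$ attached to a vertex $v$ of $P$, the character $\chi^{-v}$ is a local equation for $D_P$ on $\mathcal{U}_{\sigma_v}$, and $v\in M$ since $P$ is a lattice polytope; hence $D_P$ is Cartier. Conversely, if $D$ is a $T$-invariant ample Cartier divisor on a complete toric variety $X_{\Sigma}$, then Proposition~\ref{polytope_div_conections}(b) tells us $P_D$ is a full dimensional lattice polytope, so $(X_{\Sigma},D)\mapsto P_D$ indeed takes values among full dimensional lattice polytopes.

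Now for the compositions. Starting from $P$ and forming $P_{D_P}$: the rays of $\Sigma_P$ are exactly the primitive inner facet normals $v_F$ of $P$, and the coefficient of $D_{v_F}$ in $D_P$ is $a_F$; hence $P_{D_P}=\bigcap_{F\ \text{facet}}H^+_{v_F,a_F}$, which is precisely the facet presentation (\ref{facet_presentation}) of $P$, so $P_{D_P}=P$. Starting from $(X_{\Sigma},D)$ with $D$ ample Cartier and $T$-invariant and forming $(X_{P_D},D_{P_D})$: Proposition~\ref{polytope_div_conections}(b) gives $\Sigma=\Sigma_{P_D}$, hence $X_{\Sigma}=X_{P_D}$, and $D=D_{P_D}$. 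Thus both round trips are the identity and the correspondence is a bijection.

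The only genuinely delicate point, which I expect to need the most care to state precisely rather than to prove, is pinning down the equivalence relations that make this a literal bijection: on the polytope side $P$ must be regarded as a fixed subset of $M_{\R}$, not merely up to lattice translation, and on the variety side one must retain the actual $T$-invariant divisor $D$, not just its class in $Cl(X_{\Sigma})$, since replacing $D$ by $D+div(\chi^u)$ translates $P_D$ by a lattice vector. With these conventions fixed, the argument above is complete; beyond that, the proof is essentially an unwinding of the normal-fan construction together with Proposition~\ref{polytope_div_conections}.
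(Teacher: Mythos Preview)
Your proof is correct and is exactly the natural unfolding of why this statement is labeled a corollary: the paper gives no explicit proof at all, treating it as immediate from Proposition~\ref{polytope_div_conections}(b), and your argument simply spells out the two round trips using that proposition together with the facet presentation~(\ref{facet_presentation}). Your closing remark about the conventions needed (actual polytopes, not translation classes; actual $T$-invariant divisors, not linear equivalence classes) is a genuinely useful clarification that the paper leaves implicit.
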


\begin{de} Let $M$ and $M'$ be two $n$-dimensional lattices. 
An \emph{affine isomophism} of $M_{\R}$ and $M'_{\R}$ is a linear isomorphism between these two vector spaces obtained by tensorizing by $\R$ a lattice isomorphism $\phi:M \to M'$.
We say that two full dimensional lattice polytopes $P \subset M_{\R}$ and $P' \subset M_{\R}$ are \emph{affinelly isomorphic} if there exists an affine isomorphism $\varphi: M_{\R} \to M'_{\R}$ that applies $P$ bijectively onto $P'$. 

\label{aff_isom}
\end{de}

\begin{rem} Using the notation of Definition \ref{aff_isom}, the polytopes $P$ and $P'$ are affinelly isomorphic if and only if there exists a toric isomorphism $f: X_P \to X_{P'}$ with $f^*(D_{P'}) = D_P$.

\label{aff_isom_toric_pol}
\end{rem}

\section{Basics on Mori Theory and Adjunction Theory}

\subsection{Cones of cycles}
\label{cones_cycles}

In this section we introduce basic notions about cones of cycles, recall some properties and introduce notation. Throughout this section, $X$ will be a normal projective variety. 

 We denote by $NS(X)$ the Neron-Severi group of $X$ and by $N^1(X)$ the tensor product $NS(X) \otimes \R$. It is well known that $N^1(X)$ is a finite dimensional vector space and its dimension, denoted by $\rho(X)$, is called the \emph{Picard number} of $X$. If $D = \sum a_iD_i$ is an $\R$-linear combination of Cartier divisors on $X$ we denote by $[D]$ its corresponding class in $N^1(X)$. 

There are two important cones of divisors to consider: we recall that a Cartier divisor $D$ is \emph{numerically effective} (\emph{nef} for short) when the intersection product $D \cdot C$ is nonnegative for every complete integral curve $C \subset X$. The \emph{nef cone} $\overline{Nef}(X)$ is the closure in $N^1(X)$ of the cone generated by the classes of nef divisors. By Kleiman's Ampleness criterion, a Cartier divisor $D$ is ample if and only if its class $[D]$ belongs to the interior of $Nef(X)$. The \emph{pseudo-effective cone} $\overline{Eff}(X)$ is the the closure of the cone generated by the class of effective divisors on $X$. Since every very ample Cartier divisor is linearly equivalent to an effective divisor, we have $\overline{Nef}(X) \subset \overline{Eff}(X)$.

Let $Z_1(X)$ be the free group generated by the complete integral curves contained in $X$. The elements of $Z_1(X)$ are called \emph{1-cycles}. When two cycles $C$ and $C'$ satisfy $D \cdot C = D \cdot C'$ for every Cartier divisor $D$ on $X$, we say that these cycles are \emph{numerically equivalent} and write $C \equiv C'$. We see that $`` \equiv "$ is an equivalence relation. Let $N_1(X)$ be the vector space $\big(Z_1(X)/\equiv \big) \otimes \R$. Similarly to the divisor case, given a 1-cycle $C$ on $X$ we denote by $[C]$ the corresponding class in $N_1(X)$. The intersection product of Cartier divisors with complete integral curves establishes a perfect pairing between $N^1(X)$ and $N_1(X)$. Thus $N_1(X)$ has finite dimension, equal to $\rho(X)$. 

The \emph{Mori Cone} of $X$, $\overline{NE}(X)$, is the closure of the cone in $N_1(X)$ generated by the classes of complete integral curves on $X$. By definition, $\overline{NE}(X)$ is the dual to the nef cone $\overline{Nef}(X)$. To describe the dual cone of the pseudo-effective cone $\overline{Eff}(X)$ we need a definition:

\begin{de} A complete integral curve $C \subset X$ is called a \emph{movable} (or a moving curve) on $X$ if $C=C_{t_0}$ is a member of an algebraic family $\{C_t\}_{t \in S}$ of curves on $X$ such that $\displaystyle \bigcup_{t \in S} C_t = X$. 
\end{de}

\begin{ex} Let $(x:y:z:w)$ be the coordinates of points in $\mathbb{P}^3$ and let $X$ be the quadric cone with vertex $O=(0:0:0:1)$ given by the polynominal equation $z^2 = x^2+y^2$. Let $f: X \dashrightarrow \mathbb{P}^2$, $f(x:y:z:w)=(x:y:z)$. Blowing up $X$ at $O$, we obtain a variety $\tilde{X}$ and a morphism $g:\tilde{X} \to \mathbb{P}^2$ that resolves the rational map $f$. The image of $g$ is the conic $\Gamma$ given by the equation $z^2 = x^2+y^2$ in $\mathbb{P}^2$ and this map realizes $\tilde{X}$ as a $\mathbb{P}^1$-bundle over $\Gamma$. Moreover, the strict transform $\tilde{C}$ of a ruling $C$ of the cone $X$ is mapped to $g$ in a point of $\Gamma$. It follows that the rulings of the cone form an algebraic family of curves covering $X$ parametrized by $\Gamma$. Consequently, every ruling of the cone is movable on $X$. 

\end{ex}

The \emph{cone of moving curves} of $X$, $\overline{Mov}(X)$, is the closure in $N_1(X)$ of the cone generated by the classes of movable curves on $X$. By definition, $\overline{Mov}(X) \subset \overline{NE}(X)$.

\begin{rem} Another description of $\overline{Mov}(X)$ was given in \cite[1.3]{bdpp}. There it is showed that $\overline{Mov}(X)$ is generated by complete integral curves on $X$ called \emph{strongly movable curves}. These curves are obtained by images in $X$ of curves given by complete intersections of suitable very ample divisors on birational modifications of $X$. In particular, the numerical class of any curve given by a complete intersection $D_1 \cap ... \cap D_{n-1}$ of very ample divisors on $X$ belongs to $\overline{Mov}(X)$.
\label{strong_movable_curve}
\end{rem}

\begin{de} A \emph{$\Q$-divisor} (respectively an \emph{$\R$-divisor}) on $X$ is a $\Q$-linear (respectively an $\R$-linear) combination of prime Weil divisors on $X$. A \emph{$\Q$-divisor} (respectively an $\R$-divisor) $D$ on $X$ is called \emph{$\Q$-Cartier} (respectively \emph{$\R$-Cartier}) if $D$ is a $\Q$-linear (respectively an $\R$-linear) combination of Cartier divisors on $X$. An $\R$-Cartier $\R$-divisor $D$ is \emph{ample} (respectively \emph{nef}, respectively \emph{pseudo-effective}) if $[D] \in int(\overline{Nef}(X))$ (respectively $[D] \in \overline{Nef}(X)$, respectively $[D] \in \overline{Eff}(X)$).  
\end{de}

When $X$ is smooth, it is possible to define intersection products between cycles of arbitrary dimension (see \cite[Cap. 8]{fulton_int}). Thus, we can generalize the notions of Mori Cone and effective cone considering cycles on $X$ of dimension $\geq 1$. For each integer $k$, $2 \leq k \leq n-2$ let $Z_k(X)$ be the group of cycles of dimension $k$. Two cycles $V, V' \in Z_k(X)$ are numerically equivalent, written $V \equiv V'$, when $D \cdot V = D \cdot V'$ for every $D \in Z_{n-k}(X)$. The intersection product induces a perfect pairing between the vector spaces $N_k(X) = (Z_k(X)/\equiv) \otimes \R$ and $N^k(X) = (Z_{n-k}(X)/\equiv) \otimes \R$  and these vector spaces have finite dimension again. We denote by $[V]$ the class in $N_k(X)$ of a $k$-cycle $V$ on $X$.

A $k$-cycle $V$ is called \emph{pseudo-effective} if there exist irreducible $k$-dimensional subvarieties $V_i$ such that $V \equiv \sum_i a_iV_i$, $a_i>0$. We define $\overline{NE}_{k}(X)$ to be the closure of the cone generated by the classes of pseudo-effective $k$-cycles on $X$. Note that $\overline{NE}_1(X)$ is the Mori Cone of $X$ and $\overline{NE}_{n-1}(X)$ is the cone $\overline{Eff}(X)$.\\

\vspace{0,3 cm}
\begin{say} 
\label{mov_toric}
\textbf{Specializing to the toric case.} Let $X = X_{\Sigma}$ be a $\Q$-factorial projective toric variety corresponding to an $n$-dimensional fan $\Sigma $ in $N_{\R}$. It is well-known that linear and numerical equivalence coincide on complete toric varieties (see for instance \cite[6.2.15]{cox}). Thus, $N^1(X) \simeq \Pic(X) \otimes \R$. We view $\Pic(X)$ as a lattice for the vector space $N^1(X)$. As we have seen in Section \ref{toric_divisors}, the Picard group $\Pic(X)$ of $X$ is Abelian, free and $rank(\Pic(X)) = \# \Sigma(1) - n$. Hence the Picard number of $X$ is $\rho(X) = \# \Sigma(1) - n$. Moreover, a Cartier divisor is nef if and only if it is basepoint free (\cite[6.1.12]{cox}).

\vspace{0,2 cm}
The cones of curves and divisors defined above in the general case are polyhedral in the toric context. In fact, by Proposition \ref{eff_toric_div}, effective divisors are linearly equivalent to effective $T$-invariant divisors. Thus:
\begin{center}
$\overline{Eff}(X) = \Cone\Big( \ [D] \ \Big| \ D$ is a prime $T$-invariant divisor on $X \Big).$
\end{center}

Similarly, the Mori Cone $\overline{NE}(X)$ is generated by the classes of $T$-invariant curves $V(\omega), \ \omega \in \Sigma(n-1)$ (see \cite[1.6]{reid} or \cite[6.2.20]{cox}):

\begin{equation}
\overline{NE}(X) = \displaystyle \sum_{\omega \in \Sigma(n-1)} \R_{\geq 0}\cdot [V(\omega)].
\label{toric_cone_theorem}
\end{equation}

Taking the dual of these cones, we conclude that also the cone of moving curves and the nef cone are polyhedral.\\

We give now a combinatorial description of the elements of $N_1(X)$ when $X$ is a $\Q$-factorial projective toric variety. Tensoring the exact sequence (\ref{exact_sequence}) by $\R$ and dualizing the result, we obtain the following exact sequence: 

\begin{equation}
0 \longrightarrow N_1(X) \stackrel{\alpha}{\longrightarrow} {\R}^{\# \Sigma(1)} \stackrel{\beta}{\longrightarrow} N_{\R} \longrightarrow 0. 
\end{equation}

The map $\alpha$ sends each class $[C]$ of an integral complete curve $C$ to $(D_{\rho} \cdot C)_{\rho \in \Sigma(1)}$ and $\beta$ sends each canonical vector $e_{\rho}$ to the unique non-zero primitive lattice vector $v_{\rho}$ contained in $\rho$. Hence the elements $c \in N_1(X)$ are identified with the linear relations $\displaystyle \sum_{\rho} a_{\rho}v_{\rho} = 0$ among the minimal generators of the cones in $\Sigma(1)$, where $a_{\rho} = D_{\rho} \cdot c, \ \forall \rho \in \Sigma(1)$. One such linear relation is called \emph{effective} when each coefficient $a_{\rho}$ is nonnegative. Since $\overline{Mov}(X) = \overline{Eff}(X)^{\vee}$ we conclude that $\overline{Mov}(X)$ is generated by effective linear relations.
\end{say}

\subsection{Log singularities and contractions of extremal rays} \label{log}

Let $X$ and $Y$ be normal projective varieties and $g:X \dashrightarrow Y$ a dominant rational map. Let $U_g$ be the largest open subset of $X$ such that $g_{|U_g}$ is a morphism. The \emph{exceptional locus} of $g$, $Exc(g)$, is the complement in $X$ of the largest open set $U \subset X$ such that $g_{|U}: U \to g(U)$ is an isomorphism. Of course if $\dim(X) \neq \dim(Y)$ then $Exc(g) = X$. Suppose now that $g$ is birational. We say that $g$ is \emph{surjective in codimension} 1 if $\codim_Y(Y-Im(g))\geq 2$. In this case, given a Cartier divisor $D=\sum a_i D_i$ on $X$ we define the \emph{pushforward} of $D$ as the divisor $g_*(D)$ on $Y$ given by $g_*(D) = \sum a_i D_i'$, where $D_i' = 0$ if $\dim \overline{g(D_i \cap U_g)} < \dim D_i$, and $D_i' = \overline{g(D_i \cap U_g)}$ if $\dim \overline{g(D_i \cap U_g)} = \dim D_i$. The \emph{pullback} of a Cartier divisor $D'$ on $Y$ is the divisor on $X$ given by $\overline{g_{|U_g}^*(D')}$, where $g_{|U_g}^*(D')$ is the Cartier divisor on $U_g$ obtained by pulling back to $U_g$ the local equations defining the divisor $D'$ on $Y$. These two maps induce an injective linear map $g^*:N^1(Y) \to N^1(X)$ and a surjective linear map $g_*:N^1(X) \to N^1(Y)$. We say that $g$ is a \emph{small modification} when $g$ and $g^{-1}$ are surjective in codimension 1. Note that when $g$ is a small modification, $g_*$ and $g^*$ are inverse to each other.  

\vspace{0,2 cm}

Let $W$ be a smooth projective variety and $D = \sum_i D_i$ a divisor on $W$. We say that $D$ has \emph{simple normal crossings} if each $D_i$ is smooth and irreducible and for every $p \in W$, the tangent spaces $T_p(D_i) \subset T_p(W)$ meet transversally, i.e.:
$$
\codim( \displaystyle \bigcap_{p \in D_i} T_p(D_i)) = \# \big\{i \ \big| \ p \in D_i \big\}.
$$

Let $X$ be a normal projective variety and $\Delta = \sum d_jD_j$ a $\Q$-divisor on $X$, where the $D_j$'s are prime divisors on $X$. Let $W$ be a smooth projective variety and $f: W \to X$ a birational morphism. If $U$ is the largest open subset in $X$ such that $(f^{-1})_{|U}$ is a morphism, we define:
$$
f_*^{-1}(\Delta) = \sum_j d_j\overline{f^{-1}(D_j \cap U)}.
$$ 
Suppose that $Exc(f)$ is the union of prime divisors $E_i's$ on $W$. We say that $f$ is a \emph{log resolution} of $(X, \Delta)$ if the divisor $\sum_i E_i + f_*^{-1}(\Delta)$ has simple normal crossings.
If $K_X+\Delta$ is $\Q$-Cartier, we can pick a canonical divisor $K_{W}$ such that (see \cite[Sec. 2.3]{km98}):
$$
K_W + f^{-1}_*(\Delta) = f^*(K_X+\Delta) + \sum_i a_iE_i,
$$
where the coefficients $a_i \in \R$ do not depend of the log resolution $f$, they depend only on the valuation determined by the $E_i$.

\begin{de} Let $(X, \Delta)$ be a pair such that $K_X + \Delta$ is $\Q$-Cartier and $\Delta = \sum d_jD_j$ a $\Q$-divisor with $d_j \in [0,1)$ and $D_j$ is a prime divisor $\forall j$. If there is a log resolution $f:W\to X$ of $(X, \Delta)$ as above such that $a_i > -1, \ \forall i$, we say that the pair $(X, \Delta)$ has \emph{Kawamata log terminal} (\emph{klt} for short) \emph{singularities}. In addition, if $X$ is $\Q$-factorial, we say that $(X, \Delta)$ is a \emph{$\Q$-factorial klt pair}. If $X$ is $\Q$-factorial, $\Delta = 0$ and the $a_i$ are positive, we say that $X$ has \emph{terminal singularities}. 

\label{klt}
\end{de}

Pairs $(X, \Delta)$ with klt singularities are very special. For instance, there is a good description of the part of their Mori Cone $\overline{NE}(X)$ contained in the halfspace $K_X + \Delta <0$ (see for instance \cite[Theorem 3.7]{km98}):

\begin{teo}[Cone Theorem] Let $(X, \Delta)$ be a pair with klt singularities. There is a countable set $\Gamma \subset \overline{NE}(X)$ of classes of rational curves $C \subset X$ such that $0<-(K_X + \Delta )\cdot C \leq 2\dim(X)$ and:
$$
\overline{NE}(X) = \overline{NE}(X)_{K_X + \Delta  \geq 0} + \displaystyle \sum_{[C] \in \Gamma} \R_{\geq0}[C].
$$

Moreover, for every ample $\R$-divisor $A$ on $X$, we can pick finitely many classes $[C_1], ..., [C_l] \in \Gamma$ such that:

$$ \overline{NE}(X) = \overline{NE}(X)_{K_X + \Delta + A \geq 0} + \displaystyle \sum_{i=1}^l \R_{\geq0}[C_i].
$$

\label{cone_theorem}
\end{teo}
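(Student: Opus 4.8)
This is the Cone Theorem for klt pairs, and the plan is to reproduce the classical proof of Kawamata, Shokurov, Reid and Mori in the form presented in \cite{km98}; the argument revolves around describing the $(K_X+\Delta)$-negative part of $\overline{NE}(X)$ via rational supporting hyperplanes, and then producing low-degree rational curves by bend-and-break. \textbf{Step 1: the Rationality Theorem.} Fix an ample Cartier divisor $H$ and, assuming $K_X+\Delta$ is not nef, set
$$
r(H) \ = \ \max\{\, t \in \R \ \mid \ H + t(K_X+\Delta) \ \text{is nef} \,\}.
$$
One must show $r(H)$ is rational with denominator bounded in terms of $\dim X$ and the Cartier index of $K_X+\Delta$. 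This is proved by a non-vanishing argument: pass to a log resolution of $(X,\Delta)$, apply Kawamata--Viehweg vanishing (here the klt hypothesis enters), and combine it with asymptotic Riemann--Roch to force the linear system attached to a hypothetical irrational threshold to be non-empty, a contradiction. This is the technically hardest input.

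\textbf{Step 2: the Base-Point-Free Theorem and the cone decomposition.} If $D$ is nef and $\Q$-Cartier with $aD-(K_X+\Delta)$ ample for some $a>0$, then (again by vanishing on a log resolution) $mD$ is base-point free for $m\gg 0$, and the morphism $\operatorname{cont}_D\colon X\to Z$ it defines contracts exactly the curves $C$ with $D\cdot C=0$. Feeding in the divisors $H+r(H)(K_X+\Delta)$ from Step 1, as $H$ varies over ample classes, produces enough rational supporting hyperplanes of $\overline{NE}(X)$ that meet the cone only along its $(K_X+\Delta)$-negative part; the denominator bound from Step 1 then keeps the corresponding extremal rays from accumulating except towards $\{(K_X+\Delta)=0\}$, which yields
$$
\overline{NE}(X) \ = \ \overline{NE}(X)_{K_X+\Delta\ge 0} + \sum_{[C]\in\Gamma}\R_{\ge 0}[C]
$$
together with its finite refinement after adding an ample $\R$-divisor $A$, since only finitely many extremal rays lie in $\{K_X+\Delta+A<0\}$.

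\textbf{Step 3: length bound and rational curves.} It remains to span each $(K_X+\Delta)$-negative extremal ray by the class of a \emph{rational} curve with $0<-(K_X+\Delta)\cdot C\le 2\dim X$. A curve inside a positive-dimensional fibre of some $\operatorname{cont}_D$ has positive $-(K_X+\Delta)$-degree; deforming it while fixing a point --- via reduction modulo $p$, or in characteristic zero via the deformation theory of morphisms $\mathbb{P}^1\to X$ --- degenerates it to a connected curve with a rational component in the same ray, and iterating this bounds the degree by $2\dim X$.

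The main obstacle is the combination of the Rationality and Base-Point-Free Theorems in Steps 1--2: both rest on Kawamata--Viehweg-type vanishing for the klt pair $(X,\Delta)$ together with subtle non-vanishing estimates, and this is precisely where the klt hypothesis is indispensable. The bend-and-break step that yields the explicit constant $2\dim X$ is also substantial, but it is comparatively self-contained and does not require the full vanishing machinery.
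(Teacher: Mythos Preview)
The paper does not prove this theorem: it is stated in the preliminaries with a bare citation to \cite[Theorem 3.7]{km98}, so there is no ``paper's own proof'' to compare against. Your outline is essentially the standard Kawamata--Reid--Shokurov--Mori argument as presented in that same reference (Rationality Theorem $\Rightarrow$ Basepoint-freeness $\Rightarrow$ cone decomposition, then bend-and-break for the length bound), so in that sense you are aligned with the source the paper defers to. As a sketch it is accurate; just be aware that in a thesis context this statement is being \emph{quoted}, not proved, and a full write-up would amount to reproducing a chapter of \cite{km98}.
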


\begin{figure}[h]
\centering
\includegraphics[scale=0.5]{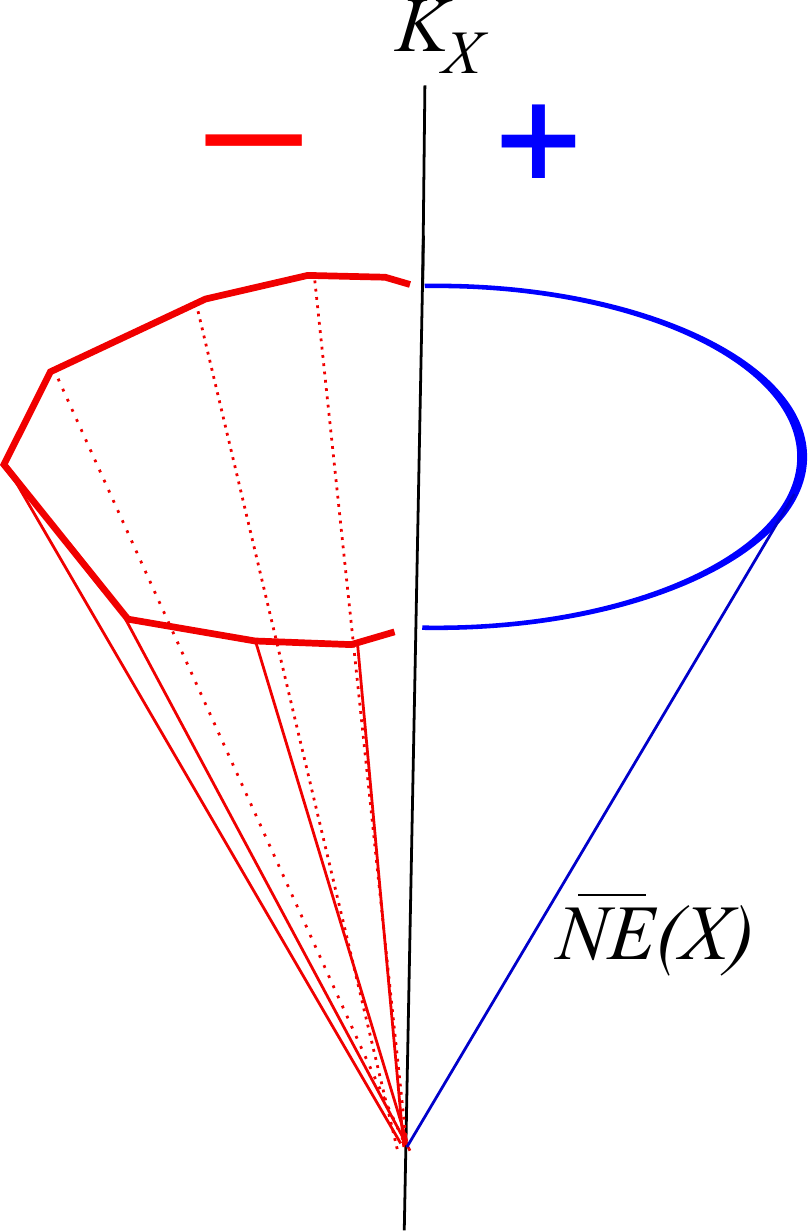}
\caption{The Cone Theorem.}
\end{figure}

The Mori Cone of a normal variety $X$ codifies information about morphisms of $X$: a \emph{contraction} of a face $F \prec \overline{NE}(X)$ is a morphism $\phi_F: X \to Y$ with connected fibers onto a normal variety $Y$, such that the image $\phi_F(C)$ of a curve $C \subset X$ is a point if and only if $[C] \in F$. 

Not every face $F$ of $\overline{NE}(X)$ provides a contraction $\phi_F: X \to Y$. However, if $\phi_F$ exists, it is unique up to isomorphism of $Y$ (as a consequence of the Stein Factorization \cite[III, 11.5]{hartshorne}).

The following theorem gives sufficient conditions to guarantee the existence of such contractions (see for instance \cite[Theorem 3.7.3]{km98}).

\begin{teo}[Contraction Theorem] Let $(X,\Delta)$ be a $\Q$-factorial klt pair and $F \prec \overline{NE}(X)$ a face contained in the halfspace $K_X + \Delta <0$. Then the contraction $\phi_F$ of the face $F$ described above exists.
\label{contraction}
\end{teo}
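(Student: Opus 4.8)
The plan is to realize the contraction $\phi_F$ as the morphism attached to (a multiple of) a suitable nef divisor, following the classical route: the Contraction Theorem is deduced from the Cone Theorem together with the Base Point Free Theorem of Kawamata and Shokurov. \emph{Step 1: a nef supporting divisor for $F$.} First I would construct a nef $\Q$-Cartier divisor $H$ on $X$ with
$$
F \;=\; \overline{NE}(X)\cap H^{\perp}
\qquad\text{and}\qquad
\big(\overline{NE}(X)\cap H^{\perp}\big)\setminus\{0\}\subset\{\,K_X+\Delta<0\,\}.
$$
Such an $H$ is furnished by the Cone Theorem (Theorem~\ref{cone_theorem}): applied with an auxiliary ample $\R$-divisor $A_0$ it gives $\overline{NE}(X)=\overline{NE}(X)_{K_X+\Delta+A_0\geq 0}+\sum_{i=1}^{l}\R_{\geq 0}[C_i]$, so the part of $\overline{NE}(X)$ on the strictly negative side of $K_X+\Delta$ is a rational polyhedral cone; since $F\setminus\{0\}$ lies there, $F$ is a rational polyhedral face spanned by some of the $[C_i]$, and duality between $N^1(X)$ and $N_1(X)$ yields a rational class vanishing exactly on $F$ and positive elsewhere on $\overline{NE}(X)$, which after clearing denominators we take to be $H$. (Some care is needed here because $\overline{NE}(X)$ itself need not be polyhedral; only the relevant portion is, which is exactly what the ``supporting function of an extremal face'' lemma extracts. $\Q$-factoriality guarantees that any such class is represented by a $\Q$-Cartier divisor.)

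\emph{Step 2: semiampleness of $H$ and the morphism.} Next I would check that $A:=mH-(K_X+\Delta)$ is ample for $m\gg 0$: by Kleiman's criterion it suffices that $A\cdot z>0$ for every nonzero $z$ in a fixed compact base of $\overline{NE}(X)$, and there $H\cdot z\geq 0$ while $-(K_X+\Delta)\cdot z>0$ wherever $H\cdot z=0$, so a compactness/continuity argument provides one value of $m$ that works everywhere; in particular $A$ is nef and big. Now $H$ is nef and $mH-(K_X+\Delta)$ is nef and big, so the Base Point Free Theorem applied to the klt pair $(X,\Delta)$ shows $H$ is semiample: for $b$ sufficiently divisible, $|bH|$ is basepoint free and defines a morphism
$$
\phi\colon X\longrightarrow Y:=\Proj\bigoplus_{j\geq 0}\Gamma\big(X,\mathcal{O}_X(jbH)\big),
$$
with $Y$ normal, $\phi_{*}\mathcal{O}_X=\mathcal{O}_Y$ (hence $\phi$ has connected fibres), and $bH=\phi^{*}A_Y$ for some ample divisor $A_Y$ on $Y$. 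For an integral curve $C\subset X$ we then have: $\phi$ contracts $C$ to a point $\iff A_Y\cdot\phi_{*}C=0\iff bH\cdot C=0\iff H\cdot C=0\iff[C]\in F$. Hence $\phi$ has precisely the defining property of $\phi_F$, and uniqueness up to isomorphism of $Y$ is the remark following the definition of a contraction (Stein factorization).

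\emph{Main obstacle.} The real input is the Base Point Free Theorem, whose proof rests on Kawamata--Viehweg vanishing and Shokurov's non-vanishing theorem; I would invoke it as a black box, exactly as in \cite{km98}. Granting it, the only delicate step is Step~1 --- upgrading the ``finitely many extremal rays'' output of the Cone Theorem to an honest rational nef divisor whose zero face on $\overline{NE}(X)$ is exactly $F$ --- since a priori $\overline{NE}(X)$ is only known to be locally polyhedral near the $(K_X+\Delta)$-negative region.
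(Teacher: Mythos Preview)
The paper does not prove this theorem; it is stated as a known result with a reference to \cite[Theorem 3.7.3]{km98}. Your outline is the standard proof given there --- produce a rational nef supporting divisor for $F$ from the Cone Theorem, then apply the Kawamata--Shokurov Base Point Free Theorem to make it semiample and take the associated morphism --- and it is correct at the level of detail you have written.
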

 
\begin{say}
 Let $(X, \Delta)$ be a $\Q$-factorial klt pair. An important type of contraction, useful specially for the Minimal Model Program, is the contraction of a $(K_X + \Delta)$-\emph{negative extremal ray} $R$, i.e., $R = \R_{\geq 0}[C]$ is an extremal ray of $\overline{NE}(X)$ such that $(K_X + \Delta) \cdot C < 0$. By simplicity, we sometimes write $(K_X + \Delta) \cdot R<0$ to refer to $(K_X+\Delta)$-negative extremal rays. For such a ray $R$, Theorem \ref{contraction} guarantees that there exists a contraction $\phi_R: X \to Y$ of $R$. One can show that there is an exact sequence $0 \longrightarrow \Pic(Y) \stackrel{\phi^*}{\longrightarrow} \Pic(X) \stackrel{g}\longrightarrow \Z$, where $g(D) = D \cdot C$. It follows that $\rho(X) = \rho(Y) +1$. Moreover, by \cite[Prop.2.5]{km98}, $\phi_R$ satisfies one of the properties below, according to the dimension of $Exc(\phi_R)$:
 
\begin{enumerate}
 
\item $Exc(\phi_{R}) = X$. In this case, we have $\dim(Y) < \dim(X)$ and $-(K_X+\Delta)$ is $\phi_R$-ample. We say that $\phi_R:X \to Y$ is a \emph{Mori Fiber space} or a \emph{fiber type contraction};
 
\item $\dim (Exc(\phi_R)) = \dim(X)-1$. The morphism $\phi_R$ is birational, its exceptional locus consists of a prime divisor and $(Y, {\phi_R}_* \Delta)$ is a $\Q$-factorial klt pair. We call such $\phi_R$ a \emph{divisorial contraction};
 
\item $ \dim (Exc(\phi_R)) < \dim(X) - 1$. The morphism $\phi_R:X \to Y$ is birational and $Y$ is not $\Q$-factorial (in fact, $K_Y$ is not $\Q$-Cartier). We say that $\phi_R$ is a \emph{small contraction}.
 
\end{enumerate}

\label{contraction_types}
\end{say}

\noindent \textbf{Length of an extremal ray.} Let $X$ be a smooth projective variety, $R$ a $K_X$-negative extremal ray of the Mori cone $\overline{NE}(X)$ and $\phi_R:X\to Y$ the contraction of $R$. 
The {\it lenght} of  $R$ is
$$
\mathfrak{l}(R) := min \big\{-K_X \cdot C \ \big| \ C \subset X \text{ rational curve 
contracted by }\phi_R \big\}.
$$ 
It satisfies $\mathfrak{l}(R) \leq \dim(X)+1$ (see \cite[Theorem 3.3]{km98}). We denote by $E_R\subset X$ the exceptional locus of $\phi_R$, i.e., 
the locus of points at which $\phi_R$ is not an isomorphism. 
The following result is due to Wi\'sniewski (see  \cite[Theorem 6.36]{beltrametti}).

\begin{prop} Let $R$ be an extremal ray of $\overline{NE}(X)$, $E$ any component of the exceptional locus of $E_R$, and 
$F$ any component of a fiber of the restriction $\phi_R |_E$. Then the inequality below holds:
\begin{equation}
\dim(E) + \dim(F) \geq \dim(X) + \mathfrak{l}(R) -1.
\label{inequality}
\end{equation}

\label{key_inequality}
\end{prop}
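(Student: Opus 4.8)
The plan is to run Mori's bend-and-break technique on the rational curves of minimal anticanonical degree lying in the ray $R$. Write $\ell=\mathfrak{l}(R)$, $n=\dim(X)$, and $\phi=\phi_R:X\to Y$. The external inputs I need are the standard deformation estimates: for a rational curve $C$ with $[C]\in R$ one has $\dim_{[f]}\Hom(\mathbb{P}^1,X)\geq -K_X\cdot C+n$, this bound drops by at most $n$ when a point of the source is pinned to a point of $X$, and a positive-dimensional family of rational curves through two fixed points must degenerate to a reducible or non-reduced curve. The first reduction is to minimal curves: by definition $R$ contains a rational curve of anticanonical degree $\ell$, and $\ell\leq n+1$; if a member of $R$ degenerates to a connected reducible curve $C_1+C_2$ then $[C_1],[C_2]\in R$ by extremality of $R$, so $-K_X\cdot C_i\geq \ell>0$ for both $i$, whence no curve of degree exactly $\ell$ in $R$ can break. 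Thus any component $V$ of the space of such curves is unsplit (proper, with irreducible members), and bend-and-break guarantees that through every point of a positive-dimensional fibre of $\phi$ — in particular through a general point of $E$ and of the given component $F$ — there passes a curve of $V$.

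Next comes the fibre bound. Fix a general point $x\in F$ (hence also general in $E$) and let $V_x\subseteq V$ be the subfamily of curves of $V$ through $x$. Pinning the source to $x$, the Hom-space estimate and the quotient by the $2$-dimensional stabilizer of a point in $\mathrm{Aut}(\mathbb{P}^1)$ give $\dim_{[C]}V_x\geq \ell-2$. If a general point of $\mathrm{Locus}(V_x)$ lay on infinitely many members of $V_x$, bend-and-break would produce a broken member through two fixed points, contradicting unsplitness; hence the evaluation map from the universal curve of $V_x$ is generically finite onto its image, so $\dim\mathrm{Locus}(V_x)\geq(\ell-2)+1=\ell-1$. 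Every member of $V_x$ is contracted by $\phi$ and passes through the general point $x\in E$, so it lies in $E$ and, being connected to $x$, inside the fibre component $F$; therefore $\mathrm{Locus}(V_x)\subseteq F$ and $\dim(F)\geq \ell-1$. This already establishes \eqref{inequality} in the fibre-type case, where $E=X$.

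Finally, to incorporate $\dim(E)$ when $\phi$ is birational, I would compare two computations of the dimension of the family of minimal curves in $R$ that sweeps out $E$: from below, since that family dominates $\overline{E}$ with general fibres of dimension $\geq \ell-2$ (Step 2 at a general point of $E$); and from above, using that every such curve is contracted, so the family fibres over $\phi(E)$ with fibres parametrizing minimal curves inside a single fibre of $\phi$, whose sweep inside $F$ is controlled by $\dim(F)$. Rearranging, together with the bound of Step 2, yields $\dim(E)+\dim(F)\geq n+\ell-1$. The main obstacle is precisely this last step: one must ensure that $E$ is genuinely covered by curves of \emph{minimal} degree — or, equivalently, replace the global count by a relative bend-and-break argument over $Y$, where $-K_X$ is $\phi$-ample and the fibre $F$ plays the role of the ambient space — and one must bound the dimension of a family of rational curves contained in a possibly singular, possibly reducible fibre of $\phi$ in terms of $\dim(F)$. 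Both are handled by the relative estimates of \cite{km98}; this is exactly Wi\'sniewski's theorem, and for the details I would follow the treatment in \cite{beltrametti}.
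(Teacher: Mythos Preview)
The paper does not give its own proof of this proposition: it is quoted as Wi\'sniewski's theorem with a reference to \cite[Theorem~6.36]{beltrametti}, and no argument is supplied. So there is nothing in the paper to compare your attempt against beyond that citation.

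That said, your sketch is precisely the standard bend-and-break proof that the cited reference contains: take an unsplit family $V$ of rational curves realising $\mathfrak{l}(R)$, use the deformation estimate $\dim V_x\geq \ell-2$ at a point $x$, use non-breaking through two points to get $\dim\mathrm{Locus}(V_x)\geq \ell-1$, and then combine this with the global count $\dim V\geq n+\ell-3$ via the incidence correspondence to obtain $\dim\mathrm{Locus}(V)+\dim\mathrm{Locus}(V_x)\geq n+\ell-1$. Since $\mathrm{Locus}(V)\subseteq E$ and $\mathrm{Locus}(V_x)\subseteq F$, the inequality follows. One small correction: in your first paragraph you invoke bend-and-break to produce a curve of $V$ through every point of a fibre, but bend-and-break only produces \emph{some} rational curve in $R$, not necessarily one of minimal degree or in the chosen component $V$. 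The clean way around this---which you essentially identify in your last paragraph---is to fix $E$, $F$ and a general $x\in F$ \emph{first}, then produce a rational curve in $R$ through $x$ by the relative version of Mori's theorem, degenerate if necessary to a minimal one through $x$, and take $V$ to be the component containing it; irreducibility of $V$ then forces $\mathrm{Locus}(V)\subseteq E$ and $\mathrm{Locus}(V_x)\subseteq F$. With that reordering your outline is complete, and the dimension comparison you describe in the final paragraph is exactly the incidence-variety count above rather than a separate obstacle.
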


\begin{say}\textbf{Specializing to Toric Case.}
\label{toric_contraction}
Let $X=X_{\Sigma}$ be a projective toric variety associated to an $n$-dimensional fan $\Sigma$ in $N_{\R}$. If $\Delta = \displaystyle \sum_{\rho \in \Sigma(1)} b_{\rho}D_{\rho}$ with $b_{\rho} \in [0,1)\cap \Q, \ \forall \rho$ and $K_X+\Delta$ is $\Q$-Cartier, then $(X,\Delta)$ is klt (see for instance \cite[11.4.24]{cox}).
Suppose now that $X$ is $\Q$-factorial. Reid proved that every extremal ray $R$ of $\overline{NE}(X)$ (not necessarily with $(K_X + \Delta)\cdot R <0$) determines a contraction $\phi_R: X \to Y$. The variety $Y$ is toric again and its fan is determined by a simple change in $\Sigma$ that we describe now (see \cite{reid} for the proofs): let $\omega \in \Sigma(n-1)$ be a cone such that $[V(\omega)] \in R$. Since $X$ is complete, there are cones $\sigma, \sigma' \subset \Sigma(n)$ such that $\omega = \sigma \cap \sigma'$. Let $v_1, ..., v_n, v_{n+1}$ be primitive lattice vectors such that:
\begin{align*}
\sigma &= \Cone(v_1, ..., v_{n-1}, v_n) \notag \\ 
\sigma' &= \Cone(v_1, ..., v_{n-1}, v_{n+1}).
\end{align*}
 
There is a linear relation $a_1v_1 + ... + a_nv_n + a_{n+1}v_{n+1} = 0$ such that $a_i = V(v_i) \cdot V(\omega) \in \Q$. Because $v_n$ and $v_{n+1}$ are in opposite sides of the hyperplane determined by $\omega$, we have $a_n, a_{n+1}>0$. Reordering the coefficients if necessary, we may suppose that there are integers $0 \leq \alpha \leq \beta \leq n-1$ such that $a_i <0$ if $i<\alpha$, $a_i = 0$ if $\alpha + 1 \leq i \leq \beta$ and $a_i > 0$ if $\beta + 1 \leq i \leq n$. Set $\sigma(\omega) = \Cone(\sigma \cup \sigma')$. One can show that the integers $\alpha$, $\beta$ and the cone:
$$
U = U(\omega) = \Cone(v_1, ..., v_{\alpha}, v_{\beta + 1}, ..., v_n,  v_{n+1})
$$
do not depend on the wall $\omega$ with $[V(\omega)] \in R$. The cone $U$ is a face of $\sigma(\omega)$ and is a vector space if and only if $\alpha = 0$. Moreover, $\sigma(\omega)$ has a decomposition:
$$
\sigma(\omega) = \displaystyle \bigcup_{j=\beta+1}^{j=n+1} \sigma_j
$$
where each $\sigma_j = \Cone(v_1, ..., \hat{v}_j, ..., v_{n+1}) \in \Sigma, \ \forall j = \beta + 1,... , n+1$.  

Let $\Sigma^*$ be the collection of cones in $N_{\R}$ given by the following maximal cones and their faces:

\begin{itemize}
\item $\sigma(\omega)$ with $[V(\omega)] \in R;$

\item $\tau \in \Sigma(n)$ such that there is no wall $\omega \prec \tau$ with $[V(\omega)] \in R$.
\end{itemize}

We have three cases to consider:

\begin{enumerate}
\item[(I)] $(\alpha = 0)$: The collection $\Sigma^*$ is a degenerated fan with vertex $U$. The fan of $Y$ is the fan in $N_{\R}/U$ obtained by the projection of $\Sigma^*$ on this vector space (see \ref{fan}). The map $\phi_R: X \to Y$ is a Mori fiber space and its general fiber is a toric variety with Picard number 1. Furthermore, $\phi_R$ is a flat morphism. The contracted curve $V(\omega)$ is movable on $X$ and its corresponding effective linear relation in $\Sigma(1)$ is: 
\begin{equation} 
a_{\beta + 1}v_{\beta + 1}+ ... + a_nv_n + a_{n+1}v_{n+1} = 0,
\label{eff_comb}
\end{equation}
where Span$(v_{\beta+1}, ..., v_{n+1}) \simeq {\R}^{n-\beta}$;
\vspace{0,2 cm}

\item[(II)] $(\alpha = 1)$: The collection $\Sigma^*$ is a simplicial fan and $Y = X_{\Sigma^*}$. The map $\phi_R$ is a divisorial contraction, $Exc(\phi_R)= V(\R_{\geq 0} \cdot v_1)$ and $\Sigma(1)-\Sigma^*(1) = {v_1}$; 

\item[(III)] $(\alpha > 1)$: The collection $\Sigma^*$ is a fan, nonsimplicial, and $Y = X_{\Sigma^*}$. 
The map $\phi_R$ is a small contraction and $Exc(\phi_R)= V \big(\Cone(v_1,...,v_{\alpha})\big)$. 

\end{enumerate}
\end{say}

\subsection{The Minimal Model Program}
\label{mmp}

Let $(X,\Delta)$ be a $\Q$-factorial klt pair. The aim of the Minimal Model Program (MMP for short) consists essentially in obtaining a birational map $\varphi: X \dashrightarrow Y$ such that $(Y, \varphi_* \Delta)$ is a $\Q$-factorial klt pair and one of the following situations occur:

\begin{enumerate}

\item $K_Y + \varphi_* \Delta$ is nef; or

\item There exists a Mori fiber space $f: Y \to Z$.

\end{enumerate}
 
To describe the steps of the MMP, we begin by supposing that $K_X + \Delta$ is not nef. There exists an extremal ray such that $(K_X + \Delta)\cdot R < 0$. Theorem \ref{contraction} provides a contraction $\phi_R: X \to Y$. If $\phi_R$ is a Mori fiber space, then we are done. If $\phi_R$ is a divisorial contraction, then by Section \ref{log}, $Y$ is birational to $X$ and $(Y, {\phi_R}_* \Delta)$ is a $\Q$-factorial klt pair. If $K_Y + {\phi_R}_* \Delta$ is nef then we arrive to the situation (1) and we are done. Otherwise, there is an extremal ray $R'$ of $\overline{NE}(Y)$ such that $(K_Y + {\phi_R}_* \Delta) \cdot R' < 0$ and we can repeat the process above to $Y$ contracting this ray. Note that there is not an infinite chain of divisorial contractions $X \rightarrow X_1 \rightarrow ... \rightarrow X_k \rightarrow ...$ because the Picard number of these varieties drop by 1 at each such contraction.

Suppose now that $\phi_R: X \to Y$ is a small contraction. While $Y$ is birational to $X$, $K_Y$ always fails to be $\Q$-Cartier. To get around this deficiency, we define a special type of small modification on $X$ whose existence is proved in \cite[Cor. 1.4.1]{bchm}.

\begin{de-teo}[Existence of flips] Let $(X,\Delta)$ be a $\Q$-factorial klt pair and $R$ be a $(K_X+\Delta)$-negative extremal ray such that $\phi_R:X \to Y$ is a small contraction. There exists a projective $\Q$-factorial variety $X^+$, an extremal ray $R^+$ of $\overline{NE}(X^+)$ and small modifications  $\phi:X \dashrightarrow X^+$ and $\phi_{R^+}:X^+ \to Y$, where $(K_{X^+} + \phi_*(\Delta))\cdot R^+ > 0$, such that the following diagram commutes:
$$\xymatrix{
X \ar[rd]_{\phi_R} \ar@{-->}[rr]^{\phi} & & X^+ \ar[ld]^{\phi_{R+}} \\ 
& Y &  
}$$

Moreover, $(X^+,{\phi}_*\Delta)$ is a $\Q$-factorial klt pair and $(K_{X^+}, {\phi}_*\Delta)\cdot R^+ >0$. Such small modification is called a \emph{flip} of the ray $R$.

\label{flip}
\end{de-teo}

Essentially, flipping an extremal ray $R$ consists in making a birational modification in the variety $X$ changing the sign of this ray relatively to $K_X + \Delta$. 

\begin{rem}[See {\cite[Sec.3]{reid}}] Let $X=X_{\Sigma}$ be a $\Q$-factorial projective toric variety corresponding to an $n$-dimensional fan $\Sigma$ in $N_{\R}$. Let $\Delta$ be a $\Q$-divisor such that the pair $(X,\Delta)$ has klt singularities. We can find a toric variety $X^+ = X_{\Sigma^+}$ that satisfies all the conditions of Definition \ref{flip}. The fan $\Sigma^+$ is obtained from $\Sigma$ by a suitable change in the maximal cones that preserves the collection of their extremal rays, i.e., $\Sigma(1) = \Sigma^+(1)$.  

\label{flip_toric}
\end{rem}

Now we can describe the procedure of MMP:

\vspace{0,2 cm}
\emph{Step 1}: Let $(X, \Delta)$ be a $\Q$-factorial klt pair. If $K_X + \Delta$ is nef, then stop. Otherwise, go to Step 2:

\vspace{0,2 cm}
\emph{Step 2}: Pick an extremal ray $R$ of $\overline{NE}(X)$ with $(K_X + \Delta) \cdot R < 0$ and consider the corresponding contraction $\phi_R:X \to Y$. If $\phi_R$ is a Mori fiber space, then we stop with the pair $(X, \Delta)$. Otherwise, go to Step 3.

\vspace{0,2 cm}
\emph{Step3}:

(3.1) If $\phi_R:X \to Y$ is a divisorial contraction, then $(Y, {\phi_R}_*(\Delta))$ is a $\Q$-factorial klt pair. We return to Step 1 with the pair $(Y,{\phi_R}_* \Delta)$ instead of $(X, \Delta)$. 

(3.2) If $\phi_R:X \to Y$ is a small contraction then we pick the flip $\phi: X \dashrightarrow X^+$ of the ray $R$ and return to Step 1 with the pair $(X^+, {\phi}_* \Delta)$ instead of $(X, \Delta)$. 

\vspace{0,3 cm}

\begin{rem} In \cite{mori} Mori proved that the MMP stops in the case of terminal 3-folds. Recently in \cite{bchm} it was proved that if $(X, \Delta)$ is a $\Q$-factorial klt pair such that $K_X + \Delta \notin \overline{Eff}(X)$ then there exists a special type of MMP (where an appropriate choice of the contracted rays is made), named \emph{MMP with scaling}, that terminates with a Mori fiber space. In general, it is not known the MMP always stops. In fact, for the program to terminate it would be necessary to whether ascertain that there is not an infinte sequence of flips. In \cite{reid}, Reid showed that the MMP (not necessarily with scaling) always stops for toric varieties of arbitrary dimension. 
\label{mori_fiberspace}
\end{rem}

\section{Fano manifolds with large index}
\label{ind}

\textbf{Notation:} Given a vector bundle $\mathcal{E}$ on a variety $Y$, we denote by $\mathbb{P}_Y(\mathcal{E})$ the Grothendieck projectivization $\Proj(\Sym(\mathcal{S}_{\mathcal{E}}))$, where $\mathcal{S}_{\mathcal{E}}$ is the sheaf of sections of $\mathcal{E}$.\\

Let $X$ be a smooth projective variety. 
We say that $X$ is a \emph{Fano manifold} if the anticanonical divisor $-K_X$ is ample.
In this case we define the \emph{index} of $X$ as the largest integer $r$ 
dividing $-K_X$ in $\Pic(X)$.
Fano manifolds with large index are very special.
In \cite{wis},  Wi\'sniewski classified $n$-dimensional Fano manifolds
with $n \leq 2r -1$. They satisfy one of the following conditions:

\begin{prop} Let $X$ be a smooth Fano manifold with $\dim(X) = n$ and index $r \geq \frac{n+1}{2}$. Then one of the following holds:

\begin{enumerate}
\item $X$ has Picard number one;
\item $X \simeq \mathbb{P}^{r-1} \times \mathbb{P}^{r-1}$;
\item $X \simeq \mathbb{P}^{r-1} \times Q^r$, where $Q^r$ is an $r$-dimensional smooth hyperquadric;
\item $X \simeq \mathbb{P}_{\mathbb{P}^r}(T_{\mathbb{P}^r})$; or
\item $X \simeq \mathbb{P}_{\mathbb{P}^r}(\mathcal{O}(2) \oplus \mathcal{O}(1)^{r-1})$.
\end{enumerate}

\label{fano_index}
\end{prop}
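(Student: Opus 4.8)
The plan is to run Mori theory on $X$ and exploit that a large index forces every extremal ray to be very ``long''. Write $-K_X = rH$ with $H$ the primitive (indivisible) class; since $-K_X$ is ample and $r$ is an integer, $H$ is represented by an ample Cartier divisor. If $\rho(X) = 1$ we are in case (1), so assume $\rho(X) \geq 2$. Because $-K_X$ is ample, the Cone Theorem (Theorem \ref{cone_theorem}) makes $\overline{NE}(X)$ rational polyhedral with every extremal ray $R$ being $K_X$-negative, so each $R$ admits a contraction $\phi_R \colon X \to Y$ (Theorem \ref{contraction}); and for a minimal rational curve $C$ spanning $R$ one has $-K_X \cdot C = r\,(H \cdot C)$, a positive multiple of $r$, so $\mathfrak{l}(R) \geq r \geq \tfrac{n+1}{2}$.

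The first step is to pin down the possible contraction types. If $\phi_R$ is birational, let $E$ be a component of its exceptional locus and $F$ a component of a fibre of $\phi_R|_E$; Wi\'sniewski's inequality (Proposition \ref{key_inequality}) gives $\dim E + \dim F \geq n + \mathfrak{l}(R) - 1 \geq n + r - 1$. Since $\dim E \leq n-1$ this already forces $\dim F \geq r$, and a finer version of the same estimate (together with smoothness) rules out small contractions, so every birational contraction is divisorial, with exceptional divisor $E$ whose general fibre over $\phi_R(E)$ has dimension $\geq r$. For a fibre-type contraction (so $E = X$) the inequality gives $\dim F \geq r-1$ for the general fibre $F$, which is a smooth Fano manifold with $-K_F = r\,(H|_F)$, hence of index $\geq r$; in the boundary cases the equality discussion identifies $\phi_R$, namely $\phi_R$ is a $\mathbb{P}^{r-1}$-bundle when $\dim F = r-1$, while $F \cong \mathbb{P}^{r}$ or a smooth quadric $Q^{r}$ when $\dim F = r$ (Kobayashi--Ochiai).

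Next I would bound $\rho(X)$ and organise the case $\rho(X)=2$. Given two distinct extremal rays $R_1, R_2$ with fibre-type contractions $\phi_i \colon X \to Y_i$ and general fibres $F_i$, the restriction $\phi_2|_{F_1}$ must be finite, since any positive-dimensional fibre of it would consist of curves whose classes lie in $R_1 \cap R_2 = \{0\}$; hence $\dim F_1 \leq \dim Y_2 = n - \dim F_2$, i.e.\ $\dim F_1 + \dim F_2 \leq n \leq 2r-1$. Together with $\dim F_i \geq r-1$ this forces each $\dim F_i \in \{r-1,\,r\}$ with at most one equal to $r$, and $n \in \{2r-2,\,2r-1\}$. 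A similar transversality/dimension count (as in \cite{wis}) rules out $\rho(X) \geq 3$ and handles the configurations involving a divisorial contraction; so $\rho(X)=2$, its two contractions being one of: two $\mathbb{P}^{r-1}$-bundles; one $\mathbb{P}^{r-1}$-bundle and one bundle whose fibre is $\mathbb{P}^{r}$ or $Q^{r}$; or one $\mathbb{P}^{r-1}$-bundle together with a divisorial contraction of the special type found above.

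Finally I would carry out the case analysis for $\rho(X)=2$. In each configuration the base $Y_i$ is dominated by a finite morphism from the fibre $F_j \cong \mathbb{P}^{r-1}$ (or $\cong \mathbb{P}^{r}$), forcing $Y_i \cong \mathbb{P}^{r-1}$ (resp.\ $\mathbb{P}^{r}$), and the simultaneous structure then determines $X$: two $\mathbb{P}^{r-1}$-bundle structures over bases of dimension $r-1$ give $X \cong \mathbb{P}^{r-1}\times\mathbb{P}^{r-1}$, over bases of dimension $r$ they give $X \cong \mathbb{P}_{\mathbb{P}^{r}}(T_{\mathbb{P}^{r}})$; a $Q^{r}$-fibre over $\mathbb{P}^{r-1}$ gives $X \cong \mathbb{P}^{r-1}\times Q^{r}$; and a divisorial contraction alongside a $\mathbb{P}^{r-1}$-bundle over $\mathbb{P}^{r}$ forces the bundle to be $\mathcal{O}(2)\oplus\mathcal{O}(1)^{\,r-1}$, which is case (5). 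Products such as $\mathbb{P}^{r-1}\times\mathbb{P}^{r}$ do not appear because their index equals $1 < \tfrac{n+1}{2}$. The main obstacle is exactly this last step: excluding the ``impostor'' fibre-space structures and establishing the rigidity of $\mathbb{P}^{r-1}$-bundles over projective space, which is where Fujita's adjunction-theoretic classification of polarized manifolds of large index, and a careful comparison of the two contractions, do the real work.
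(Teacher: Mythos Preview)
The paper does not give its own proof of this proposition: it is quoted from Wi\'sniewski \cite{wis} as a known classification, and the thesis only uses it (in the toric specialisation of Remark~\ref{toric_fano}) as a black box. So there is nothing to compare your argument against in the paper itself.

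That said, your outline is essentially Wi\'sniewski's original strategy --- bound $\mathfrak{l}(R)\geq r$ via $-K_X=rH$, feed this into the inequality of Proposition~\ref{key_inequality} to constrain the extremal contractions, show $\rho(X)\leq 2$, and then run a case analysis on the pair of contractions --- so you have correctly identified the architecture of the proof. The places where your sketch is thin are exactly the places where the real work lies: ruling out $\rho(X)\geq 3$ is not just a ``similar dimension count'' but requires controlling how three or more extremal rays can coexist; the step identifying the bases $Y_i$ with projective spaces needs Lazarsfeld's theorem (a finite surjection $\mathbb{P}^m\to Z$ forces $Z\cong\mathbb{P}^m$), not merely a domination statement; and pinning down each $\rho=2$ configuration to a single isomorphism type (especially recognising $\mathbb{P}_{\mathbb{P}^r}(T_{\mathbb{P}^r})$ and $\mathbb{P}_{\mathbb{P}^r}(\mathcal{O}(2)\oplus\mathcal{O}(1)^{r-1})$) uses vector-bundle classification on $\mathbb{P}^r$ rather than just adjunction. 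None of these are fatal to your plan, but they are genuine gaps if you intend a self-contained proof rather than a reference to \cite{wis}.
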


The following result is classical in the Theory of Toric Varieties. For the reader's convenience, we state and prove it. 

\begin{lemma} Let $X_{\Sigma}$ be an $n$-dimensional smooth projective toric variety with Picard number 1. Then $X_{\Sigma} \simeq \mathbb{P}^n$. In particular, an $n$-dimensional smooth hyperquadric is toric if and only if $n = 2$.

\label{Pic_one}
\end{lemma}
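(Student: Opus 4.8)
The plan is to reconstruct the fan $\Sigma$ explicitly and recognize it as a lattice copy of the fan of $\mathbb{P}^n$ described earlier. First I would record what the hypotheses give. Since $X_\Sigma$ is smooth and projective, $\Sigma$ is a complete and smooth fan (Proposition~\ref{fan_properties}); and since $\rho(X_\Sigma) = \#\Sigma(1) - n = 1$ (see the discussion in Section~\ref{toric_divisors}), the fan has exactly $n+1$ rays, say with primitive generators $v_0,\dots,v_n$. Completeness forces the cones on these rays to cover $N_\R$, so in particular $v_0,\dots,v_n$ span $N_\R$ and satisfy a linear relation, unique up to scaling.

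Next I would fix a maximal cone. As $\Sigma$ is complete it has at least two maximal cones; choose one and relabel so that $\sigma_0 = \Cone(v_1,\dots,v_n)$. Smoothness makes $\{v_1,\dots,v_n\}$ a $\Z$-basis of $N$, so I may write $v_0 = b_1 v_1 + \dots + b_n v_n$ with $b_i \in \Z$. The crux is to prove $b_i = -1$ for every $i$. For each $i$, the facet $\tau_i = \Cone(v_1,\dots,\widehat{v_i},\dots,v_n)$ of $\sigma_0$ is, by completeness, a wall, hence the common facet of $\sigma_0$ and a second maximal cone $\sigma_i'$; since the only ray of $\Sigma$ outside $\tau_i$ other than $v_i$ is $v_0$, we must have $\sigma_i' = \Cone(v_1,\dots,\widehat{v_i},\dots,v_n,v_0)$, and $v_0$ lies strictly on the side of the hyperplane $\Span(\tau_i)$ opposite to $v_i$. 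In the coordinates dual to $(v_1,\dots,v_n)$ this hyperplane is $\{x_i = 0\}$ and $v_i$ has $i$-th coordinate $1$, so this says $b_i < 0$, i.e. $b_i \le -1$ for every $i$. Finally $\sigma_i'$ is a smooth cone, and the matrix of its generators in the basis $(v_1,\dots,v_n)$ has determinant $\pm b_i$; hence $|b_i| = \mult(\sigma_i') = 1$ and therefore $b_i = -1$.

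With $v_0 = -v_1 - \dots - v_n$ in hand I would conclude as follows. The fan $\Sigma$ already contains $\sigma_0$ together with $\sigma_1',\dots,\sigma_n'$, that is, all $n+1$ cones $\Cone(v_i : i \neq j)$ for $j = 0,\dots,n$; using the relation $v_0 = -\sum_{i \ge 1} v_i$ one checks that these cones cover $N_\R$ and pairwise meet only along common faces. Hence, $\Sigma$ being a complete fan, these are exactly its maximal cones and there are no others. The lattice isomorphism $N \to \Z^n$ sending $v_i \mapsto e_i$ for $i = 1,\dots,n$ then carries $\Sigma$ onto the fan of $\mathbb{P}^n$, so $X_\Sigma \simeq \mathbb{P}^n$. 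For the last assertion: a smooth hyperquadric $Q^n$ with $n \ge 3$ has Picard number $1$, so were it toric it would be isomorphic to $\mathbb{P}^n$ by what precedes; but the hyperplane class $h$ on $Q^n$ satisfies $h^n = 2$ while the hyperplane class on $\mathbb{P}^n$ has self-intersection $1$, a contradiction. On the other hand $Q^2 \simeq \mathbb{P}^1 \times \mathbb{P}^1$ is toric, which accounts for the value $n = 2$.

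The step I expect to be most delicate is the middle one: extracting from completeness alone both that the neighbour of $\sigma_0$ across each facet $\tau_i$ must be $\Cone(v_1,\dots,\widehat{v_i},\dots,v_n,v_0)$ with $v_0$ strictly on the far side (so that $b_i < 0$), and, at the very end, that once the $n+1$ ``coordinate'' cones are present the fan cannot contain any further maximal cone.
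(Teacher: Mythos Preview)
Your proof is correct and shares the same core computation with the paper's: write $v_0 = \sum a_j v_j$ in the basis given by a smooth maximal cone and use $\mult(\sigma_j)=|a_j|=1$ for each of the other maximal cones. The packaging differs, however. The paper exploits projectivity: it takes an ample divisor $D$, observes that $P_D$ is an $n$-simplex (since $\#\Sigma(1)=n+1$), and reads off from the normal fan of a simplex that the maximal cones are exactly the $\sigma_j=\Cone(v_0,\dots,\hat v_j,\dots,v_n)$; the sign of the $a_j$ is then disposed of by a one-line ``change of coordinates''. You bypass the polytope entirely and extract the same fan structure from completeness and the wall condition, which also gives you $b_i<0$ directly and hence $b_i=-1$ with no further hand-waving. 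Your route is thus slightly more self-contained (it never uses the ample divisor) and more explicit about the sign, while the paper's route is shorter because the simplex does the bookkeeping for you. For the hyperquadric, both arguments reduce to $\rho(Q^n)=1$ for $n\ge 3$; you add the distinguishing invariant $h^n=2\neq 1$, which the paper leaves implicit.
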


\begin{proof} Let $D$ be any ample divisor on $X$ and consider the polytope $P_D$ as in Section \ref{polytope_toric}. Since $P_D$ has $\# \Sigma(1)$ vertices and $\rho(X_{\Sigma}) = \# \Sigma(1) - n = 1$, we conclude that $P_D$ is an $n$-dimensional simplex in $M_{\R}$. Since $X_{\Sigma} \simeq X_{\Sigma_{P_D}}$, $P_D$ is smooth. Let $v_0, ..., v_n$ be the primitive lattice vectors that defines the facets of $P$. The maximal cones of $\Sigma_{P_D}$ are $\sigma_j = \Cone(v_0, ..., \hat{v}_j, ..., v_n)$, $j=0, ..., n$ and these cones are smooth. Write:
$$
v_0 = a_1v_1 + ... +a_nv_n.
$$ 

Considering the base $\mathcal{B} = \{v_1, ..., v_n\}$ in Definition \ref{multiplicity}, we have $\forall j$:
$$
1 = \mult(\sigma_j) = \big|\det[v_0, ..., \hat{v}_j, ..., v_n]\big| = |a_j|.
$$
By making a linear change of coordinates, we can suppose that $a_j=-1$. It follows that $\Sigma_{P_D}$ is the fan of $\mathbb{P}^n$.

The second part follows from the first: let $Q$ be an $n$-dimensional smooth projective hyperquadric. Since $\rho(Q) =1$ if $n>2$ (see for instance \cite[Ex.6.5]{hartshorne}), $Q$ is not toric in this case. When $n=2$, $Q$ is isomorphic to the toric variety $\mathbb{P}^1 \times \mathbb{P}^1$. 

\end{proof}

\begin{rem}If $E$ is a vector bundle of rank $k$ on a toric variety $Z$, then $\mathbb{P}_Z(E)$ is toric if and only if $E$ is \emph{decomposable}, i.e., $E$ is isomorphic to a direct sum of $k$ line bundles on $Z$ (see for instance \cite[1.1]{drso}). On the other hand, it is well known that the tangent bundle $T_{\mathbb{P}^r}$ is decomposable if and only if $r=1$.

Using this and Lemma \ref{Pic_one}, we can refine Proposition \ref{fano_index} in the toric case: if $X$ is an $n$-dimensional smooth toric variety with index $r \geq \frac{n+1}{2}$, then $X$ is isomorphic to one of the following varieties:

\begin{itemize}
\item $\mathbb{P}^n;$

\item $\mathbb{P}^{\frac{n}{2}} \times \mathbb{P}^{\frac{n}{2}}$  ($n$ even);

\item $\mathbb{P}^1 \times \mathbb{P}^1 \times \mathbb{P}^1$ ($n=3$);

\item $\mathbb{P}_{\mathbb{P}^r}(\mathcal{O}(2) \oplus \mathcal{O}(1)^{r-1})$ ($n = 2r-1$).
\end{itemize}

\label{toric_fano}
\end{rem}

\section{Nef and spectral values}

Adjunction Theory is an important topic of Algebraic Geometry that studies the interplay between an embedding of a projective variety into a projective space and its canonical bundle. The basic objects of study in Adjunction Theory are pairs $(X, L)$ where $X$ is a normal projective variety and $L$ is an ample divisor on $X$. Each such pair is called a \emph{polarized variety}.

\begin{de} Two polarized varieties $(X, L)$ and $(X',L')$ are said to be \emph{isomorphic} if there exists an isomorphism $f: X \to X'$ such that $f^*(L') = L$.
\end{de}

In what follows we define two invariants of a polarized variety and state certain properties of theirs.
    
\begin{de} Let $(X, L)$ be a polarized variety. Assume that $X$ has terminal singularities and $K_X$ is not pseudo-effective. For $t>>0$, $K_X + tL$ is an ample $\R$-divisor. The \emph{nef value} of $L$ and the \emph{(unnormalized) spectral value} of $L$, written respectively by $\tau(L)$ and $\mu(L)$ are the the real numbers:
$$
\tau(L) = \inf \{t \geq 0 \ / \ K_X + tL \in \overline{Nef}(X) \}
$$
$$
\mu(L) = \inf \{t \geq 0 \ / \ K_X + tL \in \overline{Eff}(X) \}.
$$

\label{nef_spectral}
\end{de}

\begin{figure}[h]
\centering
\includegraphics[scale=0.5]{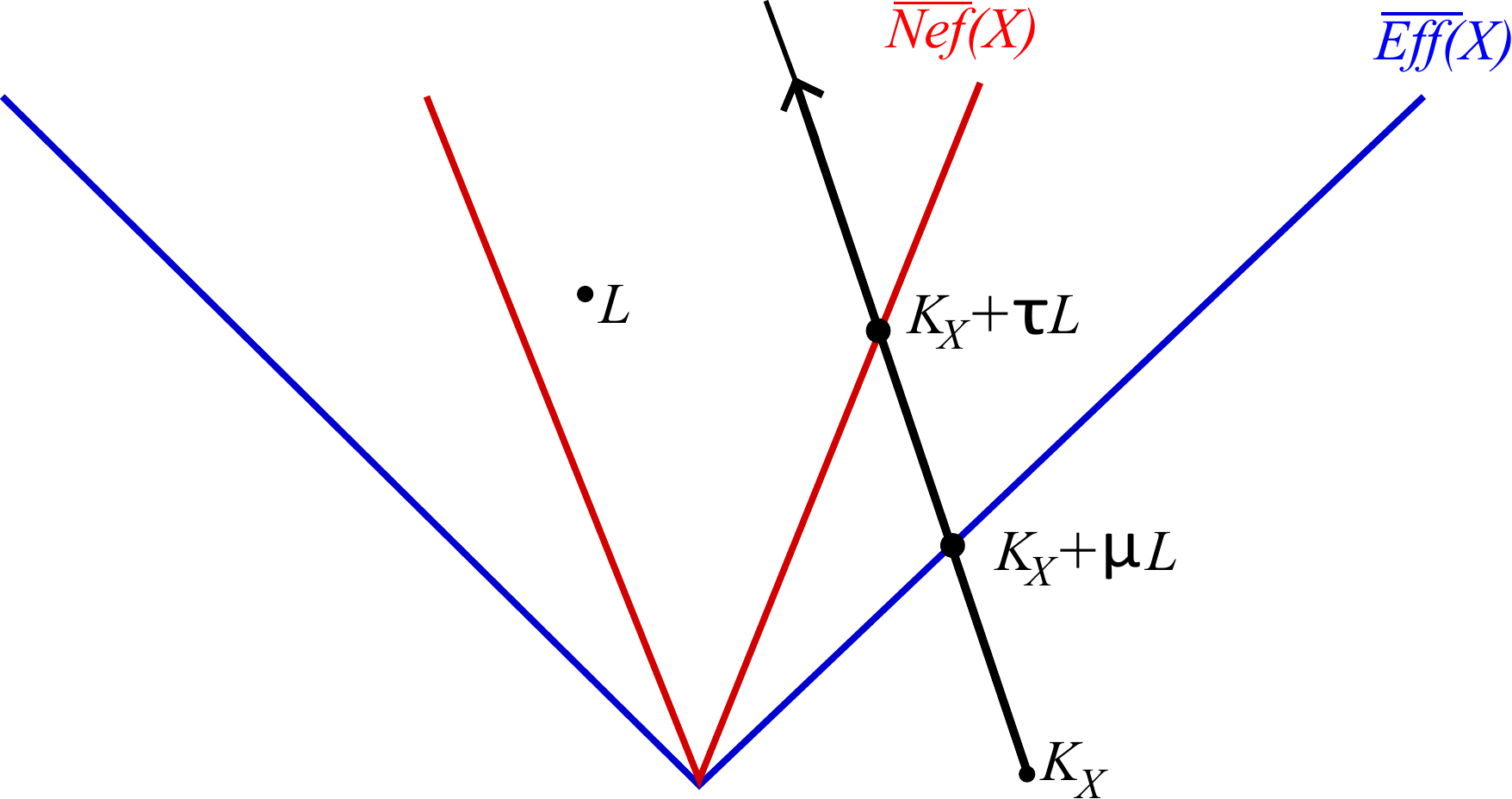}
\caption{The nef value and the spectral value of $L$.}
\end{figure}

Since $\overline{Nef}(X) \subset \overline{Eff}(X)$ and $K_X$ is not pseudo-effective, we see that $\mu(L) \leq \tau(L)$ and $\mu(L) > 0$. It follows from Kawamata Rationality Theorem (see for instance \cite[1.5.2]{beltrametti}) that $\tau(L)$ is a rational number. If in addition $X$ is smooth, the \emph{effective threshold} of $L$ defined by:
$$
\sigma(L) = \sup \{t \geq 0 \ / \ L + tK_X \in \overline{Eff}(X) \},
$$
is a rational number (see \cite[5.2]{araujo} ). Hence, $\mu(L) = 1/\sigma(L)$ is also rational. Note that by definition, the $\Q$-divisor $K_X + \tau(L) \cdot L$ is nef, but not ample on $X$.\\

\noindent \textbf{The Nef Value Morphism.} 
Let $(X, L)$ be a polarized variety. Assume that $X$ has terminal singularities and $K_X$ is not pseudo-effective. We point out the existence of a special morphism of $X$ associated to $\tau(L)$. For details see for instance \cite[4.2]{beltrametti}.

Writing $\tau(L) = a/b$ for positive integers $a$ and $b$, one can show that the complete linear system $|m(bK_X + aL)|$ is basepoint free for every integer $m>>0$ and hence defines a morphism $f:X \to \mathbb{P}^r$. By Remmert-Stein factorization, there exist a morphism $\phi_L:X \to Y$ (unique up to isomorphism of $Y$) with connected fibers onto a normal projective variety and a finite morphism $s:Y \to \mathbb{P}^r$ such that $f = s \circ \phi_L$.

\begin{de} The morphism $\phi_L: X \to Y$ defined above is called the \emph{nef value morphism} of the pair $(X, L)$.  
\end{de}

Since $bK_X + aL$ is nef, this divisor determines a face $F$ of $\overline{NE}(X)$, given by the classes $c \in \overline{NE}(X)$ such that $(bK_X + aL) \cdot c = 0$. Thus, for every such $c$, $bK_X \cdot c = -aL \cdot c < 0$ because $L$ is ample. It follows that $F$ is contained in he halfspace $K_X < 0$. By the Contraction Theorem, the contraction of $F$ exists and by construction, coincides with the nef value morphism $\phi_L$.

\begin{de} Let $X$ be a smooth variety and $L$ an ample divisor. We say that $L$ is $\Q$-normal, if $\mu(L) = \tau(L)$.
\end{de}

The $\Q$-normality assumption for an ample divisor implies an interesting consequence to the nef value morphism $\phi_L$:

\begin{prop}[{\cite[7.1.6]{beltrametti}}] Let $(X,L)$ be a polarized variety, with $X$ smooth. Then $L$ is $\Q$-normal if and only if $\phi_L$ is not birational. 
\label{nonbirational}
\end{prop}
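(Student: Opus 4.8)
The plan is to funnel both implications through a single intermediate characterization: \emph{$L$ is $\Q$-normal if and only if the $\Q$-divisor $K_X+\tau(L)L$ is not big}, and then to detect the bigness of $K_X+\tau(L)L$ from the dimension of the target of $\phi_L$. Recall that $X$ is smooth (hence terminal) with $K_X$ not pseudo-effective, so $0<\mu(L)\le\tau(L)$ and $\tau(L)=a/b\in\Q_{>0}$. By the construction of the nef value morphism, $|m(bK_X+aL)|$ is basepoint free for $m\gg0$ and induces $f=s\circ\phi_L$ with $s\colon Y\to\mathbb{P}^r$ finite, so $m(bK_X+aL)\sim\phi_L^*\big(s^*\mathcal{O}_{\mathbb{P}^r}(1)\big)$; since the pullback of an ample divisor under a finite morphism is ample, there is an ample $\Q$-Cartier divisor $A$ on $Y$ with
$$
K_X+\tau(L)L\ \sim_{\Q}\ \phi_L^*A .
$$

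\textbf{Two standard inputs.} First I would record: (i) the sum of a pseudo-effective $\R$-divisor and an ample $\R$-divisor on $X$ is big, and bigness is an open condition on $N^1(X)_{\R}$ (equivalently, the big cone is the interior of $\overline{Eff}(X)$). (ii) If $\phi\colon X\to Y$ is a surjective morphism with connected fibers onto a normal projective variety, then $\phi_*\mathcal{O}_X=\mathcal{O}_Y$, so by the projection formula $H^0(X,m\phi^*A)\cong H^0(Y,mA)$ for all $m$ clearing denominators of an ample $\Q$-divisor $A$ on $Y$; hence $\kappa(X,\phi^*A)=\kappa(Y,A)=\dim Y$, and therefore $\phi^*A$ is big if and only if $\dim Y=\dim X$, i.e.\ if and only if $\phi$ is birational.

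\textbf{The intermediate equivalence.} Next I would prove that $\mu(L)<\tau(L)$ holds if and only if $K_X+\tau(L)L$ is big. For the forward direction, $\overline{Eff}(X)$ is closed, so $K_X+\mu(L)L$ is pseudo-effective, and then $K_X+\tau(L)L=\big(K_X+\mu(L)L\big)+\big(\tau(L)-\mu(L)\big)L$ is pseudo-effective plus ample, hence big by (i). For the converse, if $K_X+\tau(L)L$ is big, openness of bigness gives $\varepsilon>0$ with $K_X+(\tau(L)-\varepsilon)L$ big, hence pseudo-effective, so $\mu(L)\le\tau(L)-\varepsilon<\tau(L)$. Since $\mu(L)\le\tau(L)$ always, this says precisely: $L$ is $\Q$-normal $\iff$ $\mu(L)=\tau(L)$ $\iff$ $K_X+\tau(L)L$ is not big.

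\textbf{Conclusion and main obstacle.} Combining the displayed $\Q$-linear equivalence with input (ii), $K_X+\tau(L)L$ is big if and only if $\phi_L^*A$ is big if and only if $\phi_L$ is birational. Together with the intermediate equivalence this yields: $L$ is $\Q$-normal $\iff$ $K_X+\tau(L)L$ is not big $\iff$ $\phi_L$ is not birational, as desired. I expect the main obstacle to be the careful bookkeeping in input (ii) for $\Q$-divisors and the justification that $\kappa(X,\phi_L^*A)=\dim Y$ (via $\phi_{L*}\mathcal{O}_X=\mathcal{O}_Y$, which uses that $\phi_L$ has connected fibers and $Y$ is normal, both built into the Remmert--Stein construction of $\phi_L$); the rest is a short convexity-and-openness argument in $N^1(X)_{\R}$ using Kleiman's criterion and the big-cone description already recalled.
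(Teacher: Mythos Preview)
The paper does not prove this proposition; it merely cites \cite[7.1.6]{beltrametti}. Your argument is a correct, self-contained proof, and it is essentially the standard one: reduce both directions to the single characterization ``$\mu(L)=\tau(L)$ if and only if $K_X+\tau(L)L$ is not big,'' then read off bigness of $K_X+\tau(L)L\sim_{\Q}\phi_L^*A$ from the Iitaka dimension via $\phi_{L*}\mathcal{O}_X=\mathcal{O}_Y$ and the projection formula. The only points worth tightening are cosmetic: (a) make explicit that the standing hypothesis $K_X\notin\overline{Eff}(X)$ is in force (it is built into the paper's Definition of $\mu(L)$ and $\tau(L)$, and you use it to get $\mu(L)>0$); (b) when you invoke openness of bigness to produce $\varepsilon>0$, it is cleaner to say that the big cone is the interior of $\overline{Eff}(X)$, so a line segment from the big class $K_X+\tau(L)L$ toward $K_X$ stays big for a short while. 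With those clarifications the argument is complete.
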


The following conjecture was stated in 1994 by M. C. Beltrametti and A.J. Sommese in \cite[7.1.8]{beltrametti}.

\begin{conj} Let $(X, L)$ be a polarized variety with $X$ smooth. If $\dim(X) = n$ and $\mu(L) > \frac{n+1}{2}$ then $L$ is $\Q$-normal.
\label{conj_codegree}
\end{conj}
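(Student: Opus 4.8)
The natural strategy is to argue by contraposition, combining Proposition~\ref{nonbirational} with the length estimates of Mori theory. Suppose $L$ is \emph{not} $\Q$-normal; by Proposition~\ref{nonbirational} the nef value morphism $\phi_L: X\to Y$ is then birational. Recall that, writing $\tau=\tau(L)=a/b$, the divisor $bK_X+aL$ is the pullback $\phi_L^*H$ of an ample $\Q$-divisor $H$ on $Y$, so $\phi_L$ is the contraction of the face $F\prec\overline{NE}(X)$ cut out by $K_X+\tau L$; in particular every integral curve $C$ with $[C]\in F$ satisfies $-K_X\cdot C=\tau\,(L\cdot C)\geq\tau\geq\mu(L)$, because $L$ is ample. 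Hence every extremal ray $R\subset F$ has length $\mathfrak{l}(R)\geq\tau(L)\geq\mu(L)>\frac{n+1}{2}$, and since $-K_X\cdot C\in\Z$ this forces $\mathfrak{l}(R)\geq\frac{n+2}{2}$ (indeed $\geq\frac{n+3}{2}$ when $n$ is odd).

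Next I would fix an extremal ray $R\subset F$. Since $(X,0)$ is a $\Q$-factorial klt pair and $R$ is $K_X$-negative, the contraction $\phi_R: X\to X'$ exists, and $\phi_L$ factors through it; as $\phi_L$ is birational, so is $\phi_R$, i.e.\ a divisorial or a small contraction. Let $E$ be a component of the exceptional locus $E_R$ and $G$ a component of a fibre of $\phi_R|_E$, which we may take to be smooth. Wi\'sniewski's inequality (Proposition~\ref{key_inequality}) gives $\dim E+\dim G\geq n+\mathfrak{l}(R)-1$, so with $\dim E\leq n-1$ and $\mathfrak{l}(R)\geq\frac{n+2}{2}$ we obtain $\dim G\geq\frac{n+2}{2}$ and $\dim\phi_R(E)\leq\frac{n-4}{2}$; in the small case $\dim E\leq n-2$ yields an even larger fibre, which for $n$ small is already impossible. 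Moreover $G$ is contracted by $\phi_L$, hence $(K_X+\tau L)|_G\equiv 0$, while $-K_X$ is $\phi_R$-ample, so $-K_X|_G$ is ample; thus $G$ carries the ample class $L|_G\equiv\tau^{-1}(-K_X|_G)$, and via adjunction $G$ is of Fano type with a polarization whose index is of the order of $\tau>\frac{n+1}{2}\geq\frac{\dim G+1}{2}$. One is then in a position to invoke the classification of Fano manifolds of large index (Proposition~\ref{fano_index}, and Remark~\ref{toric_fano} in the toric case).

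The decisive step, which I expect to be the genuine obstacle, is to convert this local Fano structure around $E$ into an \emph{upper} bound for $\mu(L)$ itself --- note that the length estimates only bound $\tau(L)$ from below, which is the wrong direction. The plan would be to use the description of $G$ to recognise $X$ in a neighbourhood of $E$ as (a blow-up of) a projective-bundle or quadric-bundle type variety, and then to exhibit an explicit effective divisor in $|K_X+tL|$ for some $t\leq\frac{n+1}{2}$ --- for instance by pushing forward a divisor supported on $E$ or by exploiting the bundle projection --- thereby forcing $K_X+tL\in\overline{Eff}(X)$ and $\mu(L)\leq t$.

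The main obstacles I anticipate are: (i) the small-contraction case, where $E$ is not a divisor and neither adjunction nor a discrepancy computation is directly available; (ii) controlling the normal bundle $N_{G/X}$ and the discrepancy of $\phi_R$ precisely enough to build the required adjoint section; and (iii) the passage from $\tau(L)$ to $\mu(L)$, which most likely needs either the second-reduction machinery of adjunction theory or a bend-and-break analysis of the family of rational curves sweeping out $E$. This is precisely where the Beltrametti--Sommese conjecture has so far resisted proof, and it is why the classification results in this thesis instead take $\Q$-normality as a hypothesis, relying on the partial combinatorial input of \cite{alicia2}.
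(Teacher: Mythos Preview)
The paper does not prove this statement: it is stated as a \emph{conjecture} (the Beltrametti--Sommese conjecture, \cite[7.1.8]{beltrametti}) and remains open. There is therefore no ``paper's own proof'' to compare against. The thesis uses the conjecture only heuristically, to motivate the $\Q$-normality hypothesis in Theorem~\ref{mainthm} and to draw the conditional consequence about the gap in $\Q$-codegree values noted after Conjecture~\ref{bs}.

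Your proposal is not a proof either, and to your credit you say so explicitly in the final paragraph. What you have written is a plausible opening move --- contraposition via Proposition~\ref{nonbirational}, then Wi\'sniewski's inequality to force large fibres in any extremal contraction inside the nef-value face --- followed by an honest enumeration of the obstacles. The crucial gap you identify in point (iii) is exactly the point: length and fibre-dimension estimates bound $\tau(L)$ from below, but the conjecture asks for an upper bound on $\mu(L)$, and there is no known mechanism to cross from one to the other in this generality. Your sketch of ``exhibiting an explicit effective divisor in $|K_X+tL|$'' is a statement of what one would like, not a method for producing it; the adjunction-theoretic second-reduction machinery you allude to does not currently close this gap. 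So the proposal should be read as a survey of the difficulty rather than as progress toward a proof, which is consistent with the paper's own treatment of the statement as conjectural.
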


We finish this section with two structural results related to the nef value of a divisor. The first result is an immediate consequence of \cite[2.5]{bsm}:

\begin{teo}[{\cite[3.1.1]{bsm}}] Let $(X, L)$ be a polarized variety, where $X$ is smooth. Assume that $\phi_L: X \to Y$ is not birational. If $\tau(L)> \frac{n+1-\dim(Y)}{2}$, then $X$ is covered by lines i.e., there exists an algebraic family of irreducible curves $\{C_t\}_{t \in S}$ with $\displaystyle \bigcup_{t \in S} C_t = X$ and $L \cdot C_t = 1, \ \forall t$. Moreover, the nef value morphism $\phi_L$ contracts all these lines.
\label{key_result_lines}
\end{teo}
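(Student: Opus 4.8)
The plan is to deduce the statement from Mori's theorem on rational curves on Fano manifolds, applied to a general fibre of the nef value morphism. Write $\tau(L)=a/b$ with $a,b$ positive integers, so that $bK_X+aL$ is nef and $\phi_L\colon X\to Y$ is the contraction of the face $F=(bK_X+aL)^{\perp}\cap\overline{NE}(X)$. Recall from the construction of $\phi_L$ that $m(bK_X+aL)=\phi_L^{*}(s^{*}\mathcal{O}_{\mathbb{P}^r}(1))$ for $m\gg 0$ and a finite morphism $s\colon Y\to\mathbb{P}^r$; hence a curve $C\subset X$ is contracted by $\phi_L$ if and only if $(bK_X+aL)\cdot C=0$. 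Since $\phi_L$ is not birational, $\dim Y<n$; put $f:=n-\dim Y\geq 1$, the dimension of a general fibre, so that the hypothesis becomes $\tau(L)>\frac{f+1}{2}$.

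First I would study a general fibre. By generic smoothness (we are in characteristic $0$) there is a dense open subset $V$ of the smooth locus of $Y$ over which $\phi_L$ is a smooth morphism; for $y\in V$ the fibre $G:=\phi_L^{-1}(y)$ is then a smooth projective variety of dimension $f$ with trivial normal bundle in $X$, so adjunction gives $K_G=K_X|_G$. Every curve contained in $G$ is contracted by $\phi_L$, hence has $(bK_X+aL)$-degree $0$; as $G$ is projective, $(bK_X+aL)|_G\equiv 0$, and therefore $K_G\equiv K_X|_G\equiv-\tfrac{a}{b}L|_G=-\tau(L)\,L|_G$. Thus $-K_G$ is ample and $G$ is a Fano manifold of dimension $f$. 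By Mori's theorem (bend-and-break), through every point of $G$ there passes a rational curve $C$ with $0<-K_G\cdot C\leq\dim G+1=f+1$. Intersecting $-K_G\equiv\tau(L)\,L|_G$ with $C$ gives $\tau(L)\,(L\cdot C)=-K_G\cdot C\leq f+1<2\tau(L)$, the last inequality being the hypothesis. Hence $L\cdot C<2$; since $L$ is an ample Cartier divisor on the smooth variety $X$ and $C$ is an integral curve, $L\cdot C$ is a positive integer, so $L\cdot C=1$, and consequently $-K_X\cdot C=-K_G\cdot C=\tau(L)$.

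Letting $y$ vary over $V$, we obtain rational curves of $L$-degree $1$ through a dense set of points of $X$. Such curves have bounded degree, so they are parametrized by a scheme of finite type; choose an irreducible component $S$ of the corresponding Chow variety whose universal curve dominates $X$. By properness its image in $X$ is closed and dense, hence equal to $X$; and since the $L$-degree is constant along $S$ while a $1$-cycle of $L$-degree $1$ (with $L$ ample) is necessarily prime, every member $C_t$, $t\in S$, is an irreducible curve with $L\cdot C_t=1$. Finally, members of the connected family $S$ are numerically equivalent, so $-K_X\cdot C_t=\tau(L)$ for all $t$; hence $(bK_X+aL)\cdot C_t=-a+a=0$, i.e.\ $\phi_L$ contracts every $C_t$. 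This produces the asserted covering family of lines and shows it is contracted by $\phi_L$.

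The part I expect to require the most care is the globalization step: passing from ``through each general point of $X$ there is a line'' to ``there is a single algebraic family $\{C_t\}_{t\in S}$ sweeping out all of $X$'' needs the boundedness of degree-one curves, the choice of a component of the Chow variety that dominates $X$, and the remark that degree-one members cannot degenerate to reducible or non-reduced cycles. The fibrewise input — adjunction on $G$, the Fano property, and the bend-and-break bound $-K_G\cdot C\leq\dim G+1$ — is essentially formal once the structure of the nef value morphism is in hand.
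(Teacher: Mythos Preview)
The paper does not give its own proof of this theorem; it merely cites it as \cite[3.1.1]{bsm} and remarks that it is ``an immediate consequence of \cite[2.5]{bsm}''. Your argument is correct and is the standard one underlying that citation: restrict to a general smooth fibre $G$ of $\phi_L$, use adjunction together with $(K_X+\tau(L)L)|_G\equiv 0$ to see that $G$ is Fano with $-K_G\equiv\tau(L)\,L|_G$, and then invoke Mori's bound $-K_G\cdot C\le\dim G+1$ to force $L\cdot C=1$ under the hypothesis $\tau(L)>\frac{\dim G+1}{2}$. Your globalization step---boundedness of degree-one curves, choice of a dominating component of the Chow variety, and the remark that an $L$-degree-$1$ effective cycle is automatically integral---is also fine, as is the computation $(bK_X+aL)\cdot C_t=0$ showing that the family is contracted by $\phi_L$.
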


\begin{teo} Let $(X, L)$ be a smooth polarized variety whose nef value morphism $\phi_L$ contracts a proper face $F \subsetneq \overline{NE}(X)$. Suppose that $X$ is covered by a family of lines $\{C_t\}_{t \in S}$. Set $\nu:= -K_X \cdot C_t - 2$. If $\nu \geq \frac{n-3}{2}$ then there exists an extremal ray $R$ of $\overline{NE}(X)$ such that the associated contraction $\phi_R: X \to Z$ is a Mori fiber space and $\dim(Z) \leq n - \tau +1$. 
\label{key_result_contraction}
\end{teo}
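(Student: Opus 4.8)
The plan is to realise the required Mori fibre space as the contraction of an extremal ray lying inside the face $F$ that the nef value morphism $\phi_L$ contracts, and to extract the bound on $\dim(Z)$ from Wi\'sniewski's inequality (Proposition~\ref{key_inequality}). First I would collect the numerical consequences of the hypotheses. Write $\tau:=\tau(L)$. Since $K_X+\tau L$ is nef and $L\cdot C_t=1$, one has $0\le(K_X+\tau L)\cdot C_t=-(\nu+2)+\tau$, so $\tau\ge\nu+2$; together with $\nu\ge\frac{n-3}{2}$ this gives $\tau\ge\frac{n+1}{2}$. Moreover, by definition of $\tau$, the contracted face $F$ is the set of classes in $\overline{NE}(X)$ on which $K_X+\tau L$ vanishes, so every curve $C$ with $[C]\in F$ satisfies $-K_X\cdot C=\tau\,(L\cdot C)\ge\tau$; in particular every extremal ray $R\subseteq F$ is $K_X$-negative with length $\mathfrak{l}(R)\ge\tau$. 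Finally $F\ne\{0\}$, because $K_X+\tau L$ is nef but not ample, and $F$ is proper by hypothesis. The whole argument then rests on the following remark: \emph{if $R\subseteq F$ is an extremal ray whose contraction $\phi_R\colon X\to Z$ is of fibre type, we are done}. Indeed, then $E_R=X$, and applying Proposition~\ref{key_inequality} to $R$, to $E=X$ and to a general (hence irreducible, of dimension $n-\dim Z$) fibre of $\phi_R$ gives $n+(n-\dim Z)\ge n+\mathfrak{l}(R)-1$, i.e. $\dim Z\le n-\mathfrak{l}(R)+1\le n-\tau+1$.

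So the task reduces to producing a fibre-type extremal ray inside $F$, and for this I would first show that $\phi_L$ is not birational. (This is automatic when $L$ is $\mathbb{Q}$-normal, by Proposition~\ref{nonbirational} — the situation relevant to Theorem~\ref{mainthm} — and in general follows from the hypotheses.) If $\phi_L$ were birational, pick an extremal ray $R_1\subseteq F$; since $\phi_L$ factors through $\phi_{R_1}$ and $\phi_L$ is birational, $\phi_{R_1}$ is a divisorial or small contraction, of length $\mathfrak{l}(R_1)\ge\tau\ge\frac{n+1}{2}$. Proposition~\ref{key_inequality} then forces the fibres of $\phi_{R_1}$ over its image to have dimension at least $\mathfrak{l}(R_1)-1\ge\frac{n-1}{2}$, so that the exceptional locus (of dimension $\le n-1$) maps onto a subvariety of dimension $\le\frac{n-3}{2}$; using that $\{C_t\}$ sweeps out $X$ together with the numerical slack $\nu\ge\frac{n-3}{2}$ one derives a contradiction (in the boundary case $\tau=\nu+2$ one argues directly: then $[C_t]\in F$, and since $L\cdot C_t=1$ the class $[C_t]$ cannot be a sum of two nonzero classes of $\overline{NE}(X)$, so it spans an extremal ray of $\overline{NE}(X)$ contained in $F$ whose contraction is of fibre type — incompatible with $\phi_L$ birational). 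Hence $\dim Y<n$ and $\phi_L$ is a fibre-type contraction of $F$.

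Now let $G$ be a general fibre of $\phi_L$. It is smooth, and since $K_X+\tau L$ is the pullback of an ample class from $Y$ and restricts trivially to $G$ while $N_{G/X}$ is trivial, adjunction gives $-K_G=-K_X|_G\equiv\tau\,L|_G$. Thus $G$ is a Fano manifold with $\dim G\le n-1$ and index $\ge\tau\ge\frac{n+1}{2}\ge\frac{\dim G+2}{2}>\frac{\dim G+1}{2}$, so Proposition~\ref{fano_index} (its Picard-number-one case together with the explicit list) describes $G$. In each case $G$ is swept out by lines $\ell$ spanning an extremal ray of $\overline{NE}(G)$, and the relation $-K_G\equiv\tau L|_G$ together with $\tau>\frac{\dim G+1}{2}$ forces $L|_G$ to be the fundamental ample generator, so that $L\cdot\ell=1$ and $-K_G\cdot\ell=\tau$. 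Letting $G$ vary over all fibres, the curves $\ell$ form a family of irreducible curves sweeping out $X$, with $L\cdot\ell=1$ and numerical class in $F$; because $L\cdot\ell=1$ this class is not a sum of two nonzero classes of $\overline{NE}(X)$, so it spans an extremal ray $R\subseteq F$ with $\mathfrak{l}(R)=\tau$; and $\phi_R$ is of fibre type, since otherwise the sweeping family $\{\ell\}$ would lie in the exceptional locus of a birational morphism. By the reduction of the first paragraph this finishes the proof.

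The main obstacle is the non-birationality of $\phi_L$: one must rule out that every extremal ray of $F$ is contracted by a divisorial or small contraction. The relevant tools are Wi\'sniewski's inequality (Proposition~\ref{key_inequality}), which makes the exceptional loci of long extremal rays large, the fact that a proper closed subset cannot absorb the covering family $\{C_t\}$, and the hypothesis $\nu\ge\frac{n-3}{2}$, which provides exactly the slack needed for the contradiction. A secondary technical point is the case-by-case check, through Proposition~\ref{fano_index}, that the general fibre of $\phi_L$ really is swept out by $L$-lines realising index $=\tau$ and spanning an extremal ray of $X$.
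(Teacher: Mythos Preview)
The paper does not prove Theorem~\ref{key_result_contraction}: it is stated without proof at the end of the preliminaries, alongside Theorem~\ref{key_result_lines}, as one of ``two structural results'' imported from \cite{bsm}. So there is no argument in the paper to compare yours against; I can only evaluate your attempt on its own terms.

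Your first paragraph is correct and isolates the real content: once a fibre-type extremal ray $R\subseteq F$ is in hand, Wi\'sniewski's inequality gives $\dim Z\le n-\mathfrak l(R)+1\le n-\tau+1$ immediately, since every ray in $F$ has length $\ge\tau$. The whole problem is therefore to produce such an $R$, and this is where your argument has a genuine gap.

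You invoke twice the principle ``$L\cdot\ell=1$ with $L$ ample $\Rightarrow$ $[\ell]$ spans an extremal ray of $\overline{NE}(X)$''. Your justification---that $[\ell]$ cannot then be a sum of two nonzero classes of $\overline{NE}(X)$---is wrong: nothing forbids $[\ell]=\tfrac12[C_1]+\tfrac12[C_2]$ with $[C_1],[C_2]$ distinct extremal curve classes of $L$-degree $1$. Minimality of $L$-degree makes a family of such curves \emph{unsplit} (no member breaks into a reducible cycle), but an unsplit covering family need not have extremal numerical class. Bridging ``unsplit covering family of lines in $F$'' to ``fibre-type extremal ray in $F$'' is precisely the nontrivial step; in \cite{bsm} it is handled by a specific deformation-theoretic analysis, not by the numerical shortcut you propose. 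Both your boundary-case argument for non-birationality and your final step rest on this unproved claim.

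Two smaller points. First, your non-birationality argument outside the boundary case is only a sketch (``one derives a contradiction''); you would need to say exactly how the covering family forces a long birational extremal ray to be of fibre type. Second, the assertion that the general fibre $G$ has index $\ge\tau$ is true but requires an argument: from $-K_G\equiv\tau L|_G$ with $\tau=p/q$ in lowest terms one gets $p\mid r$ for $r$ the index of $G$, hence $r\ge p\ge\tau$; you should include this before appealing to Proposition~\ref{fano_index}.
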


%
%
%
%

\chapter{Invariants of Polytopes}
\label{inariants_of_polytopes}

In this chapter we define certain numbers associated to a full dimensional lattice polytope $P$ that are invariant by affine isomorphisms of $P$ (see Definition \ref{aff_isom}). Using the correspondence between lattice polytopes and polarized toric varieties, one can reinterpret these invariants as algebro-geometric invariants defined in Section \ref{nef_spectral}. This will allow us to prove a classification theorem for smooth lattice polytopes.\\

Throughout this chapter, a lattice polytope $P \subset \R^n$ will be a polytope with vertices in $\Z^n$.

\section{Degree and Codegree}
\label{deg_and_codeg}

\begin{de} Let $P$ be a full dimensional lattice polytope in $\R^n$. The \emph{degree} of $P$, denoted by $\deg(P)$, is the smallest nonnegative integer $d$ such that $kP$ contains no interior lattice points for $1 \leq k \leq n-d$. 

The \emph{codegree} of $P$, $\codeg(P)$, is defined by:
$$
\codeg(P) = n + 1 - \deg(P).
$$
\end{de}

It follows immediately from these definitions that:
$$
\codeg(P) = \displaystyle \min_{k \in \Z} \Big\{k \geq 0 \ \Big| \ \interior(kP) \cap \Z^n \neq \varnothing \Big\}.
$$

Let $\varphi_{\R}: \R^n \to \R^n$ be a linear transformation induced by a lattice isomorphism $\varphi: \Z^n \to \Z^n$. Since $\varphi_{\R}$ preserves the number of lattice points in a full dimensional lattice polytope $P$, we conclude that $\deg(P)$ and $\codeg(P)$ are invariants by affine isomorphisms of $P$.

The degree $d$ of $P$ is related to the \emph{Ehrhart series} of $P$ as follows.
For each positive integer $m$, let $f_P(m)$ denote the number of lattice points in $mP$, 
and consider the  Ehrhart series 
$$
F_P(t) \ := \ \sum_{m\geq 1}f_P(m)t^m.
$$
It turns out that $h_P^*(t) := (1-t)^{n+1} F_P(t)$ is a polynomial of degree $d$ in $t$. We have $\deg(P) = d$. Moreover, writing:
$$
h_P^*(t) = h_0^* + h_1^*t + ... + h_d^*t^d,
$$
one can show that the coefficients $h_i^*$ are non-negative, $h_0^* = 1$ and $h_0^* + ... + h_d^* = \Vol(P)$. Here, $\Vol(P)$ denotes the \emph{normalized volume of $P$}, i.e., $\Vol(P)$ is equal to $n!$ times the Euclidean volume of $P$
(see \cite{beck_robins} for more details on Ehrhart series and  $h^*$-polynomials).\\

A full dimensional simplex $P$ in $\R^n$ is \emph{unimodular} if $P$ is affinely isomorphic to the standard simplex:
$$
\Delta_n = \Conv(0, e_1, ..., e_n),
$$ 
where $\{e_1, ..., e_n\}$ is the standard basis of $\R^n$. By a simple computation, we see that $\Vol(\Delta_n) = 1$. Moreover, if $P$ is a lattice simplex with vertices $v_0, ..., v_n$, $\Vol(P)$ is equal to the determinant of the linear operator in $\R^n$ that associates each canonical vector $e_j$ to $v_j-v_0$. Hence $\Vol(P) \geq 1$, and equality holds if and only if the simplex $P$ is unimodular.

The following proposition characterizes full dimensional polytopes with degree zero:

\begin{prop}[{see \cite[1.4]{baty}}] Let $P$ be a full dimensional lattice polytope in $\R^n$. The following statements are equivalent:
\begin{enumerate}
\item[(a)]$\deg(P) = 0;$

\item[(b)]$\Vol(P) = 1;$

\item[(c)]$P$ is a unimodular simplex.
\end{enumerate}

\label{deg_zero}
\end{prop}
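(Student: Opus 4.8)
The plan is to establish the cycle of implications $(a) \Rightarrow (b) \Rightarrow (c) \Rightarrow (a)$, extracting everything from the relationship between the degree, the Ehrhart series, and the normalized volume that was recalled just before the statement.

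First, $(a) \Rightarrow (b)$: if $\deg(P) = 0$, then $h_P^*(t)$ is a polynomial of degree $0$, so $h_P^*(t) = h_0^* = 1$. Since $h_0^* + h_1^* + \dots + h_d^* = \Vol(P)$ and here the only coefficient is $h_0^* = 1$, we get $\Vol(P) = 1$ immediately. (Alternatively, one can argue directly: $\deg(P)=0$ means $kP$ has no interior lattice points for $1 \le k \le n$, and then a short Ehrhart-reciprocity computation forces $f_P(m) = \binom{m+n}{n}$, whence $h_P^*(t)=1$ and $\Vol(P)=1$; but routing through the already-stated facts about $h^*$ is cleaner.)

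Next, $(b) \Rightarrow (c)$: I would first reduce to the case where $P$ is a simplex. The cleanest route is to pick any triangulation of $P$ using only the vertices of $P$ into lattice simplices $P_1, \dots, P_r$; then $\Vol(P) = \sum_i \Vol(P_i)$, and since each $\Vol(P_i) \ge 1$ (a lattice simplex has normalized volume equal to $|\det|$ of an integer matrix, hence a positive integer), $\Vol(P) = 1$ forces $r = 1$ and $\Vol(P_1) = 1$. So $P$ itself is a simplex with $\Vol(P) = 1$. Then, as recalled in the paragraph preceding the statement, $\Vol(P)$ equals the absolute value of the determinant of the linear operator sending $e_j \mapsto v_j - v_0$, where $v_0, \dots, v_n$ are the vertices of $P$; this determinant being $\pm 1$ means exactly that $\{v_1 - v_0, \dots, v_n - v_0\}$ is a $\Z$-basis of $\Z^n$, i.e.\ the affine map $e_j \mapsto v_j - v_0$ followed by translation by $v_0$ is a lattice automorphism of $\R^n$ carrying $\Delta_n$ onto $P$. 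Hence $P$ is a unimodular simplex.

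Finally, $(c) \Rightarrow (a)$: if $P$ is affinely isomorphic to $\Delta_n$, then since $\deg$ is invariant under affine isomorphism it suffices to compute $\deg(\Delta_n)$. For $1 \le k \le n$, the dilate $k\Delta_n = \Conv(0, ke_1, \dots, ke_n) = \{x \in \R^n_{\ge 0} : \sum x_i \le k\}$ has interior $\{x : x_i > 0, \sum x_i < k\}$; a lattice point there would need $x_i \ge 1$ for all $i$ and $\sum x_i \le k - 1 \le n - 1 < n$, which is impossible. So $k\Delta_n$ has no interior lattice points for $1 \le k \le n$, giving $\deg(\Delta_n) = 0$, hence $\deg(P) = 0$.

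The main obstacle is the reduction step in $(b) \Rightarrow (c)$: one must be sure that $P$ admits a triangulation into lattice simplices using no auxiliary lattice points beyond the vertices of $P$, and that volume is additive over it — both standard but worth stating carefully. The rest is bookkeeping with the determinant formula for the normalized volume and the affine-invariance of $\deg$, both already available in the text.
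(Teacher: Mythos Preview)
Your proof is correct and follows the same cycle $(a)\Rightarrow(b)\Rightarrow(c)\Rightarrow(a)$ as the paper, with identical arguments for $(a)\Rightarrow(b)$ and $(c)\Rightarrow(a)$. The only minor difference is in $(b)\Rightarrow(c)$: the paper picks a single lattice simplex $\Delta\subset P$ and uses strict monotonicity of normalized volume under strict inclusion to force $P=\Delta$, whereas you triangulate $P$ and use additivity---both are elementary volume arguments, though the paper's version avoids invoking the existence of a vertex-triangulation.
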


\begin{proof}
Suppose that $\deg(P) =0$. We have $\deg(h_P^*(t)) = 0$, then $\Vol(P) = h_0^* + ... +h_d^* = h_0^* = 1$. This proves $(a) \Rightarrow (b)$. Next, suppose that $\Vol(P) =1$. There is a lattice simplex $\Delta \subset P$. The normalized volume is an increasing function in the following sense: if $Q$ and $Q'$ are full dimensional lattice polytopes with $Q \subsetneq Q'$ then $\Vol(Q) < \Vol(Q')$. Since $\Vol(\Delta) \geq 1$, it follows from this observation that $P = \Delta$ and therefore $P$ is a unimodular simplex. This proves $(b) \Rightarrow (c)$. To prove $(c) \Rightarrow (a)$ it is sufficient to show that $\deg(\Delta_n) = 0$. For any $k \in \Z_{>0}$, we have $\interior(k\Delta_n) \cap \Z^n \neq \varnothing$ if and only if there exists a lattice solution for the inequality system:
$$
x_1+...+x_n < k, \ \ \ \ \ \ \ \ \ \ x_i > 0 \ \ \forall i=1, ..., n
$$
This occurs if and only if $k \geq n+1$. Therefore $\codeg(\Delta_n) = n+1$ and $\deg(\Delta_n) =0$.

\end{proof}

In some sense, Proposition \ref{deg_zero} shows that the unimodular simplices are the ``simplest'' full dimensional lattice polytopes. Polytopes with degree 1 were classified by Batyrev and Nill in \cite[2.5]{baty}. They belong to a special class of lattice polytopes, called \emph{Cayley Polytopes}. 

\begin{de} A \emph{Cayley polytope} is a lattice polytope $P$ affinely isomorphic to
$$
P_0 *...* P_k \ := \ \Conv \big(P_0 \times \{\bar{0}\}, P_1 \times \{e_1\},...,P_k \times \{e_k\}\big) \subset \mathbb{R}^{m}\times \mathbb{R}^{k},
$$
where the $P_i$'s are lattice polytopes in $ \mathbb{R}^m$ and $\{e_1,...,e_k\}$ is a basis for $\mathbb{Z}^k$. When the $P_i's$ are $m$-dimensional and have the same normal fan, we say that $P$ is \emph{strict} and denote it by:
$$
P = \Cayley(P_0, ..., P_k).
$$

\label{def_Cayley}
\end{de}

\begin{figure}[h]
\centering
\includegraphics[scale=0.9]{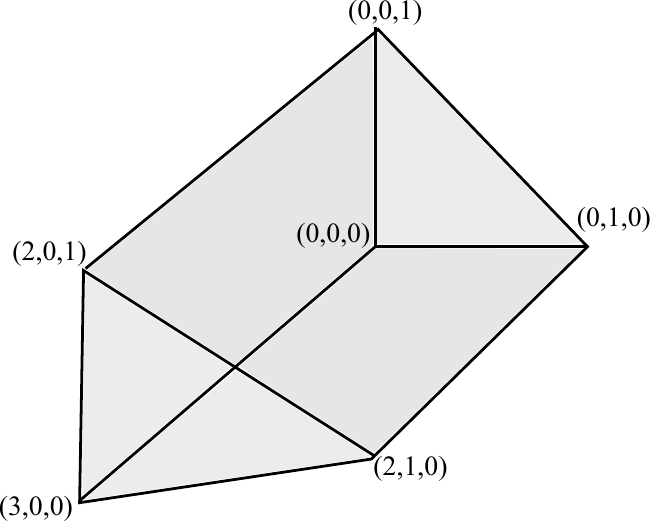}
\caption{The Cayley polytope $\Cayley(3\Delta_1, 2\Delta_1, 2\Delta_1)$.}
\end{figure}

\section{Nef value and $\Q$-codegree}
\label{nef_and_Q_degree}

In this section we will define two invariants of a full dimensional lattice polytope that are, in some sense, "more refined" than the notion of codegree. Next we relate these new invariants to invariants of the polarized toric variety associated to this polytope.\\

Let $P \subset \R^n$ be a full dimensional lattice polytope. 
For each $t\in \R_{\geq 0}$, we let $P^{(t)}$ be the (possibly empty) polytope obtained 
by moving each facet of $P$ toward its inner normal direction by a ``lattice distance'' of $t$ units.  
More precisely, if $\Sigma_P(1)=\{\eta_i\}_{i\in \{1, \dots, r\}}$, and $P$ is given by facet presentation:
$$
P\ =\ \Big\{
x\in \R^n \ \Big| \ \langle x, \eta_i\rangle \geq -a_i, \ 1\leq i\leq r\ 
\Big\},
$$
then $P^{(t)}$ is given by
$$
P^{(t)}\ =\ \Big\{
x\in \R^n \ \Big| \  \langle x, \eta_i\rangle \geq -a_i+t, \ 1\leq i\leq r\ 
\Big\}.
$$
These are called  \emph{adjoint polytopes} in \cite{drhnp}.
Set 
$$
\sigma(P) := sup\ \Big\{ t\geq 0 \ \Big| \  P^{(t)}\neq  \varnothing \Big\}.
$$
One can check that $\dim P^{(t)}=n$ for $0\leq t<\sigma(P)$, and 
$\dim P^{(\sigma(P))}<n$ (see \cite[1.6]{drhnp}). 
When we increase $t$ from $0$ to $\sigma(P)$,  $P^{(t)}$ will change its
combinatorial type at some critical values, the first one being 
$$
\lambda(P):= sup\ \Big\{
t\geq 0 \ \Big| \ P^{(t)} \neq \varnothing \ \mbox{and} \ \Sigma_t:=\Sigma_{P^{(t)}}= \Sigma_P \Big\}\leq \sigma(P).
$$
By  \cite[Lemma 1.13]{drhnp}, $\lambda(P)>0$ if and only if 
the normal fan of $P$ is $\Q$-Gorenstein. This happens for instance when $P$ is 
a smooth lattice polytope.

\begin{de}(\cite{alicia} \label{defn_qnormal}
Let $P \subset \mathbb{R}^n$ be an $n$-dimensional lattice polytope. 
 
The $\mathbb{Q}$-\emph{codegree} of $P$ is 
$$\codeg_{\mathbb{Q}}(P)\ :=\ \sigma(P)^{-1}.$$

Suppose that $\lambda(P)>0$. Then the  \emph{nef value} of $P$ is 
$$
\tau(P)\ :=\ \lambda(P)^{-1}.$$

In particular, $\codeg_{\Q}(P) \leq \tau(P)$. When equality holds, $P$ called is $\Q$\emph{-normal}.
\end{de}

\begin{lemma} Let $P$ be a full dimensional lattice polytope in $\R^n$ and $T: \R^n \to \R^n$ be an affine isomorphism of $\R^n$. Then, for every $t>0$:
$$
T\big(P^{(t)}\big) = \big(T(P)\big)^{(t)}.
$$
In particular, the nef value and the $\Q$-codegree of a lattice polytope are invariant by affine isomorphisms of $\R^n$. 

\label{aff_adjoint}
\end{lemma}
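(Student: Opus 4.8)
The plan is to reduce the whole statement to a bookkeeping computation with facet presentations. Write the affine isomorphism as $T(x)=Ax+b$ with $A\in GL_n(\mathbb{Z})$ and $b\in\mathbb{Z}^n$ (this is the map induced by a lattice automorphism together with a lattice translation; if one prefers the convention without translation, simply take $b=0$). Start from the facet presentation
\[
P\ =\ \Big\{ x\in\mathbb{R}^n \ \Big|\ \langle x,\eta_i\rangle\geq -a_i,\ 1\leq i\leq r\Big\},
\]
where the $\eta_i\in\mathbb{Z}^n$ are primitive and the $a_i$ are integers (because each facet of a lattice polytope contains a lattice point). Substituting $x=T^{-1}(y)=A^{-1}y-A^{-1}b$ and using $\langle A^{-1}z,\eta_i\rangle=\langle z,(A^{-1})^{\mathsf T}\eta_i\rangle$, I would obtain
\[
T(P)\ =\ \Big\{ y\in\mathbb{R}^n \ \Big|\ \langle y,\eta_i'\rangle\geq -a_i',\ 1\leq i\leq r\Big\},\qquad \eta_i':=(A^{-1})^{\mathsf T}\eta_i,\quad a_i':=a_i-\langle b,\eta_i'\rangle .
\]
The first thing to check is that this is genuinely the \emph{facet} presentation of $T(P)$: since $(A^{-1})^{\mathsf T}=(A^{\mathsf T})^{-1}\in GL_n(\mathbb{Z})$ is a lattice automorphism, each $\eta_i'$ is again a primitive lattice vector, each $a_i'$ is an integer, and because $T$ is a bijection it carries the facet $F_i=P\cap\{\langle\cdot,\eta_i\rangle=-a_i\}$ onto the facet $T(F_i)$ of $T(P)$, which lies on $\{\langle\cdot,\eta_i'\rangle=-a_i'\}$. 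Hence $\Sigma_{T(P)}(1)=\{\eta_i'\}_{i}$, and more precisely $\Sigma_{T(P)}=(A^{-1})^{\mathsf T}\,\Sigma_{P}$ as fans in $N_{\mathbb{R}}$ (the cone $\sigma_Q$ over a face $Q$ is sent to the cone over $T(Q)$).

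Granting this, the identity $T(P^{(t)})=(T(P))^{(t)}$ is immediate. By definition $P^{(t)}$ is the polytope cut out by $\langle x,\eta_i\rangle\geq -a_i+t$, i.e.\ $P$ with each $a_i$ replaced by $a_i-t$; running exactly the substitution above gives
\[
T(P^{(t)})\ =\ \Big\{ y\ \Big|\ \langle y,\eta_i'\rangle\geq -(a_i-t)+\langle b,\eta_i'\rangle\Big\}\ =\ \Big\{ y\ \Big|\ \langle y,\eta_i'\rangle\geq -a_i'+t\Big\}\ =\ (T(P))^{(t)},
\]
the last equality because $\{(\eta_i',a_i')\}$ is the facet presentation of $T(P)$. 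This is valid for all $t>0$, and the case $t=0$ is trivial.

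Finally, I would read off the invariance of the two numerical invariants. Since $T$ is a bijection, $P^{(t)}\neq\varnothing$ if and only if $T(P^{(t)})=(T(P))^{(t)}\neq\varnothing$, so $\sigma(P)=\sigma(T(P))$ and therefore $\codeg_{\mathbb{Q}}(P)=\sigma(P)^{-1}$ is unchanged by $T$. For the nef value, applying the lattice automorphism $(A^{-1})^{\mathsf T}$ to the fans shows that $\Sigma_{P^{(t)}}=\Sigma_{P}$ holds if and only if $\Sigma_{T(P^{(t)})}=\Sigma_{T(P)}$, hence $\lambda(P)=\lambda(T(P))$ and, when this is positive, $\tau(P)=\lambda(P)^{-1}$ is likewise invariant. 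I do not expect a real obstacle here; the only point that deserves to be written out carefully is the claim made in the first paragraph that the transformed inequality system is the irredundant, primitive-normal facet presentation of $T(P)$ rather than merely \emph{some} presentation, and that rests entirely on $(A^{-1})^{\mathsf T}$ being a lattice automorphism together with $T$ carrying facets bijectively onto facets.
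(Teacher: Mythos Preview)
Your proposal is correct and follows essentially the same approach as the paper's proof: both compute the facet presentation of $T(P)$ by applying the inverse-transpose of the linear part to the normal vectors $\eta_i$, observe primitivity is preserved, and then read off $T(P^{(t)})=(T(P))^{(t)}$ directly from the definitions. The only differences are cosmetic: you work in matrix notation and allow a translation term $b$ (the paper's Definition~\ref{aff_isom} actually takes ``affine isomorphism'' to mean a \emph{linear} map induced by a lattice automorphism, so $b=0$ there), and you spell out the invariance of $\sigma$ and $\lambda$ more explicitly than the paper, which leaves that as an immediate ``in particular.''
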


\begin{proof}
Suppose that the facet presentation of $P$ is  
$$
P\ =\ \Big\{
x\in \R^n \ \Big| \ \langle x, \eta_i\rangle \geq -a_i, \ 1\leq i\leq r\ 
\Big\}.
$$
Let $(T^{-1})^*$ be the adjoint operator of the inverse linear map $T^{-1}$ and set $\zeta_i = (T^{-1})^*(\eta_i)$. Since $T$ is an affine isomorphism and the $\eta_i's$ are primitive, the $\zeta_i$ are primitive again. For every $x \in \R^n$ we have:
$$
\langle T(x), \zeta_i \rangle = \big \langle T(x), (T^{-1})^*(\eta_i) \big \rangle = \langle x, \eta_i \rangle . 
$$
Therefore, the facet presentation of $T(P)$ is:
$$   
T(P)\ =\ \Big\{
y\in \R^n \ \Big| \ \langle y, \zeta_i\rangle \geq -a_i, \ 1\leq i\leq r\ 
\Big\}.
$$
It follows immediately from this computation that $T\big(P^{(t)}\big) = \big(T(P)\big)^{(t)}$ for every $t>0$.
\end{proof}

The following proposition reveals the geometric meaning of these polytope invariants.

\begin{prop} Let $P$ be a smooth $n$-dimensional lattice polytope in $\R^n$. Let $(X,L)$ be the polarized toric variety associated to $P$. Then the nef value and the $\Q$-codegree of $P$ are equal, respectively, to the nef value and the spectral value of $L$, defined in Section \ref{nef_spectral}. In symbols:
$$
\tau(P) = \tau(L) \ \ \mbox{and} \ \ \codeg_{\Q}(P) = \mu(L).
$$

\label{inv_polytopes_varieties}
\end{prop}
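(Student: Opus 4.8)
The plan is to match each of the two polytope invariants with the position of the $\R$-divisor $K_X+tL$ relative to the pseudo-effective and nef cones of $X$, using the dictionary between $T$-invariant divisors and polytopes from Section~\ref{polytope_toric}. Write the facet presentation $P=\{x\in\R^n:\langle x,\eta_i\rangle\geq -a_i,\ 1\leq i\leq r\}$ with $\Sigma_P(1)=\{\eta_i\}=\{v_{\rho_i}\}$, so that $L=D_P=\sum_i a_iD_{\rho_i}$ while $K_X=-\sum_i D_{\rho_i}$. For $s\geq 0$ set $D_s:=D_P+sK_X=\sum_i(a_i-s)D_{\rho_i}$; the polytope $P_{D_s}=\bigcap_i H^+_{v_{\rho_i},\,a_i-s}$ attached to this $\R$-divisor is exactly the adjoint polytope $P^{(s)}$, since $\langle x,v_{\rho_i}\rangle\geq -(a_i-s)$ is the same inequality as $\langle x,v_{\rho_i}\rangle\geq -a_i+s$. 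The computation underlying everything is that, for $t>0$,
$$
K_X+tL \;=\; \sum_i(ta_i-1)D_{\rho_i} \;=\; t\,D_{1/t},
$$
so that, $\overline{Eff}(X)$ and $\overline{Nef}(X)$ being cones, $K_X+tL$ is pseudo-effective (resp.\ nef) if and only if $D_{1/t}$ is. Note that $X$ is smooth, hence has terminal singularities; and $K_X\notin\overline{Eff}(X)$, because $\overline{Eff}(X)=\Cone([D_\rho])$ and no $u\in M_\R$ has $\langle u,v_\rho\rangle>0$ for all $\rho$ (the $v_\rho$ positively span $N_\R$), so $\tau(L)$ and $\mu(L)$ are defined.

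For $\codeg_{\Q}(P)=\mu(L)$, I would first record that for any $T$-invariant $\R$-divisor $D=\sum_\rho a_\rho D_\rho$ one has $[D]\in\overline{Eff}(X)$ if and only if $P_D\neq\varnothing$. Indeed, $\overline{Eff}(X)=\Cone([D_\rho])$ and linear equivalence agrees with numerical equivalence on complete toric varieties, so $[D]\in\overline{Eff}(X)$ means $D\sim_\R\sum_\rho c_\rho D_\rho$ with all $c_\rho\geq 0$; tensoring the sequence~(\ref{exact_sequence}) with $\R$, this holds exactly when there is $u\in M_\R$ with $c_\rho=a_\rho+\langle u,v_\rho\rangle\geq 0$ for all $\rho$, i.e.\ $-u\in P_D$. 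Applying this to $D=D_s$ gives $D_s\in\overline{Eff}(X)\iff P^{(s)}\neq\varnothing$. The set $\{s\geq 0:D_s\in\overline{Eff}(X)\}$ is closed (preimage of the finitely generated cone $\overline{Eff}(X)$ under an affine map), convex, contains $0$ (as $D_0=L$ is ample) and is bounded above (as $K_X\notin\overline{Eff}(X)$), hence equals a closed interval $[0,\sigma(P)]$. Therefore $K_X+tL=tD_{1/t}$ is pseudo-effective iff $1/t\leq\sigma(P)$, i.e.\ $t\geq\sigma(P)^{-1}$, and since $K_X$ itself is not pseudo-effective we get $\mu(L)=\sigma(P)^{-1}=\codeg_{\Q}(P)$.

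For $\tau(P)=\tau(L)$, set $c:=\sup\{s\geq 0:D_s\in\overline{Nef}(X)\}$. As $D_0=L$ lies in the interior of $\overline{Nef}(X)$ and $\overline{Nef}(X)\subseteq\overline{Eff}(X)$, the set $\{s\geq 0:D_s\in\overline{Nef}(X)\}$ is a closed interval $[0,c]$ with $0<c\leq\sigma(P)$, so $K_X+tL$ is nef iff $1/t\leq c$, giving $\tau(L)=c^{-1}$; it remains to prove $c=\lambda(P)$. If $0\leq s<c$ then $D_s$ is a convex combination of the interior point $D_0$ and the nef class $D_c$, hence ample; for rational such $s$ one clears denominators so that $qD_s$ is an integral ample Cartier divisor on $X=X_{\Sigma_P}$ with $P_{qD_s}=qP^{(s)}$, and Proposition~\ref{polytope_div_conections}(b) then yields that $qP^{(s)}$ is full-dimensional and $\Sigma_{P^{(s)}}=\Sigma_{qP^{(s)}}=\Sigma_P$; as such $s$ are dense in $[0,c)$, this forces $\lambda(P)\geq c$. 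Conversely, for rational $s<\lambda(P)$ the polytope $P^{(s)}$ is bounded with normal fan the complete fan $\Sigma_P$, hence full-dimensional, so $qP^{(s)}$ (for suitable $q$) is a full-dimensional lattice polytope with normal fan $\Sigma_P$ and $D_{qP^{(s)}}=qD_s$ is ample on $X$; thus $D_s\in\overline{Nef}(X)$ for rational $s<\lambda(P)$ and, the nef cone being closed, for all $s<\lambda(P)$, so $c\geq\lambda(P)$. Hence $c=\lambda(P)$ and $\tau(L)=\lambda(P)^{-1}=\tau(P)$.

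The step I expect to require the most care is the passage to $\R$-divisors and irrational values of the parameter, since Proposition~\ref{polytope_div_conections} and the polytope/divisor correspondence in Section~\ref{polytope_toric} are stated for lattice polytopes and integral Cartier divisors. The plan is to handle this, as above, by checking the relevant conditions only at rational parameters---where clearing denominators reduces everything to honest lattice polytopes and Cartier divisors---and propagating to all parameters using convexity of the ample cone and closedness of $\overline{Nef}(X)$ and $\overline{Eff}(X)$ together with density of $\Q$ in $\R$; alternatively, one could first invoke the rationality of $\tau(L)$ and of $\mu(L)^{-1}=\sigma(L)$ recalled after Definition~\ref{nef_spectral} to reduce directly to rational parameters. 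Beyond that, the argument is a direct unwinding of definitions once the identity $K_X+tL=t\,D_{1/t}$ and the equality $P_{D_s}=P^{(s)}$ are in hand.
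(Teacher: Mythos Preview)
Your argument is correct and follows essentially the same route as the paper's proof: both identify $P_{L+sK_X}=P^{(s)}$, translate ampleness of $L+sK_X$ into $\Sigma_{P^{(s)}}=\Sigma_P$ via Proposition~\ref{polytope_div_conections}, and translate pseudo-effectiveness into $P^{(s)}\neq\varnothing$, then invert the parameter. You simply supply more detail than the paper does---in particular an explicit justification that $[D]\in\overline{Eff}(X)\iff P_D\neq\varnothing$ and a careful treatment of the passage from rational to real parameters (note a harmless sign slip: the condition $a_\rho+\langle u,v_\rho\rangle\geq 0$ gives $u\in P_D$, not $-u\in P_D$).
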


\begin{proof}
The polytope $P$ is the polytope $P_L$ associated to $L$. Write $L= \displaystyle \sum_{\eta_i \in \Sigma_P(1)} a_iV(\eta_i)$. For all $t>0$ the divisor $L_t = L + tK_X = \sum_i (a_i - t)V(\eta_i)$ corresponds to the adjoint polytope:
$$
{P_L}^{(t)} = P^{(t)} = \Big\{
x\in \R^n \ \Big| \  \langle x, \eta_i\rangle \geq -a_i+t, \ 1\leq i\leq r\ 
\Big\}.
$$
By Proposition \ref{polytope_div_conections}, $L_t$ is ample if and only if $\Sigma_{P^{(t)}} = \Sigma_P$. Then:
\begin{align*}
\tau(L)  & = \inf\{t>0 | K_X + tL \ \mbox{is ample} \ \} \\
      & = (\sup\{t>0 | L + tK_X \ \mbox{is ample}\ \})^{-1}  \notag \\
      & = (\sup\{t>0 | \Sigma_{P^{(t)}} = \Sigma_P\})^{-1} \notag \\
      & = \lambda(P)^{-1} = \tau(P). \notag   
\end{align*}
For the $\Q$-codegree we have:
\begin{align*}
\mu(L) & = \inf\{t>0 | K_X + tL \ \mbox{ is pseudo-effective} \} \notag \\ 
       & = (\sup\{t>0 | L + tK_X \ \mbox{is pseudo-effective}\})^{-1} \notag \\
       & = (\sup\{t>0 | P^{(t)} \neq \varnothing \})^{-1} \notag \\
       & =  \sigma(P)^{-1} = \codeg_{\Q}(P).
\end{align*}
\end{proof}

Next, we study the relation between the codegree and the $\Q$-codegree of a full dimensional lattice polytope $P \subset \R^n$. First, we give another characterization of $\codeg(P)$ in terms of adjoint polytopes. The lattice points contained in the interior of $kP$ are given by the intersection:
$$
(kP)^{(1)} \cap \Z^n.
$$
In fact, let $\Sigma$ the normal fan of $P$ and $\Sigma(1)=\{\eta_i\}_{i=1,...,r}$. If
\begin{equation}
P\ =\ \Big\{
u\in \R^n \ \Big| \ \langle u, \eta_i\rangle \geq -a_i, \ 1\leq i\leq r\ 
\Big\}
\label{fac_pres}
\end{equation}
is the facet presentation of $P$, and $u \in \R^n$ is a lattice point, we have:
\begin{align*}
u \in int(kP) & \iff \langle u, \eta_i\rangle > -ka_i, \forall i=1,...,r \notag \\
             & \stackrel{(*)}{\iff} \langle u, \eta_i\rangle \geq -ka_i +1 \notag \\
             & \iff u \in (kP)^{(1)},
\end{align*}
where the equivalence in $(*)$ holds since $u$ is a lattice point. We conclude that:
\begin{equation}
\codeg(P) = \displaystyle \min_{k \in \N} \Big\{ k \ \Big| \ (kP)^{(1)} \cap \Z^n \neq \varnothing \Big\}.
\label{codeg_reformulado}
\end{equation}

\begin{prop} Let $P$ be a smooth full dimensional lattice polytope in $\R^n$. Then $\codeg(P) \geq \lceil \codeg_{\Q}(P) \rceil$. Equality holds if $P$ is $\Q$-normal and hence, in this case, $\tau(P) > \codeg(P) - 1$ {\rm(see \cite[Lemma 2.4]{alicia})}.
\label{teto}
\end{prop}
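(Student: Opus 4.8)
The plan is to recast both claims via the adjoint polytopes $P^{(t)}$. The main bookkeeping device is the dilation identity $(kP)^{(t)}=k\,P^{(t/k)}$ for every positive integer $k$: if $P=\{x\mid\langle x,\eta_i\rangle\geq-a_i\}$ is the facet presentation, then $kP=\{x\mid\langle x,\eta_i\rangle\geq-ka_i\}$, and both sides of the identity equal $\{x\mid\langle x,\eta_i\rangle\geq-ka_i+t\}$. Combining this with the reformulation $(\ref{codeg_reformulado})$, namely $\codeg(P)=\min\{k\in\N\mid(kP)^{(1)}\cap\Z^n\neq\varnothing\}$, will give both statements.

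For the inequality $\codeg(P)\geq\lceil\codeg_{\Q}(P)\rceil$ I would argue directly: writing $c=\codeg(P)$, the set $(cP)^{(1)}=c\,P^{(1/c)}$ contains a lattice point, hence in particular $P^{(1/c)}\neq\varnothing$, so $1/c\leq\sigma(P)$ and $c\geq\sigma(P)^{-1}=\codeg_{\Q}(P)$; since $c$ is an integer this forces $c\geq\lceil\codeg_{\Q}(P)\rceil$. No hypothesis on $\Q$-normality enters here, which matches the fact that the first assertion of the proposition is unconditional.

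For the equality under the $\Q$-normality hypothesis it suffices to prove $\codeg(P)\leq k_0:=\lceil\codeg_{\Q}(P)\rceil$, i.e. that $(k_0P)^{(1)}$ contains a lattice point. Since $P$ is smooth we have $\lambda(P)>0$, so $\tau(P)=\lambda(P)^{-1}$ is defined; $\Q$-normality says $\codeg_{\Q}(P)=\tau(P)$, so $\lambda(P)=\codeg_{\Q}(P)^{-1}$ and therefore $1/k_0\leq\lambda(P)$. I would then fix any vertex $v$ of $P$: by smoothness the primitive inner normals $\eta_{i_1},\dots,\eta_{i_n}$ of the facets through $v$ form a $\Z$-basis, so there is a lattice vector $w\in\Z^n$ with $\langle w,\eta_{i_j}\rangle=1$ for all $j$. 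For $0\leq t<\lambda(P)$ the normal fan of $P^{(t)}$ is $\Sigma_P$ and the vertex of $P^{(t)}$ cut out by those same facets is exactly $v+tw$; passing to the limit $t\to\lambda(P)^-$ in the defining inequalities extends this to $v+tw\in P^{(t)}$ for all $t\in[0,\lambda(P)]$. Applying this with $t=1/k_0\leq\lambda(P)$ yields $v+k_0^{-1}w\in P^{(1/k_0)}$, hence the lattice point $k_0v+w$ lies in $k_0\,P^{(1/k_0)}=(k_0P)^{(1)}$. By $(\ref{codeg_reformulado})$ we conclude $\codeg(P)\leq k_0$, and together with the previous paragraph $\codeg(P)=\lceil\codeg_{\Q}(P)\rceil$. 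The last clause is then automatic: under $\Q$-normality this reads $\codeg(P)=\lceil\tau(P)\rceil$, and $\lceil x\rceil-1<x$ for every real $x$ gives $\tau(P)>\codeg(P)-1$.

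The step I expect to be the real obstacle is the borderline case $1/k_0=\lambda(P)$, equivalently $\codeg_{\Q}(P)\in\Z$. There $P^{(1/k_0)}=P^{(\lambda(P))}$ is the degenerate adjoint polytope, of dimension strictly less than $n$, and it need not be a lattice polytope, so one cannot simply invoke ``a full-dimensional smooth lattice polytope has lattice vertices''. The explicit point $k_0v+w$ is precisely a lattice point that survives in this degenerate polytope, which is why I run the deformation $v+tw\in P^{(t)}$ all the way up to $t=\lambda(P)$ instead of only on the open range where the normal fan stays equal to $\Sigma_P$.
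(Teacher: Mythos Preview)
Your argument for the inequality is exactly the paper's argument.

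For the equality under $\Q$-normality you take a genuinely different, purely combinatorial route. The paper does not produce an explicit lattice point; instead it passes through the toric dictionary: for any integer $m$ with $(mP)^{(1)}\neq\varnothing$ the divisor $K_X+mL$ is Cartier (by smoothness) and satisfies $m\geq\codeg_{\Q}(P)=\tau(P)$ (by $\Q$-normality and Proposition~\ref{inv_polytopes_varieties}), hence is nef, hence basepoint free on a toric variety, hence by Proposition~\ref{polytope_div_conections} the polytope $(mP)^{(1)}$ is a \emph{lattice} polytope and in particular contains a lattice point. This shows $\codeg(P)=\min\{k\in\N\mid (kP)^{(1)}\neq\varnothing\}=\lceil\codeg_{\Q}(P)\rceil$.

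What each buys: your approach is self-contained and never leaves the polytope world, and it handles the borderline case $1/k_0=\lambda(P)$ cleanly by a continuity argument. The paper's approach proves strictly more --- all of $(mP)^{(1)}$ is a lattice polytope, not just that it contains one lattice point --- and it makes transparent \emph{why} the $\Q$-normality hypothesis matters (it is exactly what forces $K_X+mL$ to be nef rather than merely pseudo-effective). Both are correct proofs of the stated proposition.
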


\begin{proof} Let $k:= \codeg(P)$. By (\ref{codeg_reformulado}), $(kP)^{(1)} \neq \varnothing$. Since $(kP)^{(1)} = k \cdot (P^{(\frac{1}{k})})$, we have that $P^{(\frac{1}{k})} \neq \varnothing$. By definition of $\Q$-codegree, we conclude that $k \geq \codeg_{\Q}(P)$. Since $k$ is an integer, $k \geq \lceil \codeg_{\Q}(P) \rceil$.

Next, suppose that $P$ is $\Q$-normal. Let $(X,L)$ be the polarized toric variety corresponding to $P$. If $m>0$ is an integer such that $(mP)^{(1)} \neq \varnothing$ then $(mP)^{(1)} \cap \Z^n \neq \varnothing$. In fact, the divisor on $X$ corresponding to $(mP)^{(1)}$ is $K_X + mL$. Since $X$ is smooth, $K_X+mL$ is Cartier. By Proposition \ref{inv_polytopes_varieties}, $m \geq \codeg_{\Q}(L) = \tau(L)$. Hence, $K_X+mL$ is nef. Since nef divisors on toric varieties are basepoint free (see \ref{mov_toric}), we conclude that $K_X + mL$ is basepoint free. It follows from Proposition \ref{polytope_div_conections} that $(mP)^{(1)}$ is a lattice polytope and hence $(mP)^{(1)} \cap \Z^n \neq \varnothing$.

We proved that for smooth $\Q$-normal lattice polytopes we have:
$$
\codeg(P) = \displaystyle \min_{k \in \N} \Big\{ k \ \Big| \ (kP)^{(1)} \neq \varnothing \Big\}. 
$$
Therefore:
\begin{align*}
\codeg(P) & = \displaystyle \min_{k \in \N} \Big\{ k \ \Big| \ (kP)^{(1)} \neq \varnothing \Big\} \notag \\
          & = \displaystyle \min_{k \in \N} \Big\{ k \ \Big| \ P^{(\frac{1}{k})} \neq \varnothing \Big\} \notag \\
          & = \lceil \codeg_{\Q}(P) \rceil. \notag 
\end{align*}

\end{proof}

\begin{ex} Let $P$ be a full dimensional lattice polytope in $\R^n$ and $k \in \N$ such that $(kP)^{(1)}$ is non-empty. If $P$ is not smooth, $(kP)^{(1)}$ can fail to be a lattice polytope even when $k = \codeg(P)$. In fact, let $P = \Conv \Big((0,0), (1,1), (0,3) \Big)$. Then $P$ is a non-smooth triangle in $\R^2$ since the vectors $(-2,-1)$ and $(1, -1)$ defining the facets incidents to $(1,1)$ do not form a basis for $\Z^2$. We have $\codeg(P) = 2$ but:
$$
(2P)^{(1)} = \Conv \Big((1,2), (1,3), (4/3, 7/3) \Big)
$$
is not a lattice polytope (see Figure \ref{non_lattice_pol}).  
\end{ex}

\begin{figure}[h]
\centering
\includegraphics[scale=0.6]{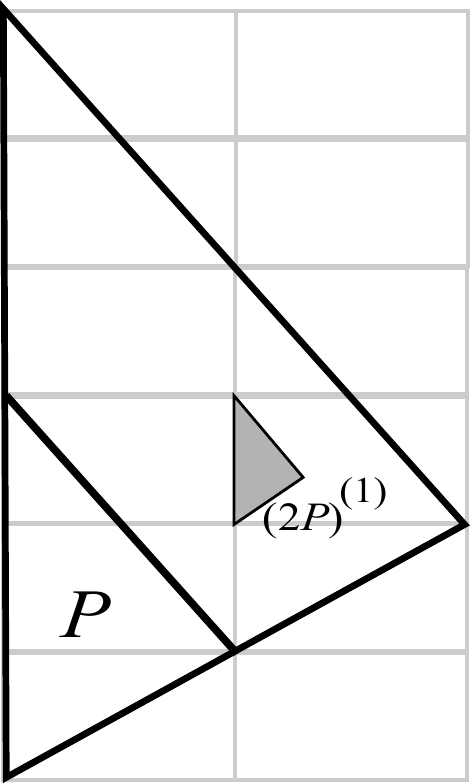}
\caption{The polytope $(2P)^{(1)}$ is not a lattice polytope.}
\label{non_lattice_pol}
\end{figure}
 
\begin{rem}
Let $P$ be a lattice polytope, and
$(X,L)$ the corresponding polarized toric variety.
When $X$ is $\Q$-Gorenstein (i.e., some multiple of $K_X$ is Cartier),
the family of adjoint polytopes $\{P^{(t)}\}_{0\leq t\leq \sigma(P)}$ is
the polytope counterpart of the \emph{Minimal Model Program with scaling}, 
established in \cite{bchm}.
The projective varieties $X_t=X_{\Sigma_t}$ that appear when
we increase $t$ from $0$ to $\sigma(P)$ are precisely the varieties that appear in the 
Minimal Model Program for $X$ with scaling of $L$.
A precise statement and proof can be found in \cite{tese_edilaine}.
\end{rem}

\section{Toric $\mathbb{P}^k$-bundles}
\label{toric_proj_bundles}

In this section we describe the fan of toric projective bundles. Details and proofs can be found in \cite[Sec. 3]{dirocco} or \cite[7.3]{cox}.\\

\begin{say} Let $\Sigma'$ be a complete fan in $\R^m$ and $Y:=X_{\Sigma'}$ the corresponding toric variety. Suppose that $Y$ is projective. Let $D_0, ..., D_k$ be $T$-invariant Cartier divisors on $Y$. The $\mathbb{P}^k$-bundle:
$$
X = \mathbb{P}_Y(\mathcal{O}(D_0) \oplus ... \oplus \mathcal{O}(D_k))
$$
is a toric variety and its fan $\Sigma$ is constructed as follows: write $\Sigma'(1)=\{\eta_1, ...,\eta_r\}$ and let $L_i = V(\eta_i)$ be the $T$-invariant divisor corresponding to $\eta_i$. Suppose that each divisor $D_j$ is written in the form: 
$$
D_j = \displaystyle \sum_i a_{ij}L_i.
$$
Let $\{e_1, ..., e_k\}$ be the canonical basis of $\R^k$ and $e_0=-e_1-...-e_k$. By simplicity, we also denote by $e_j$ the vector $(\bar{0},e_j) \in \R^m \times \R^k$. Similarly, we use the same symbol $\eta_i$ to denote the vector $(\eta_i, \bar{0}) \in \R^m \times \R^k$. For each $\eta_i$ set:
$$
\tilde{\eta}_i = \eta_i +  \displaystyle \sum_{j=0}^k (a_{ij} - a_{i0})e_j.
$$
For each cone $\sigma = \Cone(\eta_{i_1}, ..., \eta_{i_r}) \in \Sigma'(n)$, set $\tilde{\sigma} = \Cone(\tilde{\eta}_{i_1}, ..., \tilde{\eta}_{i_r})$. The maximal cones of $\Sigma$ are of the form:
$$
\tilde{\sigma} + \Cone(e_0, ..., \hat{e}_j, ..., e_k) \subset \R^m \times \R^k,
$$
for $\sigma \in \Sigma'(m)$ and $j \in \{0, ..., k\}$.\\

The natural projection $\pi: X \to Y$ is a toric morphism induced by the  projection $\bar{\pi}: \Z^m \times \Z^k \to \Z^m$. For every $\eta_i \in \Sigma'(1)$, the pullback $\pi^*(V(\eta_i))$ of the divisor $V(\eta_i)$ on $Y$ is the divisor $V(\tilde{\eta}_i)$. 

\label{notation_proj_bundle}
\end{say}

The following lemma describes the class group of $X$ and its tautological line bundle in terms of $T$-invariant divisors: 

\begin{lemma}[{\cite[Sec.3, Lemma 3]{dirocco}}] Let
$$
X=X_{\Sigma} = \mathbb{P}_Y(\mathcal{O}(D_0) \oplus ... \oplus \mathcal{O}(D_k)) \stackrel{\pi}{\longrightarrow} Y=X_{\Sigma'}
$$
be a toric projective bundle. 
\begin{enumerate}

\item[(a)] For each $j=1,...,k$ we have:
$$
V(e_j) \sim V(e_0) + \pi^*(D_0) - \pi^*(D_j).
$$ 
In particular, the classes of the divisors $V(e_0)$ and $V(\tilde{\eta}_i), \  \tilde{\eta}_i \in \Sigma(1)$, generate the class group $Cl(X)$.

\item[(b)] The divisor:  
 $$
\xi := V(e_0) + \displaystyle \sum_i a_{i0}V(\tilde{\eta}_i) = V(e_0) + \pi^*(D_0)
$$  
corresponds to the tautological line bundle on $X$, i.e., $\mathcal{O}_X(\xi) \simeq \mathcal{O}_X(1)$.
\end{enumerate}

\label{div_proj_bundle}
\end{lemma}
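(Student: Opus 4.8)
Writing $\mathcal E=\mathcal O(D_0)\oplus\dots\oplus\mathcal O(D_k)$, the plan is to prove both parts directly from the explicit fan $\Sigma$ of $X$ recorded in \ref{notation_proj_bundle}: recall $\Sigma(1)=\{\tilde\eta_1,\dots,\tilde\eta_r\}\cup\{e_0,\dots,e_k\}$, that here $M=\Z^m\times\Z^k$, and that $\pi^*V(\eta_i)=V(\tilde\eta_i)$, so $\pi^*D_j=\sum_i a_{ij}V(\tilde\eta_i)$.

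For (a) I would exhibit a character realizing the asserted linear equivalence. Fix $j\in\{1,\dots,k\}$ and let $u\in M$ be the functional that vanishes on $\Z^m\times\{0\}$, sends $e_j\mapsto1$, and sends $e_l\mapsto0$ for $l\in\{1,\dots,k\}\setminus\{j\}$; then $\langle u,e_0\rangle=-1$ since $e_0=-e_1-\dots-e_k$. Using $\tilde\eta_i=\eta_i+\sum_{l=0}^k(a_{il}-a_{i0})e_l$ one finds $\langle u,\tilde\eta_i\rangle=a_{ij}-a_{i0}$ for every $i$ (the $l=0$ term contributes $0$). Hence
$$
\operatorname{div}(\chi^u)=\sum_i(a_{ij}-a_{i0})\,V(\tilde\eta_i)+V(e_j)-V(e_0),
$$
and rearranging gives $V(e_j)\sim V(e_0)+\pi^*D_0-\pi^*D_j$, as claimed. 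Since $Cl(X)$ is generated by the classes of all $T$-invariant prime divisors (Section~\ref{toric_divisors}), and these relations express each $V(e_j)$ with $j\ge1$ through $V(e_0)$ and the $V(\tilde\eta_i)$, the classes of $V(e_0)$ and of the $V(\tilde\eta_i)$ already generate $Cl(X)$.

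For (b), rewrite the equivalences of (a) as $\xi\sim V(e_j)+\pi^*D_j$ for every $j=0,\dots,k$ (the case $j=0$ being the definition of $\xi$). Tensoring the canonical section of $\mathcal O_X(V(e_j))$ with the isomorphism $\mathcal O_X(\xi)\cong\mathcal O_X(V(e_j))\otimes\pi^*\mathcal O_Y(D_j)$ yields a map $\pi^*\mathcal O_Y(D_j)\to\mathcal O_X(\xi)$, and summing over $j$ produces $\varphi\colon\pi^*\mathcal E\to\mathcal O_X(\xi)$. This $\varphi$ is surjective: its cokernel is supported on $\bigcap_{l}V(e_l)$, which is empty because no cone of $\Sigma$ contains all of $e_0,\dots,e_k$ (each maximal cone omits one of them by construction). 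I would also check that $\mathcal O_X(\xi)$ restricts to $\mathcal O_{\mathbb P^k}(1)$ on a general fibre $F$ of $\pi$: by \ref{fiber_bundle}, $F$ is the toric variety of the subfan generated by the cones $\Cone(e_0,\dots,\hat{e}_l,\dots,e_k)$, i.e.\ $\mathbb P^k$; on it $V(e_0)|_F$ is a hyperplane and $\pi^*D_0$ misses $F$ entirely.

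The main obstacle is ruling out a twist by $\pi^*\Pic(Y)$. From the fibre computation, $\mathcal O_X(\xi)\cong\mathcal O_X(1)\otimes\pi^*\mathcal M$ for a unique $\mathcal M\in\Pic(Y)$, and one must show $\mathcal M\cong\mathcal O_Y$. I would push $\varphi$ forward: it induces a morphism $\mathcal E=\pi_*\pi^*\mathcal E\to\pi_*\mathcal O_X(\xi)=\mathcal E\otimes\mathcal M$ of rank-$(k+1)$ vector bundles (the right-hand side is locally free with fibre $H^0(\mathbb P^k,\mathcal O(1))$ by cohomology and base change, using $H^1(\mathbb P^k,\mathcal O(1))=0$) that is surjective on every fibre (the $k+1$ images of a local frame have no common zero, $\varphi$ being surjective), hence an isomorphism of bundles; so $\mathcal E\cong\mathcal E\otimes\mathcal M$, and taking determinants forces $\mathcal M^{\otimes(k+1)}\cong\mathcal O_Y$, whence $\mathcal M\cong\mathcal O_Y$ because $\Pic(Y)$ is torsion-free for the complete toric variety $Y$. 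Thus $\mathcal O_X(\xi)\cong\mathcal O_X(1)$. An alternative, probably closest to \cite[Sec.~3]{dirocco}, is to trivialize $\mathbb P_Y(\mathcal E)$ over the $T$-invariant affine charts $\mathcal U_\sigma$ of $Y$ using the local character data of the Cartier divisors $D_j$, and to check there that the tautological section $x_0$ of $\mathcal O(1)$ cuts out exactly $V(e_0)+\pi^*D_0$; that route avoids the twist discussion but turns into bookkeeping with the diagonal transition functions of $\mathcal E$.
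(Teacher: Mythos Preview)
The paper does not supply its own proof of this lemma: it is simply stated with the citation to \cite[Sec.~3, Lemma~3]{dirocco} and immediately followed by Remark~\ref{tautological_ample}. So there is nothing in the paper to compare your argument against beyond the referenced source.

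Your proof of (a) is the natural one and is correct: the character $u=e_j^*$ on the $\Z^k$ factor has $\operatorname{div}(\chi^u)=V(e_j)-V(e_0)+\sum_i(a_{ij}-a_{i0})V(\tilde\eta_i)$, and rearranging gives the asserted linear equivalence. This is almost certainly how Di~Rocco argues as well.

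For (b), your argument is correct but takes a more conceptual detour than necessary. Building the surjection $\varphi\colon\pi^*\mathcal E\twoheadrightarrow\mathcal O_X(\xi)$ from the canonical sections of the $V(e_j)$ is clean, and the twist is correctly eliminated: pushing forward gives an isomorphism $\mathcal E\cong\mathcal E\otimes\mathcal M$ (your fibrewise linear-independence argument is right, since $k+1$ linear forms on $\mathbb P^k$ with no common zero must be independent), and then $\mathcal M^{\otimes(k+1)}\cong\mathcal O_Y$ forces $\mathcal M\cong\mathcal O_Y$ because $\Pic(Y)$ is free for the complete toric $Y$. The ``alternative'' you sketch at the end---trivializing $\mathbb P_Y(\mathcal E)$ over each $\mathcal U_\sigma$ via the local character data $u_\sigma^{(j)}$ of the Cartier divisors $D_j$ and reading off directly that the tautological coordinate $x_0$ cuts out $V(e_0)+\pi^*D_0$---is indeed the approach in \cite{dirocco} and is shorter in practice, since the transition matrices of $\mathcal E$ are diagonal in characters and the bookkeeping is minimal. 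Your route trades that bookkeeping for the torsion-freeness step; both are valid, and yours has the merit of being coordinate-free.
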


\begin{rem} In the discussion above, suppose in addition that each $D_j$ is an ample divisor on $Y$. By \cite[III, Thm. 1.1]{hartshorne2}, the tautological line bundle $\mathcal{O}_X(1)$ on $X$ is also ample. Therefore, in this case, the divisor $\xi$ will be ample on $X$.
\label{tautological_ample}
\end{rem}

\begin{rem} The following result will be useful later. Let $X = \mathbb{P}_Y(\mathcal{E})$ be a projective bundle of rank $k$ over a projective variety $Y$ and $\pi: X \to Y$ the $\mathbb{P}^k$-bundle map. If $K_Y$ is Cartier then the equality below holds:
$$
K_X = \pi^*(K_Y+ E) -(k+1)\xi,
$$
where the divisors $E$ and $\xi$ satisfy $\mathcal{O}_Y(E) \simeq \det(\mathcal{E})$ and $\mathcal{O}_X(\xi) \simeq \mathcal{O}_X(1)$. 
\label{equality_canonical}
\end{rem}

\begin{say} Next we will see that toric varieties corresponding to the strict Cayley Polytopes defined in Section \ref{deg_and_codeg} are toric $\mathbb{P}^k$-bundles.

Let $P_0, \dots, P_k \subset \mathbb{R}^m$ be $m$-dimensional lattice polytopes
with the same normal fan $\Sigma'$.
Let $Y=X_{\Sigma'}$ be the corresponding projective $m$-dimensional toric variety, 
and $D_j$ the ample $T$-invariant divisor on $Y$ associated to $P_j$.
More precisely, write $\Sigma'(1)=\big\{\eta_i\big\}_{i\in \{1, \dots, r\}}$, and let $P_j$ 
be given by facet presentation:
$$
P_j\ =\ \Big\{
x\in \R^m \ \Big| \ \langle \eta_i,x\rangle \geq -a_{ij}, \ 1\leq i\leq r\ 
\Big\}.
$$
Let $L_i=V(\eta_i)$ be the invariant Weil divisor on $Y$ associated to $\eta_i$.
Then $D_j = \displaystyle \sum_{i=1}^r a_{ij}L_i$.

\label{notation_P_i}
\end{say}
The next result was proved in \cite[Sec. 3]{ccd} using a version of Farkas Lemma (see \cite[Ex. 1.6]{z} ). We give an alternative proof using the classical geometric property stated in Remark \ref{tautological_ample}.
 
\begin{lemma} Let the notation be as in \ref{notation_proj_bundle} and \ref{notation_P_i}. Then the polarized toric variety $(X_{\Sigma}, L)$ corresponding to $\Cayley(P_0, ..., P_k)$ is isomorphic to
$$ \Big(\mathbb{P}_Y\big(\mathcal{O}(D_0)\oplus \cdots \oplus \mathcal{O}(D_k) \big), 
\ \xi \Big),
$$
where $\xi$ is a divisor corresponding to the tautological line bundle on $\mathbb{P}_Y\big(\mathcal{O}(D_0)\oplus \cdots \oplus \mathcal{O}(D_k) \big)$.

\label{cayley_1}
\end{lemma}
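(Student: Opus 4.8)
The strategy is to reduce the assertion to one explicit (if slightly fiddly) polytope computation, using the dictionary between full--dimensional lattice polytopes and polarized toric varieties (Proposition~\ref{polytope_div_conections} and the Corollary following it). Since each $D_j$ is ample on $Y$, Remark~\ref{tautological_ample} shows that the tautological divisor $\xi$ on $X:=\mathbb{P}_Y(\mathcal{O}(D_0)\oplus\cdots\oplus\mathcal{O}(D_k))$ is ample. Hence $(X,\xi)$ is a polarized toric variety, so it equals $(X_{P_\xi},D_{P_\xi})$ for a unique full-dimensional lattice polytope $P_\xi\subset\R^m\times\R^k$, with $\Sigma=\Sigma_{P_\xi}$. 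By Remark~\ref{aff_isom_toric_pol} it then suffices to show that $P_\xi$ is affinely isomorphic to $\Cayley(P_0,\dots,P_k)$; in fact, with the conventions of \ref{notation_proj_bundle} and \ref{notation_P_i} I expect the two to be literally equal.

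\textbf{Second step: compute $P_\xi$.} I would write down the facet presentation $P_\xi=\bigcap_{\rho\in\Sigma(1)}H^+_{v_\rho,b_\rho}$ for $\xi=\sum_\rho b_\rho D_\rho$. By \ref{notation_proj_bundle} the rays of $\Sigma$ are $e_0,e_1,\dots,e_k$ (with $e_0=-e_1-\cdots-e_k$) and $\tilde\eta_i=\eta_i+\sum_{j=0}^k(a_{ij}-a_{i0})e_j$ for $1\le i\le r$, which equals $\eta_i+\sum_{j=1}^k(a_{ij}-a_{i0})e_j$ since $e_0=-e_1-\cdots-e_k$; and by Lemma~\ref{div_proj_bundle}(b), $\xi=V(e_0)+\pi^*(D_0)=V(e_0)+\sum_i a_{i0}V(\tilde\eta_i)$, so $b_{e_0}=1$, $b_{e_j}=0$ for $1\le j\le k$, and $b_{\tilde\eta_i}=a_{i0}$. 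Writing a point of $\R^m\times\R^k$ as $(x,y)$ with $y=(y_1,\dots,y_k)$ and setting $y_0:=1-y_1-\cdots-y_k$, the $\tilde\eta_i$-inequality $\langle x,\eta_i\rangle+\sum_{j=1}^k(a_{ij}-a_{i0})y_j\ge -a_{i0}$ simplifies, so the defining inequalities of $P_\xi$ become
\[
y_j\ge 0\ (1\le j\le k),\qquad y_1+\cdots+y_k\le 1,\qquad \langle x,\eta_i\rangle\ge -\textstyle\sum_{j=0}^k a_{ij}y_j\ (1\le i\le r).
\]

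\textbf{Third step and main obstacle.} It remains to match this with $\Cayley(P_0,\dots,P_k)$: a point of the latter is by definition $\big(\sum_j\lambda_j p_j,(\lambda_1,\dots,\lambda_k)\big)$ with $p_j\in P_j$, $\lambda_j\ge 0$, $\sum_j\lambda_j=1$, so putting $y_j=\lambda_j$ recovers the first two families of inequalities, while $x$ ranges exactly over the scaled Minkowski sum $\sum_{j=0}^k y_j P_j$. The step where the hypotheses are genuinely used — and the only delicate point — is the fact that polytopes sharing a normal fan have additive support numbers, i.e.
\[
\textstyle\sum_{j=0}^k y_j P_j\ =\ \big\{x\in\R^m\ :\ \langle x,\eta_i\rangle\ge -\textstyle\sum_{j=0}^k y_j a_{ij},\ 1\le i\le r\big\}
\]
for all $y_j\ge 0$ not all zero (with due care when some $y_j$ vanish, so that the summand degenerates to a point); this is standard but should be cited carefully. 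Granting it, $P_\xi=\Cayley(P_0,\dots,P_k)$, hence $(X_{\Sigma},L)=(X_{P_\xi},D_{P_\xi})=(X,\xi)$, which finishes the proof. Everything else is just bookkeeping with the convention $e_0=-e_1-\cdots-e_k$.
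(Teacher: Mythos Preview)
Your proposal is correct and follows essentially the same route as the paper: use Remark~\ref{tautological_ample} to see that $\xi$ is ample, then reduce to checking $P_\xi=\Cayley(P_0,\dots,P_k)$ via the facet presentation you computed (which matches the paper's exactly). The only minor difference is in the final verification of that equality: the paper checks the two inclusions by hand---showing lattice points of $P_\xi$ land in some $P_j\times\{e_j\}$, and conversely that any convex combination $\sum_j\lambda_j(u_j,e_j)$ with $u_j\in P_j$ satisfies the three families of inequalities---whereas you package the same computation as the standard fact that support data add under Minkowski sums of polytopes with common normal fan.
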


\begin{proof} Let $\xi = V(e_0) + \pi^*(D_0)$ be as in Lemma \ref{div_proj_bundle}. The divisor $\xi$ corresponds to the tautological line bundle on $X_{\Sigma}$. Since every $D_j$ is ample, by Remark \ref{tautological_ample}, $\xi$ is ample. From Proposition \ref{polytope_div_conections}, $\xi$ defines a full dimensional lattice polytope $P_{\xi}$ in $\R^m \times \R^k$ whose corresponding polarized toric variety is $(X_{\Sigma}, L)$. 
Thus, to conclude the proof we will show that:
$$
P_{\xi} = \Cayley(P_0, ..., P_k).
$$ 
The facet presentation of $P_{\xi}$ is:
\begin{equation}
\langle \tilde{u}, \tilde{\eta}_i \rangle \geq -a_{i0}, \ \ \ \langle \tilde{u}, e_0 \rangle \geq -1, \ \ \ \langle \tilde{u}, e_j \rangle \geq 0, \ j=1, ..., k.
\label{facets_cayley}
\end{equation}
If $\tilde{u}= u+u'$, $u \in \R^m \times \{\bar{0}\}, \ u' \in \{\bar{0}\} \times \R^k$, is a lattice point of $P_{\xi}$, the two last inequalities above implies that $u'=0$ or $u'=e_j$ for some $j=1, ..., k$. Replacing the value of $u'$ in the first inequality, we obtain:
$$
\left. \begin{array}{ll}
u' = 0 \ \Longrightarrow \langle u, \eta_i \rangle \geq -a_{i0} \Longrightarrow u \in P_0\\
u'= e_j \Longrightarrow \langle u, \eta_i \rangle \geq -a_{ij} \Longrightarrow u \in P_j.\\
\end{array} \right.
$$
It follows that $P_{\xi} \subset \Cayley(P_0, ..., P_k)$. Conversely, any point of $\Cayley(P_0, ..., P_k)$ is of form:
$$
\tilde{u} = \lambda_0(u_0, \bar{0}) + \lambda_1(u_1, e_1) + ...+ \lambda_k(u_k, e_k),
$$
where for all $j= 0, ..., k$, $u_j \in P_j$, $\lambda_j$ is a non-negative integer and $\sum \lambda_j = 1$. We have:
\begin{itemize}
\item $\langle \tilde{u}, e_0 \rangle = \displaystyle \sum_{j=1}^k -\lambda_j \geq -1,$

\item $\langle \tilde{u}, e_j \rangle = \lambda_j \geq 0, \ \forall j=1, ..., k,$

\item $\langle \tilde{u}, \tilde{\eta}_j \rangle = \displaystyle \sum_{j=0}^k \lambda_j(\langle u_j, \eta_i \rangle + a_{ij}-a_{i0}) \geq \displaystyle \sum_{j=0}^k \lambda_j(-a_{i0}) = -a_{i0},$
\end{itemize}
where the last inequality holds because $u_j \in P_j$ implies that $\langle u_j, \eta_i \rangle \geq -a_{ij}$. Thus the points of $\Cayley(P_0, ..., P_k)$ satisfy inequalities \ref{facets_cayley}. Therefore:
$$
\Cayley(P_0, ..., P_k) \subset P_{\xi}.
$$
\end{proof}

\begin{ex} In this example we describe the nef cone and the effective cone of the toric projective bundle: 
$$
X = \mathbb{P}_{\mathbb{P}^m}(\mathcal{O}(a_0) \oplus ... \oplus \mathcal{O}(a_k)) \stackrel{\pi}{\longrightarrow} \mathbb{P}^m,
$$
where $0< a_0 \leq ... \leq a_k$ are positive integers. We have $\rho(X) = 2$.
Let $\{f_1, ..., f_m\}$ and $\{e_1, ..., e_k\}$ be respectively the canonical basis of $\R^m$ and $\R^k$, $f_0: -\sum f_i$ and $e_0 = - \sum e_j$. Recall that the maximal cones of the fan of $\mathbb{P}^m$ are of the form:
$$
\sigma_i = \Cone(f_0, ..., \hat{f}_i, ..., f_m)\ \ \ \ i=0, ..., m.
$$
The divisor $H = V(f_0)$ represents the hyperplane class on $\mathbb{P}^m$. For each $j=0, ..., k$, $\mathcal{O}(a_j) \simeq \mathcal{O}(a_jH)$. Using the notation of Section \ref{toric_proj_bundles}, if $\Sigma$ is the fan of $X$, its maximal cones are of the form:
$$
\sigma_{ij} := \Cone(\tilde{f}_0, ..., \widehat{\tilde{f}}_i, ..., \tilde{f}_m, e_0, ..., \hat{e}_j, ..., e_k), \ \ \ i=0, ..., m, \ \ j=0, ..., k,
$$
where $\tilde{f_i} = f_i$ , for $i = 1, ..., m$ and $\tilde{f_0} = f_0 + \sum_j(a_j-a_0)e_j$. Using Lemma \ref{div_proj_bundle}, we conclude that:
$$
\overline{Eff}(X) = \Cone([V(e_k)], [\pi^*(H)]).
$$ 
Next, we compute the nef cone. Since $H$ is ample on $\mathbb{P}^m$, by the projection formula $\pi^*(H)$ is nef but not ample. We claim that $V(e_0)$ is also nef. In fact, by the description of intersection products on toric varieties given in Proposition \ref{toric_intersections} and by \ref{mov_toric}, it is sufficient to see that $V(e_0) \cdot V(\omega) \geq 0$, for every wall $\omega$ of $\sigma_{i0}$ containing $e_0$. Note that $V(\tilde{f}_0) \sim V(\tilde{f}_i) \ \forall i=1,...,m$ because $V(f_0)$ and $V(f_i)$ are hyperplanes in $\mathbb{P}^m$ and $V(\tilde{f_i}) = \pi^*(V(f_i))$, $i=0,...,m$. If
$$
\omega = \Cone(\tilde{f}_0, ..., \widehat{\tilde{f}}_i, ..., \tilde{f}_m, e_0, ..., \hat{e}_l, ..., \hat{e}_j, ..., e_k), \ \ l,k>1,
$$
since $V(e_0) \sim (a_l - a_0)V(\tilde{f_0}) + V(e_l)$ (see Lemma \ref{div_proj_bundle}), we have:
$$
V(e_0) \cdot V(\omega) = [(a_l - a_0)V(\tilde{f_i}) + V(e_l)] \cdot V(\omega) = V(e_l) \cdot V(\omega) >0
$$
because $\Cone(\tilde{f}_i, \omega) \notin \Sigma$ and $\Cone(e_l, \omega) \in \Sigma$. 

Now, if:
$$
\omega = \Cone(\tilde{f}_0, ..., \widehat{\tilde{f}}_s, ...\widehat{\tilde{f}}_i, ..., \tilde{f}_m, e_0, ..., \hat{e}_j, ..., e_k), \ \ s, l = 0, ...,m, \ j>0,
$$
then:
$$
V(e_0) \cdot V(\omega) = [(a_j-a_0)V(\tilde{f}_s) + V(e_j)] \cdot V(\omega) =  (a_j-a_0)V(\tilde{f}_s)\cdot V(\omega) \geq 0.
$$
This proves that $V(e_0)$ is nef. Since:
$$
V(e_0) \cdot V \Big(\Cone((\tilde{f}_1, ..., \tilde{f}_m, e_1, ..., e_k) \Big) = 0,
$$
we conclude that $V(e_0)$ is not ample. The classes $V(e_0)$ and $\pi^*(H)$ are not multiple of each other because $\xi = V(e_0) + a_0 \pi^*(H)$ is ample. Therefore:
$$
\overline{Nef}(X) = \Cone([V(e_0)], [\pi^*(H)]).
$$
\label{proj_bundle_ex}
\end{ex}

\begin{prop} Let $X= X_{\Sigma}$ be an $n$-dimensional smooth projective toric variety and $R$ an extremal ray of $\overline{NE}(X)$. Suppose that the contraction $\phi_R: X \to Y$ associated to $R$ is a Mori fiber space and set $m = \dim(Y)$. Then there exists a decomposable vector bundle $\mathcal{E}$ on $Y$ with rank $n-m+1$ such that $X \simeq \mathbb{P}_Y(\mathcal{E})$. 

\label{Mori_fiber_pkbundle}
\end{prop}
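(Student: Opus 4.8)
The plan is to translate everything into the combinatorics of the fan and to use the detailed description of toric Mori fiber spaces given in case (I) of Section \ref{toric_contraction}, combined with the structure theory of toric $\mathbb{P}^k$-bundles from Section \ref{toric_proj_bundles} and the criterion that $\mathbb{P}_Y(\mathcal{E})$ is toric iff $\mathcal{E}$ is decomposable (Remark \ref{toric_fano}). Since $X$ is smooth and projective, $X$ is $\mathbb{Q}$-factorial, so the contraction $\phi_R$ of the extremal ray $R$ is toric and, being of fiber type, falls into case (I) of \ref{toric_contraction}: the parameter $\alpha$ equals $0$, the fan $\Sigma^\ast$ is a degenerated fan with vertex $U = U(\omega) = \Cone(v_{\beta+1},\dots,v_n,v_{n+1})$, and the fan $\Sigma_Y$ of $Y$ is the image of $\Sigma^\ast$ in $N_\R/U$. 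Moreover $\phi_R$ is flat with general fiber a toric variety of Picard number $1$; by Lemma \ref{Pic_one} that general fiber is $\mathbb{P}^{n-m}$. The exact sequence $0\to \Pic(Y)\to\Pic(X)\to\Z\to 0$ from \ref{contraction_types} gives $\rho(X)=\rho(Y)+1$.

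First I would fix a splitting $N \simeq N_U \oplus N'$, where $N_U = U\cap N$ has rank $k+1$ (with $k := n-m$) and $N'$ maps isomorphically onto the lattice of $Y$; write $g : N \to N'$ for the projection, which is compatible with $\Sigma$ and $\Sigma_Y$ and induces $\phi_R$. The rays of $\Sigma(1)$ split into two groups: those generating $U$, which are the primitive vectors $w_0,\dots,w_k$ "pointing into the fiber", and the remaining rays, which project bijectively onto $\Sigma_Y(1) = \{\eta_1,\dots,\eta_r\}$. For each $\eta_i$ let $\tilde\eta_i \in \Sigma(1)$ be the unique ray above it. Because $U$ is a cone in the fan and the general fiber is $\mathbb{P}^k$, the vectors $w_0,\dots,w_k$ satisfy a single relation $w_0+\cdots+w_k = 0$ in $N_U$ (after normalizing), exactly as the standard fiber coordinates $e_0,\dots,e_k$ with $e_0 = -e_1-\cdots-e_k$. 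Now I would read off, from the maximal cones of $\Sigma$ over each maximal cone $\sigma = \Cone(\eta_{i_1},\dots,\eta_{i_m})$ of $\Sigma_Y$, that $\phi_R^{-1}(\mathcal{U}_\sigma)$ is the product $\mathcal{U}_\sigma \times \mathbb{P}^k$ refined only by how the $\tilde\eta_i$ acquire "fiber components". Concretely, each $\tilde\eta_i$ has the form $\tilde\eta_i = \eta_i + \sum_{j=0}^k c_{ij} w_j$ for integers $c_{ij}$ (well-defined up to adding a constant independent of $j$, i.e. up to the relation among the $w_j$). Setting $D_j := \sum_i c_{ij} L_i$ as $T$-invariant Cartier divisors on $Y$ (Cartier because $Y$ is the smooth base of a smooth total space — or, if one prefers, because smoothness of the cones of $\Sigma$ forces it), the fan $\Sigma$ is precisely the fan of $\mathbb{P}_Y(\mathcal{O}(D_0)\oplus\cdots\oplus\mathcal{O}(D_k))$ described in \ref{notation_proj_bundle}. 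Hence $X \simeq \mathbb{P}_Y(\mathcal{E})$ with $\mathcal{E} = \bigoplus_{j=0}^k \mathcal{O}_Y(D_j)$, which is decomposable of rank $k+1 = n-m+1$, as required.

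The main obstacle is the bookkeeping that shows the integers $c_{ij}$ are genuinely well-defined and that the resulting divisors $D_j$ are Cartier, i.e. that the ambiguity coming from the relation $\sum w_j = 0$ and from the choice of splitting does not obstruct identifying $\Sigma$ with a $\mathbb{P}^k$-bundle fan on the nose. This amounts to checking compatibility of the local character data across the maximal cones of $\Sigma_Y$: over $\mathcal{U}_\sigma$ the $k+1$ rays $w_0,\dots,w_k$ together with the $\tilde\eta_i$ ($\eta_i \in \sigma(1)$) must form, up to reindexing, exactly the rays of the fan of $\mathcal{U}_\sigma \times \mathbb{P}^k$ twisted by a unimodular shear fixing the base — this uses smoothness of every cone of $\Sigma$ and the fact that the general fiber is $\mathbb{P}^k$ (not a weighted projective space). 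One clean way to finish is to invoke that the total space of the projectivization of any rank-$(k+1)$ decomposable bundle recovers exactly this combinatorial pattern, and conversely a smooth toric variety whose fan has this pattern over a smooth complete base is such a projectivization; a reference for this equivalence is \cite[Sec.~3]{dirocco} or \cite[7.3]{cox}. Alternatively, one avoids the combinatorics entirely by a geometric argument: $\phi_R$ is a smooth morphism (flat with smooth fibers $\mathbb{P}^k$) between smooth varieties, hence a Zariski-locally trivial $\mathbb{P}^k$-bundle, so $X = \mathbb{P}_Y(\mathcal{E})$ for some rank-$(n-m+1)$ vector bundle $\mathcal{E}$ on $Y$; since $X$ is toric, Remark \ref{toric_fano} forces $\mathcal{E}$ to be decomposable. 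I would present the geometric argument as the main line and the fan computation as the explicit description of $\mathcal{E}$.
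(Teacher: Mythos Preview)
Your combinatorial argument is essentially the paper's proof: the paper also invokes case~(I) of \ref{toric_contraction}, writes each lifted ray as $\tilde\eta_i = \eta_i + \sum_{j=1}^k b_{ij}\,u_j$, uses smoothness of the maximal cones of $\Sigma$ to force all the $a_j$ in the fiber relation to equal $1$ (so $u_0+\cdots+u_k=0$ and the fiber is honestly $\mathbb{P}^k$, not a weighted projective space), and then matches $\Sigma$ against the fan of $\mathbb{P}_Y\big(\mathcal{O}\oplus\mathcal{O}(D_1)\oplus\cdots\oplus\mathcal{O}(D_k)\big)$ from \ref{notation_proj_bundle}. Your bookkeeping worry is a non-issue in the paper's setup: it expands in the \emph{basis} $u_1,\dots,u_k$ of $U$ rather than in all of $u_0,\dots,u_k$, so no ambiguity from the relation $\sum u_j=0$ ever arises; in your language this is just the normalization $D_0=0$.

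Your proposed geometric main line, however, has a gap. From case~(I) of \ref{toric_contraction} you only know that $\phi_R$ is flat and that the \emph{general} fiber is a smooth toric variety of Picard number one, hence $\mathbb{P}^k$ by Lemma~\ref{Pic_one}. You have not shown that special fibers are smooth (or even reduced), so you cannot assert that $\phi_R$ is a smooth morphism, and the step ``hence a Zariski-locally trivial $\mathbb{P}^k$-bundle'' is unjustified. The Remark following \ref{fiber_bundle} is precisely a warning that special fibers of a toric fibration can differ from the general one. Proving that every fiber is $\mathbb{P}^k$ is tantamount to the conclusion of the proposition, so this route is circular unless you import an extra tool---e.g.\ an ample line bundle restricting to $\mathcal{O}_{\mathbb{P}^k}(1)$ on general fibers so that Fujita's Lemma~\ref{fuj_lemma} applies, or the combinatorial identification you already have. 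Keep the combinatorial argument as the main line; it is complete and it \emph{is} the paper's argument.
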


\begin{proof} Recall from \ref{toric_contraction} that each maximal cone $\sigma$ of the fan $\Sigma'$ of $Y$ is constructed as follows. There exist a wall $\omega \in \Sigma(n-1)$ separating two maximal cones:
$$
\tilde{\sigma} = \Cone(\tilde{\eta}_1, ..., \tilde{\eta}_m, u_0, u_1, ..., u_{k-1}),
$$
$$\tilde{\sigma'} = \Cone(\tilde{\eta}_1, ..., \tilde{\eta}_m, u_1, ..., u_{k-1}, u_k).
$$
of $\Sigma$ and positive integers $a_0,...,a_k$ such that $a_0u_0 + ... + a_ku_k = 0$. The cone:
$$
U = \Cone(u_0, ..., u_k)
$$
is a vector space of dimension $k$. Write $\R^n = U \oplus U^{\perp}$. Setting $\sigma(\omega) = \tilde{\sigma} + \tilde{\sigma'}$, we have that $\sigma$ is the image of $\sigma(\omega)$ by the projection $\pi: \R^n  \to U^{\perp}$. If $\eta_i = \pi(\tilde{\eta}_i)$, we have $\sigma = \Cone(\eta_i)_{i=1,...,m}$. There exist integers $b_{ij}$ such that:
$$
\tilde{\eta_i} = \eta_i + \sum_{j=1}^k b_{ij}u_j.
$$
The cones of $\Sigma(n)$ whose images by $\pi$ are equal to $\sigma$ are of the form:
$$
\sigma_j = \Cone(\tilde{\eta_i})_{i=1,...,m} + \Cone(u_0, ..., \hat{u}_j, ..., u_k).
$$
Since $X$ is smooth, we have $a_j = 1, \ \forall j$. By making a linear isomorphism in $\Z^n = \Z^m \times \Z^k$, we can suppose that $u_j$ is the canonical vector $e_j$ in $\Z^k$, $\forall j>0$ and $u_0 = e_0 = - \sum_{j=1}^k e_j$. 
Comparing with \ref{notation_proj_bundle}, we see that 
$$ 
X \simeq \mathbb{P}_Y(\mathcal{O} \oplus \mathcal{O}(D_1) \oplus ... \oplus \mathcal{O}(D_k)),
$$
where $D_j = \displaystyle \sum_{\eta_i \in \Sigma'(1)} b_{ij}V(\eta_i)$.

\end{proof}

\vspace{30 cm}
\ 

\vspace{30 cm}

%
%
%
%

\chapter{Classifying Smooth Lattice Polytopes}
\label{main_chapter}

In this chapter we classify smooth lattice polytopes with small degree. This class of polytopes is very special. For instance, we saw in Section \ref{deg_and_codeg} that lattice polytopes with degree $\leq 1$ are Cayley Polytopes. In \cite{baty}, Batyrev and Nill posed the following question:
\vspace{0,3 cm}

\noindent \textbf{Question:} Is there a function $N(d)$ of $d$ such that every lattice polytope of degree $d$ and dimension $n>N(d)$ is a Cayley polytope?

\vspace{0,5 cm}

In \cite[Theorem 1.2]{hnp}, Hasse, Nill and Payne solved this problem with the quadratic polynomial 
$N(d) = (d^2+19d-4)/2$. 
It was conjectured in \cite[Conjecture 1.2]{alicia2} that one can take $N(d)=2d$. 
This would be a sharp bound.
Indeed, let $ \Delta_n$ denote the standard  $n$-dimensional unimodular  simplex.
If $n$ is even, then 
$2 \Delta_n$ has degree $d=\frac{n}{2}$, but is not a Cayley polytope.\\

While the methods of \cite{baty} and \cite{hnp} are purely combinatorial,  
Hasse, Nill and Payne pointed out that these results can be interpreted 
in terms of Adjunction Theory on toric varieties.
This point of view was then explored by Dickenstein, Di Rocco and Piene in \cite{alicia} 
to study smooth lattice polytopes with small degree. \\

One has the following classification of smooth $n$-dimensional  
lattice polytopes $P$ with degree $d<\frac{n}{2}$ (or, equivalently, $\codeg(P) \geq \frac{n+3}{2}$).

\begin{teo}[{\cite[Theorem 1.12]{alicia}  and \cite[Theorem 1.6]{alicia2}}] Let $P \subset \mathbb{R}^n$ be a smooth $n$-dimensional lattice polytope. Then $\codeg(P) \geq \frac{n+3}{2}$ if and only if $P$ is 
affinely isomorphic to a Cayley polytope $P_0 *...* P_k$, where all the $P_i$'s 
have the same normal fan, and $k = \codeg(P)-1 > \frac{n}{2}$. 
\label{DDRP}
\end{teo}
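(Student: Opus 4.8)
The plan is to translate the combinatorial statement into a statement about the polarized toric variety $(X,L) = (X_P, L_P)$ and then invoke Adjunction Theory, following the strategy outlined in the introduction. First I would record the dictionary: by Proposition \ref{inv_polytopes_varieties}, $\tau(P) = \tau(L)$ and $\codeg_{\mathbb{Q}}(P) = \mu(L)$, and by Proposition \ref{teto}, since $P$ is smooth, $\codeg(P) \geq \lceil \codeg_{\mathbb{Q}}(P)\rceil$, with equality when $P$ is $\mathbb{Q}$-normal. The hypothesis $\codeg(P) \geq \frac{n+3}{2}$ should be fed into the inequality $\codeg(P) > \codeg(P) - 1 < \tau(P)$ (the last inequality in Proposition \ref{teto}, valid under $\mathbb{Q}$-normality) to obtain a lower bound $\tau(L) > \frac{n+1}{2}$. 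The subtle point here — and this is where one must cite \cite{alicia2}, specifically \cite[Theorem 1.6]{alicia2} — is that the inequality $\codeg(P) \geq \frac{n+3}{2}$ already \emph{forces} $P$ to be $\mathbb{Q}$-normal, so we may assume $\mathbb{Q}$-normality throughout. With $\mathbb{Q}$-normality in hand, Proposition \ref{nonbirational} tells us the nef value morphism $\phi_L \colon X \to Y$ is \emph{not} birational.

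Next I would apply the structural results of Adjunction Theory to $\phi_L$. Set $m = \dim Y$. Since $\phi_L$ is not birational and $\tau(L) > \frac{n+1}{2} \geq \frac{n+1-m}{2}$, Theorem \ref{key_result_lines} applies: $X$ is covered by a family of lines $\{C_t\}$ with $L \cdot C_t = 1$, all contracted by $\phi_L$. A length/dimension estimate for the fibers of $\phi_L$ (of the type in Proposition \ref{key_inequality}, or directly from the bound $\tau(L) > \frac{n+1}{2}$ bounding the relative Picard-theoretic data) should show that the general fiber $F$ of $\phi_L$ is a smooth toric Fano manifold of dimension $n - m$ with index $r \geq \frac{(n-m)+1}{2}$. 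Now Proposition \ref{fano_index}, refined in the toric setting by Remark \ref{toric_fano}, severely restricts $F$: it is $\mathbb{P}^{n-m}$, a product of two equidimensional projective spaces, $\mathbb{P}^1\times\mathbb{P}^1\times\mathbb{P}^1$, or $\mathbb{P}_{\mathbb{P}^r}(\mathcal{O}(2)\oplus\mathcal{O}(1)^{r-1})$. One then rules out all but the case $F \cong \mathbb{P}^{n-m}$ using the strict inequality $\tau(L) > \frac{n+1}{2}$ (the borderline cases saturate the bound and hence are excluded). At this point $\phi_L \colon X \to Y$ is a $\mathbb{P}^{n-m}$-bundle, and since it is a toric Mori fiber space, Proposition \ref{Mori_fiber_pkbundle} gives $X \simeq \mathbb{P}_Y(\mathcal{E})$ with $\mathcal{E}$ a decomposable bundle of rank $n-m+1$ on the toric variety $Y$; writing $k = n - m$, we have $X \simeq \mathbb{P}_Y(\mathcal{O}(D_0)\oplus\cdots\oplus\mathcal{O}(D_k))$.

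It then remains to identify $L$ with the tautological bundle and recover the polytope. Using Remark \ref{equality_canonical}, $K_X = \pi^*(K_Y + \det\mathcal{E}) - (k+1)\xi$, so the condition that $K_X + \tau(L)\,L$ be the pullback of an ample class from $Y$ (i.e. that it defines $\phi_L$) forces, after a twist of $\mathcal{E}$, that $L$ be linearly equivalent to $\xi$ and that each $D_j$ be ample on $Y$ — so $Y$ is projective and each $D_j$ corresponds to an ample polytope $P_j$ with common normal fan $\Sigma_Y$. Then Lemma \ref{cayley_1} identifies $(X,L)$ with the polarized toric variety of $\Cayley(P_0,\dots,P_k)$, and Remark \ref{aff_isom_toric_pol} upgrades this isomorphism of polarized pairs to an affine isomorphism $P \cong P_0 * \cdots * P_k$ with the $P_i$ sharing a normal fan. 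Finally, tracking the numerics: $\xi \cdot C_t = 1$ for a line in a fiber, $\tau(L) = k+1 - (\text{ample contribution})$, combined with $\codeg(P) = \lceil \mu(L) \rceil$ and the computation of $\mu$ for a strict Cayley polytope (Proposition \ref{prop1}-type computation), yields $k = \codeg(P) - 1$; and $k > \frac{n}{2}$ follows from $m < \frac{n}{2}$, which is exactly the translation of $\tau(L) > \frac{n+1}{2}$ via $\dim Y = m$. The converse direction is the easy one: for a strict Cayley polytope $\Cayley(P_0,\dots,P_k)$ one computes directly from Lemma \ref{cayley_1} and the description of adjoint polytopes that $\codeg_{\mathbb{Q}} = \mu(\xi) = k+1$, that such polytopes are $\mathbb{Q}$-normal, hence $\codeg(P) = k+1 = \frac{n+3}{2}$ when $k > \frac{n}{2}$, establishing the inequality.

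The main obstacle is the fiber-analysis step: squeezing the hypothesis $\codeg(P)\geq\frac{n+3}{2}$ through the chain $\codeg \geq \lceil\mu\rceil$, $\tau > \codeg - 1$, to get the \emph{strict} inequality $\tau(L) > \frac{n+1}{2}$, and then using exactly that strictness (not just $\geq$) to eliminate the non-projective-space fibers in Remark \ref{toric_fano} and to force $\dim Y < \frac{n}{2}$. This is where the sharp threshold $\frac{n+3}{2}$ (rather than $\frac{n+1}{2}$, which is the subject of Theorem \ref{mainthm}) does its work, and it is also where the reliance on \cite{alicia2} for the automatic $\mathbb{Q}$-normality is essential, since without it Proposition \ref{nonbirational} would not apply and $\phi_L$ could a priori be birational.
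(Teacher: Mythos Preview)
The paper does not give its own proof of Theorem~\ref{DDRP}: it is stated as a cited result from \cite{alicia} and \cite{alicia2}, and the surrounding text only explains that \cite{alicia} proved it under the extra $\mathbb{Q}$-normality hypothesis, while \cite{alicia2} removed that hypothesis by a combinatorial argument. So there is no proof in the paper to compare your proposal against.

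That said, your outline is close in spirit to the strategy the paper uses for its own Theorem~\ref{mainthm} (the $\codeg(P)\geq\frac{n+1}{2}$ case), and indeed to the original argument in \cite{alicia}. One point where your write-up is looser than the paper's argument for \ref{mainthm}: you treat the nef value morphism $\phi_L$ itself as if it were an extremal Mori fiber space and apply Proposition~\ref{Mori_fiber_pkbundle} to it directly. In the paper's proof of \ref{mainthm}, $\phi_L$ may contract a higher-dimensional face of $\overline{NE}(X)$; one first uses Theorem~\ref{key_result_lines} to produce a covering family of lines, then invokes Theorem~\ref{key_result_contraction} to extract a single extremal ray $R$ whose contraction $\phi_R$ (not $\phi_L$) is the Mori fiber space to which Proposition~\ref{Mori_fiber_pkbundle} and Fujita's Lemma apply. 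Your Fano-fiber classification step is likewise handled differently there: rather than classifying the general fiber of $\phi_L$, the paper bounds $L\cdot C$ for extremal curves $C$ via the inequality $n+1\geq -K_X\cdot C=\tau(L)\,L\cdot C$ and splits into cases on $L\cdot C$. Under the stronger hypothesis $\tau(L)>\frac{n+1}{2}$ only $L\cdot C=1$ survives, which is what collapses everything to the strict Cayley case.
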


Theorem~\ref{DDRP} was first proved in \cite{alicia} under the additional assumption that 
$P$ is a \emph{$\mathbb{Q}$-normal} polytope (see Definition~\ref{defn_qnormal}). 
Then, using combinatorial methods, Dickenstein and Nill showed in  \cite{alicia2}
that  the inequality $\codeg(P) \geq \frac{n+3}{2}$  implies that $P$ is $\mathbb{Q}$-normal. 

In our work we go one step further, addressing smooth $n$-dimensional lattice polytopes $P$ of degree 
$d< \frac{n}{2}+1$ (or, equivalently, $\codeg(P) \geq \frac{n+1}{2}$). Not all such polytopes are Cayley polytopes as we observed for $2\Delta_n$. We need to generalize the concept of Cayley polytope in order to state our classification result.

\section{Generalized Cayley Polytopes} The following definition was introduced in \cite{alicia}.

\begin{de} \label{cay}
Let $P_0, \dots, P_k \subset \mathbb{R}^m$  be $m$-dimensional lattice polytopes, 
and $s$ a positive integer.   Set
$$
[P_0 *...* P_k]^s \ := \ \Conv \big(P_0 \times \{\bar{0}\}, P_1 \times \{se_1\},...,P_k \times \{se_k\}\big) \subset \mathbb{R}^{m}\times \mathbb{R}^{k},
$$
where $\{e_1,...,e_k\}$ is a basis for $\mathbb{Z}^k$.
We say that a lattice polytope $P$ is an $s^{th}$  \emph{order generalized Cayley polytope} 
if $P$ is affinely isomorphic to a polytope $[P_0 *...* P_k]^s$ as above.
If all the $P_i$'s  have the same normal fan, we write 
$$
P = \Cayley^s(P_0, \ldots ,P_k),
$$
and say that $P$ is \emph{strict}.
\end{de}

Generalized Cayley polytopes with $s=1$ are standard Cayley polytopes as defined in Chapter \ref{inariants_of_polytopes}. 

\begin{ex} The polytope $2 \Delta_n$ defined in the beginning of this Chapter is the $2^{\mbox{nd}}$ order generalized strict Cayley polytope $\Cayley^2(\underbrace{\Delta_0, ..., \Delta_0}_{n+1 \mbox{\ times}})$, where $\Delta_0 = \R^0$ is a point. 
\end{ex}

Next we describe the geometry of polarized toric varieties associated to 
generalized Cayley polytopes. We start by fixing the notation to be used throughout this section.

\begin{notation} \label{notation:P_i}
Let $P_0, \dots, P_k \subset \mathbb{R}^m$  be full dimensional lattice polytopes
having the same normal fan $\Sigma'$.
Let $Y=X_{\Sigma'}$ be the corresponding projective toric variety, 
and $D_j$ the ample $T$-invariant divisor on $Y$ associated to $P_j$. Write $\Sigma'(1)=\big\{\eta_i\big\}_{i\in \{1, \dots, r\}}$, and suppose that each $P_j$ is given by the facet presentation:
$$
P_j\ =\ \Big\{
x\in \R^m \ \Big| \ \langle x, \eta_i\rangle \geq -a_{ij}, \ 1\leq i\leq r\ 
\Big\}.
$$

Then $D_j = \displaystyle \sum_{i=1}^r a_{ij}L_i$, where $L_i = V(\eta_i)$. We denote by $T$-$\Div(Y)$ the group of $T$-invariant Weil divisors on $Y$.
\end{notation}

Let $X$ be the projective toric variety corresponding to $\Cayley^s(P_0, ..., P_k)$.
In \cite{alicia}, it was shown that there exists a toric
fibration $\pi:X\to Y$ whose set theoretical fibers are all isomorphic to $\mathbb{P}^k$.
We note that $\pi$ may have multiple fibers, and  $X$ may be singular, even when $Y$ is smooth.
The following lemma gives a necessary and sufficient condition for $\Cayley^s(P_0, \ldots ,P_k)$
to be smooth.

\begin{lemma} Let the notation be as in \ref{notation:P_i}.
Then $\Cayley^s(P_0, \ldots ,P_k)$ is smooth if and only if $Y$ is smooth and
$s$ divides $a_{ij}-a_{i0}$ for every $i\in\{1,\cdots,r\}$ and $j\in\{1,\cdots,k\}$. 
In this case, $s$ divides $D_j - D_0$ in $T$-$\Div(Y)$
for every $j\in\{1,\cdots,k\}$, and the corresponding polarized toric variety $(X,L)$  satisfies:
\begin{align}
 X \ &\simeq \ \mathbb{P}_Y\Big(\mathcal{O}(D_0)\oplus \mathcal{O}\Big(\frac{D_1-D_0}{s}+D_0\Big) 
\oplus \cdots \oplus \mathcal{O}\Big(\frac{D_k-D_0}{s}+D_0\Big) \Big) \ , \notag
\\
 L \ &\sim \ s\xi + \pi^*\big((1-s)D_0\big) \ , \notag
\end{align}
where $\pi: X \to Y$ is the $\mathbb{P}^k$-bundle map, 
and $\xi$ is a divisor corresponding to the tautological line bundle.
\label{cayley_s}
\end{lemma}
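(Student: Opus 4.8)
The plan is to compute the facet presentation of $Q:=\Cayley^s(P_0,\dots,P_k)\subset\R^m\times\R^k$ explicitly, read off its normal fan $\Sigma_P=\Sigma_Q$ and its ample divisor $D_Q$, and then match everything against the $\mathbb{P}^k$-bundle description of \ref{notation_proj_bundle}. Writing a point of $Q$ as $\lambda_0(u_0,\bar 0)+\sum_{l=1}^{k}\lambda_l(u_l,se_l)$ with $u_l\in P_l$, $\lambda_l\ge 0$ and $\sum_l\lambda_l=1$, and invoking the standard fact that polytopes sharing the normal fan $\Sigma'$ satisfy $\sum_l\lambda_lP_l=\{x\mid\langle x,\eta_i\rangle\ge-\sum_l\lambda_la_{il}\ \forall i\}$, I would prove that
$$
Q=\Big\{(x,y)\in\R^m\times\R^k \ \Big|\ y_j\ge 0\ (1\le j\le k),\ \sum_{j=1}^{k}y_j\le s,\ \langle x,\eta_i\rangle+\tfrac1s\sum_{j=1}^{k}(a_{ij}-a_{i0})y_j\ge -a_{i0}\ (1\le i\le r)\Big\},
$$
and that all $r+k+1$ inequalities are facet-defining, since each equality set is a Cayley-type polytope of dimension $m+k-1$. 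Consequently the vertices of $Q$ are exactly $(v,\bar 0)$ for $v$ a vertex of $P_0$, and $(v,se_l)$ for $v$ a vertex of $P_l$ with $1\le l\le k$; these are in bijection with the maximal cones of $\Sigma_Q$.

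From this presentation the primitive facet normals are $e_1,\dots,e_k$ (with support value $0$), $e_0:=-e_1-\cdots-e_k$ (support value $s$), and, for each $i$, the vector $w_i:=\tfrac1{g_i}(s\eta_i,\,a_{i1}-a_{i0},\dots,a_{ik}-a_{i0})$ with $g_i:=\gcd(s,a_{i1}-a_{i0},\dots,a_{ik}-a_{i0})$ (support value $sa_{i0}/g_i$). For the ``only if'' direction I would test the vertex $(v,\bar 0)$ attached to a vertex $v$ of $P_0$: it lies on the $m$ facets indexed by the maximal cone $\sigma_v=\Cone(\eta_{i_1},\dots,\eta_{i_m})$ of $\Sigma'$ and on the $k$ ``side'' facets, and expanding the determinant of $(w_{i_1},\dots,w_{i_m},e_1,\dots,e_k)$ along its last $k$ columns gives $\pm\big(\prod_{l}s/g_{i_l}\big)\cdot\mult(\sigma_v)$, a product of positive integers. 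If $Q$ is smooth this product equals $1$, forcing $g_{i_l}=s$ (i.e.\ $s\mid a_{i_lj}-a_{i_l0}$ for all $j$) and $\mult(\sigma_v)=1$; letting $v$ run over the vertices of $P_0$, the cones $\sigma_v$ exhaust $\Sigma'(m)$ and the indices $i_l$ exhaust $\{1,\dots,r\}$, so $Y$ is smooth and $s\mid a_{ij}-a_{i0}$ for all $i,j$.

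Conversely, assuming $Y$ smooth and $s\mid a_{ij}-a_{i0}$, then $s$ divides $D_j-D_0=\sum_i(a_{ij}-a_{i0})L_i$ in $T$-$\Div(Y)$; I would set $D_0':=D_0$ and $D_j':=\tfrac{D_j-D_0}{s}+D_0=\sum_ia_{ij}'L_i$ for $1\le j\le k$, and let $X'=\mathbb{P}_Y(\mathcal{O}(D_0')\oplus\cdots\oplus\mathcal{O}(D_k'))$ with bundle map $\pi$. Since now $g_i=s$, the normal $w_i=(\eta_i,\tfrac{a_{i1}-a_{i0}}{s},\dots,\tfrac{a_{ik}-a_{i0}}{s})$ is precisely the ray $\tilde\eta_i=\eta_i+\sum_{j=0}^{k}(a_{ij}'-a_{i0}')e_j$ of $X'$ from \ref{notation_proj_bundle}; matching maximal cones vertex by vertex ($(v,\bar 0)\mapsto\widetilde{\sigma_v}+\Cone(e_1,\dots,e_k)$ and $(v,se_l)\mapsto\widetilde{\sigma_v}+\Cone(e_0,\dots,\widehat{e_l},\dots,e_k)$) then gives $\Sigma_Q=\Sigma_{X'}$, so $X\simeq X'$, and $X$ is smooth because $Y$ is, hence $P=Q$ is smooth. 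Finally, reading the support values off the facet presentation gives $L=D_Q=\sum_ia_{i0}D_{\tilde\eta_i}+s\,D_{e_0}=\pi^*(D_0)+s\,D_{e_0}$ (using $\pi^*(L_i)=D_{\tilde\eta_i}$), and substituting $D_{e_0}=\xi-\pi^*(D_0)$ from Lemma \ref{div_proj_bundle}(b) yields $L\sim s\xi+\pi^*\big((1-s)D_0\big)$.

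The step I expect to be the main obstacle is establishing the facet presentation of $Q$ — chiefly the inclusion ``$\supseteq$'', where the common-normal-fan hypothesis enters through the Minkowski-sum identity for $\sum_l\lambda_lP_l$, and the verification that no listed inequality is redundant — together with the purely combinatorial matching of the maximal cones of $\Sigma_Q$ with those of the bundle fan of \ref{notation_proj_bundle}; once these are settled, the determinant computation and the divisor identity are routine.
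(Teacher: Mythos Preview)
Your proposal is correct and follows essentially the same route as the paper's proof: derive the facet presentation of $\Cayley^s(P_0,\dots,P_k)$, read off the primitive normals, and use the determinant/multiplicity computation at a vertex to force $s\mid a_{ij}-a_{i0}$ and smoothness of $Y$; then match the normal fan with the bundle fan of \ref{notation_proj_bundle} and compute $L$. The only cosmetic difference is that the paper obtains the facet presentation by scaling from the $s=1$ case (Lemma~\ref{cayley_1}) via the observation $(x,y)\in P^s\iff (x,y/s)\in P^1$, whereas you derive it directly from the convex-combination description and the Minkowski-sum identity for polytopes with common normal fan; your primitive normal $w_i$ is exactly the paper's $r_i\hat\eta_i$ with $r_i=s/g_i$, and your determinant $\prod_l(s/g_{i_l})\cdot\mult(\sigma_v)$ is identical to the paper's $\lvert r_{i_1}\cdots r_{i_m}\rvert\cdot\mult(\tau)$.
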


\begin{proof} 
Set $P^1:=P_0 *...* P_k$, and $P^s := \Cayley^s(P_0, \ldots ,P_k)$. 
Notice that a point $x =(y,z)\in\mathbb{R}^m\times\mathbb{R}^k$ is in $P^s$ if and only if 
$(y,\frac{z}{s})\in P^1$. Hence, from the facet description of $P^1$ given in the proof of Lemma \ref{cayley_1},
we deduce that $P^s$ has the following facet presentation:
$$
\langle x,\hat{\eta_i}\rangle \geq-a_{i0}, \ \ \ \ \langle x,e_0\rangle\geq -s, \ \ \ \ 
\langle x,e_j\rangle\geq 0 \ \ \ \ j=1,...,k,
$$
where  $\displaystyle \hat{\eta_i}=\eta_i+\sum_{j=1}^k\frac{(a_{ij}-a_{i0})}{s} e_j$.

Suppose that $Y$ is smooth and
$s$ divides $a_{ij}-a_{i0}$ for every $i\in\{1,\cdots,r\}$ and $j\in\{1,\cdots,k\}$.
Then $\hat{\eta_i}$ is a primitive lattice vector, 
$s$ divides $D_j-D_0$ in $T-\Div(Y)$, and one can check that $P^s$ and 
$P_{D_0}*P_{\left(\frac{D_1 - D_0}{s}+D_0\right)}*\cdots *P_{\left(\frac{D_k - D_0}{s}+D_0 \right)}$
have the same normal fan. Thus
$$
X \ \cong \ \mathbb{P}_Y\Big(\mathcal{O}(D_0)\oplus \mathcal{O}\Big(\frac{D_1-D_0}{s}+D_0\Big) 
\oplus \cdots \oplus \mathcal{O}\Big(\frac{D_k-D_0}{s}+D_0\Big) \Big),
$$
and $P^s$ is smooth. 
The facet presentations of $P^s$ and 
$P_{D_0}*P_{\left(\frac{D_1 - D_0}{s}+D_0\right)}*\cdots *P_{\left(\frac{D_k - D_0}{s}+D_0 \right)}$
also show that 
$$
L \ = \ sV(e_0) + \displaystyle \sum a_{i0}V(\hat{\eta_i}) \
                  = \ sV(e_0) + \pi^*(D_0) \
                  \sim s\xi + \pi^*((1-s)D_0).
$$

Conversely, suppose that $P^s$ is smooth, and denote by $\Sigma$ its normal fan.
For each $i\in\{1,\cdots,r\}$, let $r_i$ be the least 
positive (integer) number  such that $r_i\hat{\eta}_i$ is a lattice vector.
We deduce from Section \ref{toric_proj_bundles} that the maximal cones of $\Sigma$ are of the form:
$$
\tilde{\tau} :=\Cone \big( r_{i_1}\hat{\eta}_{i_1}, \ldots, r_{i_t}\hat{\eta}_{i_t}, e_0, \ldots \hat{e}_j, \ldots e_k\big),  
$$
where $\tau =\Cone \big( \eta_{i_1}, \ldots, \eta_{i_t} \big) \in \Sigma'(m)$
and $j\in\{0,\cdots,k\}$. 
Since $X$ is smooth, $\Sigma$ must be simplicial (i.e., $t=m$), and
\begin{align*}
1\ & = \ \mult(\tilde{\tau}) \\ 
   & = \ \big|\det[r_{i_1}\hat{\eta}_{i_1}, \ldots , r_{i_m}\hat{\eta}_{i_m}, e_0, \ldots \hat{e}_j, \ldots , e_k]\big| \\
   & = \ \big|r_{i_1} \ldots r_{i_m}\big|\cdot \big|\det[\eta_{i_1}, \ldots , \eta_{i_m}]\big| \\
   & = \ \big|r_{i_1} \ldots r_{im}\big| \cdot \mult(\tau).
\end{align*} 

It follows that $\mult(\tau)=1$ for every 
$\tau \in \Sigma'(m)$, 
and $r_i =1$ for every $i\in\{1,\cdots,r\}$.
Thus $Y$ is smooth, and
$s$ divides $a_{ij}-a_{i0}$ for every $i\in\{1,\cdots,r\}$ and $j\in\{1,\cdots,k\}$.
\end{proof}

It will be useful to compute the nef value and the $\Q$-codegree of the following type of generalized Cayley polytope.

\begin{ex} Let $0< d_0 \leq d_1 \leq ... \leq d_k$ be integers and suppose that $s$ is an integer that divides $d_j-d_0$ for every $j \in \{1, ..., k\}$. We compute the nef value and the $\Q$-codegree of the smooth polytope:
$$
P^s = \Cayley^s(d_0 \Delta_m, ..., d_k \Delta_m).
$$
Set $a_j := \frac{d_j-d_0}{s} + d_0$, $j=0, ..., k$. By Lemma \ref{cayley_s}, $a_j$ is a positive integer for every $j$, and the polarized toric variety corresponding to $P^s$ is:
$$
(X, L) = \Big( \mathbb{P}_{\mathbb{P}^m}(\mathcal{O}(a_0) \oplus ... \oplus \mathcal{O}(a_k)), s\xi + (1-s)a_0 \pi^*(H) \Big)
$$ 
where $\pi: X \to Y$ is the natural projection, $H$ is an invariant hyperplane in $\mathbb{P}^m$ and $\xi = V(e_0)+a_0 \pi^*(H)$. It follows from Remark \ref{equality_canonical} that:
$$
K_X = \Big(\sum a_j - (m+1)\Big)\pi^*(H) -(k+1)\xi.
$$
For every $t>0$ we have:
\begin{align*}
K_X + tL & = \Big(\sum a_j -(m+1)\Big)\pi^*(H)-(k+1)\big(V(e_0)+a_0\pi^*(H)\big) + \\ 
& \ \ \ \ \ tsV(e_0)+tsa_0\pi^*(H)+t(1-s)a_0\pi^*(H) \\
& = \Big[\sum a_j -(m+1)+(t-(k+1))a_0\Big]\pi^*(H)+(ts-(k+1))V(e_0).        
\end{align*}

Since $\overline{Nef}(X) = \Cone\big([V(e_0)], [\pi^*(H)]\big)$ (see Example \ref{proj_bundle_ex}), $K_X+tL$ is nef if and only if:
$$
t \geq \frac{k+1}{s} \ \ \mbox{and} \ \ \sum a_j -(m+1)+(t-(k+1))a_0 \geq 0.
$$
Replacing $a_j = \frac{d_j-d_0}{s} + d_0$, the second inequality becomes the following:
$$t \geq \frac{k+1}{s} + \frac{m+1 - \sum \frac{d_j}{s}}{d_0}.
$$
Therefore:
$$\tau(P^s) = \tau(L) = \max \Big\{\frac{k+1}{s}, \  \frac{k+1}{s} + \frac{m+1 - \sum \frac{d_j}{s}}{d_0} \Big\}.
$$

Next we compute the spectral value of $L$. From Lemma \ref{div_proj_bundle}, $V(e_k) \sim V(e_0) + (a_k-a_0)\pi^*(H)$. Thus:
$$ \xi \sim V(e_k) + a_k\pi^*(H).
$$
For every $t \geq 0$ we have:
\begin{align*}
K_X + tL & \sim \Big(\sum a_j -(m+1)\Big)\pi^*(H)-(k+1)\Big(V(e_k)+a_0\pi^*(H)\Big) + \\ 
& \ \ \ \ \ tsV(e_k)+tsa_k\pi^*(H)+t(1-s)a_0\pi^*(H) \\
& =\Big[\sum a_j -(m+1)+(st-(k+1))a_k+t(1-s)a_0\Big]\pi^*(H)+ \\ 
& \ \ \ \ \ (ts-(k+1))V(e_0).        
\end{align*}

By Example \ref{proj_bundle_ex} we have:
$$
\overline{Eff}(X) = \Cone \big([V(e_k)], [\pi^*(H)]\big).
$$
Hence $K_X+tL$ is effective if and only if:
$$
t \geq \frac{k+1}{s} \ \ \mbox{and} \ \ \sum a_j -(m+1)+(st-(k+1))a_k+t(1-s)a_0 \geq 0.
$$
Replacing $a_j = \frac{d_j-d_0}{s} + d_0$, the second inequality becomes:
$$ -(m+1) + d_kt  \geq (k+1)\Big(\frac{d_k-d_0}{s}+d_0)- \sum \Big(\frac{d_j-d_0}{s}+d_0 \Big) 
$$ 
$$  \therefore \ \ t \geq \frac{k+1}{s} + \frac{m+1-\sum \frac{d_j}{s}}{d_k}.
$$
Therefore:
$$
\codeg_{\Q}(P^s) = \mu(L) = \max \Big\{\frac{k+1}{s}, \  \frac{k+1}{s} + \frac{m+1 - \sum \frac{d_j}{s}}{d_k} \Big\}.
$$
Note that $P^s$ is $\Q$-normal exactly in two situations:
\begin{enumerate}
\item $s(m+1) < \sum d_j$. In this case, $\codeg_{\Q}(P^s) = \frac{k+1}{s}$.

\item $d_0 = ... = d_k$ and $s(m+1)>(k+1)d_0$. In this case, $P^s = (d_0\Delta_m) \times (s\Delta_k)$ and $\codeg_{\Q}(P) = \frac{m+1}{d_0}$.
\end{enumerate}

\label{key_ex_1} 
\end{ex}

\begin{ex} Consider again the toric projective bundle:
$$
X = \mathbb{P}(\mathcal{O}(a_0) \oplus ... \oplus \mathcal{O}(a_k)) \to \mathbb{P}^m,
$$
where $0<a_0 \leq ... \leq a_k$. Let $\xi$ be a divisor corresponding the tautological line bundle and $s$ a positive integer. If $H$ is an $T$-invariant hyperplane on $\mathbb{P}^m$, consider the divisor:
$$
L = s\xi + a\pi^*(H), \ \ \ a \in \Z.
$$
We conclude from the description of the tautological line bundle and of the nef cone of $X$ given in Lemma \ref{div_proj_bundle} and Example \ref{proj_bundle_ex} that $L$ is ample on $X$ if and only if $a>-sa_0$. In this case the polytope $P_L$ corresponding to $L$ is affinelly isomorphic to:
$$
P^s = \Cayley^s((sa_0 + a)\Delta_m, ..., (sa_k + a)\Delta_m).
$$
In fact, by Lemma \ref{cayley_s}, the polarized toric variety  associated to $P^s$ is:
$$
(X', L') = (\mathbb{P}_{\mathbb{P}^m}(\mathcal{E}), s\xi'+(1-s)(sa_0+a)\pi'^*(H)),
$$ 
where $\mathcal{O}_{X'}(\xi') \simeq \mathcal{O}_{X'}(1)$, $\pi':X' \to \mathbb{P}^m$ is the $\mathbb{P}^k$-bundle map and: 
$$
\mathcal{E} = \mathcal{O}(sa_0+a) \oplus \mathcal{O}(a_1-a_0+sa_0+a) \oplus ... \oplus \mathcal{O}(a_k-a_0+sa_0+a).
$$ 
Tensorizing $\mathcal{E}$ by the line bundle $\mathcal{O}(a_0(1-s)-a)$ induces a natural toric isomorphism $X' \stackrel{\simeq}{\to} X$ that takes $\xi'$ to $\xi -a_0((1-s)+a)\pi^*(H)$ (see for instance \cite[II 7.9]{hartshorne}). It follows that this isomorphism takes $L'$ to:
$$s\xi + \big[-s(a_0(1-s)+a)+(1-s)(sa_0+a)\big] \pi^*(H) = s\xi +a \pi^*(H) = L.
$$
Therefore, the pairs $(X, L)$ and $(X',L')$ are isomorphic. By Remark \ref{aff_isom_toric_pol}, $P_L \simeq P^s$.

\label{key_ex_2}
\end{ex}

\vspace{0,3 cm}

In Example \ref{key_ex_1} we determined sufficient conditions for the generalized Cayley polytope 
$$
\Cayley^s(d_0 \Delta_m, ..., d_k \Delta_m), \ \ 0<d_0 \leq ... \leq d_k
$$
to be $\Q$-normal. 
Our next goal is to do the same for more general polytopes. We begin proving a toric geometric result. 

\begin{lemma} Let $Y$ be an $n$-dimensional smooth projective toric variety and $L$ a $\Q$-divisor on $Y$. If $L \cdot C \geq n$ for every $T$-invariant curve $C \subset Y$, then either $K_Y + L$ is nef or $Y \simeq \mathbb{P}^n$ and $L \sim tH$ where $H$ is a hyperplane and $t$ is a rational number with $n \leq t < n+1$.

\label{fujita_conj}
\end{lemma}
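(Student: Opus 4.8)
The plan is to analyze the toric Fano-type structure forced by the hypothesis $L\cdot C\geq n$ for every $T$-invariant curve. First I would reduce to the case where $K_Y+L$ is \emph{not} nef and derive a contradiction unless $Y\simeq\mathbb{P}^n$. By the toric cone theorem (equation (\ref{toric_cone_theorem})), $\overline{NE}(Y)$ is generated by classes $[V(\omega)]$ with $\omega\in\Sigma(n-1)$, so $K_Y+L$ fails to be nef precisely when there is a wall $\omega$ with $(K_Y+L)\cdot V(\omega)<0$. Since $L\cdot V(\omega)\geq n$, this forces $K_Y\cdot V(\omega)<-n$, i.e. $-K_Y\cdot V(\omega)>n$. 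For a smooth complete toric variety, $-K_Y=\sum_{\rho}D_\rho$, and the intersection numbers $D_\rho\cdot V(\omega)$ can be computed from the wall relation $v_n+v_{n+1}+\sum a_i v_i=0$ associated to $\omega$ as in \ref{toric_contraction} (here all the relevant multiplicities are $1$ by smoothness); in particular $-K_Y\cdot V(\omega) = 2 + \sum a_i$ where the $a_i$ are the coefficients of the remaining rays. I would then contract the extremal ray $R=\mathbb{R}_{\geq 0}[V(\omega)]$: since $Y$ is smooth (hence terminal and $K_Y$ not pseudo-effective, $Y$ being a complete toric variety) the Cone/Contraction Theorems give $\phi_R\colon Y\to Z$.

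Next I would use the length estimate. The ray $R$ is $K_Y$-negative with length $\mathfrak{l}(R)=\min\{-K_Y\cdot C\}\geq -K_Y\cdot V(\omega) > n = \dim Y$... but actually more carefully $\mathfrak{l}(R)\leq \dim Y+1$ always, so $\mathfrak{l}(R)\in\{n,n+1\}$ forced by $-K_Y\cdot V(\omega)>n$ combined with integrality (the $a_i$ are positive rationals but here integers by smoothness, so $-K_Y\cdot V(\omega)\geq n+1$). Then Wiśniewski's inequality (Proposition \ref{key_inequality}), $\dim E + \dim F \geq \dim Y + \mathfrak{l}(R) - 1 \geq 2n$, with $\dim E\leq n$ and $\dim F\leq n$, forces $\dim E=\dim F=n$, so $E=Y$ and the fiber is all of $Y$: the contraction is a Mori fiber space onto a point, $Z=\mathrm{pt}$, and $\rho(Y)=\rho(Z)+1=1$. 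By Lemma \ref{Pic_one}, a smooth projective toric variety of Picard number $1$ is $\mathbb{P}^n$.

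Finally, on $Y=\mathbb{P}^n$ every divisor is $\sim tH$ for some $t$, and since $L$ is a $\mathbb{Q}$-divisor, $t\in\mathbb{Q}$. The line $C=$ a $T$-invariant line has $H\cdot C=1$, so $L\cdot C=t\geq n$ by hypothesis. If $t\geq n+1$ then $K_{\mathbb{P}^n}+L=(t-n-1)H$ is nef, contradicting our standing assumption; hence $n\leq t<n+1$, which is exactly the stated alternative. I would assemble these pieces in that order. The main obstacle I anticipate is bookkeeping in the wall-relation intersection computation to pin down $-K_Y\cdot V(\omega)\geq n+1$ and hence $\mathfrak{l}(R)\geq n+1$ cleanly; once the length bound is in hand, Wiśniewski's inequality plus Lemma \ref{Pic_one} do the rest essentially formally. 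An alternative to the length argument, which might be cleaner, is to invoke the toric refinement of the Fano-index classification (Remark \ref{toric_fano}): if $K_Y+L$ is not nef one shows $-K_Y$ is "large" relative to the minimal curve degrees, forcing $Y$ into the list there, and among those only $\mathbb{P}^n$ is consistent with $\rho(Y)$ having to drop to $1$ under the relevant contraction; but the direct contraction argument above is self-contained given what the excerpt has already established.
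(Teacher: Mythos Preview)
Your approach is exactly the paper's: assume $K_Y+L$ is not nef, find a $T$-invariant curve $C$ in a $(K_Y+L)$-negative extremal ray $R$, use $L\cdot C\geq n$ and integrality to force $-K_Y\cdot C=n+1$, then apply Wi\'sniewski's inequality (Proposition~\ref{key_inequality}) to get $E=F=Y$, hence $\rho(Y)=1$ and $Y\simeq\mathbb{P}^n$ by Lemma~\ref{Pic_one}; the bound $n\leq t<n+1$ falls out at the end just as you say.

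The one point to tighten is your handling of the length. You wrote $\mathfrak{l}(R)=\min\{-K_Y\cdot C\}\geq -K_Y\cdot V(\omega)$, which is the wrong direction (a minimum is $\leq$ any particular value), and your self-correction lands at $\mathfrak{l}(R)\in\{n,n+1\}$, which is not enough: Wi\'sniewski's inequality needs $\mathfrak{l}(R)\geq n+1$ to yield $\dim E+\dim F\geq 2n$. The clean fix, which the paper uses in one line, is that on a smooth projective toric variety the length of an extremal ray is \emph{achieved} by a $T$-invariant curve (the class $[V(\omega)]$ is the primitive lattice generator of $R$, so every rational curve in $R$ has class a positive integer multiple of it). Once you take $C=V(\omega)$ to be that length-minimizing $T$-invariant curve, the hypothesis $L\cdot C\geq n$ applies directly and gives $\mathfrak{l}(R)=-K_Y\cdot C\geq n+1$. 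With that in place your argument is complete and matches the paper verbatim.
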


\begin{proof}
Let $R$ be a $K_Y$-negative extremal ray of $\overline{NE}(Y)$, $\phi_R$ the contraction associated to $R$, $E$ any component of the excepcional locus of $R$, and $F$ any component of the fiber of the restriction ${\phi_R}_{|E}$. Since $X$ is smooth, there exists a $T$-invariant curve $C$ whose class generates $R$, and satisfies:
$$
\mathfrak{l}(R) = -K_Y \cdot C,
$$
where $\mathfrak{l}(R)$ is the lenght of $R$, as defined in Section \ref{log}.We have seen that $-K_Y \cdot C \leq n+1$.

If $K_Y + L$ is not nef, there exist such a curve $C$ such that $-K_Y \cdot C =n+1$. Proposition \ref{key_inequality} implies that $E = F = Y$. Thus $Y \simeq \mathbb{P}^n$ and since $K_Y+L$ is not nef, we have $L \sim tH$ where $H$ is a hyperplane on $Y$ and $t$ is a rational number with $n \leq t < n+1$. 
\end{proof}

The following is the main result of this section. It improves the $\Q$-normality criterion given in \cite[Proposition 3.9]{alicia}.

\begin{prop}
Let the notation be as in \ref{notation:P_i}, and suppose that:
$$
P^s:=\Cayley^s(P_0, \ldots ,P_k)
$$ 
is smooth. If $\frac{k+1}{s} \geq m$, then   $\codeg_{\mathbb{Q}}(P^s) = \frac{k+1}{s}$ and $P^s$ is $\mathbb{Q}$-normal, unless one of the following holds:
\begin{enumerate}
\item[(1)] $P^s \simeq \Cayley^s(d_0 \Delta_m, ..., d_k \Delta_m)$, where $0<d_0\leq ... \leq d_k$, $d_0 \neq d_k$ and $sm\leq \sum_{j=0}^k d_j < s(m+1)$. In this case, $P^s$ is not $\Q$-normal and:
$$ \codeg_{\Q}(P) = \frac{k+1}{s} + \frac{m+1 - \sum \frac{d_j}{s}}{d_k},
$$
$$\tau(P) = \frac{k+1}{s} + \frac{m+1 - \sum \frac{d_j}{s}}{d_0}.
$$

\item[(2)]$P^s \simeq (r\Delta_m) \times (s\Delta_k)$, where $r$ is a positive integer with $s(m+1)>(k+1)r \geq sm$. In this case $P^s$ is $\Q$-normal and $\codeg_{\Q}(P) = \frac{m+1}{r}$.
\end{enumerate}
\label{prop1}
\end{prop}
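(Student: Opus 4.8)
The plan is to translate the statement into one about the polarized toric variety $(X,L)$ associated to $P^s$ and exploit the projective bundle description from Lemma~\ref{cayley_s}. By that lemma $X\simeq\mathbb{P}_Y(\mathcal{E})$ with $\mathcal{E}=\mathcal{O}(D_0)\oplus\bigoplus_{j=1}^{k}\mathcal{O}\big(\tfrac{D_j-D_0}{s}+D_0\big)$, the bundle map $\pi:X\to Y$ has all fibers isomorphic to $\mathbb{P}^k$, and $L\sim s\xi+(1-s)\pi^{*}D_0$ with $\mathcal{O}_X(\xi)\simeq\mathcal{O}_X(1)$. The first step is a computation: applying the canonical bundle formula of Remark~\ref{equality_canonical}, $K_X=\pi^{*}(K_Y+\det\mathcal{E})-(k+1)\xi$, and substituting $L$ and simplifying the coefficient of $D_0$ one obtains the clean identity
\[
K_X+\tfrac{k+1}{s}\,L\ \sim\ \pi^{*}\!\Big(K_Y+\tfrac{1}{s}\textstyle\sum_{j=0}^{k}D_j\Big).
\]
Write $M:=K_Y+\tfrac{1}{s}\sum_{j=0}^{k}D_j$, a $\Q$-divisor on the $m$-dimensional toric variety $Y$. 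By Proposition~\ref{inv_polytopes_varieties} it then suffices to compute $\tau(L)$ and $\mu(L)$.

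Next I would establish the lower bounds $\tau(L)\geq\frac{k+1}{s}$ and $\mu(L)\geq\frac{k+1}{s}$: for any fiber $F\cong\mathbb{P}^k$ of $\pi$ one has $K_X|_F=K_{\mathbb{P}^k}$ and $L|_F\sim\mathcal{O}_{\mathbb{P}^k}(s)$, so $(K_X+tL)|_F\sim\mathcal{O}_{\mathbb{P}^k}(ts-(k+1))$, and since restricting a nef (resp.\ pseudo-effective) $\Q$-divisor to a general fiber $\mathbb{P}^k$ gives a nef (resp.\ effective) divisor, $K_X+tL$ is neither nef nor pseudo-effective when $t<\frac{k+1}{s}$. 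Then comes the key dichotomy: each $D_j$ is ample on $Y$, so $D_j\cdot C\geq1$ for every $T$-invariant curve $C\subset Y$, whence $\big(\tfrac{1}{s}\sum_j D_j\big)\cdot C\geq\frac{k+1}{s}\geq m=\dim Y$ by hypothesis; Lemma~\ref{fujita_conj} then applies and yields that either $M$ is nef, or $Y\simeq\mathbb{P}^m$ with $\tfrac{1}{s}\sum_j D_j\sim tH$ for some $t\in[m,m+1)$ and $H$ a hyperplane.

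If $M$ is nef, then $K_X+\frac{k+1}{s}L=\pi^{*}M$ is nef, so $\tau(L)\leq\frac{k+1}{s}$, and combined with the lower bounds $\mu(L)=\tau(L)=\frac{k+1}{s}$; hence $\codeg_{\Q}(P^s)=\frac{k+1}{s}$ and $P^s$ is $\Q$-normal, a situation that excludes cases (1)--(2) since both force the second alternative of Lemma~\ref{fujita_conj}. If $M$ is not nef, then $Y\simeq\mathbb{P}^m$, and as an ample $T$-invariant divisor on $\mathbb{P}^m$ is linearly equivalent to a positive multiple of the hyperplane, after reordering $P^s\simeq\Cayley^s(d_0\Delta_m,\dots,d_k\Delta_m)$ with $0<d_0\leq\cdots\leq d_k$ and $sm\leq\sum_j d_j<s(m+1)$. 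I would then finish by invoking Example~\ref{key_ex_1}: if $d_0\neq d_k$ we are in case (1), where $P^s$ is not $\Q$-normal and, because $m+1-\sum_j d_j/s>0$, the maxima in the formulas of Example~\ref{key_ex_1} are attained by the second terms, giving the stated values of $\codeg_{\Q}(P)$ and $\tau(P)$; if $d_0=\cdots=d_k=:r$, then $\Cayley^s(r\Delta_m,\dots,r\Delta_m)=(r\Delta_m)\times(s\Delta_k)$ and $sm\leq(k+1)r<s(m+1)$, i.e.\ case (2), which is $\Q$-normal with $\codeg_{\Q}(P)=\frac{m+1}{r}$ by Example~\ref{key_ex_1}(2).

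The bulk of the work is routine once the identity $K_X+\frac{k+1}{s}L=\pi^{*}M$ is established; the conceptual point that needs care is that the hypothesis $\frac{k+1}{s}\geq m$ is precisely the numerical input allowing Lemma~\ref{fujita_conj} to be applied to $M$ on the $m$-dimensional base, and one must verify that the exceptional cases (1) and (2) match the two sub-cases of Example~\ref{key_ex_1} disjointly and exhaustively. The only mildly delicate technical point is the pseudo-effective lower bound for $\mu(L)$, which uses that restriction to a general fiber preserves pseudo-effectivity.
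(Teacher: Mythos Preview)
Your argument is correct and follows essentially the same route as the paper: both establish the identity $K_X+\tfrac{k+1}{s}L\sim\pi^*(K_Y+\tfrac{1}{s}\sum_j D_j)$, apply Lemma~\ref{fujita_conj} on the base using the hypothesis $\tfrac{k+1}{s}\geq m$, and in the non-nef alternative reduce to Example~\ref{key_ex_1}. The only cosmetic difference is that the paper obtains the lower bound $\codeg_{\Q}(P^s)\geq\tfrac{k+1}{s}$ via the polytope projection $P^s\to s\Delta_k$ rather than by restricting to a fiber, which sidesteps the mild subtlety about pseudo-effectivity under restriction that you flag at the end.
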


\begin{proof}
The second projection $f:\mathbb{R}^m \times \mathbb{R}^k \to \mathbb{R}^k$ maps $P^s$ 
onto the $k$-dimensional simplex $s\Delta_k$. 
Therefore, $ \tau(P^s)\geq \codeg_{\mathbb{Q}}(P^s) \geq \codeg_{\mathbb{Q}}(s\Delta_k)=\frac{k+1}{s}$.

Recall from Lemma \ref{cayley_s} that the polarized toric variety associated to $P^s$ is 
$$
(X,L) \ \cong \ \Big(\mathbb{P}_Y\big(\mathcal{O}(E_0)\oplus \cdots \oplus \mathcal{O}(E_k) \big), 
\ s\xi +(1-s)\pi^*(D_0) \Big),
$$
where $Y$ is smooth, $\pi:X \to Y$ is the $\mathbb{P}^k$-bundle map, $\xi$ is a divisor corresponding to the tautological line bundle on $X$ and $E_i := \frac{D_i + (s-1)D_0}{s}, \forall i=0,...,k$. 
The $\Q$-divisor
$$
M := \displaystyle \sum_{i=0}^{k} E_i - \frac{(k+1)(s-1)}{s}D_0 = \frac{1}{s} \sum_{i=0}^{k} D_i 
$$
satisfies $M \cdot C \geq \frac{k+1}{s} \geq m$ for every $T$-invariant integral curve $C \subset Y$. By Lemma \ref{fujita_conj}, we conclude that either $K_Y + \mathcal{L}$ is nef, or $Y \simeq \mathbb{P}^m$ and $sm \leq \sum_{j=0}^k d_j < s(m+1)$, where $d_j$ denotes the degree of the ample divisor $D_j$ under the isomorphism $Y \simeq \mathbb{P}^m$.

Suppose that $K_Y + M$ is nef. Then $\pi^*(K_Y + M)$ is nef but not ample on $X$. Since $L$ is ample, it follows from Remark \ref{equality_canonical} that
$$
K_X + tL = \pi^*(K_Y + M) + \big(t -\frac{k+1}{s} \big)L
$$
is ample if and only if $t-\frac{k+1}{s} >0$. Hence $\tau(P^s)=\frac{k+1}{s}$, as desired.

Suppose now that $Y \simeq \mathbb{P}^m$ and $sm \leq \sum_{j=0}^k d_j < m+1$. By Lemma \ref{cayley_s} we conclude that:
$$P^s \simeq \Cayley^s(d_0 \Delta_m, ..., d_k \Delta_m).
$$
Therefore, we are reduced to the computations made in Example \ref{key_ex_1}. This concludes the proof.  
\end{proof}

\section{Main Result}

We have the following classification theorem for polytopes:

\begin{teo} Let $P \subset \mathbb{R}^n$ be a smooth $n$-dimensional $\mathbb{Q}$-normal lattice polytope. Then $\codeg(P) \geq \frac{n+1}{2}$ if and only if $P$ is affinely isomorphic to one of the following polytopes:
\begin{enumerate}
\item[{\rm (i)}] $2\Delta_n$;
\item[{\rm (ii)}] $3\Delta_3 \ (n=3)$;
\item[{\rm (iii)}] $s\Delta_1$, $s\geq 1 \ (n=1)$;
\item[{\rm (iv)}] $\Cayley^1(P_0, \ldots, P_k)$, where $k=\codeg(P)-1 \geq \frac{n-1}{2}$;
\item[{\rm (v)}] $\Cayley^2(d_0\Delta_1, d_1\Delta_1, \ldots, d_{n-1}\Delta_1)$, where $n \geq 3$ is odd and the $d_i$'s are congruent modulo 2.
\end{enumerate}
\label{mainthm}
\end{teo}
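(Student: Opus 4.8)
The plan is to translate the problem into one about polarized toric varieties $(X,L)$ and to run the Minimal Model Program, following the strategy of \cite{alicia}, with the refinements provided by Proposition \ref{prop1} and the adjunction-theoretic input of Theorems \ref{key_result_lines} and \ref{key_result_contraction}. The ``if'' direction is the easy half: for each of the polytopes (i)--(v) we must check $\codeg(P)\geq\frac{n+1}{2}$. For (i), Proposition \ref{deg_zero}-type computations (done in Example \ref{key_ex_1} with all $d_j=1$, $m=0$) give $\codeg(2\Delta_n)=\lceil(n+1)/2\rceil\geq\frac{n+1}{2}$; for (ii) a direct lattice-point count gives $\codeg(3\Delta_3)=2\geq 2$; (iii) is trivial; for (iv) the second projection onto the $k$-simplex $\Delta_k$ shows $\codeg_{\mathbb{Q}}(P)\geq k+1=\codeg(P)$, and $\mathbb{Q}$-normality gives equality, so $\codeg(P)=k+1\geq\frac{n+1}{2}$; for (v), Example \ref{key_ex_1} with $s=2$ computes $\codeg_{\mathbb{Q}}$ explicitly, and the congruence-mod-$2$ hypothesis is exactly the smoothness condition from Lemma \ref{cayley_s}, so one reads off $\codeg(P)\geq\frac{n+1}{2}$.

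For the ``only if'' direction, let $(X,L)$ be the smooth polarized toric variety attached to $P$. By Proposition \ref{inv_polytopes_varieties}, $\tau(P)=\tau(L)$ and $\codeg_{\mathbb{Q}}(P)=\mu(L)$, and $\mathbb{Q}$-normality means $\tau(L)=\mu(L)$; by Proposition \ref{teto}, $\mathbb{Q}$-normality also gives $\codeg(P)=\lceil\codeg_{\mathbb{Q}}(P)\rceil$ and hence $\tau(L)=\tau(P)>\codeg(P)-1\geq\frac{n-1}{2}$. Now invoke Proposition \ref{nonbirational}: since $L$ is $\mathbb{Q}$-normal, the nef value morphism $\phi_L:X\to Y$ is \emph{not} birational, so $\dim Y<n$. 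The first step is to handle the case $\dim Y=0$: then $K_X+\tau L\equiv 0$, so $X$ is Fano with index $r$ satisfying $\tau(L)=\tfrac{n+1}{r}$ (after dividing $L$ by its primitivity), whence $r=\tfrac{n+1}{\tau}\leq\tfrac{n+1}{(n-1)/2}$, which forces $r\geq\frac{n+1}{2}$ for $n\geq 3$; by Remark \ref{toric_fano} (the toric refinement of Proposition \ref{fano_index}), $X$ is one of $\mathbb{P}^n$, $\mathbb{P}^{n/2}\times\mathbb{P}^{n/2}$, $(\mathbb{P}^1)^3$, or $\mathbb{P}_{\mathbb{P}^r}(\mathcal{O}(2)\oplus\mathcal{O}(1)^{r-1})$ — these produce cases (i), (v) with all $d_i$ equal, (ii)/(iii)-type, and (v). When $\dim Y\geq 1$, Theorem \ref{key_result_lines} (applicable since $\tau(L)>\frac{n+1-\dim Y}{2}$ once $\dim Y$ is small enough relative to $\tau$) shows $X$ is covered by lines contracted by $\phi_L$; then apply Proposition \ref{Mori_fiber_pkbundle} to the extremal ray contracted by $\phi_L$ to realize $X\simeq\mathbb{P}_Y(\mathcal{E})$ for a decomposable bundle, i.e.\ $P$ is a strict generalized Cayley polytope $\Cayley^s(P_0,\dots,P_k)$ with $k+1=$ rank of $\mathcal{E}$ and $\dim Y=m$. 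Finally, the $\mathbb{Q}$-normality hypothesis combined with the computation of $\tau$ and $\mu$ in Proposition \ref{prop1} pins down $s$ and the $P_i$: if $\frac{k+1}{s}\geq m$ fails we must further contract using Theorem \ref{key_result_contraction} to reduce $\dim Y$, eventually landing in the regime where Proposition \ref{prop1} applies, which gives either case (iv) ($s=1$), or — in the exceptional subcases of Proposition \ref{prop1} where $Y\simeq\mathbb{P}^m$ and the $d_j$-sum inequality holds — the $\mathbb{Q}$-normality forces $m\leq 1$ (so $Y=\mathbb{P}^1$, $s=2$, $n$ odd), giving case (v), or a product, giving case (ii)/(iii).

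The main obstacle I expect is the bookkeeping in the inductive descent: after the first contraction $\phi_L:X\to Y$ we know $X$ is a $\mathbb{P}^k$-bundle, but to match $P$ with one of the listed normal forms we need to know that the base $Y$ is itself either a point, a projective space, or has the same ``$\mathbb{Q}$-normal, large codegree'' structure, and that the twisting bundle $\mathcal{E}$ has the special shape $\mathcal{O}(d_0)\oplus\cdots\oplus\mathcal{O}(d_k)$ forced by Lemma \ref{cayley_s}. Controlling this requires carefully tracking how $\tau(L)$, $\mu(L)$ and the Picard number change under each elementary contraction, and checking that the numerical hypothesis $\frac{k+1}{s}\geq m$ of Proposition \ref{prop1} is eventually met; the delicate point is ruling out intermediate configurations where $\phi_L$ has fibers of ``mixed'' dimension — this is precisely where Theorem \ref{key_result_contraction} and the length estimate of Proposition \ref{key_inequality} are needed, and where the low-dimensional sporadic cases (ii), (iii) and the ``$d_i$ all equal'' subcase of (v) sneak in. The borderline equality $\codeg(P)=\frac{n+1}{2}$ (when $n$ is odd) is the most demanding, since there $\tau(P)$ is only barely above $\frac{n-1}{2}$ and the adjunction-theoretic hypotheses hold with no slack.
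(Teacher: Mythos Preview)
Your overall strategy---translate to $(X,L)$, use $\mathbb{Q}$-normality to make $\phi_L$ non-birational, then invoke the adjunction-theoretic machinery---matches the paper, but the case organization is wrong and this causes real gaps.

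The paper does \emph{not} split primarily on $\dim Y$. Instead, for an extremal curve $C$ contracted by $\phi_L$ one has
\[
n+1 \geq -K_X\cdot C = \tau(L)\cdot (L\cdot C) > \tfrac{n-1}{2}(L\cdot C),
\]
forcing $L\cdot C\in\{1,2,3,4,5\}$, and the proof splits on this value. The point is that when $L\cdot C=1$ for \emph{every} such $C$, then $\tau=-K_X\cdot C$ is an \emph{integer}, so $\tau>\codeg(P)-1$ upgrades to $\tau\geq\codeg(P)\geq\frac{n+1}{2}$; only with this sharper bound does Theorem~\ref{key_result_lines} apply (you need $\tau>\frac{n+1-\dim Y}{2}$, and your bound $\tau>\frac{n-1}{2}$ alone is not enough when $\dim Y=1$). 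Your proposal never isolates this integrality step, so your application of Theorem~\ref{key_result_lines} is unjustified in the borderline cases you yourself flag as ``most demanding.''

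Second, you misassign the outcomes. In the Fano case ($\dim Y=0$, arising only when $L\cdot C=1$) the toric classification of Remark~\ref{toric_fano} yields polytopes that are all strict Cayley with $s=1$---i.e.\ case~(iv), \emph{not} (i) or (v). Case~(i), $2\Delta_n$, arises when some $L\cdot C=2$ and $\dim F=n$; case~(v) arises when $L\cdot C=2$ and $\dim F=n-1$ (via Proposition~\ref{key_inequality} and Example~\ref{key_ex_2}); case~(ii) arises when $L\cdot C=3$ and $n=3$. Your ``inductive descent'' on $Y$ is not what happens: the paper finishes in a single step, using Proposition~\ref{key_inequality} (Wi\'sniewski's inequality) to pin down $\dim E$ and $\dim F$ directly from the value of $L\cdot C$.

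Third, $\phi_L$ contracts a \emph{face} of $\overline{NE}(X)$, not necessarily a single ray, so you cannot apply Proposition~\ref{Mori_fiber_pkbundle} to ``the extremal ray contracted by $\phi_L$.'' In the paper's Case~1 with $\dim Y>0$, Theorem~\ref{key_result_contraction} is used to produce a \emph{specific} extremal ray $R$ whose contraction $\phi_R:X\to Z$ is a Mori fiber space with $\dim Z\leq n-\tau+1$; then Proposition~\ref{Mori_fiber_pkbundle} applies to $\phi_R$, and Fujita's Lemma (Proposition~\ref{fuj_lemma}) identifies $\mathcal{O}_X(L)$ with the tautological bundle (since $L\cdot C=1$), which is what forces $s=1$ in Lemma~\ref{cayley_s}. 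You omit Fujita's Lemma entirely, so you have no mechanism for determining $s$.

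Finally, the ``if'' direction requires checking $\mathbb{Q}$-normality of each listed polytope, which is not automatic: the paper uses Proposition~\ref{prop1} and has to rule out its exceptional subcases (1) and (2) under the hypothesis $k\geq\frac{n-1}{2}$.
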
 

Our proof of Theorem~\ref{mainthm} follows the same strategy of \cite{alicia}: we interpret the codegree of $P$ as a geometric invariant of the corresponding polarized variety $(X,L)$,
and then apply techniques from Adjunction Theory and Mori Theory to classify this pair. Later we recover the description of the polytope $P$.
We state a classical result about projective bundles that will be useful (see for instance \cite[3.2.1]{beltrametti}):

\begin{prop}[Fujita's Lemma] Let $X$ be a smooth projective variety and $\pi: X \to Y$ a flat morphism with connected fibers onto a normal variety $Y$. Suppose that there exists an ample line bundle $\mathcal{L}$ on $X$ such that for a general fiber $F$ of $\pi$ we have $(F, \mathcal{L}_{|F}) \simeq (\mathbb{P}^k, \mathcal{O}_{\mathbb{P}^k}(1))$, $k \in \Z$. Then $(X, L) \simeq (\mathbb{P}_Y(\mathcal{E}), \mathcal{O}_{\mathbb{P}_Y(\mathcal{E})}(1))$, where $\mathcal{E} = \pi_*\mathcal{L}$. 
\label{fuj_lemma}
\end{prop}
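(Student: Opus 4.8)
The plan is to build an explicit $Y$-morphism $g\colon X\to\mathbb{P}_Y(\mathcal{E})$ from the tautological surjection $\pi^*\pi_*\mathcal{L}\twoheadrightarrow\mathcal{L}$, and then to identify it as an isomorphism by the finite-birational-onto-normal principle. First I would set $\mathcal{E}:=\pi_*\mathcal{L}$ and establish that $\mathcal{E}$ is locally free of rank $k+1$ and that its formation commutes with restriction to every fiber. Since $\pi$ is flat and proper, the Euler characteristic $\chi\big(X_y,\mathcal{L}|_{X_y}\big)$ is independent of $y\in Y$, hence equals its value $k+1$ on a general fiber $(\mathbb{P}^k,\mathcal{O}(1))$; combined with the vanishing $H^{>0}(\mathbb{P}^k,\mathcal{O}(1))=0$ and the cohomology-and-base-change theorem (Grauert), this makes $\mathcal{E}$ locally free of rank $k+1$ with the counit compatible with restriction to fibers. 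Consequently $P:=\mathbb{P}_Y(\mathcal{E})\to Y$ is a Zariski-locally-trivial $\mathbb{P}^k$-bundle; being smooth over the normal variety $Y$, it is itself normal.

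Next I would produce the morphism. The counit $\pi^*\mathcal{E}=\pi^*\pi_*\mathcal{L}\to\mathcal{L}$ is surjective: restricted to a fiber it is the evaluation map $H^0(\mathbb{P}^k,\mathcal{O}(1))\otimes\mathcal{O}_{\mathbb{P}^k}\to\mathcal{O}(1)$, which is surjective because $\mathcal{O}(1)$ is globally generated, and Nakayama's lemma upgrades fiberwise surjectivity to surjectivity of sheaves. By the universal property of $\mathbb{P}_Y(\mathcal{E})$, this surjection defines a $Y$-morphism $g\colon X\to P$ with $g^*\mathcal{O}_P(1)\cong\mathcal{L}$.

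It then remains to check that $g$ is an isomorphism. Since $g^*\mathcal{O}_P(1)=\mathcal{L}$ is ample, $g$ can contract no curve (otherwise $\mathcal{L}\cdot C=\mathcal{O}_P(1)\cdot g_*C=0$), so $g$ is proper with finite fibers, hence finite. On a general fiber $g$ restricts to the morphism $\mathbb{P}^k\to\mathbb{P}^k$ attached to the complete linear system $|\mathcal{O}(1)|$, which is an isomorphism; thus $g$ is generically one-to-one, and being finite of degree one it is birational. A finite birational morphism from the smooth (hence normal) variety $X$ onto the normal variety $P$ is an isomorphism, and under it $\mathcal{L}$ corresponds to $\mathcal{O}_P(1)$. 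This yields $(X,\mathcal{L})\cong(\mathbb{P}_Y(\mathcal{E}),\mathcal{O}_P(1))$, as claimed.

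The main obstacle is the first step: passing from the general-fiber hypothesis to constancy of fiber cohomology over \emph{all} of $Y$, i.e. ruling out a jump of $h^0(X_y,\mathcal{L}|_{X_y})$ at special fibers (without which neither the local freeness of $\mathcal{E}$ nor the surjectivity of the counit over special fibers is automatic). Flatness already pins down $\chi$ and the degree $\mathcal{L}^k\cdot X_y=1$ of every fiber; to finish I would argue that a flat family of $k$-dimensional polarized fibers of degree one whose general member is $(\mathbb{P}^k,\mathcal{O}(1))$ has all members $(\mathbb{P}^k,\mathcal{O}(1))$ — for instance, because the corresponding point of the relevant Hilbert scheme lies on the Grassmannian of linear $k$-planes, which is smooth and proper, so no degenerate flat limit can occur. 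Once every fiber is $(\mathbb{P}^k,\mathcal{O}(1))$, the vanishing $H^{>0}=0$ holds fiberwise, $h^0=k+1$ is constant, and the base-change input of the first paragraph is justified.
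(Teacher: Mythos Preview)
The paper does not supply its own proof of this proposition: it is stated as a classical result and attributed to \cite[3.2.1]{beltrametti}, with no argument given. So there is nothing to compare your proof against on the paper's side.

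Your outline is the standard one and the architecture is sound: build $g\colon X\to\mathbb{P}_Y(\mathcal{E})$ from the counit $\pi^*\pi_*\mathcal{L}\to\mathcal{L}$, then show $g$ is finite (because $g^*\mathcal{O}(1)=\mathcal{L}$ is ample) and birational (by looking at a general fiber), hence an isomorphism onto the normal target. You also correctly isolate the genuine difficulty, namely upgrading the \emph{general}-fiber hypothesis to a statement about \emph{every} fiber so that Grauert/base-change applies and $\mathcal{E}$ is locally free of rank $k+1$.

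The one place that deserves tightening is your Hilbert-scheme fix. As written it presupposes an embedding of the family in a projective bundle over $Y$ in which the fibers sit as \emph{linear} $k$-planes; but producing such an embedding already uses relative very-ampleness of $\mathcal{L}$ and the local freeness of $\pi_*\mathcal{L}$, which is what you are trying to establish. A non-circular route is to take $m\gg 0$ so that $\mathcal{L}^{m}$ is relatively very ample and $R^i\pi_*\mathcal{L}^{m}=0$ for $i>0$; then $X\hookrightarrow\mathbb{P}_Y(\pi_*\mathcal{L}^{m})$ and flatness forces every fiber to have the same Hilbert polynomial $\binom{mt+k}{k}$ as the $m$-uple Veronese of $\mathbb{P}^k$. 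One then argues (e.g.\ via the classification of varieties of minimal degree, or via rigidity of $\mathbb{P}^k$) that every fiber is the Veronese image of some $(\mathbb{P}^k,\mathcal{O}(1))$, and in particular $h^0(X_y,\mathcal{L}|_{X_y})=k+1$ and $h^{>0}=0$ for all $y$. With that in hand, your first paragraph goes through verbatim. Alternatively, you can bypass the embedding entirely by invoking the characterization of $(\mathbb{P}^k,\mathcal{O}(1))$ as the unique $k$-dimensional connected projective scheme with an ample line bundle of top self-intersection $1$ and $\chi(\mathcal{O})=1$ (both invariants are constant in the flat family); this is essentially how the reference \cite{beltrametti} proceeds.
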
 

\begin{proof}[Proof of Theorem \ref{mainthm}]

First note that the five classes of polytopes listed in Theorem \ref{mainthm} are $\mathbb{Q}$-normal and have codegree $\geq \frac{n+1}{2}$. This is straightforward for polytopes (i), (ii) and (iii). 

Let $P$ be a polytope as in (iv). Setting $m = \dim(P_i)$, we have $n = m + k$, and the inequality $k \geq \frac{n-1}{2}$ is equivalent to $k +1 \geq m$. Hence, $P$ satisfies the hypothesis of Proposition \ref{prop1} and then $P$ is $\Q$-normal and $\codeg(P) = k+1 \geq \frac{n+1}{2}$ (as desired), unless $P$ is a polytope as items (1) or (2) of that Proposition. In this case, since $k+1 \geq m$, it is not difficult to see that (1) does not occur and (2) holds if and only if $k=m-1$, $P \simeq \Delta_m \times \Delta_{m-1}$ and $\codeg(P) = m+1 = \frac{n+1}{2}$.

Next, let $P$ be a polytope as in (v). Since $n \geq 3$, by Proposition \ref{prop1}, $\codeg_{\Q}(P) = n/2$ unless $P$ satisfies (1) or (2). As in the later case, (1) does not occur, and $P$ satisfies (2) if and only if $P \simeq \Delta_1 \times (2\Delta_{n-1})$ whose $\Q$-codegree is $n/2$. In this case, since $n$ is odd, it follows from Proposition \ref{teto} that $\codeg(P) = \lceil \frac{n}{2} \rceil = \frac{n+1}{2}$, as required.\\

Conversely, let $P \subset \mathbb{R}^n$ be a smooth $n$-dimensional $\mathbb{Q}$-normal lattice polytope with $\codeg(P) \geq \frac{n+1}{2}$ and denote by $(X,L)$ the corresponding polarized toric variety. We may assume that $n>1$. Set $\tau:=\tau(P)$. By Proposition \ref{teto}, $\tau > \codeg(P)-1$. Thus $\tau > \frac{n-1}{2}$. It follows from the discussion in Section \ref{nef_spectral} that the nef value morphism  $\phi=\phi_L: X \to Y$ is defined by the linear system $|e(K_X + \tau L)|$, for a large and divisible enough integer $e$. Moreover, the assumption that $P$ is $\Q$-normal implies that $\dim(Y) < \dim(X)$ (Proposition \ref{nonbirational}). If $C \subset X$ is an extremal curve contracted by $\phi$ which satisfies $\mathfrak{l}(\R_+[C]) = -K_X \cdot C$, then:
\begin{equation}
n+1 \geq  -K_X \cdot C = \tau \cdot L \cdot C > \frac{n-1}{2} L \cdot C.
\label{eq_1}
\end{equation}
In particular, $L \cdot C \leq 5$. We consider three cases.

\vspace{0.2cm}

{\it Case 1}: Suppose that $L \cdot C = 1$ for {\it every extremal curve} $C \subset X$ contracted by $\phi$. We have $\tau = -K_X \cdot C \in \Z$. Since $\codeg(P)$ is also integer and $\tau > \codeg(P) -1$, it follows that $\tau \geq \frac{n+1}{2}$.  

If $\dim(Y) = 0$ then the divisor $K_X +\tau L$ is zero. Hence $-K_X = \tau L$ and then $X$ is a Fano variety. Denote by $r$ the index of $X$ (see \ref{ind}). Then, $r \geq \tau \geq \frac{n+1}{2}$. By Remark \ref{toric_fano}, $X$ is isomorphic to one of the following: $\mathbb{P}^n$,  $\mathbb{P}^{\frac{n}{2}} \times \mathbb{P}^{\frac{n}{2}}$ ($n$ even), $\mathbb{P}^1 \times \mathbb{P}^1 \times \mathbb{P}^1$ ($n=3$), or $\mathbb{P}_{\mathbb{P}^r}({\mathcal{O}(2) \oplus \mathcal{O}(1)^{r-1}})$ ($n=2r-1$). 

In the first three cases we have $P \simeq \Delta_n$,  $P \simeq \Delta_{n/2} \times \Delta_{n/2}$ and $P \simeq \Delta_1 \times \Delta_1 \times \Delta_1$ respectively. These are strict Cayley polytopes as in (iv). In the last case, let $\pi:X \to \mathbb{P}^r$ be the $\mathbb{P}^{r-1}$-bundle map, $H$ a hyperplane in $\mathbb{P}^r$ and $\xi$ a divisor on $X$ corresponding to the tautological line bundle. Then by Remark \ref{equality_canonical}:
$$
-K_X \sim -\pi^{\ast}(K_{\mathbb{P}^r} + (r+1)H) - r\xi \sim -r\xi.
$$
It follows that $L \sim \xi$. By Lemma \ref{cayley_s}, $P \simeq \Cayley^1(\underbrace{\Delta_r, \ldots, \Delta_r}_{r-1 \ times}, 2\Delta_r).$

\vspace{0,2 cm}
Suppose now that $\dim(Y)>0$. Then $\tau \geq \frac{n+1}{2} > \frac{n+1-\dim(Y)}{2}$. Since $\phi$ is not birational,  we conclude from Theorem \ref{key_result_lines} that $X$ is covered by lines. Using Theorem \ref{key_result_contraction} and Proposition \ref{Mori_fiber_pkbundle}, we see that there exists an extremal ray $R$ of $NE(X)$ whose associated contraction $\phi_R: X \to Z$ realizes $X$ as a projectivization of a vector bundle $\mathcal{E}$ of rank $k+1$ on another smooth toric variety $Z$, with $\dim(Z) \leq n- \tau +1$. Since that $n = \dim(Z) + k$, we have $k \geq \tau -1 \geq \frac{n-1}{2}$. Since $L \cdot C = 1$, we have $\mathcal{O}_X(L)_{|F} \simeq \mathcal{O}_{\mathbb{P}^k}(1)$. By Fujita's Lemma, $X \simeq_Z \mathbb{P}_Z({\phi_R}_*\mathcal{O}_X(L))$, and under this isomorphism $\mathcal{O}_X(L)$ corresponds to the tautological line bundle. Since $X$ is toric, the ample vector bundle ${\phi_R}_*\mathcal{O}_X(L)$ splits as a sum of k+1 ample line bundles on $Z$. By Lemma \ref{cayley_s}, there are lattice polytopes $P_0, ..., P_k$ with same normal fan such that $P \simeq \Cayley^1(P_0, ..., P_k)$.     
\vspace{0.2cm}

{\it Case 2}: Suppose that there is an extremal curve $C$ that is contracted by $\phi$ such that $L \cdot C = 2$. Let $R$ be the extremal ray generated by $C$ and $\phi_R: X \to Z$ the associated contraction. By (\ref{eq_1}), $-K_X \cdot C \in \{n, n+1\}$. Let $E$ be the excepcional locus of $\phi_R$ and $F$ an irreducible component of the fiber of the restriction $\phi_R|_E$. By Proposition \ref{key_inequality}, $\dim(E) = n$ and $n-1 \leq \dim(F) \leq n$.

If $\dim(F) = n$, then $(X, \mathcal{O}_X(L)) \simeq (\mathbb{P}^n, \mathcal{O}(2))$, and $P \simeq 2\Delta_n$.

If $\dim(F) = n-1$, by Proposition \ref{Mori_fiber_pkbundle}, $\phi_R: X \to \mathbb{P}^1$ is a projective bundle of rank $n-1$ with $\mathcal{O}_X(L)_{|F} \simeq \mathcal{O}_{\mathbb{P}^{n-1}}(2)$. So there are integers $0<a_0 \leq ... \leq a_{n-1}$ and $a>-2a_0$ such that:
\begin{align*}
X & \simeq \mathbb{P}_{\mathbb{P}^1}(\mathcal{O}(a_0) \oplus ... \oplus \mathcal{O}(a_{n-1})) \notag \\
L & \simeq 2\xi + aF \notag, 
\end{align*}
where $\xi$ is a divisor corresponding to the tautological line bundle. By Example \ref{key_ex_2},
$$
P \simeq \Cayley^2\big((2a_0+a)\Delta_1, ..., (2a_{n-1}+a)\Delta_1\big).
$$

\vspace{0.2 cm}

{\it Case 3} : Suppose that there is an extremal curve $C$ that is contracted by $\phi$ with $3 \leq L \cdot C \leq 5$. By (\ref{eq_1}), we must have $n \leq 4$. If $3 \leq n \leq 4$, then Proposition \ref{key_inequality} and (\ref{eq_1}) imply that $L \cdot C = 3$ and $X \simeq \mathbb{P}^n$. Thus $P \simeq 3\Delta_n$. For $n \in \{3, 4\}$, $\codeg(3\Delta_n)=2$, which is $\geq \frac{n+1}{2}$ only if $n=3$.

From now, suppose that $n=2$. If $L \cdot C \in \{4, 5\}$, then Proposition \ref{key_inequality} implies that $-K_X \cdot C = 3$ and thus $X \simeq \mathbb{P}^2$. On other hande, $4\Delta_2$ and $5\Delta_2$ do not satisfy the codegree hypothesis. So we must have $L \cdot C = 3$. We conclude from Prop. \ref{key_inequality} and (\ref{eq_1}) that there are integers $0<a_0\leq a_1$ and $a>-3a_0$ such that:
\begin{align*}
X & \simeq \mathbb{P}_{\mathbb{P}^1}\big(\mathcal{O}(a_0) \oplus \mathcal{O}(a_1)\big) \notag \\
L & \simeq 3\xi + aF \notag, 
\end{align*}
where $\xi$ is a divisor corresponding to the tautological line bundle and $F$ is a fiber of $X \to \mathbb{P}^1$. By Example \ref{key_ex_2},
$$
P \simeq \Cayley^3\big((3a_0 + a)\Delta_1, (3a_1+a)\Delta_1)\big).
$$
On the other hand, by Example \ref{key_ex_1}, $P$ is $\Q$-normal and has codegree $\geq \frac{n+1}{2}$ if and only if $P \simeq 3\Delta_1 \times \Delta_1 = \Cayley^1(3\Delta_1, 3\Delta_1)$, which is a polytope as in (iv).
\end{proof} 

The following is the polytope version of Conjecture \ref{conj_codegree}:

\begin{conj} Let $P\subset \mathbb{R}^n$ be a smooth $n$-dimensional lattice polytope. 
If the inequality $\codeg_{\mathbb{Q}}(P) > \frac{n+1}{2}$ holds, then $P$ is $\mathbb{Q}$-normal.
\label{bs}
\end{conj}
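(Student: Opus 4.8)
The plan is to translate the conjecture into a statement about the nef value morphism and argue by contradiction. Let $(X,L)$ be the smooth polarized toric variety attached to $P$. By Proposition~\ref{inv_polytopes_varieties} we have $\codeg_{\mathbb{Q}}(P)=\mu(L)$ and $\tau(P)=\tau(L)$, and by Proposition~\ref{nonbirational} the polytope $P$ is $\mathbb{Q}$-normal if and only if the nef value morphism $\phi_L\colon X\to Y$ is \emph{not} birational. So it suffices to prove that if $X$ is a smooth projective toric $n$-fold, $L$ is ample, and $\mu(L)>\tfrac{n+1}{2}$, then $\phi_L$ is not birational. Assume, for contradiction, that $\phi_L$ is birational; then $L$ is not $\mathbb{Q}$-normal, so $\tau(L)>\mu(L)>\tfrac{n+1}{2}$. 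Recall that $\phi_L$ is the contraction of the face $F=(K_X+\tau(L)L)^{\perp}\cap\overline{NE}(X)$, which lies in the halfspace $K_X<0$. Since $\phi_L$ factors through the contraction of any extremal ray $R\subseteq F$, and a fiber type contraction of such a ray would produce positive-dimensional fibers of $\phi_L$ sweeping out $X$, every extremal ray $R\subseteq F$ must determine a \emph{birational} toric contraction $\phi_R$, necessarily divisorial or small; in particular $\dim\mathrm{Exc}(\phi_R)\le n-1$.

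The next step is to exhibit a large linear subspace contracted by $\phi_L$. Fix an extremal ray $R\subseteq F$ and a rational curve $C$ with $[C]\in R$ realizing $\mathfrak{l}(R)=-K_X\cdot C$. From $(K_X+\tau(L)L)\cdot C=0$ and the ampleness of $L$ we get $\mathfrak{l}(R)=\tau(L)\,(L\cdot C)\ge\tau(L)>\tfrac{n+1}{2}$, hence $\mathfrak{l}(R)\ge\lfloor n/2\rfloor+1$. Applying Wi\'sniewski's inequality (Proposition~\ref{key_inequality}) to a component $E$ of $\mathrm{Exc}(\phi_R)$ and a component $F'$ of a fiber of $\phi_R|_E$, and using $\dim E\le n-1$, yields $\dim F'\ge n+\mathfrak{l}(R)-1-\dim E\ge\mathfrak{l}(R)>\tfrac{n-1}{2}$. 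In the divisorial case $E$ is a smooth toric prime divisor and, by the structure of toric contractions, $\phi_R|_E$ is a toric fibration whose general fiber $F'$ is a smooth toric variety of Picard number one, hence $F'\simeq\mathbb{P}^d$ with $d=\dim F'>\tfrac{n-1}{2}$ by Lemma~\ref{Pic_one}. A line $\ell_0\subset\mathbb{P}^d$ satisfies $[\ell_0]\in R$, so $-K_X\cdot\ell_0=\tau(L)\,(L\cdot\ell_0)$; since adjunction along $\mathbb{P}^d$ gives $-K_X\cdot\ell_0\le d+1\le n$ (the normal bundle of $\mathbb{P}^d$ has nonpositive degree in the contracted directions), the inequality $\tau(L)>\tfrac{n+1}{2}$ forces $L\cdot\ell_0=1$ and $\tau(L)=-K_X\cdot\ell_0\le d+1$. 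One then confronts the resulting numerical relations $\tfrac{n-1}{2}<d\le n-1$, $\tfrac{n+1}{2}<\tau(L)\le d+1$ and the length bound $\tau(L)\le n+1$ — sharpened by a closer look at the normal bundle of $\mathbb{P}^d$ and at the position of $\mathrm{Exc}(\phi_L)$ — to derive a contradiction, reducing to a finite list of small $n$ to be settled by hand; the polytopes $2\Delta_n$ and $\Delta_m\times\Delta_{m-1}$, both $\mathbb{Q}$-normal with $\codeg_{\mathbb{Q}}$ equal to $\tfrac{n+1}{2}$ and $\tfrac{n+3}{2}$, mark the boundary of the statement.

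The main obstacle is the small contraction case and, more broadly, turning these heuristics into a genuine induction. When $\phi_R$ is small its exceptional locus $V(\Cone(v_1,\dots,v_\alpha))$ has dimension $n-\alpha<n-1$, the displayed inequality forces still larger fibers, and one cannot read off a projective space directly; the natural fix — replace $X$ by its flip $X^{+}$ and recurse — is delicate, since $\mu$ need not vary monotonically along an MMP and $X^{+}$, though $\mathbb{Q}$-factorial and toric, may fail to be smooth. A cleaner route would be to run the $K_X$-MMP with scaling of $L$, equivalently to follow the chain of adjoint polytopes $P^{(t)}$ introduced in Section~\ref{nef_and_Q_degree}, and to prove that as soon as $\mu(L)>\tfrac{n+1}{2}$ the \emph{first} nontrivial step is already of fiber type. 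Establishing this step-one-is-fiber-type dichotomy — by quantifying how the constraints $\mu(L)>\tfrac{n+1}{2}$ and $\tau(L)>\tfrac{n+1}{2}$ restrict the combinatorics of the first contracted wall — is where the essential difficulty lies, and is presumably the reason the conjecture remains open.
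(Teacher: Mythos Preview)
This statement is labelled a \emph{Conjecture} in the paper (Conjecture~\ref{bs}, the polytope version of the Beltrametti--Sommese conjecture), and the paper offers no proof: immediately after stating it, the text only discusses its \emph{consequences} (``If this conjecture holds, then Theorem~\ref{mainthm} implies\ldots''). So there is no argument in the paper against which to measure your proposal.

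Your own write-up is candid about this: you close by saying that establishing the step-one-is-fiber-type dichotomy ``is presumably the reason the conjecture remains open.'' That is accurate, and it means what you have produced is an outline of a strategy, not a proof. A couple of the intermediate steps are also not justified as stated. The claim $-K_X\cdot\ell_0\le d+1$ relies on $c_1(N_{\mathbb{P}^d/X})\cdot\ell_0\le 0$; while the direction normal to the exceptional divisor $E$ is indeed negative on $\ell_0$, the directions normal to $\mathbb{P}^d$ inside $E$ need not be, so the inequality does not follow from the sketch you give. Likewise, ``reducing to a finite list of small $n$ to be settled by hand'' is asserted but no such reduction is carried out, and the small-contraction case is simply deferred to a flip-and-recurse that you yourself flag as problematic (loss of smoothness, nonmonotonicity of $\mu$). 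In short: the paper does not prove this statement, and neither does your proposal; your concluding paragraph correctly identifies where the genuine difficulty lies.
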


If this conjecture holds, then Theorem \ref{mainthm} implies that smooth polytopes $P$ with 
$\codeg_{\mathbb{Q}}(P) > \frac{n+1}{2}$ are those in (iv) with $k \geq \frac{n}{2}$. 
By Proposition \ref{prop1}, these polytopes have $\mathbb{Q}$-codegree $\geq \frac{n+2}{2}$. 
Hence, if Conjecture \ref{bs} holds, then the $\mathbb{Q}$-codegree of smooth lattice polytopes
does not assume values in the interval $\left( \frac{n+1}{2}, \frac{n+2}{2} \right)$.

\section{Is $\Q$-normality necessary?}
An immediate consequence of Theorem \ref{mainthm} is stated below:
 
\begin{cor} Let $P \subset \mathbb{R}^n$ be a smooth $n$-dimensional $\mathbb{Q}$-normal lattice polytope. If $\codeg(P) \geq \frac{n+1}{2}$, then $P$ is a strict generalized Cayley polytope.
\label{bound_tobe_ cayley}
\end{cor}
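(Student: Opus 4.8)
The plan is to derive Corollary~\ref{bound_tobe_ cayley} directly from Theorem~\ref{mainthm} by inspecting the five families of polytopes in its statement and checking that each one is a strict generalized Cayley polytope. The point is that the classification already hands us an explicit list, so the proof is nothing more than a bookkeeping exercise matching each item against Definition~\ref{cay}.

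The key steps, in order, are as follows. First I would invoke Theorem~\ref{mainthm}: since $P$ is smooth, $n$-dimensional, $\Q$-normal and satisfies $\codeg(P)\geq\frac{n+1}{2}$, it is affinely isomorphic to one of the polytopes (i)--(v). Then I treat each case. Item (i), $2\Delta_n$, equals $\Cayley^2(\Delta_0,\ldots,\Delta_0)$ with $n+1$ copies of the point $\Delta_0$, as already observed in the example following Definition~\ref{cay}; item (ii), $3\Delta_3$, equals $\Cayley^3(\Delta_0,\Delta_0,\Delta_0,\Delta_0)$ by the same reasoning; item (iii), $s\Delta_1$ in dimension $1$, is $\Cayley^s(\Delta_0,\Delta_0)$; item (iv) is a strict generalized Cayley polytope by definition (with $s=1$); and item (v), $\Cayley^2(d_0\Delta_1,\ldots,d_{n-1}\Delta_1)$, is manifestly of the form $\Cayley^s$ with $s=2$ and all factors having the same normal fan (that of $\mathbb{P}^1$). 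In every case the defining polytopes share a common normal fan (trivially so when they are points), so the Cayley structure is strict. Affine isomorphism preserves the property of being a strict generalized Cayley polytope (indeed $\Cayley^s$ is only defined up to affine isomorphism), so the conclusion descends to $P$ itself.

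Strictly speaking there is essentially no obstacle here: the corollary is a direct readout of the main theorem, and the only thing to be careful about is that all of (i), (ii), (iii) are degenerate instances of the $\Cayley^s$ construction in which the base polytopes have dimension $0$, so one must make sure Definition~\ref{cay} is read with $m=0$ allowed. Once that is granted, the proof is a one-line deduction: each of the five families lies in the class of strict generalized Cayley polytopes, hence so does $P$.

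\begin{proof}[Proof of Corollary \ref{bound_tobe_ cayley}]
By Theorem \ref{mainthm}, $P$ is affinely isomorphic to one of the polytopes (i)--(v) listed there. Polytopes of type (iv) and (v) are strict generalized Cayley polytopes by Definition \ref{cay}. For the remaining types, recall from the example following Definition \ref{cay} that $2\Delta_n = \Cayley^2(\Delta_0,\ldots,\Delta_0)$ with $n+1$ copies of the point $\Delta_0 = \mathbb{R}^0$; the same argument gives $3\Delta_3 = \Cayley^3(\Delta_0,\Delta_0,\Delta_0,\Delta_0)$ and $s\Delta_1 = \Cayley^s(\Delta_0,\Delta_0)$. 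In each of these cases the defining polytopes are all equal to a point, hence trivially share the same (trivial) normal fan, so the Cayley structure is strict. Therefore every polytope in the list is a strict generalized Cayley polytope, and since this property is invariant under affine isomorphism, so is $P$.
\end{proof}
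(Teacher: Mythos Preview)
Your proof is correct and follows exactly the approach the paper intends: the corollary is stated as an ``immediate consequence'' of Theorem~\ref{mainthm}, and your case-by-case verification that each of (i)--(v) is a strict generalized Cayley polytope (using the example $2\Delta_n=\Cayley^2(\Delta_0,\ldots,\Delta_0)$ already noted after Definition~\ref{cay}) is precisely the bookkeeping that makes this immediate.
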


We have seen in Theorem \ref{DDRP} that if $P$ is a smooth $n$-dimensional lattice polytope with $\codeg(P) \geq \frac{n+3}{2}$ then $P$ is a strict Cayley polytope. This result was first proved with the assumption that $P$ is $\Q$-normal, but later it was showed that this can be dropped. 

The following example shows that, to guarantee the result of Corollary \ref{bound_tobe_ cayley}, the $\Q$-normality assumption cannot be dropped . We exhibit a non $\Q$-normal smooth lattice polytope $P$ that satisfies $\codeg(P) \geq \frac{\dim(P)+1}{2}$ but $P$ is not a generalized strict Cayley polytope.

\begin{ex} Let $m$ be a positive integer and $H \subset \mathbb{P}^m$ a hyperplane. Let $\pi: X \to \mathbb{P}^m \times \mathbb{P}^1$ the blow up of $\mathbb{P}^m \times \mathbb{P}^1$ along $H_o = H \times \{o\}$. Then $X$ is a smooth projective toric variety with Picard number 3. We will see that $X$ is Fano, and the Mori cone $\overline{NE}(X)$ has exactly 3 extremal rays, whose corresponding contractions are all divisorial contractions. Since $X$ does not admit any contraction of fiber type, $P_L$ is not a strict generalized Cayley polytope for any ample divisor $L$ on $X$ by Lemma \ref{cayley_s}. When $m$ is even, we will then exhibit an ample divisor $L$ on $X$ such that $\codeg(P_L) \geq \frac{\dim(P_L) +1}{2}$.

Let $\{e_1, \ldots, e_m, e\}$ be the canonical basis of $\mathbb{R}^m \times \mathbb{R}$ and $e_0 = -e_1 - ... -e_m$. 
The maximal cones of the fan $\Sigma$ of $\mathbb{P}^m \times \mathbb{P}^1$ is $\Cone( e_0, \ldots , \hat{e_i}, \ldots e_m, \pm e)$, $i=0, \ldots n$. Set $f=e_1+e$. The fan $\Sigma_X$ of $X$ is obtained from $\Sigma$ by 
star subdivision centered in $f$. Let $D_i := V(e_i), \ i=0, \ldots m, \ D_e:=V(e)$ and $E:=V(f)$. One check that
$D_i \sim D_0$ for $i>1$, $V(-e) \sim D_e + E$ and $D_0 \sim D_1 + E$. Therefore the classes of $D_1, D_e$ and $E$  form a basis for $N^1(X)$ and generates the cone $\overline{Eff}(X)$. Written in this basis:
$$-K_X = \sum_{i=0}^{m} D_i + D_e + V(-e) + E \sim (m+1)D_1 +2D_e + (m+2)E.
$$
On $N_1(X)$ the Mori Cone $\overline{NE}(X)$ is generated by classes of invariant curves (see Sec. \ref{mov_toric}). One can check that $\overline{NE}(X)$ is generated by the numerical classes of invariant curves $C_1$, $C_2$ and $C_3$ associated to the cones $\Cone (e_1,e_2, \ldots e_m)$, $\Cone(e_2, \ldots e_m, f)$ and $\Cone(e, e_2, \ldots e_m)$ respectively. For each $i \in \{1, 2, 3\}$, denote  by $\pi_i$ the contraction of the extremal ray generated by $[C_i]$. One checks that $\pi_1: X \to \mathbb{P}_{\mathbb{P}^m}(\mathcal{O} \oplus \mathcal{O}(1))$ blows down the divisor $D_1$ onto a $\mathbb{P}^{m-1}$, $\pi_2=\pi$ and $\pi_3:X \to \mathbb{P}_{\mathbb{P}^1}(\mathcal{O}(1) \oplus \mathcal{O}^{\oplus m})$ blows down the divisor $D_e$ onto a point (see Figure \ref{blow_downs}). 

In terms of the basis for $N^1(X)$ and $N_1(X)$ given above, the intersection products between divisors and curves are given by:
\[D_1 \cdot C_1 = -1, D_1 \cdot C_2 = 1 , D_1 \cdot C_3 = 0 \]
\[D_e \cdot C_1 = 0, D_e \cdot C_2 = 1, D_e \cdot C_3 = -1 \]
\[E \cdot C_1 = 1, E \cdot C_2 = -1 , E \cdot C_3 = 1. \]
By Kleiman Ampleness Criterion, a divisor $D = aD_1 + bD_e + cE$ is ample if and only if $-a+c > 0$, $a+b-c >0$ and $-b+c>0$. Thus $L = 2D_1 + 2D_e + 3E$ is ample on $X$, and $K_X+tL$ is ample if and only if $t>m$, hence $\tau(L) = m$. Since $\overline{Eff}(X)=\Cone(D_1,D_e,E)$, $K_X + tl \in \overline{Eff}(X)$ if and only if $t \geq \frac{m+1}{2}$. Thus $\codeg_{\Q}(P_L) = \frac{m+1}{2}$. When $m$ is even, $\codeg(P_L) \geq \lceil \codeg_{\Q}(P_L) \rceil = \frac{m+2}{2} = \frac{\dim(P_L) + 1}{2}$. 
\label{contra}
\end{ex}

\begin{figure}[h]
\centering
\includegraphics[scale=0.54]{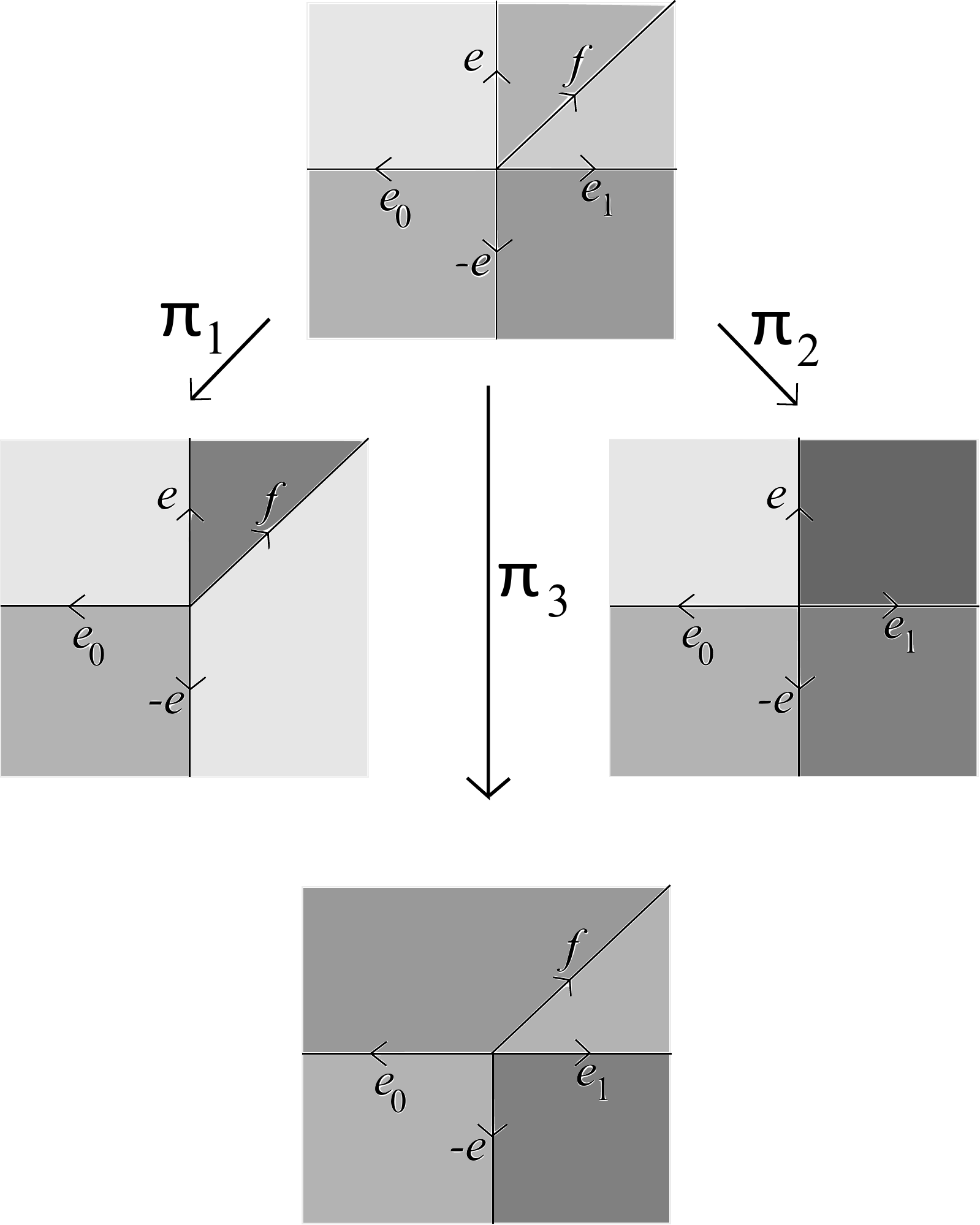}
\caption{The divisorial contractions $\pi_1$, $\pi_2$ and $\pi_3$.}
\label{blow_downs}
\end{figure}

%
%
%
%

\chapter{Some results about Cones of Cycles on Toric Varieties}

In this chapter we turn our attention to the cones of cycles  introduced in Section \ref{cones_cycles}. Throughout the following sections, $N$ will be a lattice of rank $n$ and $M$ will be the dual lattice of $N$. We write $N_{\R}$ to denote the vector space $N \otimes_{\Z} \R$. We denote by $M_{\R}$  the dual of $N_{\R}$.

\section{Pseudo-effective Cycles} 

Let $X=X_{\Sigma}$ be a smooth $n$-dimensional complete toric variety. Recall from Section \ref{cones_cycles} that for every non-negative integer $k$ with $0 \leq k \leq n$, the \emph{cone of pseudo-effective $k$-cycles} on $X$ is denoted by $\overline{NE}_k(X)$. This cone is the closure in $N_k(X)$ of the cone generated by the effective $k$-cycles on $X$. In \cite[1.6]{reid} it is showed that the Mori Cone $\overline{NE}_1(X)$ is generated by classes of $T$-invariant curves on $X$. The idea of the proof is very simple: let $C \subset X$ be an irreducible complete curve. By an induction argument, we can suppose that $C$ intersects the torus $T\subset X$. Then, one choose a suitable $1$-dimensional subtorus $T' \subset T$ and, using the action of $T'$, one can move $C$ to a (possibly reducible) curve $C'$ contained in $X-T$. Thus, $C$ is numerically equivalent to $C'$ and the latter is numerically equivalent to a effective $T$-invariant cycle. Our goal in this section is generalize this result to cycles of arbitrary dimension. First, we discuss some ingredients to be used in the proof.

\begin{say}[One-parameter subgroups] Let $v = (v_1, ..., v_n) \in N$, $v \neq 0$. The \emph{one-parameter subgroup} associated to $v$ is the morphism $\lambda_v: \C^* \to T_N$, given by:
$$
\lambda_v(t) = (t^{v_1}, ..., t^{v_n}).
$$
One-parameter subgroups are related to distinguished points of  toric varieties. Let $X=X_{\Sigma}$ be a toric variety with fan $\Sigma$ in $N_{\R}$ and $\sigma \in \Sigma$. We have (see \cite[3.2.2]{cox}):
$$
v \in \sigma \Leftrightarrow \displaystyle \lim_{t \to 0} \lambda_v(t) \ \mbox{exists in} \ \mathcal{U}_{\sigma}.
$$
Moreover, if $u$ belongs to the relative interior of $\sigma$, then $\lim_{t \to 0} \lambda_v(t) = x_{\sigma}$.

The one-parameter subgroup associated to $v \in N$ defines a $\C^*$-action $\C^* \times X \to X$, $(t,x) \mapsto \lambda_v(t) \cdot x$. We claim that we can choose $v \in N$ such that this action has finitely many fixed points. Indeed, let $\tau \in \Sigma(k), \ k<n$. Let $O(\tau) \simeq T_{N(\tau)}$ be the orbit in $X$ corresponding to $\tau$ and $x \in O(\tau)$. Since $\dim(\tau) <n$, $O(\tau)$ is not a point. Pick $v_\tau \in N$ such that $v_{\tau} \notin N_{\tau} = \Span(\tau) \cap N$. Thus, the class $\bar{v}_{\tau}$ of $v_{\tau}$ in $N(\tau) = N/N_{\tau}$ is not zero. Hence for $t \neq 1$, $\lambda_{\bar{v}_{\tau}}(t)$ is not the unity in $T_{N(\tau)}$. Thus, if $t \neq 1$:
$$
\lambda_{v_{\tau}}(t) \cdot x = \lambda_{\bar{v}_{\tau}}(t) \cdot x \neq x.
$$
Therefore, there is no fixed points of the action of $\lambda_{v_{\tau}}$ in the orbit $O(\tau)$. 

Finally, let $v \in N$ be such that $v \notin \Span(\tau)$ for every $\tau \in \Sigma(k)$, $k<n$. Clearly, the distinguished points $x_{\sigma}$, $\sigma \in \Sigma(n)$, are fixed points of $\lambda_v$ since they are fixed by the action of $T_N$. We prove that these are the unique fixed points of $\lambda_v$. In fact, let $x \in X$ be a such fixed point. From Cone-Orbit correspondence, $x$ belong to a $T_N$-orbit $O(\tau)$, $\tau \in \Sigma(k)$. From what has been done above, we have $k = n$ and $x = x_{\tau}$ as required.
\end{say}

\begin{say}[Chow varieties] Let $X$ be a complex projective variety with a fixed embedding $i: X \hookrightarrow \mathbb{P}^N$. Let $k$ and $d$ be non-negative integers with $0 \leq k \leq \dim(X)$. Consider the set of effective $k$-cycles of $X$:
$$
Chow_{k,d}(X) = \Big\{ \sum_i a_i V_i \ \Big| \ a_i>0, \ \sum_i a_i\deg(V_i) = d \Big\}.
$$
One can show that $Chow_{k,d}(X)$ has structure of a (possibly reducible) projective variety (see for instance \cite[Chap. 21]{harris}). The \emph{Chow varieties of $k$ cycles on X} is defined by:
$$
Chow_k(X) = \coprod_{d\geq0} Chow_{k,d}(X).
$$ 

Suppose in addition that $X$ is a toric variety and let $\lambda_v: \C^* \to T$ be a one-parameter subgroup. The action of $\lambda_v$ in $X$ induces an action in the Chow variety of $k$-cycles:
\begin{align*}
\C^* \times Chow_k(X) & \longrightarrow Chow_k(X) \\
\Big(t, \sum_i a_iV_i \Big)  & \longmapsto \Big( \sum_i a_i (\lambda_v(t)\cdot V_i) \Big).
\end{align*}
\end{say}

The following is the main result of this section:

\begin{teo} 
Let $X=X_{\Sigma}$ be a smooth complete toric variety and $V \subset X$ any $k$-dimensional subvariety. Then:
$$
V \equiv \displaystyle \sum_{\omega \in \Sigma(n-k)} a_{\omega} \cdot V(\omega), \ \ \ a_{\omega} \geq 0.
$$
In particular, the cone $\overline{NE}_k(X)$ is polyhedral.
\label{thm_cone}
\end{teo}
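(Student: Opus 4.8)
The plan is to mimic Reid's argument for the Mori cone (the case $k=1$), which is recalled in the paragraph preceding the statement, and push it through for cycles of arbitrary dimension by degenerating an arbitrary subvariety to a $T$-invariant cycle under a suitable one-parameter subgroup. We argue by induction on $k$, or more precisely by a descending argument on the dimension of the smallest orbit closure $V(\sigma)$ containing $V$. If $V$ is already contained in a proper $T$-invariant subvariety $V(\tau)$ with $\tau \in \Sigma(j)$, $j \geq 1$, then $V(\tau)$ is itself a smooth complete toric variety (its fan is $\Star(\tau)$, as recalled in Section~2.1), $V$ has codimension $k - (n-j)$ in it, and we may replace $X$ by $V(\tau)$ and invoke the inductive hypothesis; the $T$-invariant cycles of $V(\tau)$ are $T$-invariant cycles of $X$, and numerical equivalence in $V(\tau)$ implies numerical equivalence in $X$ because restriction of Cartier divisors is compatible with intersection products. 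So we may assume $V$ meets the open torus $T \subset X$.

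\textbf{The degeneration step.} Choose $v \in N$ with $v \notin \Span(\tau)$ for every cone $\tau \in \Sigma$ of dimension $< n$; as shown in the ``one-parameter subgroups'' paragraph, the associated $\C^*$-action $\lambda_v$ has exactly the distinguished points $x_\sigma$, $\sigma \in \Sigma(n)$, as fixed points. Fix a projective embedding $X \hookrightarrow \mathbb{P}^N$ and regard $[V] \in Chow_{k,d}(X)$ for the appropriate degree $d$. The $\C^*$-action on the Chow variety extends the action on $X$; since $Chow_{k,d}(X)$ is projective, the limit $Z_0 := \lim_{t\to 0} \lambda_v(t)\cdot [V]$ exists in $Chow_{k,d}(X)$, i.e.\ $Z_0 = \sum_i a_i W_i$ is an honest effective $k$-cycle with $a_i > 0$. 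Because the cycle classes $\lambda_v(t)\cdot[V]$ all lie in a single connected family (the orbit closure of $[V]$ is an irreducible — or at least connected — projective curve through $[V]$ and $Z_0$), they are all numerically equivalent; hence $V \equiv \sum_i a_i W_i$. Moreover each $W_i$ is $\lambda_v$-invariant: it is a component of the fixed locus $Chow_{k,d}(X)^{\C^*}$ in the sense that $\lambda_v(t)\cdot W_i = W_i$ for all $t$ (a standard fact: limits of $\mathbb{G}_m$-orbits are fixed points, applied componentwise to the cycle).

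\textbf{From $\lambda_v$-invariant to $T$-invariant.} It remains to see that a $\lambda_v$-invariant irreducible $k$-dimensional subvariety $W \subset X$ is actually $T$-invariant, i.e.\ equals some $V(\omega)$ with $\omega \in \Sigma(n-k)$. The dense orbit of $W$ (under its own structure, or just the locus where $W$ meets a $T$-orbit in the largest possible dimension) lies in some $O(\tau)$; the $\lambda_v$-invariance forces $W$ to be contained in the closure of the $\lambda_v$-fixed locus of that stratum. By the choice of $v$, $\lambda_v$ has no fixed point on any orbit $O(\tau)$ with $\dim \tau < n$ unless $v \in \Span(\tau)$ — which we excluded — so $W$ can only accumulate on the $0$-dimensional orbits $x_\sigma$. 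Chasing this through the orbit decomposition $X = \coprod_\sigma O(\sigma)$, one concludes that $W$ is a union of orbits, hence $T$-invariant and irreducible, hence $W = V(\omega)$ for a unique $\omega \in \Sigma(n-k)$. This gives $V \equiv \sum_{\omega \in \Sigma(n-k)} a_\omega V(\omega)$ with $a_\omega \geq 0$, so the cone $\overline{NE}_k(X)$ is generated by the finitely many classes $[V(\omega)]$, $\omega \in \Sigma(n-k)$, and is therefore polyhedral.

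\textbf{Main obstacle.} The delicate point is the last step: controlling the fixed locus of the $\mathbb{G}_m$-action on the Chow variety and verifying that every component $W_i$ of the limit cycle is genuinely $T$-invariant rather than merely $\lambda_v$-invariant. One must argue carefully that a $\lambda_v$-invariant subvariety, with $v$ generic in the sense above, cannot have a component whose generic point lies in a torus orbit of positive dimension — this uses that such a component would be swept by the non-trivial $\lambda_v$-action on that orbit and hence could not be invariant. Equivalently, one can phrase the limit as a Bialynicki-Birula type decomposition and identify its strata with the $T$-fixed data; either way, making the orbit-by-orbit bookkeeping precise (and handling the inductive reduction to $V(\tau)$ cleanly, including the compatibility of numerical equivalence under the closed embedding $V(\tau) \hookrightarrow X$) is where the real work lies.
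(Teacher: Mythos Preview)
Your degeneration step is fine, and it matches the paper's approach: take a limit in the Chow variety under a generic one-parameter subgroup $\lambda_v$ to get an effective $\lambda_v$-invariant cycle numerically equivalent to $V$. The problem is the next step. Your claim that a $\lambda_v$-invariant irreducible subvariety $W$ is automatically $T$-invariant is false, and the argument you give confuses ``$\lambda_v$-invariant'' with ``contained in the $\lambda_v$-fixed locus''. A $\lambda_v$-invariant $W$ is a union of $\lambda_v$-orbit closures, not a union of fixed points. Concretely, in $\mathbb{P}^2$ with $v=(v_1,v_2)$ generic, the closure of the $\lambda_v$-orbit through any point $(1:a:b)$ of the open torus is an irreducible $\lambda_v$-invariant curve (a monomial curve depending on $a,b$) which is \emph{not} $T$-invariant. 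So after one limit, the components $W_i$ of $Z_0$ may very well meet the open torus.

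The paper repairs exactly this point by iterating: if some component of the limit still meets $T$, pick a second one-parameter subgroup $\lambda_{v_2}$ with $\{v_1,v_2\}$ linearly independent and take another limit; the new limit cycle is invariant under \emph{both} $\lambda_{v_1}$ and $\lambda_{v_2}$ (commutativity of the torus plus continuity). Repeat. A $k$-dimensional irreducible variety invariant under $k+1$ independent one-parameter subgroups cannot meet the open torus, since otherwise it would contain a $(k{+}1)$-dimensional torus orbit. Hence after at most $k{+}1$ steps every component lies in a proper $V(\tau)$, and one concludes by induction on $\dim X$. You should also note that the Chow-variety argument uses a projective embedding, so the complete non-projective case needs toric Chow's Lemma and a pushforward argument, which the paper supplies and your proposal omits.
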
   

\begin{proof} We begin supposing that $X$ is a projective variety. We can suppose that $k<n$. The proof will be by induction in $\dim(X)$. If $\dim(X) = k+1$, then $V$ is a Weil divisor on $X$ and the theorem follows from Proposition \ref{eff_toric_div}. Suppose now that the theorem holds for projective varieties with dimension $<n$. We can suppose that $V$ intersects the torus $T$ of $X$ (otherwise, $V$ would be contained in an invariant subvariety of $X$ and the result follows by the induction hypothesis). 

Choose $v_1 \in N$ such that the $\C^*$-action $\C^* \times X \to X$ given by $(t, x) \mapsto \lambda_{v_1}(t) \cdot x$ has finitely many fixed points. This action induces a natural rational map $f: \C \times V \dashrightarrow X$. By a sequence of blowups along smooth centers, we obtain a variety $Y$ with a morphism $\psi: Y \to X$ that resolves the map $f$.
$$\xymatrix{
Y \ar@/_0.8cm/[dd]_{\pi}
\ar[d]_p \ar[dr]^{\psi} & \\ 
\mathbb{C}\times V \ar[d]_{p_1} \ar@{-->}[r]^f  & X  \\
\mathbb{C} & & 
}$$
By \cite[III 9.6 and 9.7]{hartshorne}, $\pi = p_1 \circ p$ is a flat morphism and the fibers of $\pi$ have pure dimension $k$. Consider the $\C^*$-action $\C^* \times (\C \times V) \to \C \times V$, $(t', (t,x)) \mapsto (t\cdot t', x)$. Since the indeterminacy locus of $f$ is contained in $\{0\} \times V$, this action lifts to a $\C^*$-action $(t', y) \mapsto t' \cdot y$ in $Y$. Furthermore, for every $t' \in \C^*$ and $y \in Y$, $\psi(t' \cdot y) = \lambda_{v_1}(t') \cdot \psi(y)$.  It follows that the effective cycle $\psi_*(\pi^*(0))$ is invariant by the action of $\lambda_{v_1}$. Note that the strict transform of $\{0\} \times V$ is contracted to a fixed point of this one-parameter subgroup. We have that $V = \psi_*(\pi^*(1)) \equiv \psi_*(\pi^*(0))$.
In terms of Chow varieties, we proved that if $V_t := \lambda_{v_1}(t)\cdot V$, $t \in \C^*$, the limit $V_0:=\lim_{t \to 0} V_t$ exists in $Chow_k(X)$. Moreover, $V_0$ is invariant by the action of $\lambda_{v_1}$ in $Chow_k(X)$ and $V \equiv V_0$. 

If each irreducible $k$-cycle appearing in the expression of  $V_0$ does not intersect $T$, we use the induction hypothesis for these cycles and hence, we are done. Otherwise, we choose another one-parameter subgroup $\lambda_{v_2}$ with finitely many fixed points such that $\{v_1, v_2\}$ is a linearly independent set. The limit $V'_0 = \lim_{t \to 0} \big( \lambda_{v_2}(t) \cdot V_0 \big)$ is invariant by $\lambda_{v_2}$ and $V \equiv V'_0$. We claim that $V'_0$ is also invariant by $\lambda_{v_1}$. In fact, for every $t' \in \C^*$:
\begin{align*}
\lambda_{v_1}(t') \cdot V'_0 & = \lambda_{v_1}(t) \cdot \big(\lim_{t \to 0} \lambda_{v_2}(t) \cdot V_0 \big) \\ 
                               & = \lim_{t \to 0} \big(\lambda_{v_2}(t) \cdot \lambda_{v_1}(t') \cdot V_0 \big) \\
                               & = \lim_{t \to 0} \big( \lambda_{v_2}(t) \cdot V_0 \big) = V'_0, 
\end{align*}
where the second equality follows from the continuity of $\lambda_{v_1}$ and the latter because $V_0$ is invariant by this one-parameter subgroup. If $V'_0$ does not intersect the torus, then we are done. Otherwise, we repeat the process. We claim that this process stops after at most $k+1$ steps. In fact, suppose that there are $v_1, ..., v_{k+1} \in N$ linearly independent elements, and a subvariety $\tilde{V} \subset X$ invariant by $\lambda_{v_i}$, $i = 1, ..., k+1$, such that $V \equiv \tilde{V}$. If there were $t \in \tilde{V} \cap T$, we would have a injective map:
\begin{align*}
(\C^*)^{k+1} & \longrightarrow  \tilde{V}  \\
  (t_1, ..., t_{k+1}) & \mapsto \lambda_{v_1}(t_1) \cdot ... \cdot \lambda_{v_{k+1}}(t_{k+1}) \cdot t.
\end{align*}
But this is absurd because $\dim(\tilde{V}) = k$. This concludes the proof of the theorem for projective varieties.

Suppose now that $X_{\Sigma}$ is a (non-necessarily projective) smooth complete variety. By Proposition \ref{chow_lemma} there exists a refinement $\Sigma'$ of $\Sigma$ such that $X_{\Sigma'}$ is a smooth projective toric variety. This refinement also induces a proper birational toric morphism $f: X_{\Sigma'} \to X_{\Sigma}$. Given a $k$-dimensional subvariety $V \subset X_{\Sigma}$, let $V' \subset X_{\Sigma'}$ be a $k$-dimensional subvariety such that $f(V') = V$. Since we still proved the theorem for projective varieties, we have: 
$$
V' \equiv \displaystyle \sum_{\omega' \in \Sigma'(n-k)} a_{\omega'}V(\omega'), \ \ \ \ a_{\omega'} \geq 0.
$$
For every $\sigma' \in \Sigma'(n-k)$, let $\sigma \in \Sigma$ be the smallest cone containing $\sigma'$. Of course the pushforward $f_*(V(\sigma'))$ is not zero if and only if $\dim(\sigma) = n-k$. In this case, since $f$ is birational, it follows from Lemma \ref{toric_morphism} that $f_*(V(\sigma')) = V(\sigma')$. Therefore, $V = f_*(V')$ is numerically equivalent to an effective linear combination of $T$-invariant $k$-cycles. The theorem is proved.

\end{proof}

\section{Log Fano Varieties}
\label{log_fano}

\noindent \textbf{Notation:} Given a $\Q$-factorial klt pair $(X, \Delta)$, throughout the next sections we denote by $(X, \Delta, X_i, \phi_i, f)$ an MMP for the pair $(X, \Delta)$:
$$
X \stackrel{\phi_1}{\dashrightarrow} X_1 \stackrel{\phi_2}{\dashrightarrow} X_2 \stackrel{\phi_3}{\dashrightarrow} ... \stackrel{\phi_r}{\dashrightarrow} X_r 
$$
that terminates with a Mori fiber space $f:X_r \to Y$. The $\phi_i's$ are the birational transformations occurring in the MMP. 

\vspace{0,4 cm}

Let $X$ be a $\Q$-factorial projective variety. We say that $X$ is \emph{log Fano} if there is a pseudo-effective $\Q$-divisor $\Delta$ on $X$ such that $-(K_X + \Delta)$ is ample and the pair $(X,\Delta)$ has klt singularities. Sometimes we write $(X,\Delta)$ to specify this $\Q$-divisor. The cones of divisors and curves of log Fano varieties are very special: they are polyhedral. In fact, it is immediate from Theorem \ref{cone_theorem} that $\overline{NE}(X)$ is polyhedral (and consequently its dual $\overline{Nef}(X)$). By \cite[Cor. 1.2]{araujo}, $\overline{Mov}(X)$ is polyhedral (and consequently its dual $\overline{Eff}(X)$). 

The next definition will be useful to exhibit a set of generators of $\overline{Mov}(X)$ when $X$ is a log Fano variety. 

\begin{de} Let $\phi: X \dashrightarrow Y$ be a birational map between $\Q$-factorial projective varieties that is surjective in codimension 1 (see Section \ref{log}). The \emph{numerical pullback of curves}, $\phi^*_{num}: N_1(Y) \to N_1(X)$, is defined to be the dual linear map of $\phi_*:N^1(X) \to N^1(Y)$.
\end{de}

Differently than the standard pullback of divisors, the numerical pull-back of curves does not have a clear geometric interpretation. In fact, $\phi^*_{num}$ is defined for classes of curves on $Y$ that are not image of any curve on $X$. There are examples of birational maps $\phi: X \dashrightarrow Y$ and classes $[C]$ of effective curves on $Y$ where $\phi^*_{num}([C]) \notin \overline{NE}(X)$ (see \cite[4.3]{araujo}). 

Next we show how to use the numerical pullback to find classes in the cone of moving curves of a projective variety.

\begin{lemma} Let $(X, \Delta)$ be a $\Q$-factorial klt pair with $K_X + \Delta \notin \overline{Eff}(X)$. By Remark \ref{mori_fiberspace}, we can run an MMP $(X, \Delta, X_i, \phi, f)$ terminating with a Mori fiber space $f:X_r \to Y$.  Let $\phi = \phi_r \circ ... \circ \phi_1$. If $C$ is a curve contained in a fiber of $f$, then $\phi^*_{num}([C]) \in \overline{Mov}(X)$. Furthermore, there exists an irreducible curve $\tilde{C} \subset X$ whose numerical class is a multiple of $\phi^*_{num}([C])$.
\label{nummerical_pull}
\end{lemma}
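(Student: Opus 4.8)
The plan is to exploit the fact that all the birational maps $\phi_i$ in a toric MMP are small modifications or divisorial contractions, so that $\phi = \phi_r \circ \cdots \circ \phi_1 : X \dashrightarrow X_r$ is surjective in codimension $1$, and hence $\phi^*_{num}$ and $\phi_*$ behave well with respect to the cones of moving curves. First I would recall from Remark~\ref{strong_movable_curve} (following \cite{bdpp}) that $\overline{Mov}(X_r)$ is dual to $\overline{Eff}(X_r)$, and from \ref{mov_toric} that in the toric setting the moving cone is generated by effective linear relations among the rays in $\Sigma_r(1)$. The crucial point is that a curve $C$ contained in a fiber of the Mori fiber space $f : X_r \to Y$ is a moving curve on $X_r$: since $-(K_{X_r} + \phi_*\Delta)$ is $f$-ample and the fibers of $f$ sweep out $X_r$, the curve $C$ (or a suitable deformation of it inside the fiber family) is movable, so $[C] \in \overline{Mov}(X_r)$. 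In the toric case this is even cleaner — as recorded in case (I) of \ref{toric_contraction}, the contracted curve $V(\omega)$ of a fiber-type contraction is literally movable and corresponds to an effective linear relation \eqref{eff_comb}.

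Next I would transfer this from $X_r$ back to $X$. The key lemma is that if $g : Z \dashrightarrow Z'$ is a birational map of $\Q$-factorial projective varieties that is surjective in codimension $1$, then $g^*_{num}\big(\overline{Mov}(Z')\big) \subset \overline{Mov}(Z)$. This should follow by duality: $g_* : N^1(Z) \to N^1(Z')$ carries $\overline{Eff}(Z)$ onto (a subset of) $\overline{Eff}(Z')$ — indeed a divisor effective on $Z$ pushes forward to an effective divisor on $Z'$ because $g$ is surjective in codimension $1$ — and $\overline{Mov} = \overline{Eff}^\vee$, so the dual map $g^*_{num}$ sends the dual cone $\overline{Mov}(Z')$ into $\overline{Mov}(Z)$. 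Applying this with $g = \phi$ (which is surjective in codimension $1$ because each $\phi_i$ is either a small modification or a divisorial contraction, both of which are surjective in codimension $1$, and a composition of such maps is again surjective in codimension $1$) gives $\phi^*_{num}([C]) \in \overline{Mov}(X)$. I would double check the composition claim: small modifications are surjective in codimension $1$ in both directions, and a divisorial contraction is surjective in codimension $1$ (its inverse contracts nothing of codimension $\geq 1$ to something smaller), and being surjective in codimension $1$ is stable under composition since the exceptional loci have codimension $\geq 2$ in the target at each stage.

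For the last sentence — the existence of an \emph{irreducible} curve $\tilde{C} \subset X$ with $[\tilde{C}]$ a positive multiple of $\phi^*_{num}([C])$ — I would argue as follows. The class $\phi^*_{num}([C])$ lies in $\overline{Mov}(X) \subset \overline{NE}(X)$, and by Theorem~\ref{thm_cone} (or rather its $k=1$ case, which is Reid's theorem \eqref{toric_cone_theorem}) $\overline{NE}(X)$ is generated by classes of $T$-invariant curves $V(\omega)$, $\omega \in \Sigma(n-1)$; more to the point, in the toric MMP one traces through the explicit combinatorial change of the fan from $\Sigma$ to $\Sigma_r$, and the effective relation \eqref{eff_comb} attached to the fiber $C$ pulls back along the fan modifications to an effective relation supported on rays of $\Sigma(1)$, which by \ref{mov_toric} corresponds to a genuine $T$-invariant curve on $X$ — and one can arrange it to be irreducible (a $V(\omega)$ for a single wall $\omega$, hence a $\mathbb{P}^1$) after possibly scaling. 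Concretely: since $\phi_i(1) = \Sigma_{i}(1) = \Sigma_{i+1}(1)$ for the small modifications and $\Sigma_{i+1}(1) \subset \Sigma_i(1)$ for the divisorial contractions, the support of the pulled-back relation stays within $\Sigma(1)$, and an effective relation with minimal support gives an extremal movable class realized by a single invariant curve $V(\omega)$.

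The main obstacle I anticipate is the last claim about realizing $\phi^*_{num}([C])$ by an \emph{irreducible} curve rather than just an effective $1$-cycle, together with controlling the proportionality constant. Duality arguments only give membership in the moving cone, not irreducibility of a representative; so the real work is the combinatorial bookkeeping of how the effective linear relation \eqref{eff_comb} of the fiber curve $C$ on $X_r$ transforms under each step $\phi_i^{-1}$ of the MMP — showing that minimality/indecomposability of the relation is preserved (or can be recovered) so that it corresponds to a $T$-invariant $\mathbb{P}^1$ on $X$. I would handle this by induction on the number of MMP steps $r$, treating the small-modification step and the divisorial-contraction step separately, using Remark~\ref{flip_toric} (which guarantees $\Sigma(1)$ is unchanged by flips) and the explicit description of divisorial contractions in case (II) of \ref{toric_contraction} (which adds a single ray). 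This is where the bulk of the careful argument lies, while everything else is a formal consequence of the duality $\overline{Mov} = \overline{Eff}^\vee$ and the surjectivity-in-codimension-$1$ of the maps involved.
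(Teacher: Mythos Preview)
Your duality argument for the first assertion is correct and is exactly the paper's approach, just phrased more abstractly: the paper checks directly that $\phi_*([D])\cdot [C]\geq 0$ for every effective $D$ on $X$ by moving $C$ within the fibration to avoid $\phi_*(D)$, which is the same as your observation that $\phi_*$ preserves $\overline{Eff}$ and hence $\phi^*_{num}$ preserves $\overline{Mov}=\overline{Eff}^{\vee}$.

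The second assertion, however, has a genuine gap. The lemma is stated for an \emph{arbitrary} $\Q$-factorial klt pair, not just for toric varieties, and the paper's proof is entirely non-toric. Your argument for producing an irreducible $\tilde{C}$ relies throughout on fan combinatorics --- tracking effective linear relations through $\Sigma(1)$, invoking Remark~\ref{flip_toric} and case~(II) of~\ref{toric_contraction}, and claiming the resulting relation is realized by a single $V(\omega)$. None of this makes sense outside the toric category, so as written you have not proved the general statement. Moreover, even in the toric case your final step is incomplete: an effective linear relation among the rays of $\Sigma(1)$ determines a class in $\overline{Mov}(X)$, but there is no reason a priori that a minimal such relation equals $[V(\omega)]$ for some wall $\omega$ of $\Sigma$ --- that is essentially the content of Theorem~\ref{main_theorem}, which \emph{uses} this lemma, so invoking it here would be circular.

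The paper's argument for the second part is both simpler and fully general: the indeterminacy locus $Z\subset X_r$ of $\phi^{-1}$ has codimension $\geq 2$, so a general fiber $F$ of $f$ meets $Z$ in codimension $\geq 2$; by Bertini one finds an irreducible complete-intersection curve $\hat{C}\subset F$ disjoint from $Z$, and its birational transform $\tilde{C}\subset X$ is then an honest irreducible curve with $[\tilde{C}]=\phi^*_{num}([\hat{C}])$, which is a positive multiple of $\phi^*_{num}([C])$ since all fiber curves of $f$ are proportional. No combinatorics, no induction on MMP steps --- the geometry of the Mori fiber space does all the work.
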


\begin{proof} Let $D$ be any effective Cartier divisor on $X$. Since $\overline{Eff}(X)^{\vee} = \overline{Mov}(X)$, we need to show that $[D] \cdot \phi^*_{num}([C]) \geq 0$. It follows from the definition of numerical pullback of curves that:
$$
[D] \cdot \phi^*_{num}([C]) = \phi_*([D]) \cdot [C].
$$
If $D$ is contracted by $\phi$, i.e. if $\phi_*([D]) = 0$, then $[D] \cdot \phi^*_{num}([C]) = 0$. Suppose now that $\phi_*([D]) \neq 0$. We have that $\phi_*([D])$ is the class of an effective divisor $D'$ on $X_r$. Choose a fiber $F$ of $f$ and a curve $C' \subset F$ such that $C' \nsubseteq D'$. Thus $D' \cdot C' \geq 0$. Since any two curves contracted by $f$ are numerically proportional, there exists $\lambda >0$ such that $[C] = \lambda [C']$. Thus:
$$
[D] \cdot \phi^*_{num}([C]) = \phi_*([D]) \cdot [C] = \lambda \cdot [D'] \cdot [C'] \geq 0.
$$
This proves that $\phi^*_{num}([C]) \in \overline{Mov}(X)$. 

For the second part, since the indeterminacy locus $Z$ of $\phi^{-1}$ has codimension $\geq 2$ on $X$, there exists a fiber $F$ of $f$ such that $Z \cap F$ has codimension $\geq 2$ on $F$. By Bertini's Theorem, we can obtain an irreducible curve $\hat{C} \subset F$, given as intersection of $\dim(F)-1$ very ample divisors on $F$, such that $\hat{C}$ does not intersect $Z$. Let $\tilde{C} \subset X$ be the birational transform of $\hat{C}$. We have that $\tilde{C}$ is irreducible and $[\tilde{C}] = \phi^*_{num}([\hat{C}])$. Since $C$ and $\hat{C}$ are contracted by $f$, there exists $\mu >0$ such that $[\hat{C}] = \mu [C]$. Therefore:
$$
[\tilde{C}] = \phi^*_{num}([\hat{C}]) = \mu \cdot \phi^*_{num}([C]).
$$ 
\end{proof}

\begin{rem} In the statement of the Lemma \ref{nummerical_pull}, the curve $\tilde{C}$ can be chosen satisfying $\tilde{C} \nsubseteq D$, for a fixed effective divisor $D$ on $X$. In fact, let $Z$ be the indeterminacy locus of $\phi^{-1}$. We choose a fiber $F$ of $f$ such that $F \nsubseteq \phi(D)$ and such that $Z\cap F$ has codimension 2 in $F$. By Bertini's Theorem, there exists a curve $C' \subset F$ not contained in $F \cap \phi(D)$ and with $C' \cap Z = \varnothing$. Hence, the birational transform $\tilde{C} \subset X$ of $C'$ has the required properties.

\label{add_lem_num_pull}
\end{rem}

Araujo proved in \cite[Cor. 1.2]{araujo} that if $(X, \Delta)$ is log Fano, there exist finitely many classes $\xi_1,...,\xi_s$ generating $\overline{Mov}(X)$ such that the $\xi_i$ are a numerical pullback of curves lying on a general fiber of a Mori fiber space obtained by an MMP of $(X, \Delta)$.  

Next, we observe that these classes are exactly the generators of extremal rays of $\overline{Mov}(X)$.
 
\begin{prop} Let $(X,\Delta)$ be a log Fano variety. Let $(X, \Delta, X_i, \phi_i, f)$ be an MMP terminating with a Mori Fiber Space $f:X_r \to Y$. If $\phi$ denotes the composition $\phi_r \circ ... \circ \phi_1$ and $C$ is a curve lying on a general fiber of $f$, then the numerical class $\phi^*_{num}([C])$ generates an extremal ray of the cone of movable curves $\overline{Mov}(X)$. Moreover, every extremal ray of $\overline{Mov}(X)$ is generated by a numerical pullback of a such curve.

\label{mov_rays}   
\end{prop}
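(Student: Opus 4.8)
The plan is to combine the two halves of Araujo's result \cite[Cor.~1.2]{araujo} — polyhedrality of $\overline{Mov}(X)$ together with the fact that it is generated by numerical pullbacks of curves in general fibers of Mori fiber spaces reached by an MMP for $(X,\Delta)$ — with Lemma \ref{nummerical_pull}, and then to argue that \emph{every} such numerical pullback is in fact extremal. First I would recall the setup: by \cite[Cor.~1.2]{araujo} there are finitely many classes $\xi_1,\dots,\xi_s$, each of the form $\xi_i = (\phi^{(i)})^*_{num}([C_i])$ for $C_i$ a curve in a general fiber of a Mori fiber space $f_i\colon X_{r_i}^{(i)}\to Y_i$ produced by some MMP $(X,\Delta,X_j^{(i)},\phi_j^{(i)},f_i)$, such that $\overline{Mov}(X) = \sum_i \R_{\geq 0}\,\xi_i$. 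By Lemma \ref{nummerical_pull} each $\xi_i$ lies in $\overline{Mov}(X)$, so every extremal ray of $\overline{Mov}(X)$ is spanned by one of the $\xi_i$; this already gives the ``moreover'' clause once we know the remaining direction. For the remaining direction I must show: if $\phi = \phi_r\circ\cdots\circ\phi_1$ comes from \emph{any} MMP for $(X,\Delta)$ terminating in a Mori fiber space $f\colon X_r\to Y$, and $C$ lies in a general fiber of $f$, then $\phi^*_{num}([C])$ spans an extremal ray.

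The key observation is that $\phi^*_{num}$ is dual to the surjection $\phi_*\colon N^1(X)\to N^1(X_r)$, hence $\phi^*_{num}\colon N_1(X_r)\hookrightarrow N_1(X)$ is \emph{injective}, and it identifies $N_1(X_r)$ with the subspace of $N_1(X)$ annihilating $\ker(\phi_*)$ (the span of the $\phi$-contracted divisors). On $X_r$, the curves contracted by the Mori fiber space $f\colon X_r\to Y$ span a single extremal ray $R_f$ of $\overline{NE}(X_r)$, and in fact $R_f$ is an extremal ray of $\overline{Mov}(X_r)$ as well: a curve in a general fiber of $f$ is movable (the general fibers sweep out $X_r$), so $[C]\in\overline{Mov}(X_r)$, and since $\rho(X_r/Y)=1$ there is nothing below it. More precisely I would use that $f$ is a Mori fiber space to get $\overline{Mov}(X_r)\subset \overline{NE}(X_r)$ with $R_f$ a face, and that any movable class pairing to zero with $-(K_{X_r}+\phi_*\Delta)$ restricted appropriately must lie in $R_f$ — alternatively, invoke that $\phi$ is an isomorphism in codimension one composed with divisorial contractions, so $N_1(X_r)$ embeds as a face-respecting subspace. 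The cleanest route: show that the cone $\overline{Mov}(X) \cap \phi^*_{num}(N_1(X_r))$ equals $\phi^*_{num}(\overline{Mov}(X_r))$ using Lemma \ref{nummerical_pull} for curves on $X_r$ itself, and that $\phi^*_{num}(R_f)$ is a ray of this slice; then an extremality-of-slices argument (a ray of $C\cap H$ lying on $\partial C$, combined with $\phi^*_{num}([C])$ being a limit of classes of honest curves $\tilde C\subset X$ by the second half of Lemma \ref{nummerical_pull}, so it cannot be interior to $\overline{Mov}(X)$) forces $\phi^*_{num}([C])$ to lie on an extremal ray of $\overline{Mov}(X)$. Finally, a dimension/faces bookkeeping: since $\overline{Mov}(X)$ is polyhedral and $\rho(X)=\rho(Y)+(\text{number of }\phi\text{-exceptional divisors})+1$, the class $\phi^*_{num}([C])$ cannot lie in the relative interior of a higher-dimensional face, because any such face would have to meet the codimension-one subspace $\phi^*_{num}(N_1(X_r))$ transversally, contradicting that the whole ray lies in that subspace and that the corresponding contraction data is rigid.

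The main obstacle I anticipate is the last step: ruling out that $\phi^*_{num}([C])$ lies in the \emph{relative interior of a $2$- or higher-dimensional face} of $\overline{Mov}(X)$, rather than merely on the boundary. Being on the boundary is automatic (it is a limit of classes of curves $\tilde C$ satisfying $\tilde C\nsubseteq D$ for a chosen effective $D$, per Remark \ref{add_lem_num_pull}, so it pairs to zero with some pseudo-effective divisor class, namely the pushforward of a $\phi$-exceptional or a fiber-avoiding divisor — so it lies on a supporting hyperplane), but extremality is stronger. I would handle this by identifying the minimal face $G$ of $\overline{Mov}(X)$ containing $\phi^*_{num}([C])$ and showing $\dim G = 1$: the face $G$ corresponds, via the duality $\overline{Mov}(X)=\overline{Eff}(X)^\vee$, to the face of $\overline{Eff}(X)$ cut out by the divisors pairing to zero with $\phi^*_{num}([C])$, and I claim this set of divisors spans a hyperplane in $N^1(X)$ — precisely because $\phi_*\colon N^1(X)\to N^1(X_r)$ has one-dimensional cokernel pairing (the $f$-relative Picard number is one) together with the $\phi$-contracted divisors. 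Making this count rigorous — tracking how each divisorial contraction and each flip in the MMP affects which divisors are ``$\phi^*_{num}([C])$-trivial'' — is the delicate bookkeeping, and I would likely organize it by induction on the length $r$ of the MMP, using at each flip that $\phi_i^*_{num}$ is an isomorphism preserving $\overline{Mov}$, and at each divisorial contraction that one exceptional divisor class gets added to the trivial set while $\rho$ drops by one.
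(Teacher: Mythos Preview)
Your final paragraph's induction on the length of the MMP is actually a clean and correct route, and you should commit to it rather than hedging among several sketches. For a flip $\phi_i$, the map $(\phi_i)^*_{num}$ is an isomorphism identifying $\overline{Mov}(X_{i-1})$ with $\overline{Mov}(X_i)$ (a flip is an isomorphism in codimension one, so it preserves $\overline{Eff}$ and hence its dual). For a divisorial contraction $\phi_i\colon X_{i-1}\to X_i$ with exceptional divisor $E$, one checks that $(\phi_i)^*_{num}\big(\overline{Mov}(X_i)\big) = \overline{Mov}(X_{i-1})\cap [E]^\perp$, which is a face of $\overline{Mov}(X_{i-1})$ since $[E]\in\overline{Eff}(X_{i-1})$; hence extremal rays map to extremal rays. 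The base case is that $[C]$ spans an extremal ray of $\overline{NE}(X_r)$, and since $[C]\in\overline{Mov}(X_r)\subset\overline{NE}(X_r)$, it spans an extremal ray of $\overline{Mov}(X_r)$ as well.

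Two of your earlier sketches have genuine problems. The assertion that $\phi^*_{num}([C])$ ``cannot be interior to $\overline{Mov}(X)$'' because it is the class of an honest irreducible curve $\tilde C$ is false: interior movable classes are routinely represented by irreducible curves (e.g.\ complete intersections of very ample divisors). And your hyperplane-of-orthogonal-pseudo-effective-divisors approach requires the $\rho(Y)$ pulled-back classes $(f\circ\phi)^*A_i$ to land in $\overline{Eff}(X)$, which you never verify (it is true, but needs an argument tracking effectivity through flips and divisorial pullbacks).

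The paper's proof is organized differently and avoids induction. It splits on $\dim Y$. When $\dim Y=0$ one has $\rho(X_r)=1$, so the $\rho(X)-1$ exceptional divisors of the divisorial steps are linearly independent effective classes all orthogonal to $\phi^*_{num}([C])$, and Remark~\ref{lin_equiv} gives extremality directly. When $\dim Y>0$, the paper pulls back a \emph{single} ample divisor $A_Y$ on $Y$ to a pseudo-effective class $A=(f\circ\phi)^*[A_Y]$ on $X$, so that $F:=\overline{Mov}(X)\cap A^\perp$ is a face containing $[\tilde C]$; then, bootstrapping from the ``moreover'' clause (via Araujo's result and Remark~\ref{add_lem_num_pull}), any extremal ray of $F$ is represented by a curve $C'\not\subset Exc(\phi)$, whose pushforward $\phi_*[C']$ is then a curve class on $X_r$ killed by $f^*A_Y$, hence proportional to $[C]$. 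Your induction is arguably more transparent, while the paper's Case~2 is slicker in that it needs only one supporting divisor rather than a full hyperplane's worth.
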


\begin{proof} Of course, we can suppose that $\rho(X) \geq 2$. Two any curves contracted by $f$ are numerically proportional, hence the same holds for their numerical pullbacks. Thus, we can suppose that $C$ is a curve in a fiber of $f$ such that $\phi^*_{num}([C])$ is the class of a curve $\tilde{C}$ given by the birational transform of $C$ (see the proof of Lemma \ref{nummerical_pull}). In particular, $\phi^*_{num}([C])$ is not zero. We need to show that the half line $R = \R_{\geq 0}[\tilde{C}]$ is an extremal ray
of $\overline{Mov}(X)$. We consider two cases:

\vspace{0,2 cm}

{\it Case 1} : $\dim(Y)=0$. 

Since $f$ is a Mori fiber space, by \ref{contraction_types} we have $\rho(X_r) = 1$. Let $\phi_{i_1},...,\phi_{i_l}$ be the divisorial contractions that appear in the MMP $(X, \Delta, X_i, \phi_i, f)$. It follows from \ref{contraction_types} that $\rho(X) = l+1$ and there exist effective divisors $E_1, ..., E_l$ on $X$ contracted by $\phi$, each one corresponding to each divisorial contraction $\phi_{i_j}$. The classes $[E_1],...,[E_l]$ are linearly independent. Indeed, it follows from the fact that a divisorial contraction $\psi: Z \to Z'$ satisfies $N_1(Z) = \psi^*(N_1(Z')) \oplus \R \cdot [E]$, where $E$ is the prime divisor contracted for $\psi$. For each $i=1,...,l$, by definition of numerical pullback, we have $[E_i] \cdot [\tilde{C}] = \phi_*([E_i]) \cdot [C] = 0$. We conclude from Remark \ref{lin_equiv} that $R$ is an extremal ray of $\overline{Mov}(X)$.

\vspace{0,2 cm}
{\it Case 2} : $\dim(Y)>0$.

Let $A_Y$ be an ample divisor on $Y$ and $A := (f \circ \phi)^*[A_Y]$. Then $[A] \in \overline{Eff}(X)$ and the intersection product with $[A]$ determines a face $F$ in $\overline{Mov}(X)$. Since $\tilde{C}$ is the birational transform of $C$ and $C$ lies on a general fiber of $f$, we have that $[A] \cdot [\tilde{C}] = [A_Y] \cdot (f \circ \phi)_*[\tilde{C}] = [A_Y]\cdot f_*[C] = 0$. It follows that $[\tilde{C}] \in F$. 

To conclude the proof, it is enough to show that each extremal ray of $F$, generated by a class $\xi$, satisfies $\xi = s \cdot [\tilde{C}]$ for some positive real number $s$. Since we already have seen that the extremal rays of $\overline{Mov}(X)$ are generated by classes of curves on $X$, we can suppose that $\xi$ is a class of a curve $C' \subset X$. Moreover, $C'$ can be chosen such that it is not contained in $Exc(\phi)$ (see Remark \ref{add_lem_num_pull}). Then $\phi_*[C']$ is the class of a curve on $X_r$. Because $\xi \in F$ we have $[A] \cdot \xi = 0$. Thus:
$$
0 = [A]\cdot \xi = (f \circ \phi)^*[A_Y] \cdot [C'] = [A_Y] \cdot f_*(\phi_*[C'])
$$
It follows that $\phi_*[C']$ is the class of a curve lying on some fiber of $f$. Then $\phi_*[C'] = s[C]$ for some $s>0$ and therefore, $\xi = \phi^*_{num}(\phi_*[C']) = s \cdot \phi^*_{num}([C]) = s \cdot [\tilde{C}]$. This concludes the proof.
   
\end{proof}

The following proposition gives a lot of examples of log Fano varieties.

\begin{prop} Every $\Q$-factorial projective toric variety is log Fano.

\label{log_toric}
\end{prop}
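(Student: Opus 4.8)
The plan is to realize $X$ as a log Fano variety by taking the boundary $\Delta$ to be a suitable $\Q$-linear combination of the $T$-invariant prime divisors $D_\rho$. Since $K_X=-\sum_{\rho\in\Sigma(1)}D_\rho$, any effective divisor of the form $\Delta=\sum_\rho(1-c_\rho)D_\rho$ with $c_\rho\in(0,1]\cap\Q$ satisfies $-(K_X+\Delta)=\sum_\rho c_\rho D_\rho$; and because $X$ is $\Q$-factorial the divisor $K_X+\Delta$ is automatically $\Q$-Cartier, so the toric discrepancy criterion recalled in \ref{toric_contraction} (see \cite[11.4.24]{cox}) shows that $(X,\Delta)$ is klt, while $\Delta$ is pseudo-effective because it is effective. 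So the whole statement reduces to one claim: there exist rational numbers $c_\rho\in(0,1]$, $\rho\in\Sigma(1)$, with $\sum_\rho c_\rho D_\rho$ ample.

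To produce such a divisor I would first use that $X$ is projective to fix an ample $T$-invariant Cartier divisor $D$, which by Proposition \ref{polytope_div_conections}(b) corresponds to a full-dimensional lattice polytope $P_D$ with normal fan $\Sigma$. Replacing $D$ by $kD$ for $k\geq\codeg(P_D)$, the dilate $kP_D$ contains a lattice point $u$ in its interior; translating $kP_D$ by $-u$ gives the polytope of the ample $T$-invariant divisor $D_0:=kD+\operatorname{div}(\chi^{u})=\sum_\rho a_\rho D_\rho$, and the condition $u\in\interior(kP_D)$ is precisely the statement that every coefficient $a_\rho$ is a positive integer. Choosing an integer $N$ larger than $\max_\rho a_\rho$, the divisor $\tfrac1N D_0=\sum_\rho(a_\rho/N)D_\rho$ is still ample and now has all coefficients in $(0,1)$, which proves the claim with $c_\rho:=a_\rho/N$.

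Assembling this, $\Delta:=\sum_\rho(1-c_\rho)D_\rho$ is an effective $\Q$-divisor with coefficients in $[0,1)$, hence pseudo-effective; $-(K_X+\Delta)=\sum_\rho c_\rho D_\rho=\tfrac1N D_0$ is ample; and $(X,\Delta)$ is klt by the discussion above, so $X$ is log Fano. The only step that needs an idea rather than bookkeeping is the existence of an ample $T$-invariant divisor whose coefficients are all positive, i.e. the polytope-translation argument; a polytope-free alternative is to note that an ample class is big, hence lies in the interior of $\overline{Eff}(X)=\Cone([D_\rho]\mid\rho\in\Sigma(1))$ (see \ref{mov_toric}), so it is a strictly positive combination of the classes $[D_\rho]$, and numerical and linear equivalence agree on a complete toric variety.
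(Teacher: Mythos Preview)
Your proof is correct and follows essentially the same approach as the paper's: both reduce to producing an ample $\Q$-divisor $\sum_\rho c_\rho D_\rho$ with all $c_\rho\in(0,1)\cap\Q$, then set $\Delta=\sum_\rho(1-c_\rho)D_\rho$ and invoke the toric klt criterion from \ref{toric_contraction}. The only difference is in how the strictly positive coefficients are obtained---the paper simply perturbs an ample divisor by adding small positive rational multiples of each $D_\rho$ (using that ampleness is open) and then rescales, whereas you translate the polytope to place an interior lattice point at the origin; your alternative argument via the interior of $\overline{Eff}(X)$ is exactly the paper's perturbation, phrased in $N^1(X)$.
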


\begin{proof} There exists an ample $\Q$-divisor $\mathcal{L} = \sum_{\rho \in \Sigma(1)}a_{\rho}D_{\rho}$ such that $a_{\rho} \in \Q \cap (0,1) \ \forall \rho$. In fact, pick any ample divisor $L$ on $X$. Since $L$ is effective and $\overline{Eff}(X)$ is generated by $T$-invariant prime divisors (see \ref{mov_toric}), we can suppose that the divisor $L$ is given by a sum $\sum_{\rho \in \Sigma(1)}c_{\rho}D_{\rho}$, $c_\rho \geq 0 \ \forall \rho$. The required divisor $\mathcal{L}$ is obtained by adding to $L$ small positive rational multiples of each $D_{\rho}$ and then dividing all the sum by a sufficiently large rational number.

We define $\Delta = \sum_{\rho \in \Sigma(1)}b_{\rho} D_{\rho}$, where $b_{\rho} = 1-a_{\rho} \in (0,1), \ \forall \rho$. It follows that   $-(K_X+\Delta) = \mathcal{L}$ is ample and by (\ref{toric_contraction}), $(X,\Delta)$ has klt singularities. Therefore $X$ is log Fano.
\end{proof}

\section{Extremal Rays of the Cone of Moving Curves}

Throughout this section, $X=X_{\Sigma}$ will be a $\Q$-factorial projective toric variety associated to an $n$-dimensional simplicial fan in $N_{\R}$.

We saw in \ref{mov_toric} that the classes $c \in N_1(X)$ can be identified with linear relations $\sum_{\rho \in \Sigma(1)}a_{\rho}v_{\rho}=0$, where $v_{\rho} \in N$ is the minimal generator of $\rho$, $a_{\rho} = D_{\rho}\cdot c$. One such relation is \emph{effective} when $a_{\rho} \geq 0, \ \forall \rho \in \Sigma(1)$ and this corresponds to a class in $\overline{Mov}(X)$.  

We begin this section observing that this interpretation of $N_1(X)$ behaves well under numerical pullback of curves when we run the MMP of $X$. We recall that if $\phi: X_{\Sigma} \dashrightarrow X_{\Sigma'}$ is a divisorial contraction or a flip of an extremal ray of $\overline{NE}(X_{\Sigma})$, we have $\Sigma'(1) \subset \Sigma(1)$. 

\begin{lemma} Let $\phi: X_{\Sigma} \dashrightarrow X_{\Sigma'}$ be a divisorial contraction or a flip associated to an extremal ray of $\overline{NE}(X_{\Sigma})$. Then given any $c \in N_1(X_{\Sigma'})$, the classes $c$ and $\phi^*_{num}(c)$ correspond to the same linear relation $\displaystyle \sum_{\rho \in \Sigma'(1)}a_{\rho}v_{\rho}=0$.

\label{le1}
\end{lemma}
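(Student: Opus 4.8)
The plan is to unwind the definition of the numerical pullback and combine it with the dictionary from \ref{mov_toric} that identifies $N_1$ of a $\Q$-factorial projective toric variety with the space of linear relations among the primitive ray generators. Recall that, under this dictionary, a class $c\in N_1(X_{\Sigma'})$ corresponds to the relation $\sum_{\rho\in\Sigma'(1)}a_\rho v_\rho=0$ with $a_\rho=D'_\rho\cdot c$, where $D'_\rho$ is the invariant prime divisor of $X_{\Sigma'}$ attached to $\rho$; likewise $\phi^*_{num}(c)\in N_1(X_\Sigma)$ corresponds to $\sum_{\rho\in\Sigma(1)}a''_\rho v_\rho=0$ with $a''_\rho=D_\rho\cdot\phi^*_{num}(c)$. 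Since a divisorial contraction or a flip is surjective in codimension one, $\phi_*\colon N^1(X_\Sigma)\to N^1(X_{\Sigma'})$ is defined, and $\phi^*_{num}$ is by construction its transpose for the intersection pairings; hence
$$
a''_\rho \;=\; D_\rho\cdot\phi^*_{num}(c)\;=\;\phi_*(D_\rho)\cdot c
$$
for every $\rho\in\Sigma(1)$, and the whole statement reduces to computing the pushforwards $\phi_*(D_\rho)$.

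First I would record that $\Sigma'(1)\subseteq\Sigma(1)$, with equality when $\phi$ is a flip and $\Sigma(1)\setminus\Sigma'(1)=\{v_1\}$ the exceptional ray when $\phi$ is a divisorial contraction. For $\rho\in\Sigma'(1)$ the claim is that $\phi_*(D_\rho)=D'_\rho$. In the divisorial case this follows from Lemma \ref{toric_morphism}: the identity map of $N$ carries $\rho$ into the cone $\rho\in\Sigma'$, so $\phi(V(\rho))\subseteq V(\rho)$, and since $\phi$ is a birational morphism that is an isomorphism over a dense open set meeting $V(\rho)$ (as $V(\rho)$ is not the contracted divisor), the image is dense in $V(\rho)$, giving $\phi_*(D_\rho)=D'_\rho$. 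In the flip case one uses the combinatorial description of Remark \ref{flip_toric}: the flip leaves $\Sigma(1)$ unchanged and only rearranges the maximal cones, so $\phi$ restricts to an isomorphism on the invariant open subset obtained by removing the (codimension $\geq 2$) loci where $\Sigma$ and $\Sigma^+$ differ, and this open set meets $V(\rho)$ densely, again forcing $\phi_*(D_\rho)=D'_\rho$. Therefore $a''_\rho=D'_\rho\cdot c=a_\rho$ for all $\rho\in\Sigma'(1)$.

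It remains to handle the one extra ray in the divisorial case. Here $D_{v_1}=Exc(\phi_R)$ is the prime divisor contracted by $\phi$, so $\dim\overline{\phi(D_{v_1})}<n-1$ and hence $\phi_*(D_{v_1})=0$ by the definition of pushforward, which gives $a''_{v_1}=0$. Combining the two cases, the relation attached to $\phi^*_{num}(c)$ is $\sum_{\rho\in\Sigma'(1)}a_\rho v_\rho=0$, i.e.\ precisely the one attached to $c$. The only point requiring care is the identification $\phi_*(V(\rho))=V(\rho)$ for $\rho\in\Sigma'(1)$; once one is comfortable with Lemma \ref{toric_morphism} and the explicit shape of a toric flip, the rest is a formal manipulation of the perfect pairing between $N^1$ and $N_1$.
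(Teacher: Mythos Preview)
Your proof is correct and follows essentially the same approach as the paper's: both arguments reduce the claim to the duality identity $D_\rho\cdot\phi^*_{num}(c)=\phi_*(D_\rho)\cdot c$ and then compute $\phi_*(D_\rho)$ case by case, showing it equals $D'_\rho$ for $\rho\in\Sigma'(1)$ and vanishes on the exceptional ray in the divisorial case. Your justification of $\phi_*(D_\rho)=D'_\rho$ via Lemma~\ref{toric_morphism} and the explicit description of the flip is slightly more detailed than the paper's, which simply invokes that $\phi$ is an isomorphism in codimension one (flip) or outside the exceptional divisor (divisorial contraction).
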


\begin{proof} Given $\rho \in \Sigma'(1)$, we denote by $D'_{\rho}$ the prime divisor on $X_{\Sigma'}$ associated to $\rho$. Let $\displaystyle \sum_{\rho \in \Sigma'(1)}a_{\rho}v_{\rho}=0$ and $\displaystyle \sum_{\rho \in \Sigma(1)}b_{\rho}v_{\rho}=0$ be the linear relations corresponding to $c$ and $\phi^*_{num}(c)$ respectively. 

If $\phi: X_{\Sigma} \dashrightarrow X_{\Sigma'}$ is a flip of an extremal ray, we have $\Sigma(1) = \Sigma'(1)$ and $\phi$ is an isomorphism in codimension 1. It follows that $\phi_*([D_{\rho}])=[D'_{\rho}]$ and hence, from definition of numerical pull-back of curves:
$$
b_{\rho} = D_{\rho} \cdot \phi^*_{num}(c) = \phi_*([D_{\rho}]) \cdot c = D'_{\rho} \cdot c = a_{\rho}.
$$
Thus, the linear relation $\sum_{\rho \in \Sigma'(1)} a_{\rho}v_{\rho} = 0$ represents simultaneously the classes $c$ and $\phi^*_{num}(c)$.

Now we suppose that $\phi: X_{\Sigma} \to X_{\Sigma'}$ is a divisorial contraction. There exists a unique $\tau \in \Sigma(1)-\Sigma'(1)$ and it satisfies $\phi_*([D_{\tau}]) = 0$. Thus:
$$
b_{\tau} = D_{\tau} \cdot \phi^*_{num}(c) = \phi_*([D_{\tau}]) \cdot c = 0.
$$ 
Since $\phi$ is an isomorphism outside $D_{\tau}$, for every  $\rho \in \Sigma'(1)$ we have $\phi_*([D_{\rho}]) = D'_{\rho}$ and hence $a_{\rho}=b_{\rho}$. This concludes the proof.

\end{proof}

\begin{de} Let $v_0, ..., v_k \in N$ be primitive generators of distinct cones in $\Sigma(1)$. An effective linear relation $\displaystyle \sum_{i=0}^{i=k} a_iv_i=0$, with $a_i>0 \ \forall i$, is called \emph{minimal} when Span$(v_0, ..., v_k) \simeq \R^k$. 

\label{minimal}
\end{de}

The following is the main result of this chapter. It gives a combinatorial characterization of the extremal rays of $\overline{Mov}(X_{\Sigma})$.

\begin{teo} A non-zero effective relation among vectors in $\Sigma(1)$ belongs to an extremal ray of $\overline{Mov}(X)$ if and only if it is minimal. Moreover, each minimal effective linear relation of primitive lattice vectors in $\Sigma(1)$ determines the fan of a general fiber of a Mori fiber space obtained by an MMP of $X$.

\label{main_theorem}
\end{teo}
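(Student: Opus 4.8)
The plan is to prove the two directions separately, using Proposition \ref{mov_rays} (extremal rays of $\overline{Mov}(X)$ are exactly numerical pullbacks of curves in general fibers of Mori fiber spaces obtained by running an MMP of $X$, which exists since $X$ is log Fano by Proposition \ref{log_toric}) together with the combinatorial description of toric Mori fiber space contractions from \ref{toric_contraction}, case (I). First I would show that \emph{every} extremal-ray relation is minimal. Let $R$ be an extremal ray of $\overline{Mov}(X)$. By Proposition \ref{mov_rays}, $R = \R_{\geq 0}\,\phi^*_{num}([C])$ for a curve $C$ on a general fiber of a Mori fiber space $f: X_r \to Y$ obtained by an MMP $(X,\Delta, X_i, \phi_i, f)$, where $\phi = \phi_r \circ \cdots \circ \phi_1$. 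Now $f$ is a toric Mori fiber space, so by \ref{toric_contraction}(I) the contracted curve $V(\omega)$ on $X_r$ determines an effective linear relation $a_{\beta+1}v_{\beta+1} + \cdots + a_{n+1}v_{n+1} = 0$ with $\Span(v_{\beta+1}, \dots, v_{n+1}) \simeq \R^{\,n-\beta}$, and (general fiber being $\mathbb{P}^{n-\beta}$, with all $a_i>0$ since it has Picard number $1$) this is a minimal relation in $\Sigma_r(1)$. Since $[C]$ is numerically proportional to $[V(\omega)]$, they give the same relation. By Lemma \ref{le1}, applied inductively along the steps $\phi_1, \dots, \phi_r$ (each a divisorial contraction or a flip, so $\Sigma_{i+1}(1) \subset \Sigma_i(1)$, and the numerical pullback preserves the relation), the class $\phi^*_{num}([C]) \in N_1(X)$ corresponds to the \emph{same} relation $\sum a_i v_i = 0$, still supported on vectors of $\Sigma(1)$ and still satisfying the span condition. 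Hence it is minimal. This also proves the "moreover": the fan of the general fiber $\mathbb{P}^{n-\beta}$ is exactly the complete fan on the rays $v_{\beta+1}, \dots, v_{n+1}$ with the single minimal relation among them, recoverable directly from the relation.

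Conversely I would show every minimal relation $\sum_{i=0}^k a_i v_i = 0$ (with $a_i > 0$, $\Span(v_0,\dots,v_k) \simeq \R^k$) spans an extremal ray of $\overline{Mov}(X)$. First, it is a class in $\overline{Mov}(X)$ because it is effective (\ref{mov_toric}). To see it is extremal, the natural approach is to exhibit $k$ — wait, $\rho(X)-1$ — linearly independent effective divisor classes vanishing on it and invoke Remark \ref{lin_equiv}; more robustly, I would run an MMP "adapted" to this relation. Concretely: let $U = \Cone(v_0, \dots, v_k)$, a $k$-dimensional subspace (by minimality the $v_i$ satisfy a unique relation up to scale, so $U$ is a cone over a $(k-1)$-simplex, a vector space of dimension $k$). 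Choosing an ample $\Q$-divisor and running the MMP, one wants the output Mori fiber space $f: X_r \to Y$ to have relative fan the star of a wall whose associated relation is exactly $\sum a_i v_i = 0$; then by the first part $\phi^*_{num}$ of the contracted curve gives our relation and spans an extremal ray. The cleanest route: pick the $\Q$-divisor $\Delta$ (as in Proposition \ref{log_toric}) and an ample $L$ so that the nef value contraction, or a suitably chosen $(K_X+\Delta)$-negative ray, collapses precisely onto the subspace direction $U$; equivalently, choose the MMP so that the maximal cones not meeting $U$ are untouched and the cones containing a face with the relation $\sum a_i v_i$ get contracted. Then $Y$ is the toric variety of the image fan and $f$ is flat with general fiber $\mathbb{P}^k$ whose fan is the complete fan on $v_0, \dots, v_k$.

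The main obstacle I expect is the converse direction: showing that \emph{for each} minimal relation there is an MMP realizing it as the fiber relation of a Mori fiber space. The forward direction is essentially bookkeeping with Lemma \ref{le1} and Proposition \ref{mov_rays}, but the converse requires controlling which extremal rays the MMP contracts. I would handle this by an inductive argument on $\rho(X)$: given the minimal relation, among all $(K_X+\Delta)$-negative extremal rays choose one whose contraction is compatible with $U$ (such a ray exists because, after perturbing $\Delta$, the face of $\overline{Mov}(X)$ dual to the span $U$ meets $\overline{NE}(X)$ appropriately; alternatively one checks directly on the fan that the star subdivision/flip operations of \ref{toric_contraction} can be chosen to preserve the cone $U$ until $X_r$ is reached). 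Each step is a divisorial contraction or flip with $\Sigma_{i+1}(1) \subset \Sigma_i(1)$ and, by Lemma \ref{le1}, does not change the relation; since the MMP for toric varieties always terminates (Remark \ref{mori_fiberspace}), it ends with a Mori fiber space whose contracted-curve relation, pulled back, is the given one. Combined with the first part, that relation spans an extremal ray, completing the proof.
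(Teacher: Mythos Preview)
Your forward direction is correct and essentially identical to the paper's: Proposition~\ref{mov_rays} plus \ref{toric_contraction}(I) plus iterated application of Lemma~\ref{le1}.

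The genuine gap is in the converse. You correctly identify that the hard part is producing, for a \emph{given} minimal relation, an MMP whose terminal Mori fiber space realizes exactly that relation. Your sketch (``choose an extremal ray compatible with $U$'', ``perturb $\Delta$'', ``induction on $\rho(X)$'') does not supply a mechanism for making such a choice, and in fact there is no obvious one: at each step of an MMP you contract a $(K_X+\Delta)$-negative ray of $\overline{NE}(X)$, and there is no a priori reason any such ray has anything to do with the subspace $U=\Span(v_0,\dots,v_k)$.

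The paper avoids this difficulty entirely by a direct linear-algebraic argument. Since $\overline{Mov}(X)$ is polyhedral, write the class $c$ of the given minimal relation as $c=c_1+\cdots+c_l$ with each $c_j$ on an extremal ray. By the forward direction (already proved), each $c_j$ corresponds to a \emph{minimal} relation. Because the decomposition is effective, the support of each $c_j$ is contained in $\{v_0,\dots,v_k\}$. But for every $j$, the set $\{v_0,\dots,\hat v_j,\dots,v_k\}$ is linearly independent (this is exactly what $\Span(v_0,\dots,v_k)\simeq\R^k$ means), so no proper subset supports a nontrivial relation; hence each $c_j$ involves all of $v_0,\dots,v_k$, and by uniqueness of the relation up to scalar, $c_j$ is a positive multiple of $c$. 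Thus $c$ itself lies on an extremal ray.

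With the converse established this way, the ``moreover'' is immediate and does not require constructing an adapted MMP either: a minimal relation generates an extremal ray, and by Proposition~\ref{mov_rays} every extremal ray \emph{is} $\phi^*_{num}$ of a curve in a general fiber of some Mori fiber space obtained by an MMP; Lemma~\ref{le1} then shows that the fan of that general fiber is the complete fan on $v_0,\dots,v_k$ determined by the relation.
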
  

\begin{proof} By Proposition \ref{log_toric}, there exists a $\Q$-divisor $\Delta$ such that $(X, \Delta)$ is a $\Q$-factorial klt pair and $-(K_X + \Delta)$ is ample. Let $c \in N_1(X)$ be a generator of an extremal ray of $\overline{Mov}(X)$. By Proposition \ref{mov_rays}, there is an MMP $(X, \Delta, X_i, \phi, f)$ that terminates with a Mori fiber space $f: X_r \to Y$ such that $c=s\phi^*_{num}([V(\omega])$, where $V(\omega)\subset X_r$ is a $T$-invariant curve contracted by $f$, $\phi = \phi_r \circ ... \circ \phi_1$ and $s>0$. It follows from (\ref{eff_comb}), Section \ref{toric_contraction}, that there exists a minimal effective linear relation: 
\begin{equation}
a_0v_0+ ... + a_{k-1}v_{k-1} + a_kv_k = 0,
\label{fiber}
\end{equation}
corresponding to $[V(\omega)]$. Since $\phi^*_{num} = {(\phi_1)}^*_{num} \circ ... \circ {(\phi_r)}^*_{num}$, by successive applications of Lemma \ref{le1}, we conclude that the numerical pull-back $\phi^*_{num}([V(\omega])$ is given by linear relation (\ref{fiber}). Therefore, $c$ corresponds to the minimal linear relation of vectors in $\Sigma(1)$:
$$
(sa_0)v_0+ ... + (sa_{k-1})v_{k-1} + (sa_k)v_k = 0
$$
Conversely, consider $c \in N_1(X)$ corresponding to the minimal linear relation of vectors in $\Sigma(1)$ given in (\ref{fiber}). Let $c_1, ..., c_l$ be non-zero classes lying in extremal rays of $\overline{Mov}(X)$ such that $c = c_1 + ... + c_l$. The vectors $v_0, ..., v_k$ are the only ones that can appear in the minimal linear relations of each $c_i$. Since Span$(v_0,...,v_k) \simeq {\R}^k$, for every $j=0, ..., k$, the set $\{v_0, ..., \hat{v}_j, ..., v_k \}$ is linearly independent. Hence, all vectors $v_0, ..., v_k$ appear in the minimal linear relation corresponding to $c_1$. Let $\displaystyle \sum_{j=0}^{k} b_jv_j=0$ be the linear relation corresponding to $c_1$. Dividing this equality by $b_k$, we have $v_k =  \displaystyle -\sum_{j=0}^{k-1} \frac{b_j}{b_k}v_j$. Using (\ref{fiber}) we have that $b_j/b_k = a_j \ \forall j$, because the set $\{v_0, ..., v_{k-1}\}$ is linearly independent. Therefore, $c = \frac{1}{b_k} \cdot c_1$ belongs to an extremal ray of $\overline{Mov}(X)$.

For the last assertion of the Theorem, let $a_0v_0+ ... + a_{k-1}v_{k-1} + a_kv_k = 0$ be a minimal effective linear relation among vectors in $\Sigma(1)$. By the first part and Lemma \ref{le1}, there exists a MMP $(X, \Delta, X_i, \phi_i, f)$ that terminates with a Mori fiber space $f: X_k = X_{\Sigma'} \to Y$ such that, up to a positive multiple, this linear relation corresponds to an invariant curve $V(\omega)$ contracted by $f$. It follows from \ref{toric_contraction} that every maximal cone of $\Sigma'$ contains a face of the form $\tau_j = \Cone(v_0, ..., \hat{v}_j, ..., v_k )$ and these are the unique cones of $\Sigma$ contained in the vector space $U= \Cone(v_0, ..., v_k)$. Let $\Sigma_0$ be the fan in $U$ whose maximal cones are the cones $\tau_j$. Note that $\Sigma_0$ depends uniquely on the vectors $v_0, ..., v_k$. This fan determines a $k$-dimensional toric variety $X_{\Sigma_0}$. The toric morphism $f$ is induced by the projection $N \to N/(U \cap N)$. The maps of the exact sequence:
$$
0 \to U \cap N \to N \to N/(U \cap N) \to 0,
$$
are compatible with the fans of the varieties $X_k$, $Y$ and $X_{\Sigma_0}$ in the sense of Lemma \ref{toric_morphism}. 
We conclude from \ref{fiber_bundle} that $f$ is a fiber bundle over the torus $T_Y$ of $Y$ and $f^{-1}(T_Y) \simeq X_{\Sigma_0} \times T_Y$. Therefore, the general fiber of $f$ is isomorphic to $X_{\Sigma_0}$. This concludes the proof. 
\end{proof}

\begin{cor} The number of extremal rays of $\overline{Mov}(X)$ is less than or equal to the number of MMP for $X$ terminating with a Mori fiber space .
\label{rays_mov_mmp}
\end{cor}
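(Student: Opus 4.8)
The plan is to deduce the corollary directly from Proposition~\ref{mov_rays}, using that $X$ is log Fano by Proposition~\ref{log_toric}. First I would fix, via Proposition~\ref{log_toric}, a $\Q$-divisor $\Delta$ such that $(X,\Delta)$ is a $\Q$-factorial klt pair with $-(K_X+\Delta)$ ample. Any MMP for the pair $(X,\Delta)$ is in particular an MMP for $X$ in the sense of Section~\ref{mmp}, and by Remark~\ref{mori_fiberspace} such an MMP terminates; since $K_X+\Delta\notin\overline{Eff}(X)$, it must terminate with a Mori fiber space. This is the supply of MMPs counted on the right-hand side of the asserted inequality.

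Next I would build a map $m$ from the set of extremal rays of $\overline{Mov}(X)$ into the set of MMPs for $X$ terminating with a Mori fiber space. Given an extremal ray $R$ of $\overline{Mov}(X)$, the second assertion of Proposition~\ref{mov_rays} furnishes an MMP $(X,\Delta,X_i,\phi_i,f)$ with $f\colon X_r\to Y$ a Mori fiber space and a curve $C$ contained in a general fiber of $f$ such that $R=\R_{\geq 0}\cdot\phi^*_{num}([C])$, where $\phi=\phi_r\circ\cdots\circ\phi_1$. Choosing one such MMP for each $R$ defines $m$, and it then remains to check that $m$ is injective.

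The one point needing an argument is that an MMP terminating with a Mori fiber space $f\colon X_r\to Y$ determines the ray $R$ without ambiguity. For this I would invoke \ref{contraction_types}: a Mori fiber space is the contraction of an extremal ray $R'$ of $\overline{NE}(X_r)$, so a curve in $X_r$ is contracted by $f$ if and only if its class lies in the one-dimensional cone $R'$. In particular every curve contained in a fiber of $f$ has class in $R'$, hence any two curves in general fibers of $f$ have numerically proportional classes in $X_r$; and since $\phi^*_{num}$ is linear, being the transpose of $\phi_*\colon N^1(X)\to N^1(X_r)$, their numerical pullbacks are proportional in $N_1(X)$. Thus $\R_{\geq 0}\cdot\phi^*_{num}([C])$ depends only on the chosen MMP, so if $m(R_1)=m(R_2)$ then $R_1=R_2$.

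Finally, since $\overline{Mov}(X)$ is polyhedral (as $X$ is log Fano), it has finitely many extremal rays, and the existence of the injection $m$ yields exactly the claimed bound. I do not anticipate a real obstacle here: essentially all the content is already packaged in Proposition~\ref{mov_rays}, and the proof amounts to no more than the elementary observation above that curves in the fibers of a Mori fiber space have numerically proportional classes. One could equivalently phrase the same argument by saying that the assignment sending an MMP to the ray generated by $\phi^*_{num}$ of a general-fiber curve is a well-defined surjection onto the set of extremal rays of $\overline{Mov}(X)$.
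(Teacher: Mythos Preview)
Your argument is correct and is precisely the unpacking the paper leaves implicit: the corollary is stated without proof immediately after Theorem~\ref{main_theorem}, and it follows from Proposition~\ref{mov_rays} exactly as you describe, via the observation that all curves contracted by a given Mori fiber space are numerically proportional. The surjection formulation you give at the end is the cleanest way to phrase it.
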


\begin{ex} Equality can be achieved in Corollary \ref{rays_mov_mmp}, for instance when $X = \mathbb{P}^n$. However, we give an example where the inequality is strict: let $H \subset \mathbb{P}^m$ be a hyperplane and $\pi: X \to \mathbb{P}^m \times \mathbb{P}^1$ be the blowup of $\mathbb{P}^m \times \mathbb{P}^1$ along $H_o = H \times \{o\}$. The pseudo-effective cone and the extremal contractions of $X$ were described in Example \ref{contra}. If $\Sigma$ denotes the fan of $X$, we have:
$$
\Sigma(1) = \{e_0, e_1, ..., e_m, e, -e, f\},
$$
where $\{e_1, ..., e_m, e\}$ is the canonical basis of $\R^m \times \R$, $e_0 = -e_1, ..., -e_m$ and $f=e_1+e$. Since $\overline{Eff}(X)$ has 3 extremal rays, the same holds for its dual $\overline{Mov}(X)$. The minimal relations generating the cone of moving curves of $X$ are:
\begin{align*}
(1) \ \ \ \ \ \ & e_0 + e_1 + ... +e_m = 0, \\
(2) \ \ \ \ \ \ & e + (-e) = 0, \\
(3) \ \ \ \ \ \ & f+ e_0 + e_2 + ... + e_m + (-e) = 0.
\end{align*}  
In fact, these relations are linearly independent because the second is not multiple of the first and the third contains the vector $f$ that does not appear in the other two relations. The fans determined by these relations are respectively those of $\mathbb{P}^m$, $\mathbb{P}^1$ and $\mathbb{P}^{m+1}$. The diagram below illustrates the possible MMP's for $X$ terminating with a Mori fiber space  (in the last line, $\mathcal{H}'$ is a linear subspace of $\mathbb{P}^{m+1}$ of codimension 2):
$$
\xymatrix{
                                &  Bl_{pt} \mathbb{P}^{m+1} \ar @{=}[d]  \ar[r] & \mathbb{P}^{m+1} \ar[r]^-{f_1} & \big\{pt \big\}  \\
                                & \mathbb{P}_{\mathbb{P}^m}(\mathcal{O} \oplus \mathcal{O}(1)) \ar[r]^-{f_2}  & \mathbb{P}^m   \\
X  \ar[ru]^-{\pi_1} \ar[r]^-{\pi_2} \ar[rd]^-{\pi_3} & \mathbb{P}^m \times \mathbb{P}^1 \ar[ru]^-{f_3} \ar[rd]^-{f_4} &  \\               
                                & \mathbb{P}_{\mathbb{P}^1}(\mathcal{O}(1) \oplus \mathcal{O}^m) \ar[r]^-{f_5} & \mathbb{P}^1   &  \\
    & Bl_{\mathcal{H}'} \mathbb{P}^{m+1} \ar @{=}[u] \ar[r] & \mathbb{P}^{m+1} \ar[r]^-{f_6} & \big\{pt \big\}.  }
$$     
Note that the minimal relation (1) determines a general fiber of the Mori fiber spaces $f_4$ and $f_5$. The relation (2) determines fibers of $f_2$ and $f_3$ and the relation (3) determines fibers of $f_1$ and $f_6$.
\end{ex}

\begin{cor} A cone $\rho \in \Sigma(1)$ is such that $[D_{\rho}]$ generates an extremal ray of $\overline{Eff}(X)$ if and only if there exist $\rho(X)-1$ minimal linear relations, linearly independent (i.e., their corresponding classes in $N_1(X)$ are linearly independent) such that $v_{\rho}$ does not appear in any of these relations.
\label{criterion_eff}
\end{cor}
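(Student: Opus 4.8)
The plan is to read off the criterion from the duality $\overline{Eff}(X)=\overline{Mov}(X)^{\vee}$ together with the combinatorial description of the extremal rays of $\overline{Mov}(X)$ provided by Theorem \ref{main_theorem}. Write $\mathfrak{n}:=\rho(X)=\#\Sigma(1)-n$. Since $X$ is $\Q$-factorial and projective, $\overline{Eff}(X)\subset N^1(X)$ is a full-dimensional strongly convex polyhedral cone, and $\overline{Mov}(X)=\overline{Eff}(X)^{\vee}\subset N_1(X)$ is then also a full-dimensional strongly convex polyhedral cone (see \ref{mov_toric}); by biduality $\overline{Eff}(X)=\overline{Mov}(X)^{\vee}$, and both cones have dimension $\mathfrak{n}$. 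By the standard inclusion-reversing bijection between the faces of a full-dimensional strongly convex polyhedral cone and the faces of its dual, with $\dim\tau+\dim\tau^{\ast}=\mathfrak{n}$, the ray $\R_{\geq0}[D_\rho]$ is an extremal ray of $\overline{Eff}(X)$ if and only if
$$
F_\rho\ :=\ \overline{Mov}(X)\cap[D_\rho]^{\perp}
$$
is a facet of $\overline{Mov}(X)$, that is, $\dim F_\rho=\mathfrak{n}-1$. Here $[D_\rho]\neq0$, since a nonzero effective divisor on a projective variety is not linearly trivial; so $[D_\rho]^{\perp}$ is a hyperplane and $\dim F_\rho\leq\mathfrak{n}-1$ automatically, while $[D_\rho]\in\overline{Eff}(X)=\overline{Mov}(X)^{\vee}$ means $[D_\rho]$ is a supporting functional of $\overline{Mov}(X)$, so $F_\rho$ is indeed a face.

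\textbf{Identifying $F_\rho$ combinatorially.} Next I would use \ref{mov_toric}: a class $c\in N_1(X)$ is identified with the linear relation $\sum_{\eta\in\Sigma(1)}a_\eta v_\eta=0$ where $a_\eta=D_\eta\cdot c$; one has $c\in\overline{Mov}(X)$ exactly when this relation is effective ($a_\eta\geq0$ for all $\eta$); and $[D_\rho]\cdot c=a_\rho$. Hence $F_\rho$ is exactly the set of effective relations in which $v_\rho$ does not appear (i.e.\ has coefficient $0$). Being a face of the polyhedral cone $\overline{Mov}(X)$, $F_\rho$ is itself a polyhedral cone generated by the extremal rays of $\overline{Mov}(X)$ lying inside it; by Theorem \ref{main_theorem} those extremal rays are precisely the rays spanned by minimal effective relations among the vectors of $\Sigma(1)$, and such a relation lies in $F_\rho$ exactly when $v_\rho$ does not occur in it. Therefore $\dim F_\rho$ equals the maximal number of linearly independent minimal relations in which $v_\rho$ does not appear.

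\textbf{Conclusion.} Combining the two steps: $\R_{\geq0}[D_\rho]$ generates an extremal ray of $\overline{Eff}(X)$ $\iff$ $\dim F_\rho=\mathfrak{n}-1=\rho(X)-1$ $\iff$ there exist $\rho(X)-1$ linearly independent minimal relations among the vectors of $\Sigma(1)$, none of which involves $v_\rho$ — which is exactly the statement of the corollary. The argument is essentially formal once Theorem \ref{main_theorem} is in hand; the only points demanding a little care are the dimension bookkeeping $\dim\tau+\dim\tau^{\ast}=\rho(X)$ in the face correspondence between $\overline{Eff}(X)$ and $\overline{Mov}(X)$, and the observation that the extremal rays of the face $F_\rho$ are among those of $\overline{Mov}(X)$, so that ``$\dim F_\rho=\rho(X)-1$'' can be detected by counting linearly independent minimal relations avoiding $v_\rho$.
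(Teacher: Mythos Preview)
Your proof is correct and follows essentially the same route as the paper: both arguments rest on the duality $\overline{Eff}(X)=\overline{Mov}(X)^{\vee}$, the observation that $[D_\rho]\cdot c=0$ exactly when $v_\rho$ does not appear in the relation corresponding to $c$, and Theorem~\ref{main_theorem} identifying the extremal rays of $\overline{Mov}(X)$ with minimal relations. The paper is slightly more terse, invoking Remark~\ref{lin_equiv} for the ``if'' direction and the existence of a dual facet for the ``only if'' direction, whereas you spell out the face correspondence $\dim\tau+\dim\tau^{\ast}=\rho(X)$ explicitly; the content is the same.
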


\begin{proof}  
A class $c \in N_1(X)$ satisfies $[D_{\rho}] \cdot c = 0$ if and only if $v_{\rho}$ does not appear in the linear relation between the vectors in $N_1(X)$ corresponding to $c$. Suppose that there are $\rho(X)-1$ minimal linear relations linearly independent in $\overline{Mov}(X)$ such that $v_{\rho}$ does not appear in any of these relations. It follows from Remark \ref{lin_equiv} that $[D_\rho]$ generates an extremal ray of $\overline{Eff}(X)$.

Conversely, if $[D_{\rho}]$ generates an extremal ray of $\overline{Eff}(X)$, there exists a facet $F$ of $\overline{Mov}(X)$ generated by $\rho(X)-1$ linear classes $c_i$ such that $[D_\rho] \cdot c_i = 0$, $i=1, ..., \rho(X)-1$. By Theorem \ref{main_theorem}, each $c_i$ corresponds to a minimal linear relation of vectors in $\Sigma(1)$. Therefore, $v_{\rho}$ does not appear in any of these relations.  
\end{proof}

\section{An Application}
\label{losev_manin}

In this section we use Corollary \ref{criterion_eff} to describe the cone of pseudo-effective divisors of Losev-Manin moduli spaces. Part of the content of this section comes from a joint work with Edilaine Nobili.

\begin{say} \noindent \textbf{Losev-Manin moduli spaces} The study of moduli spaces is an active research area of Algebraic Geometry. Roughly speaking, a moduli space is an algebraic variety whose points correspond to algebro-geometric objects of some fixed kind, or isomorphism classes of such objects. We are interested in the so called \emph{Losev-Manin moduli spaces} $\bar{L}_{n+1}$. A point of $\bar{L}_{n+1}$ corresponds to an isomorphism class of a chain of projective lines $C = C_1 \cup ... \cup C_r$ with (not necessarily distinct) $(n+1)$ marked points $s_1, ..., s_{n+1}$ on $C$ distributed as follows: each curve $C_j$ has two distinguished points $p^-_j$ and $p^+_j$ named \emph{poles}. For $p_1^- \in C_1$ and $p_r^+ \in C_r$ we write $s_0$ and $s_{\infty}$. Two different lines $C_i$ and $C_j$ intersect only if $|i-j|=1$. The line $C_j$ intersects $C_{j+1}$ at $p^+_j = p^-_{j+1}$. The points $s_i$ are smooth points of $C$ that are different from the poles. Every line $C_j$ must contain at least one point $s_i$. 
We have that $\bar{L}_{n+1}$ provides a compactification of the moduli space $\mathcal{M}_{0,n+3}$ of smooth rational curves with $n+3$ marked points. For more details and for the proof of the existence of $\bar{L}_{n+1}$, see \cite{lm00}. \\

\begin{figure}[h]
\centering
\includegraphics[scale=0.5]{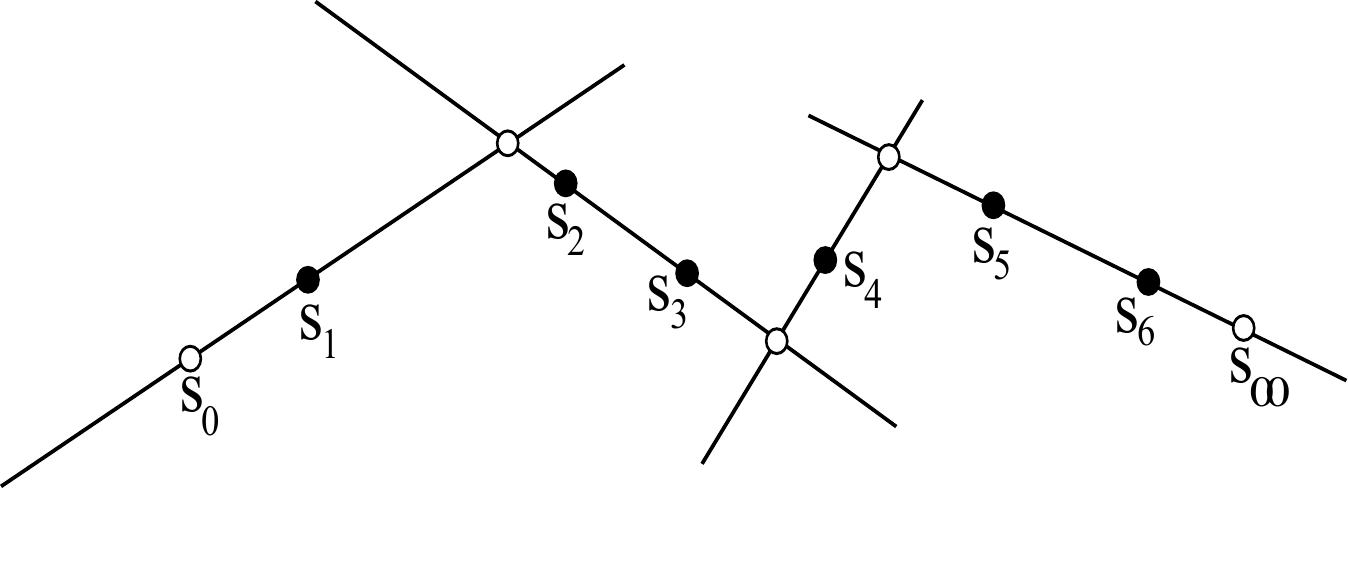}
\caption{A point of $\bar{L}_6$.}
\end{figure}

Kapranov provided in \cite[4.3]{kapranov} the following concrete realisation of $\bar{L}_{n+1}$ (and of others moduli spaces) as a successive chain of toric blowups of linear subspaces of the projective space:

\begin{enumerate}
\item[(*)] Let $q_0$, ..., $q_n$ be $n+1$ points in general position in $\mathbb{P}^n$. We blowup the points $q_i$. Then, we blowup the strict transforms of the lines $l_{ij}$ passing through $q_i$ and $q_j$. Then we blow up the strict tranforms of the $2$-planes $H_{ijk}$ passing through 3 points $q_i$, $q_j$ and $q_k$. We continue this process until all proper transform of  codimension 2 linear spaces have been blown up. 
\end{enumerate}

The smooth projective variety obtained in the process (*) is isomorphic to $\bar{L}_{n+1}$. Moreover, $\bar{L}_{n+1}$ is a toric variety.
\end{say}

Our main goal is to use Corollary \ref{criterion_eff} to prove the following:

\begin{prop} The pseudo-effective cone $\overline{Eff}(\bar{L}_{n+1})$ is polyhedral and has $2^{n+1} -2$ extremal rays. This number is exactly the quantity of $T$-invariant prime divisors on $\bar{L}_{n+1}$.
\label{losev_cone}
\end{prop}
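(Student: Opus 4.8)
The plan is to make the toric structure of $X:=\bar L_{n+1}$ completely explicit and then run the combinatorial criterion of Corollary~\ref{criterion_eff}. Using Kapranov's realization~\cite{kapranov} recalled above (equivalently, identifying $\bar L_{n+1}$ with the toric variety of the $n$-permutohedron), the fan $\Sigma$ of $X$ lives in $N_\R=\R^{n+1}/\R\mathbf 1$, with lattice $N=\Z^{n+1}/\Z\mathbf 1$ of rank $n$; its rays are indexed by the proper nonempty subsets $S\subsetneq\{0,\dots,n\}$, the ray $v_S$ being the image of $\sum_{i\in S}e_i$. One checks directly that each $v_S$ is primitive and that $S\mapsto v_S$ is injective, so $\#\Sigma(1)=2^{n+1}-2$; this is also the number of $T$-invariant prime divisors $D_{v_S}$, and $\rho(X)=\#\Sigma(1)-n=2^{n+1}-n-2$. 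By~\ref{mov_toric} the cone $\overline{Eff}(X)=\Cone\big([D_{v_S}]\big)$ is polyhedral and has \emph{at most} $2^{n+1}-2$ extremal rays, so it suffices to prove that each $[D_{v_T}]$ spans an extremal ray and that these rays are pairwise distinct (the latter because, for $n\ge 2$, a short check shows that deleting any two rays from $\Sigma(1)$ still leaves a subset spanning $N_\R$, so no linear relation in $Cl(X)_\R$ can be supported on just two of the $D_{v_S}$).

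By Corollary~\ref{criterion_eff}, showing that $[D_{v_T}]$ is extremal amounts to exhibiting $\rho(X)-1=2^{n+1}-n-3$ linearly independent minimal relations among the $v_S$ in which $v_T$ does not occur. The basic supply comes from set partitions: for a partition $\mathcal B=\{B_1,\dots,B_k\}$ of $\{0,\dots,n\}$ into $k\ge2$ nonempty blocks, $\sum_j e_{B_j}=\mathbf 1$ gives the effective relation $v_{B_1}+\dots+v_{B_k}=0$, which is minimal in the sense of Definition~\ref{minimal} because the $e_{B_j}$ have pairwise disjoint supports, so the $v_{B_j}$ span a subspace of dimension $k-1$; I will also use the complementary family $v_{B_1^c}+\dots+v_{B_k^c}=0$, minimal for the same reason since $v_{B_j^c}=-v_{B_j}$ in $N_\R$. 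Among the first family, the relations
$$
r_A:\ v_A+\textstyle\sum_{i\notin A}v_{\{i\}}=0 \quad(2\le|A|\le n),\qquad r_0:\ \textstyle\sum_{i=0}^n v_{\{i\}}=0
$$
number exactly $\rho(X)$ and form a basis of $N_1(X)$, since $v_A$ occurs in $r_A$ only. For $T$ with $|T|\ge 2$ this settles the matter at once: the $\rho(X)-1$ relations $\{r_A:A\neq T\}\cup\{r_0\}$ are independent and omit $v_T$, so $[D_{v_T}]$ is extremal.

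The case that needs real work is $T=\{t\}$ a singleton: then $r_0$ and every $r_A$ with $t\notin A$ contain $v_{\{t\}}$, leaving only $2^n-2$ basis relations that omit it, short of the required $2^{n+1}-n-3$. I would make up the remaining $2^n-n-1$ relations from minimal relations in which $t$ is buried inside a block of size $\ge2$ — partitions of $\{0,\dots,n\}$ having no singleton block $\{t\}$, together with their complemented companions (for instance $\sum_i v_{\{i\}^c}=0$ already omits every singleton ray) — and select them by the same ``each one introduces a new ray'' bookkeeping used above, so that linear independence is transparent. Carrying out this selection and verifying that it produces a full family of $\rho(X)-1$ independent minimal relations avoiding $v_{\{t\}}$ is the technical heart of the argument, and I expect it to be the main obstacle; everything else is formal. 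Granting it, Corollary~\ref{criterion_eff} shows all $2^{n+1}-2$ classes $[D_{v_S}]$ are extremal, hence $\overline{Eff}(\bar L_{n+1})$ is polyhedral with exactly $2^{n+1}-2$ extremal rays, which equals the number of $T$-invariant prime divisors.
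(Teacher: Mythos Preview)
Your setup and your treatment of the case $|T|\ge 2$ are exactly the paper's: the basis $\{r_0\}\cup\{r_A:2\le|A|\le n\}$ is the paper's basis (\ref{minimal_example}) in different coordinates, and the extremality of the non-singleton rays follows immediately by dropping the one relation containing $v_T$.

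The gap you flag for $T=\{t\}$ is real, and your proposed fix --- building $\rho(X)-1$ independent minimal relations avoiding $v_{\{t\}}$ by hand from partitions with no singleton block $\{t\}$ --- is exactly the kind of explicit construction the paper \emph{avoids}. The paper's argument for this case is much shorter and does not require producing any new relations: having already shown that the $\rho(X)-1$ classes $[D_{v_A}]$ with $|A|\ge 2$ are extremal, one observes that $\overline{Eff}(X)$ is full-dimensional in $N^1(X)\simeq\R^{\rho(X)}$, hence has at least $\rho(X)$ extremal rays. So at least one singleton ray $[D_{v_{\{j\}}}]$ must be extremal. But the symmetric group $S_{n+1}$ acts on $\{0,\dots,n\}$, inducing automorphisms of the fan (permuting the $v_{\{i\}}$'s among themselves and the $v_A$'s with $|A|\ge 2$ among themselves); this action permutes the singletons transitively, so extremality of one $[D_{v_{\{j\}}}]$ forces extremality of all of them.

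This dimension-count-plus-symmetry trick replaces your entire ``technical heart'' paragraph. Your direct approach could presumably be made to work, but the bookkeeping you anticipate is genuinely awkward (you need $2^n-n-1$ further independent relations, and the ``each introduces a new ray'' mechanism breaks down once you move beyond the basis $r_A$), whereas the symmetry argument is two lines.
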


Before the proof, we make a crucial observation:

\begin{rem} Let $X_{\Sigma}$ be a smooth projective toric variety and $\tau \in \Sigma$. Consider $\pi: \tilde{X} \to X$ the blowup of $X$ along $V(\tau)$. Recall that the fan of $\tilde{X}$ is given by the star subdivision $\Sigma(\tau)$ centered in $\tau$ (see \ref{blowup}). Since the star subdivison $\Sigma(\tau)$ changes only the cones of $\Sigma$ containing $\tau$, every proper face $\sigma \prec \tau$ is also a cone of $\Sigma(\tau)$. We denote by $V(\sigma)$ and $\tilde{V}(\sigma)$ the $T$-invariant subvarieties associated to $\sigma$ in $X$ and $\tilde{X}$ respectively. The blowup $\pi$ is the toric morphism induced by the identity $id: N \to N$ on the lattice $N$. Thus, the subvariety $\tilde{V}(\sigma)$ is mapped onto $V(\sigma)$. Since $\tilde{V}(\sigma)$ and $V(\sigma)$ are irreducible and have same dimension, we conclude that $\tilde{V}(\sigma)$ is the strict transform of $V(\sigma)$.
\label{strict_transform}
\end{rem}

To fix ideas, first we describe the cone of pseudo-effective divisors of $\bar{L}_4$. Let $\{e_1, e_2, e_3\}$ be the canonical basis of $\R^3$ and $e_0 := -e_1 -e_2 -e_3$. Let $q_0, ..., q_3$ be the four invariant points in $\mathbb{P}^3$. If $q_i$ corresponds to the cone $\Cone(e_0, ..., \hat{e_i}, .., e_3)$, by blowing up this point we introduce in the fan the new vector $u_i = \Big(\sum_{j=0}^4 e_j \Big) - e_i$. Next, let $l_{ij}$ be the invariant line passing through $q_i$ and $q_j$. The cone corresponding to $l_{ij}$ is $\Cone(e_k \big| k \neq i,j)$ and by Remark \ref{strict_transform}, this is also the cone of the strict transform of this line in the blowup of $\mathbb{P}^3$ at the points $q_0, ..., q_3$. Blowing up this strict transform, we add to the fan the vector $u_{ij} = \Big(\sum_{j=0}^4 e_j \Big) - e_i - e_j$. It follows that:
$$
\Sigma_{\bar{L}_4}(1) = \big\{e_0, e_1, e_2, e_3, u_0, u_1, u_2, u_3, u_{01}, u_{02}, u_{03}, u_{12}, u_{13}, u_{23} \big\}.
$$
The Picard number of $\bar{L}_4$ is $\rho(\bar{L}_4) = \# \Sigma_{\bar{L}_4}(1) - 3 = 11$. Consider the following effective linear relations:
\begin{align}
 & e_0+e_1+e_2+e_3 = 0, \notag \\ 
 & {u_i + e_i = 0, \ \ i=0,...,3,  \label{11_relations}} \\ 
 & u_{ij} + e_i + e_j = 0, \ \ i,j=0,...,3, \ \ i\neq j. \notag \\ \notag
\end{align}
Observe that $T$-invariant prime divisors corresponding to any two vectors of $\Sigma_{\bar{L}_4}(1)$ are not numerically equivalent. It is immediate that these relations are minimal. Furthermore, they are linearly independent because, except for the first, each relation contains a vector that does not appear in the others relations. Moreover, they form a basis for $N_1(\bar{L}_4)$ because there are 11 relations in (\ref{11_relations}). By Corollary \ref{criterion_eff}, the classes of the $T$-invariant prime divisors $V(u_i)$ and $V(u_{ij})$ generate extremal rays of $\overline{Eff}(\bar{L}_4)$. There are 10 such divisors. Since $\overline{Eff}(\bar{L}_4)$ is a maximal cone in $N^1(\bar{L}_4)$ and the dimension of this vector space is 11, some $T$-invariant divisor $V(e_j)$ must determine an extremal ray. By Corollary \ref{criterion_eff}, there are 10 minimal linear relations among the vectors of $\Sigma_{\bar{L}_4}(1)$ in which the vector $e_j$ does not appear. These 10 relations are linear combinations of the relations in (\ref{11_relations}). Notice the symmetry of (\ref{11_relations}) on the indices 0, 1, 2, 3. It follows that all the $T$-invariant divisors corresponding to the vectors $e_0$, ..., $e_3$ generate extremal rays of $\overline{Eff}(\bar{L}_4)$ and this cone has $\# \Sigma_{\bar{L}_4}(1) = 2^4 - 2 = 14$ extremal rays.  

\vspace{0,5 cm}

Next, we treat the general case. Let $\{e_1, ..., e_n\}$ be the canonical basis of $\R^n$ and $e_0 := -e_1 -...-e_n$. Let $q_0, ..., q_n$ be the $T$-invariant points of $\mathbb{P}^n$, each $q_j$ corresponding to the cone $\Cone(e_0, ..., \hat{e}_j, ..., e_n)$. For every positive integer $r \leq n-1$ the number of $(r-1)$-planes passing through $r$ of the points $q_0, ..., q_n$ is equal to ${n+1 \choose r}$. For every $\alpha = \{i_1<i_2<...<i_r\} \subset \{0,1,...,n\}$, $1 \leq r \leq n-1$ the $(r-1)$-plane passing through the points $\{q_j \ | \ j \in \alpha \}$ is the $T$-invariant subvariety of $\mathbb{P}^n$ corresponding to the cone $\Cone \big(e_j \big| j \notin \alpha \big)$. We define:
$$
u_{\alpha} :=  \Big(\sum_{j=0}^n e_j \Big) - \Big(\sum_{j \in \alpha} e_j \Big).
$$ 
Thus, the vectors appearing in $\Sigma_{\bar{L}_{n+1}}(1)$ are the canonical vectors $e_j$ and the vectors $u_{\alpha}$ with $\alpha \subset \{0,1,...,n\}$, $1 \leq \# \alpha \leq n-1$. We have:
$$
\rho(\bar{L}_{n+1}) = \# \Sigma_{\bar{L}_{n+1}}(1) - n =  \displaystyle \sum_{r = 1}^n {n+1 \choose r} - n = 2^{n+1} - (n+2).
$$
Consider the following linear relations among the vectors of $\Sigma_{\bar{L}_{n+1}}(1)$:

\begin{align}
& e_0 + e_1 + ... + e_n = 0, \notag \\
& {\label{minimal_example}} \\
& u_{\alpha} + \Big(\sum_{j \in \alpha} e_j \Big) = 0, \ \ \ \alpha \subset \{0,...,n\},  \  1 \leq \# \alpha \leq n-1. \notag
\end{align}
There are $\rho(\bar{L}_{n+1})$ relations in (\ref{minimal_example}). They are minimal and linearly independent, since each such relation, except for the first, has a vector that does not appear in the others. Moreover, two vectors in $\Sigma_{\bar{L}_{n+1}}(1)$ determine $T$-invariant prime divisors whose numerical classes are distinct. By Corollary \ref{criterion_eff}, for every $\alpha \subset \{0,...,n\}$, $1 \leq \# \alpha \leq n-1$,  $[V(u_{\alpha})]$ generates an extremal ray of $\overline{Eff}(\bar{L}_{n+1})$. Since there are $\rho(\bar{L}_{n+1}) - 1$ vectors $u_{\alpha}$ and $\overline{Eff}(\bar{L}_{n+1})$ is a cone of dimension $\rho(\bar{L}_{n+1})$, there exists a vector $e_j$ such that $[V(e_j)]$ generates an extremal ray of this cone. By Corollary \ref{criterion_eff}, there are $\rho(X)-1$ minimal linear relations among vectors of $\Sigma_{\bar{L}_{n+1}}(1)$ in which the vector $e_j$ does not appear. These relations are linear combinations of the basis (\ref{minimal_example}). Notice the symmetry on the indices of the vectors of (\ref{minimal_example}). It follows that all the classes of $T$-invariant divisors $V(e_0), ..., V(e_n)$ generate extremal rays of $\overline{Eff}(\bar{L}_{n+1})$ and this cone has $\# \Sigma_{\bar{L}_{n+1}}(1) = 2^{n+1} - 2$ extremal rays. 

\vspace{30 cm}
\

\vspace{30 cm}

\addcontentsline{toc}{chapter}{Bibliography}

\end{document}